\newtheorem{theorem}{Theorem}[section]
\newtheorem{definition}[theorem]{Definition}
\newtheorem{lemma}[theorem]{Lemma}
\newtheorem{corollary}[theorem]{Corollary}
\newtheorem{proposition}[theorem]{Proposition}
\newtheorem{remark}[theorem]{Remark}
\newcommand{\R}{\mathbb R}
\newcommand{\bt}{\begin{theorem}}
\newcommand{\et}{\end{theorem}}
\newcommand{\bl}{\begin{lemma}}
\newcommand{\el}{\end{lemma}}
\newcommand{\bd}{\begin{definition}}
\newcommand{\ed}{\end{definition}}
\newcommand{\bc}{\begin{corollary}}
\newcommand{\ec}{\end{corollary}}
\newcommand{\bp}{\begin{proof}}
\newcommand{\ep}{\end{proof}}
\newcommand{\bx}{\begin{example}}
\newcommand{\ex}{\end{example}}
\newcommand{\bi}{\begin{exercise}}
\newcommand{\ei}{\end{exercise}}
\newcommand{\bo}{\begin{proposition}}
\newcommand{\eo}{\end{proposition}}
\newcommand{\br}{\begin{remark}}
\newcommand{\er}{\end{remark}}
\newcommand{\beq}{\begin{equation}}
\newcommand{\eeq}{\end{equation}}
\newcommand{\ba}{\begin{align}}
\newcommand{\ea}{\end{align}}
\newcommand{\bn}{\begin{enumerate}}
\newcommand{\en}{\end{enumerate}}
\newcommand{\bg}{\begin{align*}}
\newcommand{\bcs}{\begin{cases}}
\newcommand{\ecs}{\end{cases}}
\newcommand{\bean}{\begin{eqnarray*}}
\newcommand{\eean}{\end{eqnarray*}}
\newtheorem{Thm}{Theorem}[section]
\newtheorem{Lem}[Thm]{Lemma}
\newtheorem{Cor}[Thm]{Corollary}
\theoremstyle{definition}
\theoremstyle{remark}
\def\R{\mathbb{R}}
\def\bd{\mathrm{bd}\,}
\numberwithin{equation}{section}
\begin{document}
\begin{CJK*}{GBK}{song}
\title{\bf  Positive normalized solutions to nonlinear elliptic systems in $\R^4$ with critical Sobolev exponent\thanks{Luo and Yang are supported by NSFC (No. 11901147) and the Fundamental Research Funds for the Central Universities of China (No. JZ2020HGTB0030). Zou is supported by NSFC (No. 20171301826, 20181301532).}}

\date{}
\author{
{\bf        Xiao Luo$^1$,\;\; Xiaolong Yang$^2$,\;\; Wenming Zou$^3$}\\
\footnotesize \it 1. School of Mathematics, Hefei University of Technology, Hefei, 230009, P. R. China.\\
\footnotesize \it 2. School of Mathematics and Statistics, Central China Normal University, Wuhan, 430079, P. R. China. \\
\footnotesize \it 3. Department of Mathematical Sciences, Tsinghua University, Beijing, 100084, P. R. China.}

\maketitle

\begin{center}
\begin{minipage}{120mm}
\begin{center}{\bf Abstract}\end{center}
In this paper, we consider the existence and asymptotic behavior on mass of the positive solutions to the following system:
\begin{equation}\label{eqA0.1}\nonumber
\begin{cases}
-\Delta u+\lambda_1u=\mu_1u^3+\alpha_1|u|^{p-2}u+\beta v^2u\quad&\hbox{in}~\R^4,\\
-\Delta v+\lambda_2v=\mu_2v^3+\alpha_2|v|^{p-2}v+\beta u^2v\quad&\hbox{in}~\R^4,\\
\end{cases}
\end{equation}
under the mass constraint $$\int_{\R^4}u^2=a_1^2\quad\text{and}\quad\int_{\R^4}v^2=a_2^2,$$
where $a_1,a_2$ are prescribed, $\mu_1,\mu_2,\beta>0$; $\alpha_1,\alpha_2\in \R$, $p\!\in\! (2,4)$ and $\lambda_1,\lambda_2\!\in\!\R$ appear as Lagrange multipliers. Firstly, we establish  a non-existence result for the repulsive interaction case, i.e.,  $\alpha_i<0(i=1,2)$. Then  turning to  the case of $\alpha_i>0 (i=1,2)$, if $2<p<3$, we show that the problem admits a ground state and an excited state, which are characterized respectively by a local minimizer and a mountain-pass critical point of the corresponding energy functional.  Moreover, we give a precise asymptotic behavior of these two solutions as $(a_1,a_2)\to (0,0)$ and $a_1\sim a_2$. This seems to be the first contribution regarding the multiplicity as well as the synchronized mass collapse behavior of  the normalized solutions to Schr\"{o}dinger systems with Sobolev critical exponent. When $3\leq p<4$, we prove an  existence as well as non-existence ($p=3$) results  of  the ground states, which are characterized by constrained mountain-pass critical points of the corresponding energy functional. Furthermore, precise asymptotic behaviors of the ground states are obtained when the masses of whose two components vanish and cluster to a upper bound (or infinity), respectively.

\vskip0.23in

\noindent {\bf Keywords:} {Elliptic systems; Sobolev critical; Normalized solutions; Asymptotic behavior.}

\vskip0.23in
\noindent {\bf 2010 Mathematics Subject Classification:} 35J50, 35B33, 35B09, 35B40.

\vskip0.23in

\end{minipage}
\end{center}
\vskip0.26in
\newpage

\section{Introduction}

\setcounter{equation}{0}
In this paper, for prescribed $a_1,a_2>0$, we study the existence of solutions $(\lambda_1,\lambda_2,u,v)\in \R^2\times H^1(\R^4,\R^2)$ satisfying
\begin{equation}\label{eq1.1}
\begin{cases}
-\Delta u+\lambda_1u=\mu_1u^3+\alpha_1|u|^{p-2}u+\beta v^2u,\\
-\Delta v+\lambda_2v=\mu_2v^3+\alpha_2|v|^{p-2}v+\beta u^{2}v,\\
\end{cases}
\end{equation}
and
\begin{equation}\label{eq1.11}
\int_{\R^4}u^2=a_1^2,\quad\int_{\R^4}v^2=a_2^2,
\end{equation}
where $\mu_1,\mu_2, \beta>0$ and $\alpha_1,\alpha_2\in \R$.

\vskip0.1in

Equation \eqref{eq1.1} is closely related to the
following focusing time-dependent nonlinear Schr\"{o}dinger system
\begin{equation}\label{eq1.2}
\begin{cases}
-i\partial_t\Phi_1=\Delta \Phi_1+\mu_1|\Phi_1|^2\Phi_1+\alpha_1|\Phi_1|^{p-2}\Phi_1+\beta |\Phi_2|^2\Phi_1\quad&\hbox{in}~\R^4,\\
-i\partial_t\Phi_2=\Delta \Phi_2+\mu_2|\Phi_2|^2\Phi_2+\alpha_2|\Phi_2|^{p-2}\Phi_2+\beta |\Phi_1|^{2}\Phi_2\quad&\hbox{in}~\R^4.\\
\end{cases}
\end{equation}
Indeed, setting $\Phi_1(t,x)=e^{i\lambda_1t}u(x)$ and $\Phi_2(t,x)=e^{i\lambda_2t}v(x)$, if $(u,v)$ is a nontrivial solution of \eqref{eq1.1}, then $(\Phi_1,\Phi_2)$ is a solitary wave solution of \eqref{eq1.2}. Physically, system \eqref{eq1.2} has the nature of conservation of mass, i.e.,  the $L^2$-norms $\|\Phi_1(t,\cdot)\|^2_2$ and $\|\Phi_2(t,\cdot)\|^2_2$ of solutions are independent of $t\in \R$. Problems with prescribed masses
(the former constraint) appear in nonlinear optics, where the mass represents the power supply, and in the theory of Bose-Einstein condensates, where it represents the total number of particles (see \cite{AA,EGB,FD,HME,TV}). In particular, the positive sign of $\mu_1,\mu_2, \beta$ stays for attractive interaction, while the negative sign stays for repulsive interaction. 

\vskip0.1in

When $\alpha_1=\alpha_2=0$, system \eqref{eq1.1} becomes the following so-called Gross-Pitaevskii equation
\begin{equation}\label{eq1.21}
\begin{cases}
-\Delta u+\lambda_1u=\mu_1u^3+\beta v^2u\quad&\hbox{in}~\R^N,\\
-\Delta v+\lambda_2v=\mu_2v^3+\beta u^2v\quad&\hbox{in}~\R^N,\\
\end{cases}
\end{equation}
which governs various physical phenomena, for instance binary mixtures of Bose-Einstein condensates, or the propagation of mutually incoherent wave packets in nonlinear optics, see \cite{HME,TV}.
Recently, normalized solutions to system \eqref{eq1.21} have attracted much attention of researchers e.g. \cite{BJS,BS,BS-CV,BXZ}. Bartsch, Jeanjean, Soave \cite{BJS} and Bartsch, Soave \cite{BS} dealt with $\beta > 0$ and $\beta < 0$ respectively, they found solutions to \eqref{eq1.21}-\eqref{eq1.11} for specific range of $\beta$ depending on $a_1, a_2$. In addition, Bartsch, Zhong and Zou \cite{BXZ} studied  \eqref{eq1.11}-\eqref{eq1.21}   for $\beta > 0$ belonging to much better ranges independent of the masses $a_1$ and $a_2$, they adopted a new approach based on bifurcation theory and the continuation method. Bartsch and Soave \cite{BS-CV} proved the existence of infinitely many normalized solutions to \eqref{eq1.21} with $\mu_{1}=\mu_{2}>0$ and $\beta\leq -\mu_{1}$ by using a suitable minimax argument. For normalized solutions of nonlinear elliptic systems with more general nonlinearities, we refer the reader to \cite{BarJean,GouJean,LiZou,MS}.

\vskip2mm
For $\beta=0$, system \eqref{eq1.1} reduces to the following single energy (Sobolev) critical Schr\"{o}dinger equation
\begin{equation}\label{eq2}
-\Delta u=\lambda u+\mu |u|^{q-2}u+|u|^{2^*-2}u, \ \  \ u\in H^{1}(\R^N),
\end{equation}
where $\mu\in\R$ and $2<q<2^*=\frac{2N}{(N-2)^+}$. If $\mu>0$, it was proved in \cite{Soave2} that if the product of $\mu$ and the mass is well controlled, there is a normalized ground state to \eqref{eq2}. In addition, Soave \cite{Soave2} raised an open question on how to obtain  the second normalized solution to \eqref{eq2} since the associated energy functional constrained on the $L^2$-sphere admits a convex-concave geometry. It is worth pointing out that, Jeanjean, Le \cite{JTT} solved this open question for $N \ge 4$. Recently, Wei and Wu \cite{WW} proved the existence of normalized solutions of mountain-pass type of \eqref{eq2} for $N\ge 3$ and $2<q<2+\frac{4}{N}$. Moreover, in \cite{WW}, they also got the precisely asymptotic behaviors of ground states and mountain-pass solutions as $\mu\to 0$ or  $\mu$ goes to its upper bound. These studies answer some open questions proposed by Soave in \cite{Soave2}.

\vskip3mm

Elliptic systems with the doubly critical exponent have  been investigated by many authors (c.f. \cite{CZ,CZ2,LL,PPW,PYF,WuZ,TS} and the references therein). However, much less is known when the $L^2$-norms $\|u\|_2,\|v\|_2$ are prescribed. Recently, Li, Zou in \cite{LiZou} and Mederski, Schino in \cite{MS} considered normalized solutions to coupled Schr\"{o}dinger systems with a wide class of nonlinearities including the Sobolev critical terms. Here, we have to emphasize that the system considered by Li and Zou in \cite{LiZou} contains only a single critical nonlinearity, and coupled terms of the systems considered by Mederski and Schino in \cite{MS} must be Sobolev subcritical.
In this paper, we study the existence of normalized solutions to problem \eqref{eq1.1} with doubly Sobolev critical exponent involving coupling terms.

\vskip1mm
One can get solutions to \eqref{eq1.1}-\eqref{eq1.11} by looking for critical points $(u,v)\in H^1(\R^4,\R^2)$ of the energy functional
\begin{equation}\label{eq1.4}
I(u,v)=\int_{\R^4}\frac{1}{2}\big(|\nabla u|^2+|\nabla v^2|\big)-\frac{1}{4}\big(\mu_1|u|^4+\mu_2|v|^4\big)-\frac{\alpha_1}{p}|u|^p-\frac{\alpha_2}{p}|v|^p-\frac{\beta}{2}|u|^{2}|v|^{2}
\end{equation}
constrained on the sphere $S(a_1)\times S(a_2)$, where
\begin{equation}\label{eq1.5}
S(a):=\big\{u\in H^1(\R^4):\int_{\R^4}|u|^2=a^2\big\},
\end{equation}
and $\lambda_1,\lambda_2$ in \eqref{eq1.1} appear as the Lagrange multipliers with respect to the mass constraints.

\vskip3mm
If $\alpha_i<0(i=1,2)$, we have the following non-existence result.
\begin{Thm}\label{th1.3}
Let $\mu_i,a_i,\beta>0(i=1,2)$, $2<p<4$. If $\alpha_i<0(i=1,2)$, then problem \eqref{eq1.1}-\eqref{eq1.11} has no positive solution $(u,v)\in H^1(\R^4,\R^2)$ and no non-trivial radial solution for $p>\frac{8}{3}$.
\end{Thm}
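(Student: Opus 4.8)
The plan is to use a Pohozaev-type identity argument. Suppose for contradiction that $(u,v)\in H^1(\R^4,\R^2)$ is a positive solution of \eqref{eq1.1}-\eqref{eq1.11} with Lagrange multipliers $\lambda_1,\lambda_2$. First I would record three scalar identities obtained by testing the system. Testing the first equation with $u$ and the second with $v$ and adding gives the Nehari-type relation
\begin{equation}\label{eqN}
\int_{\R^4}\big(|\nabla u|^2+|\nabla v|^2\big)+\lambda_1 a_1^2+\lambda_2 a_2^2=\int_{\R^4}\big(\mu_1 u^4+\mu_2 v^4\big)+\alpha_1\int_{\R^4}|u|^p+\alpha_2\int_{\R^4}|v|^p+2\beta\int_{\R^4}u^2v^2.
\end{equation}
The Pohozaev identity in dimension $N=4$ (multiply by $x\cdot\nabla u$, $x\cdot\nabla v$, integrate by parts) reads
\begin{equation}\label{eqP}
\int_{\R^4}\big(|\nabla u|^2+|\nabla v|^2\big)+2\big(\lambda_1 a_1^2+\lambda_2 a_2^2\big)=\int_{\R^4}\big(\mu_1 u^4+\mu_2 v^4\big)+\frac{4\alpha_1}{p}\int_{\R^4}|u|^p+\frac{4\alpha_2}{p}\int_{\R^4}|v|^p+2\beta\int_{\R^4}u^2v^2,
\end{equation}
using that the quartic and coupling terms are exactly Sobolev critical in $\R^4$ (so their dilation weights cancel the gradient term) while $|u|^p$ has weight $4/p$ and the mass term has weight $2$.

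Subtracting \eqref{eqP} from \eqref{eqN} eliminates the gradient and the two critical terms, yielding
\begin{equation}\label{eqPN}
-\big(\lambda_1 a_1^2+\lambda_2 a_2^2\big)=\Big(1-\frac{4}{p}\Big)\Big(\alpha_1\int_{\R^4}|u|^p+\alpha_2\int_{\R^4}|v|^p\Big).
\end{equation}
Since $2<p<4$ we have $1-4/p<0$, and since $\alpha_1,\alpha_2<0$ with $u,v\not\equiv 0$ (positivity) the right side of \eqref{eqPN} is strictly negative, hence $\lambda_1 a_1^2+\lambda_2 a_2^2>0$. Next I would extract a second, opposing inequality: plugging \eqref{eqPN} back to express $\lambda_1 a_1^2+\lambda_2 a_2^2$ in \eqref{eqN}, and combining with the Gagliardo–Nirenberg / Sobolev bounds $\int u^4\le \cs^{-2}(\int|\nabla u|^2)^2$, $\int|u|^p\le C(\int|\nabla u|^2)^{(p-2)/2}a_1^{4-p}$, etc., one derives an inequality forcing the gradient norms to vanish, contradicting the mass constraint $a_i>0$. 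The cleanest route: from \eqref{eqN} and \eqref{eqPN},
\begin{equation}\label{eqkey}
\int_{\R^4}\big(|\nabla u|^2+|\nabla v|^2\big)=\int_{\R^4}\big(\mu_1u^4+\mu_2v^4\big)+2\beta\int_{\R^4}u^2v^2+\frac{4}{p}\Big(\alpha_1\int_{\R^4}|u|^p+\alpha_2\int_{\R^4}|v|^p\Big),
\end{equation}
and since the $\alpha_i$ terms are $\le 0$ while $\mu_i,\beta>0$, this gives $\|\nabla u\|_2^2+\|\nabla v\|_2^2\le \cs^{-2}\big(\|\nabla u\|_2^4+\|\nabla v\|_2^4\big)+2\beta\int u^2v^2$, which after a Sobolev estimate on the coupling term forces a lower bound on $\|\nabla u\|_2^2+\|\nabla v\|_2^2$; I would then chase this together with the sign of $\lambda_1 a_1^2+\lambda_2 a_2^2$ obtained above to reach the contradiction — most likely by showing the two constraints on $(\lambda_1 a_1^2+\lambda_2 a_2^2)$ are incompatible, or by a direct scaling test function that lowers the energy below any candidate level.

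For the radial non-existence statement when $p>8/3$, the positivity hypothesis is dropped, so \eqref{eqPN} only gives information once we know $u,v$ are not both zero, which is automatic; the restriction $p>8/3=2+2/3$ suggests the argument there needs $p$ above the $L^2$-critical exponent $2+4/N=2+1=3$? — actually $8/3<3$, so the relevant threshold is where the Gagliardo–Nirenberg exponent $(p-2)/2$ interacts with the quartic term's scaling; I would reexamine which combination of \eqref{eqN}, \eqref{eqP} and the Sobolev inequality closes only for $p>8/3$. The main obstacle I anticipate is precisely this last bookkeeping: getting a clean contradiction from \eqref{eqkey} requires controlling the sign-indefinite interplay between the negative $\alpha_i$-terms and the positive critical terms uniformly, and in the radial case without positivity one must be careful that the identities still force nontriviality of both components rather than allowing, say, $u\equiv 0$ and $v$ a nontrivial solution of the decoupled critical equation (which one must separately rule out via the scalar Pohozaev identity, and this is exactly where the $p>8/3$ condition should enter).
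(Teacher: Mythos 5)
Your first step (Nehari identity minus Pohozaev identity) is the same starting point as the paper, but you then make a sign error that derails the argument: for $2<p<4$ the factor $1-\frac{4}{p}$ is negative and $\alpha_1\|u\|_p^p+\alpha_2\|v\|_p^p$ is negative, so their product is \emph{positive}, and your identity actually gives $\lambda_1a_1^2+\lambda_2a_2^2=\frac{4-p}{p}\big(\alpha_1\|u\|_p^p+\alpha_2\|v\|_p^p\big)<0$, i.e.\ at least one multiplier, say $\lambda_1$, is \emph{negative} --- not positive as you claim. More seriously, the route you propose for the contradiction does not close. Your combined identity is nothing but the Pohozaev constraint $\|\nabla u\|_2^2+\|\nabla v\|_2^2=\mu_1\|u\|_4^4+\mu_2\|v\|_4^4+2\beta\|uv\|_2^2+\gamma_p\big(\alpha_1\|u\|_p^p+\alpha_2\|v\|_p^p\big)$ (note the coefficient is $\gamma_p=2-\frac{4}{p}$, not $\frac{4}{p}$), and feeding it into the Sobolev inequality only produces a \emph{lower} bound of the form $\|\nabla u\|_2^2+\|\nabla v\|_2^2\gtrsim \mathcal{S}^2_{\mu_1,\mu_2,\beta}$; these algebraic identities are perfectly consistent with the mass constraints (they hold for the solutions constructed when $\alpha_i>0$), so no contradiction can be extracted at this level. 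The paper's actual mechanism is qualitative: once $\lambda_1<0$, elliptic regularity and gradient estimates give $u,v\to 0$ at infinity, the only negative term $\alpha_1|u|^{p-2}$ is then dominated by $-\lambda_1>0$ for $|x|$ large, so $-\Delta u\ge \frac{-\lambda_1}{2}u>0$ outside a large ball and $u$ is superharmonic there; the Hadamard three-spheres theorem forces $\min_{|x|=r}u\gtrsim r^{-2}$, which is not square-integrable in $\R^4$, contradicting $u\in L^2(\R^4)$. This Liouville-type decay obstruction is the heart of the proof and is entirely absent from your proposal.

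For the radial statement you correctly sense that $p>\frac{8}{3}$ is a decay threshold, but you do not identify the mechanism. The paper invokes the Strauss radial lemma, $|u(x)|\le C|x|^{-3/2}$ for radial $H^1(\R^4)$ functions, rewrites the first equation as $-\Delta u+p(x)u=-\lambda_1 u$ with $p(x)=-\alpha_1|u|^{p-2}-\mu_1u^2-\beta v^2$, and notes that $|x|\,|p(x)|\lesssim |x|^{4-\frac{3p}{2}}+|x|^{-2}\to 0$ exactly when $p>\frac{8}{3}$; Kato's theorem on the absence of positive eigenvalues with $L^2$ eigenfunctions for $-\Delta+p(x)$ with $p(x)=o(|x|^{-1})$ then yields the contradiction, since $-\lambda_1>0$ would be such an eigenvalue. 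Without this ingredient the radial part of your argument cannot be completed.
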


Next, we consider the case $\alpha_i>0(i=1,2)$. By the $L^2$-norm preserving dilations $s\star u(x)=e^{2s}u(e^sx)$ with $s\!>\!0$, it is easy to know that
$\bar{p}:=3$ is the $L^2$-critical exponent of \eqref{eq1.1}.
We say that $(\tilde{u},\tilde{v})$ is a normalized ground state of \eqref{eq1.1}-\eqref{eq1.11} if $dI|_{S(a_1)\times S(a_2)}(\tilde{u},\tilde{v})=0$ and
$$
I(\tilde{u},\tilde{v})=\inf \big\{I(u,v): ~~dI|_{S(a_1)\times S(a_2)}(u,v)=0\ \text{and}\ (u,v)\in S(a_1)\times S(a_2)\big\}.
$$
If $(u,v) \!\in\! H^1(\mathbb{R}^{4},\R^2)$ is a weak solution of \eqref{eq1.1}, then the Pohozaev identity holds true:
\begin{equation}\label{c31}
\begin{aligned}
&P_{a_1,a_2}(u,v) =:||\nabla u||^2_2+\|\nabla v\|^2_2-\mu_1\|u\|^{4}_{4}-\mu_2\|v\|^{4}_{4}\\
&\quad\quad\quad \quad\quad\quad  -\gamma_p\alpha_1\|u\|^p_p-\gamma_p\alpha_2\|v\|^p_p-
2\beta\|uv\|^2_2
\end{aligned}
\end{equation}
for $\gamma_p=\frac{2(p-2)}{p}$. Since $2<p<4$, we have $\inf\limits_{ S(a_1)\times S(a_2)}I(u,v)=-\infty$. In spirit of Bartsch and Soave in \cite{BS}, we introduce the following Pohozaev set:
\begin{equation}\label{c1}
\mathcal{P}_{a_1,a_2}=\big\{(u,v) \in S(a_1)\times S(a_2) : P_{a_1,a_2}(u,v)=0\big\}.
\end{equation}
For $u \in S(a)$ and $s\in \mathbb{R}$, 
it follows that $s\star u \in S(a)$.
Define $s\star (u,v)=(s\star u, s\star v)$. The Pohozaev set $\mathcal{P}_{a_1,a_2}$  is quite related to the fiber maps
\begin{equation} \label{c4}
\begin{aligned}
&\Psi_{(u,v)}(s)\\
& :=I(s\star (u,v))\\
&=\frac{e^{2s}}{2}\int_{\R^4}\big(|\nabla u|^2+|\nabla v|^2\big)-\frac{e^{4s}}{4}\int_{\R^4}\big(\mu_1|u|^4+\mu_2|v|^4+2\beta|u|^{2}|v|^{2}\big)\\
&\quad\ \ -\frac{e^{p\gamma_ps}}{p}\alpha_1\int_{\R^4}|u|^p-\frac{e^{p\gamma_ps}}{p}\alpha_2\int_{\R^4}|v|^p.\\
\end{aligned}
\end{equation}
For $(u,v) \in S(a_1)\times S(a_2)$ and $s \in (0,\infty)$, we have
\begin{equation} \label{c2}
\Psi'_{u,v}(s)=P_{a_1,a_2}\big(s \star (u,v)\big),
\end{equation}
where $P_{a_1,a_2}$ is defined by \eqref{c31}.
We shall see that the critical points  of $\Psi_{u,v}(s)$ allow to project a function on $\mathcal{ P}_{a_1,a_2}$. Thus,  the monotonicity and convexity properties of $\Psi_{u,v}(s)$ strongly affect  the structure of $\mathcal{ P}_{a_1,a_2}$. 
Therefore, $\mathcal{P}_{a_1, a_2}$ can be divided into the disjoint union $\mathcal{ P}_{a_1,a_2}=\mathcal{ P}_{a_1,a_2}^+\cup \mathcal{ P}_{a_1,a_2}^0\cup \mathcal{ P}_{a_1,a_2}^-$, where
\begin{equation}\label{c41}
\begin{aligned}
	\mathcal{ P}_{a_1,a_2}^+&:=\big\{(u,v)\in \mathcal{ P}_{a_1,a_2} : \Psi_{(u,v)}''(0)>0\big\},\\
	\mathcal{ P}_{a_1,a_2}^0&:=\big\{(u,v)\in \mathcal{ P}_{a_1,a_2} : \Psi_{(u,v)}''(0)=0\big\},\\
	\mathcal{ P}_{a_1,a_2}^-&:=\big\{(u,v)\in \mathcal{ P}_{a_1,a_2} : \Psi_{(u,v)}''(0)<0\big\}.
\end{aligned}
\end{equation}
Let
\begin{equation*}
m^{\pm}(a_1,a_2)=\inf_{(u,v)\in \mathcal{P}^{\pm}_{a_1,a_2}} I(u,v).
\end{equation*}

In the spirit of Soave \cite{Soave2} and Jeanjean, Le \cite{JTT},  we see that the presence of the mass subcritical terms $\alpha_1|u|^{p-2} u, \alpha_2|v|^{p-2}v$   causes  a convex-concave geometry of $I|_{S(a_1)\times S(a_2)}$ if $\alpha_1,\alpha_2>0$ and $a_1,a_2>0$ are small.
In view of this, if $\alpha_1,\alpha_2>0$, it is natural to introduce a suitable local minimization problem:
\begin{equation}\label{b11}
\begin{aligned}
&m^+(a_1,a_2):=\inf_{(u,v)\in\mathcal{P}^+_{a_1,a_2}}I(u,v)\\
&=\inf_{(u,v)\in\mathcal{P}_{a_1,a_2}}I(u,v)=\inf_{(u,v)\in V(a_1,a_2)}I
(u,v)<0,
\end{aligned}
\end{equation}
where
$$
V(a_1,a_2):=\big\{(u,v)\in S(a_1)\times S(a_2): \big[\|\nabla u\|^2_2+\|\nabla v\|^2_2\big]^{\frac{1}{2}}<\rho_0 \big\},
$$
see Section 3 for details. 

\vskip3mm
Our main results are as follows. If $\alpha_i,\mu_i,\beta>0(i=1,2)$ and $2<p<4$, we first consider the existence and asymptotic behaviour of ground states to \eqref{eq1.1}-\eqref{eq1.11}. Let $w_p$ be the unique positive solution of 
\begin{equation}\label{g2}
-\Delta w + w = |w|^{p-2}w \ \ \text{in}\ \R^4.
\end{equation}
For $2<p<4$, the Gagliardo-Nirenberg inequality (see \cite{WeM}) is
\begin{equation}\label{b2}
\|u\|_p\leq  \mathcal{C}_p\|\nabla u\|_2^{\gamma_p}\|u\|_2^{1-\gamma_p},\quad \forall u\in H^1(\R^4),
\end{equation}
where $\mathcal{C}_p>0$ is a constant and $\gamma_p=\frac{2(p-2)}{p}$.
Define
\begin{equation}\label{r1}
T(a_1,a_2):=\alpha_1a_1^{4-p}+\alpha_2a_2^{4-p}\quad
\end{equation}
and
\begin{equation}\label{r1-zwm}
\gamma_1\!:=\!\frac{p}{2(4-p)|\mathcal{C}_p|^{p}}
\Big[\!\frac{2(3\!-\!p)S^{2}_{\mu_1,\mu_2, \beta}}{4\!-\!p}\Big]^{3-p},
\end{equation}
where
\begin{equation}\label{f4}
\mathcal{S}_{\mu_1,\mu_2, \beta}:=\inf _{(u, v) \in\left[D^{1,2}\left(\mathbb{R}^{4}\right)\right]^{2} \backslash\{(0,0)\}} \frac{\int_{\mathbb{R}^{4}}\left(|\nabla u|^{2}+|\nabla v|^{2}\right)}{\left[\int_{\mathbb{R}^{4}}\left(\mu_1|u|^{4}+\mu_2|v|^{4}+2\beta|u|^{2}|v|^{2}\right)\right]^{\frac{1}{2}}}
.
\end{equation}

To state  the next theorem, we  use the notation  $c\sim d$ which means that $C'd\le c \le Cd$ and $c\lesssim d$ means $c\le C d$.  We have
\begin{Thm}\label{th1.1}
Let $\mu_i,\alpha_i,a_i>0(i=1,2)$, $\beta>0$ and $2<p<3$, and $T(a_1,a_2)\le\gamma_1$.
\begin{enumerate}
\item
Then \eqref{eq1.1}-\eqref{eq1.11} has a positive ground state  $(u_{a_1},v_{a_2})$, which is a local minimizer of $I$ on $V(a_1,a_2)$.
\item For any ground state $(u_{a_1},v_{a_2})\in V(a_1,a_2)$, we have
\begin{equation*}
\Big(\big(\frac{L_1}{\alpha_1}\big)^{-\frac{1}{p-2}}u_{a_1}(L_1^{-\frac{1}{2}}x), \big(\frac{L_2}{\alpha_2}\big)^{-\frac{1}{p-2}}v_{a_2}(L_2^{-\frac{1}{2}}x)\Big)\to (w_p,w_p)
 \ \  \text{in}\ H^{1}(\R^4,\R^2),
\end{equation*}
as $(a_1,a_2)\to (0,0)$ and $a_1\sim a_2$, where $L_1=\big(\frac{a^2_1}{\|w_{p}\|^2_2}\alpha_1^{\frac{2}{p-2}} \big)^{\frac{p-2}{6-2p}}$ and $L_2=\big(\frac{a^2_2}{\|w_{p}\|^2_2}\alpha_2^{\frac{2}{p-2}} \big)^{\frac{p-2}{6-2p}}$.
\end{enumerate}
\end{Thm}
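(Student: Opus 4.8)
The plan is to prove the two assertions in turn, using throughout the convex--concave description of $I|_{S(a_1)\times S(a_2)}$ encoded in \eqref{b11} and the Pohozaev constraint \eqref{c31}. For (1) I would start from a minimizing sequence $(u_n,v_n)\subset V(a_1,a_2)$ for $m^+(a_1,a_2)$; it is bounded in $H^1(\R^4,\R^2)$ by the definition of $V(a_1,a_2)$, and since the Schwarz symmetrization preserves the $L^q$--norms, does not increase the Dirichlet energy and does not decrease $\int_{\R^4}|u|^2|v|^2$ (Riesz rearrangement), I may assume $(u_n,v_n)$ radial and non--increasing. Along a subsequence $(u_n,v_n)\rightharpoonup(u_0,v_0)$ in $H^1$; the compact embedding $H^1_{\mathrm{rad}}(\R^4)\hookrightarrow L^p(\R^4)$ ($2<p<4$) handles the $L^p$--terms, and the Brezis--Lieb lemma (applied to the quartic terms, the coupling term, the $L^2$--norms and the Dirichlet energies) gives, with $(u_n,v_n)=(u_0,v_0)+(w_n,z_n)$,
\[
m^+(a_1,a_2)=I(u_0,v_0)+\Big[\tfrac12\|\nabla(w_n,z_n)\|_2^2-\tfrac14\big(\mu_1\|w_n\|_4^4+\mu_2\|z_n\|_4^4+2\beta\|w_nz_n\|_2^2\big)\Big]+o(1).
\]
By the choice of $\rho_0$ in Section 3 the bracket is $\ge0$ along the sequence (one stays below the level at which $\tfrac12 t-\tfrac14\mathcal S_{\mu_1,\mu_2,\beta}^{-2}t^2$ vanishes) and $I\ge0$ on $\{(u,v)\in S(a_1)\times S(a_2):\|\nabla(u,v)\|_2=\rho_0\}$, so $I(u_0,v_0)\le m^+(a_1,a_2)<0$ and $\|\nabla(u_0,v_0)\|_2<\rho_0$. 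To upgrade to strong convergence I would rule out $L^2$--mass escaping to infinity via the strict monotonicity $m^+(b_1,b_2)>m^+(a_1,a_2)$ for $0<b_i\le a_i$, $(b_1,b_2)\ne(a_1,a_2)$ (a scaling consequence of $\alpha_i>0$): otherwise $m^+(a_1,a_2)=I(u_0,v_0)+(\text{nonnegative})\ge m^+(\|u_0\|_2,\|v_0\|_2)>m^+(a_1,a_2)$. Hence $(u_n,v_n)\to(u_0,v_0)$ in $H^1$, $(u_0,v_0)\in V(a_1,a_2)$ attains $m^+(a_1,a_2)$, and being an interior minimizer on the $C^1$--manifold $S(a_1)\times S(a_2)$ it solves \eqref{eq1.1} with multipliers $\lambda_1,\lambda_2$. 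Replacing $(u_0,v_0)$ by $(|u_0|,|v_0|)$ (same energy, still in $V(a_1,a_2)$) and using elliptic regularity and the strong maximum principle yields $u_0,v_0>0$. Finally, every constrained critical point satisfies \eqref{c31}, hence lies in $\mathcal P_{a_1,a_2}$, and $I(u_0,v_0)=m^+(a_1,a_2)=\inf_{\mathcal P_{a_1,a_2}}I$ by \eqref{b11}, so $(u_0,v_0)$ is a ground state.

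For (2), with $L_i$ as in the statement I would set $\bar u:=(L_1/\alpha_1)^{-\frac1{p-2}}u_{a_1}(L_1^{-1/2}\cdot)$ and $\bar v:=(L_2/\alpha_2)^{-\frac1{p-2}}v_{a_2}(L_2^{-1/2}\cdot)$; the choice of $L_i$ forces $\|\bar u\|_2^2=\|\bar v\|_2^2=\|w_p\|_2^2$. An a priori bound comes first: combining $I(u_{a_1},v_{a_2})=m^+(a_1,a_2)<0$ with \eqref{c31} gives $\|\nabla(u_{a_1},v_{a_2})\|_2^2<\tfrac{2(4-p)}{p}\big(\alpha_1\|u_{a_1}\|_p^p+\alpha_2\|v_{a_2}\|_p^p\big)$, whence by \eqref{b2} $\|\nabla(u_{a_1},v_{a_2})\|_2^{2(3-p)}\lesssim T(a_1,a_2)$; after rescaling and using $a_1\sim a_2$, this shows $(\bar u,\bar v)$ is bounded in $H^1(\R^4,\R^2)$, radial, non--increasing and positive, and testing the rescaled equations against $\bar u,\bar v$ shows $\bar\lambda_i:=\lambda_i/L_i$ is bounded. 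Setting $c_i:=L_i^{\frac{4-p}{p-2}}\alpha_i^{-\frac2{p-2}}$ (so $c_i\sim a_i^{\frac{4-p}{3-p}}$, hence $c_1\sim c_2$), a change of variables yields
\[
I(u_{a_1},v_{a_2})=c_1\Big[\tfrac12\|\nabla\bar u\|_2^2-\tfrac1p\|\bar u\|_p^p\Big]+c_2\Big[\tfrac12\|\nabla\bar v\|_2^2-\tfrac1p\|\bar v\|_p^p\Big]+O\big(c_1^2+c_2^2\big),
\]
the quartic and coupling contributions being $O(c_i^2)=o(c_i)$ thanks to the a priori bound.

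It remains to identify the limit. Projecting the rescaled copies of $w_p$ onto $\mathcal P^+_{a_1,a_2}$ along the fiber map gives the upper bound $m^+(a_1,a_2)\le(c_1+c_2)\mathfrak m+o(c_1+c_2)$, where $\mathfrak m:=\inf\big\{\tfrac12\|\nabla u\|_2^2-\tfrac1p\|u\|_p^p:u\in S(\|w_p\|_2)\big\}<0$ is the ($L^2$--subcritical) model level, attained precisely at $w_p$ by \eqref{g2} and scaling; on the other hand each bracket above is $\ge\mathfrak m$ because both rescaled components have $L^2$--mass $\|w_p\|_2^2$, so $m^+(a_1,a_2)\ge(c_1+c_2)\mathfrak m+o(c_1+c_2)$. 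Comparing (and using $c_1\sim c_2$) forces $\tfrac12\|\nabla\bar u\|_2^2-\tfrac1p\|\bar u\|_p^p\to\mathfrak m$ and likewise for $\bar v$, so $\bar u$ and $\bar v$ are minimizing sequences for $\mathfrak m$. By the strict monotonicity of $b\mapsto\inf\big\{\tfrac12\|\nabla u\|_2^2-\tfrac1p\|u\|_p^p:u\in S(b)\big\}$ (which prevents loss of $L^2$--mass) together with radial compactness in $L^p$, they converge strongly in $H^1(\R^4)$, and by the uniqueness statement in \eqref{g2} and the mass normalization the common limit is $w_p$. Since this limit is independent of the subsequence, the full families converge, which is the assertion.

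The main obstacle, in part (1), is recovering compactness against the Sobolev--critical quartic terms: one must simultaneously exclude bubbling --- handled because $m^+(a_1,a_2)<0$ lies strictly below the critical level attached to $\mathcal S_{\mu_1,\mu_2,\beta}$, via the choice of $\rho_0$ --- and $L^2$--mass escaping to infinity --- handled by the strict monotonicity of $m^+$ in the masses. In part (2) the crux is the two--sided energy expansion: one has to verify that both the quartic term and the $\beta$--coupling term are genuinely of lower order than $c_1+c_2$, which is exactly what the bound $\|\nabla(u_{a_1},v_{a_2})\|_2^{2(3-p)}\lesssim T(a_1,a_2)$ supplies.
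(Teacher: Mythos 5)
Your part (1) follows the paper's overall strategy (radial minimizing sequence in $V(a_1,a_2)$, compact embedding $H^1_r\hookrightarrow L^p$, Br\'ezis--Lieb splitting, and the observation that $\rho_0^2<2\mathcal S^2_{\mu_1,\mu_2,\beta}$ keeps the critical remainder nonnegative), but the step that actually closes the compactness argument is a genuine gap. You exclude loss of $L^2$-mass by invoking the strict monotonicity $m^+(b_1,b_2)>m^+(a_1,a_2)$ for $0<b_i\le a_i$, $(b_1,b_2)\neq(a_1,a_2)$, calling it ``a scaling consequence of $\alpha_i>0$''. For the scalar equation this is known (Lemma \ref{lem2.1}), but for the system it is not routine: the two components must be rescaled by \emph{different} factors to adjust the two masses independently, and neither the pointwise multiplication $u\mapsto (a_1/b_1)u$ (which can push the pair out of $B_{\rho_0}$ when one ratio $a_i/b_i$ is large, after which $I(\tilde u,\tilde v)<0$ no longer says anything about the \emph{local} minimum level) nor the dilation $u\mapsto \kappa u(\theta\cdot)$ (which degrades the $L^p$-term and only yields non-strict subadditivity, i.e.\ ``mass added at infinity'') produces the strict inequality; moreover the coupling term $\int|u|^2|v|^2$ does not transform cleanly under component-wise dilations. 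The paper avoids this entirely: in Lemma \ref{lem3.4} it shows $\lambda_1,\lambda_2>0$ (Lemma \ref{lem2.11}) and combines the Pohozaev identities of the limit and of the sequence to get $\lambda_1a_1^2+\lambda_2a_2^2=\lambda_1\|u\|_2^2+\lambda_2\|v\|_2^2$, which forces $\|u\|_2=a_1$, $\|v\|_2=a_2$ with no monotonicity needed. You should also note that your monotonicity claim, stated only for $b_i>0$, does not cover the semitrivial alternatives $u_0\neq0,\ v_0=0$ (and vice versa); the paper rules these out separately via the strict inequality $m^+(a_1,a_2)<\min\{m^+(a_1,0),m^+(0,a_2)\}$ of Lemma \ref{lem3.3}, which you would need to import.

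Your part (2) is a legitimately different route from the paper's and, as far as I can check, correct. The paper (Lemma \ref{lem3.8}) first pins down the rates $\lambda_{i,k}\sim a_{i,k}^{(p-2)/(3-p)}$ of the Lagrange multipliers, rescales, passes to the limit in the PDE to reach the decoupled system $-\Delta u+\lambda_i^*u=|u|^{p-2}u$, and identifies $\lambda_i^*=1$ from mass conservation and the uniqueness of $w_p$. You instead work purely at the energy level: the a priori bound $\|\nabla(u_{a_1},v_{a_2})\|_2^{2(3-p)}\lesssim T(a_1,a_2)$ (which does follow from $I<0$ and $P_{a_1,a_2}=0$), the two-sided expansion $m^+(a_1,a_2)=(c_1+c_2)\mathfrak m+o(c_1+c_2)$ with the quartic and coupling terms of order $c_i^2$, and the conclusion that $\bar u,\bar v$ are minimizing sequences for the subcritical model level, whose unique radial positive minimizer on $S(\|w_p\|_2)$ is $w_p$ (consistent with \eqref{z6} and Corollary \ref{cor4.1}, whose explicit formula also supplies the strict monotonicity of the scalar level needed to prevent mass loss in the model problem). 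This buys a cleaner identification of the limit without analyzing the multipliers, at the price of needing $c_1\sim c_2$ (your $a_1\sim a_2$ hypothesis) to split the combined error between the two brackets.
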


\vskip3mm

Next, for $2<p<3$, we study the existence and asymptotic behaviour of   the  second solution of \eqref{eq1.1}-\eqref{eq1.11}.
Denote
$$\mathcal{S}=\inf _{u \in {D}^{1,2}(\mathbb{R}^{4})\setminus \{0\} }    \frac{\left\| \nabla u\right\|_2^{2}}{||u||_{4}^{2}}.$$
From \cite{GT}, we know that $\mathcal{S}$ is attained by the Aubin-Talanti bubbles
\begin{align}\label{L1}
U_{\varepsilon}(x) :=\frac{2\sqrt{2}\varepsilon}{\varepsilon^2+|x|^2}, \quad \varepsilon > 0, \quad x\in\R^4.
\end{align}
Then $U_\varepsilon$ satisfies $-\Delta u=u^3$ and $\int_{\R^4}|\nabla U_\varepsilon|^2=\int_{\R^4}|U_\varepsilon|^4=\mathcal{S}^2$.
On the other hand, if $0<\beta<\min\{\mu_1,\mu_2\}$ or $\beta>\max\{\mu_1,\mu_2\}$ (see Lemma \ref{lem2.2}),  the pair $$\Big(\sqrt{\frac{\beta-\mu_2}{\beta^2-\mu_1\mu_2}}U_{\varepsilon},\sqrt{\frac{\beta-\mu_1}{\beta^2-\mu_1\mu_2}}U_{\varepsilon}\Big)$$  is the least energy solutions to the following elliptic system:
\begin{equation}\label{a1}
\begin{cases}
-\Delta u=\mu_1u^3+\beta v^{2} u, & x \in \mathbb{R}^{4}, \\
-\Delta v=\mu_2v^3+\beta u^{2} v, & x \in \mathbb{R}^{4}, \\
u,v \in D^{1,2}(\R^4),
\end{cases}
\end{equation}
we refer to Section 2 for more  details.

\vskip4mm

\begin{Thm}\label{th1.2}
Let $\mu_i,\alpha_i,a_i>0(i=1,2)$, $\beta\in\big(0, \min\{\mu_1,\mu_2\}\big)\cup \big(\max\{\mu_1,\mu_2\},\infty\big)$, $2<p<3$, and $T(a_1,a_2)\le\gamma_{1}$.
\begin{enumerate}
\item  Then there exists a positive mountain pass type solution
$(u^-_{a_1},v^-_{a_2})$ to \eqref{eq1.1}-\eqref{eq1.11}, and

\begin{equation*}
I(u^-_{a_1},v^-_{a_2})=m^-(a_1,a_2)\to\frac{2\beta-\mu_1-\mu_2}{4(\beta^2-\mu_1\mu_2)}\mathcal{S}^2\ \ \text{as} \ \ (a_1,a_2)\to (0,0).
\end{equation*}
\item  Moreover, there exists $\varepsilon_{a_1,a_2}>0$ such that
\begin{equation*}
\big(\varepsilon_{a_1,a_2} u^-_{a_1}(\varepsilon_{a_1,a_2} x),\varepsilon_{a_1,a_2} v^-_{a_2}(\varepsilon_{a_1,a_2} x)\big)
\end{equation*}

\begin{equation*}\to \Big(\sqrt{\frac{\beta-\mu_2}{\beta^2-\mu_1\mu_2}}U_{\varepsilon_0},\sqrt{\frac{\beta-\mu_1}{\beta^2-\mu_1\mu_2}}U_{\varepsilon_0}\Big)
\end{equation*}
in $D^{1,2}(\R^4,\R^2)$, for some $\varepsilon_0>0$ as $(a_1,a_2)\to (0,0)$ and $a_1\sim a_2$ up to a subsequence.
\end{enumerate}
\end{Thm}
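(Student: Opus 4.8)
The plan is to construct the mountain-pass solution $(u^-_{a_1},v^-_{a_2})$ via a minimax over the Pohozaev-type geometry established in Section 3, then identify the limiting energy level and the blow-up profile as the masses vanish.

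\textbf{Step 1: Existence of the mountain-pass critical point.}
Following the splitting $\mathcal{P}_{a_1,a_2}=\mathcal{P}^+_{a_1,a_2}\cup\mathcal{P}^0_{a_1,a_2}\cup\mathcal{P}^-_{a_1,a_2}$ and the fiber-map analysis of $\Psi_{(u,v)}(s)$, I would show that under $T(a_1,a_2)\le\gamma_1$ the set $\mathcal{P}^0_{a_1,a_2}$ is empty, that every $(u,v)\in S(a_1)\times S(a_2)$ has exactly two critical points of $\Psi_{(u,v)}$ — a local minimum $s^+(u,v)$ landing on $\mathcal{P}^+$ and a strict maximum $s^-(u,v)$ landing on $\mathcal{P}^-$ — and that $m^-(a_1,a_2)=\inf_{\mathcal{P}^-}I$ is a mountain-pass value: concretely, for the augmented functional one takes paths from a neighborhood of the local minimizer (at level $m^+<0$) out to a point where $\Psi\to-\infty$, and $m^-$ equals the $\inf\!-\!\sup$ over such paths. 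A Palais-Smale sequence at level $m^-(a_1,a_2)$ is produced by Ekeland's variational principle on $\mathcal{P}^-$ (or via the Ghoussoub minimax machinery with the additional Pohozaev constraint, as in Bartsch--Soave~\cite{BS}, Jeanjean--Le~\cite{JTT}); it can be taken to satisfy $P_{a_1,a_2}(u_n,v_n)\to0$ and, after passing to radially symmetric decreasing rearrangements and using the Schwarz symmetrization (here the sign conditions $\alpha_i>0$ matter), to be bounded in $H^1_{rad}(\R^4,\R^2)$. The Lagrange multipliers $\lambda_{1,n},\lambda_{2,n}$ are controlled from the equations and the Pohozaev identity \eqref{c31}; the key point is to rule out vanishing and to show the weak limit is nontrivial in \emph{both} components, which is where the energy bound $m^-(a_1,a_2)<\tfrac{2\beta-\mu_1-\mu_2}{4(\beta^2-\mu_1\mu_2)}\mathcal{S}^2$ (the least energy of the limit system \eqref{a1}) is used to exclude the Sobolev-critical concentration/dichotomy, exactly the threshold identified in Lemma~\ref{lem2.2}. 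Positivity of $(u^-_{a_1},v^-_{a_2})$ follows from the rearrangement, the maximum principle, and $\beta>0$.

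\textbf{Step 2: The energy level as $(a_1,a_2)\to(0,0)$.}
For the upper bound on $m^-(a_1,a_2)$ I would use as competitors the rescaled bubbles: plug $\big(t\,\sqrt{\tfrac{\beta-\mu_2}{\beta^2-\mu_1\mu_2}}\,U_\varepsilon\eta,\ t\,\sqrt{\tfrac{\beta-\mu_1}{\beta^2-\mu_1\mu_2}}\,U_\varepsilon\eta\big)$ (with a cutoff $\eta$, then dilated by $\star$ and scaled to land on $S(a_1)\times S(a_2)$) into $\sup_{s}\Psi$, and estimate using the standard expansions $\int|\nabla(U_\varepsilon\eta)|^2=\mathcal{S}^2+O(\varepsilon^2)$, $\int(U_\varepsilon\eta)^4=\mathcal{S}^2+O(\varepsilon^4)$, together with the fact that the subcritical term contributes $O(T(a_1,a_2))\to0$; optimizing in $\varepsilon$ and $t$ gives $m^-(a_1,a_2)\le\tfrac{2\beta-\mu_1-\mu_2}{4(\beta^2-\mu_1\mu_2)}\mathcal{S}^2+o(1)$. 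For the matching lower bound, apply the profile-decomposition / concentration-compactness to the solutions $(u^-_{a_1},v^-_{a_2})$ themselves as $(a_1,a_2)\to(0,0)$: the $L^2$-masses go to zero, so the $H^1$ and $D^{1,2}$ norms separate, the subcritical and linear terms become negligible, and any nontrivial weak limit of an appropriate rescaling solves \eqref{a1}, whose least energy is precisely $\tfrac{2\beta-\mu_1-\mu_2}{4(\beta^2-\mu_1\mu_2)}\mathcal{S}^2$; hence $\liminf m^-(a_1,a_2)\ge$ that value, giving the claimed limit.

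\textbf{Step 3: The blow-up profile (part 2).}
With $a_1\sim a_2$, define $\varepsilon_{a_1,a_2}$ so that the rescaled pair $\big(\varepsilon_{a_1,a_2}u^-_{a_1}(\varepsilon_{a_1,a_2}\,\cdot),\ \varepsilon_{a_1,a_2}v^-_{a_2}(\varepsilon_{a_1,a_2}\,\cdot)\big)$ is $D^{1,2}$-normalized (e.g. fixing the $L^4$ or $\dot H^1$ scale of the first component). The sequence is then bounded in $D^{1,2}(\R^4,\R^2)$; its weak limit $(U,V)$ is, by Step 1's exclusion of splitting and by the energy identity $I\to$ least energy of \eqref{a1}, a nonnegative least-energy solution of \eqref{a1}, hence equals $\big(\sqrt{\tfrac{\beta-\mu_2}{\beta^2-\mu_1\mu_2}}U_{\varepsilon_0},\sqrt{\tfrac{\beta-\mu_1}{\beta^2-\mu_1\mu_2}}U_{\varepsilon_0}\big)$ for some $\varepsilon_0>0$ by the classification in Lemma~\ref{lem2.2} and translation invariance (after recentering). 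Convergence upgrades from weak to strong in $D^{1,2}$ because the $D^{1,2}$-norms of the rescalings converge to that of the limit (norms converge plus weak convergence in a Hilbert space $\Rightarrow$ strong).

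\textbf{Main obstacle.}
The crux is Step 1's compactness: because of the \emph{doubly} Sobolev-critical nonlinearity and the coupling term $\beta u^2v^2$ (also critical), the Palais--Smale sequence can lose mass into a bubble of the limit system \eqref{a1}, and one must show the mountain-pass level $m^-(a_1,a_2)$ stays strictly below the threshold $\tfrac{2\beta-\mu_1-\mu_2}{4(\beta^2-\mu_1\mu_2)}\mathcal{S}^2$ for the given range of parameters — the competitor estimate in Step 2 must beat this level for \emph{all} small $a_1,a_2$ with $T(a_1,a_2)\le\gamma_1$, and simultaneously one needs both components of the weak limit to survive (not just one), which forces the use of the restriction $\beta\in(0,\min\{\mu_i\})\cup(\max\{\mu_i\},\infty)$ so that the limit system genuinely has a positive vector least-energy solution rather than degenerating to a scalar one. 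Handling the interplay between the two critical scales (the $L^2$-mass scale, which shrinks, and the $\dot H^1$/Sobolev scale, on which the bubble lives) in a uniform way as $(a_1,a_2)\to(0,0)$ is the delicate technical heart of the argument.
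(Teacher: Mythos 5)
There is a genuine gap in Step 1/Step 2, and it concerns the compactness threshold. You propose to exclude loss of compactness by showing $m^-(a_1,a_2)<\frac{k_1+k_2}{4}\mathcal{S}^2$ (with $k_1=\frac{\beta-\mu_2}{\beta^2-\mu_1\mu_2}$, $k_2=\frac{\beta-\mu_1}{\beta^2-\mu_1\mu_2}$), obtained by testing with a truncated bubble pair alone. This bound is not strong enough. In the decisive step of the compactness argument the weak limit $(u,v)$ of the (symmetrized) minimizing sequence on $\mathcal{P}^-_{a_1,a_2}$ is already nontrivial in both components and lies on $\mathcal{P}_{a_1,a_2}$, hence satisfies only $I(u,v)\ge m^+(a_1,a_2)$, where $m^+(a_1,a_2)<0$ is the local-minimizer (ground state) level from Theorem \ref{th1.1}; if a critical bubble splits off, the total energy is at least $m^+(a_1,a_2)+\frac{k_1+k_2}{4}\mathcal{S}^2$, which is \emph{strictly below} $\frac{k_1+k_2}{4}\mathcal{S}^2$. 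So the inequality you prove does not forbid this scenario. The paper's Lemma \ref{lem4.1} establishes the sharper bound $m^-(a_1,a_2)<m^+(a_1,a_2)+\frac{k_1+k_2}{4}\mathcal{S}^2$, and to do so it does not test with a pure bubble but with the ground state translated by a bubble, $(\bar u+t_1W_\varepsilon,\ \bar v+t_2W_\varepsilon)$ projected back onto $S(a_1)\times S(a_2)$ and onto $\mathcal{P}^-_{a_1,a_2}$; the expansion then exploits the equation satisfied by $(\bar u,\bar v)$ and the interaction terms $\int\bar u W_\varepsilon^3=O(\varepsilon)$ to beat the threshold. This Brezis--Nirenberg-type construction is the key idea your proposal is missing. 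A second, related omission: to prevent the weak limit from being semitrivial you also need the energy comparison $m^-(a_1,a_2)<\min\{m^-(a_1,0),m^-(0,a_2)\}$ with the scalar mountain-pass levels (the paper's Lemma \ref{lem4.21}); the classification of the limit system \eqref{a1} alone does not rule out the case $u\neq0$, $v=0$ in the dichotomy, because in that case the limit object solves the scalar problem \eqref{a2}, not \eqref{a1}.

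Your Step 2 lower bound and Step 3 are essentially in the spirit of the paper (the limit energy $\frac{k_1+k_2}{4}\mathcal{S}^2$ is obtained by showing the gradients cannot vanish on $\mathcal{P}^-_{a_1,a_2}$ and the subcritical terms are $o(1)$ by Gagliardo--Nirenberg as $a_i\to0$, and the profile is identified via the minimizing property for $\mathcal{S}_{\mu_1,\mu_2,\beta}$ together with Lemma \ref{lem2.2}), although the paper additionally pins down the scale $\varepsilon_{a_1,a_2}$ through uniform pointwise decay estimates $u_1(r)\lesssim(1+r^2)^{-1}$ rather than simply taking the concentration scale. But without repairing the threshold and the scalar comparison, the existence part of the theorem is not established.
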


\vskip1mm

\begin{remark}
Theorems \ref{th1.1} and \ref{th1.2} indicate that when the lower power perturbation terms are mass subcritical with  a product of the perturbation coefficients and masses being controlled from above, problem \eqref{eq1.1} possesses at least two normalized solutions, one ground state and one excited state (whose energy is strictly larger than that of ground state). Furthermore, by making appropriate scalings, two components of the ground state both converge to the unique positive solution of $-\Delta w + w = |w|^{p-2}w$; two components of the excited state both converge to the Aubin-Talanti bubble in related Sobolev spaces, as the masses of two components vanish at the same rate. As far as we know, this is the first result on the  multiplicity as well as detailed synchronized mass collapse behavior of normalized solutions to Schr\"{o}dinger systems with Sobolev critical exponent. The condition $T(a_1,a_2)\le\gamma_1$ in Theorems \ref{th1.1} and \ref{th1.2} not only ensures that the corresponding energy functional $I$ admits a convex-concave geometry, but also guarantees that the Pohozaev manifold $\mathcal{P}_{a_1,a_2}$ is a natural constraint, on which the critical points of $I$ are indeed normalized solutions to problem \eqref{eq1.1}. The value range of coupling coefficient $\beta$ in Theorem \ref{th1.2} (1) is used to prove
$$m^-(a_1,a_2)<\frac{k_1+k_2}{4}\mathcal{S}^2+m^+(a_1,a_2)$$
 i.e.,
the mountain pass energy level is less than the usual critical threshold plus the ground state energy, and thus ensures the compactness of PS sequence and a mountain pass type solution follows.
The condition $\beta\in \big(0,\min\{\mu_1,\mu_2\}\big)\cup\big(\max\{\mu_1,\mu_2\},\infty\big)$ in Theorem \ref{th1.2} (2) also guarantees that the corresponding limiting system \eqref{a1} has a unique ground state solution up to translation and dilation. With the help of this uniqueness result, precise synchronized mass collapse behavior of the mountain pass type solution can be obtained.
\end{remark}

\vskip3mm

If $p=3$, $w_3$ is the unique positive solution of \eqref{g2}, then we get the following existence and non-existence results.

\begin{Thm}\label{th1.4}
Let $\mu_i,\alpha_i,a_i>0(i=1,2)$, $p=3$, there exists $\beta_0>0$ such that  if $\beta>\beta_0$ and $\beta\in \big(0,\min\{\mu_1,\mu_2\}\big)\cup\big(\max\{\mu_1,\mu_2\},\infty\big)$, then
the following conclusions hold.
\begin{enumerate}
\item
If $0<\alpha_ia_i<\|w_3\|_2(i=1,2)$,
then \eqref{eq1.1}-\eqref{eq1.11} has a positive ground state solution. However, for $\alpha_1a_1\ge\|w_3\|_2$ and $\alpha_2a_2\ge\|w_3\|_2$, then \eqref{eq1.1}-\eqref{eq1.11} has no ground states.

\item
If $0<\alpha_ia_i<\|w_3\|_2(i=1,2)$,  the for any ground state $(u^-_{a_1},v^-_{a_2})$ of \eqref{eq1.1}-\eqref{eq1.11}, there exist $\nu_1,\nu_2>0$ such that
\begin{equation*}
\Big(\big(\frac{a_1}{\|w_3\|_2} \big)r^{2}_1u^-_{a_1}(\frac{a_1}{\|w_3\|_2}r_1x), \big(\frac{a_2}{\|w_3\|_2} \big)r^{2}_2v^-_{a_2}(\frac{a_2}{\|w_3\|_2}r_2x)\Big)
\end{equation*}
\begin{equation*}
\to \big(\nu_1w_3(\sqrt{\nu_1}x), \nu_2w_3(\sqrt{\nu_2}x)\big),
\end{equation*}
in $H^1(\R^4,\R^2)$ as $(a_1,a_2)\to (\frac{\|w_3\|_2}{\alpha_1},\frac{\|w_3\|_2}{\alpha_2})$ and $\Big(1-\frac{\alpha_1a_1}{\|w_3\|_2}\Big)\sim \Big(1-\frac{\alpha_2a_2}{\|w_3\|_2}\Big)$, up to a subsequence, where
$r_1=\big(1-\frac{\alpha_1a_1}{\|w_3\|_2}\big)^{-\frac{1}{2}}$ and $r_2=\big(1-\frac{\alpha_2a_2}{\|w_3\|_2}\big)^{-\frac{1}{2}}$.
\item
Moreover, there exists $\varepsilon_{a_1,a_2}>0$ such that
\begin{equation*}
\big(\varepsilon_{a_1,a_2} u^-_{a_1}(\varepsilon_{a_1,a_2} x),\varepsilon_{a_1,a_2} v^-_{a_2}(\varepsilon_{a_1,a_2} x)\big)
\end{equation*}
\begin{equation*}
\to \Big(\sqrt{\frac{\beta-\mu_2}{\beta^2-\mu_1\mu_2}}U_{\varepsilon_0},\sqrt{\frac{\beta-\mu_1}{\beta^2-\mu_1\mu_2}}U_{\varepsilon_0}\Big)
\end{equation*}
in $D^{1,2}(\R^4,\R^2)$, for some $\varepsilon_0>0$ as $(a_1,a_2)\to (0,0)$ and $a_1\sim a_2$, up to a subsequence.
\end{enumerate}
\end{Thm}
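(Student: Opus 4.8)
\textbf{Proof proposal for Theorem \ref{th1.4}.}

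The plan is to treat the $p=3$ case as the $L^2$-critical endpoint of the analysis developed for $2<p<3$, following the same Pohozaev-manifold strategy that underlies Theorems \ref{th1.1} and \ref{th1.2}, but paying attention to the fact that at $p=3$ the mass-subcritical perturbation becomes mass-critical, so the relevant smallness condition on $T(a_1,a_2)$ degenerates into the explicit condition $\alpha_i a_i < \|w_3\|_2$. First I would analyze the fiber map $\Psi_{(u,v)}(s)$ from \eqref{c4}: with $\gamma_p=\frac{2}{3}$ at $p=3$, the perturbation term scales like $e^{2s}$, exactly as the kinetic term, so $\Psi_{(u,v)}'(s)$ reduces to a competition between an $e^{2s}$-coefficient and an $e^{4s}$-coefficient. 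The $e^{2s}$-coefficient is $\|\nabla u\|_2^2+\|\nabla v\|_2^2 - \gamma_3(\alpha_1\|u\|_3^3+\alpha_2\|v\|_3^3)$, and using the Gagliardo-Nirenberg inequality \eqref{b2} with $\gamma_3=\frac{2}{3}$ one shows this coefficient is positive for all admissible $(u,v)$ precisely when $\alpha_i a_i<\|w_3\|_2$ (here $\|w_3\|_2$ enters through the optimal constant $\mathcal{C}_3$, since $\mathcal{C}_3^3$ is tied to $\|w_3\|_2$). Hence under the hypothesis of item (1), $\Psi_{(u,v)}$ is strictly increasing near $-\infty$ and eventually strictly decreasing (the $e^{4s}$ term dominates because $\mu_i,\beta>0$), so it has a unique critical point, necessarily a maximum; therefore $\mathcal{P}_{a_1,a_2}=\mathcal{P}_{a_1,a_2}^-$ and the only candidate is a mountain-pass/constrained-maximum level, which is why the ground state here is of mountain-pass type rather than a local minimizer. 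When $\alpha_1 a_1\ge\|w_3\|_2$ and $\alpha_2 a_2\ge\|w_3\|_2$, the $e^{2s}$-coefficient can be made $\le 0$, $\Psi_{(u,v)}$ is then monotone decreasing for such test functions, $\mathcal{P}_{a_1,a_2}$ fails to capture a critical level, and one concludes non-existence of ground states by showing any putative solution would violate the Pohozaev identity \eqref{c31} combined with the sharp Gagliardo-Nirenberg bound --- this is the non-existence half of item (1).

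For the existence half of item (1), I would set $m^-(a_1,a_2)=\inf_{\mathcal{P}_{a_1,a_2}^-}I$ and run the standard normalized-solution machinery: show $\mathcal{P}_{a_1,a_2}^-$ is a natural constraint (using that $\mathcal{P}_{a_1,a_2}^0=\emptyset$ under $\alpha_i a_i<\|w_3\|_2$, which follows from the strict sign of the $e^{2s}$-coefficient just established), construct a Palais-Smale sequence at level $m^-$ via the minimax class built from the fiber map (as in \cite{BS, Soave2, JTT}), upgrade it to a PS sequence for $I$ on $S(a_1)\times S(a_2)$ using the Ghoussoub/Jeanjean min-max-with-extra-derivative device, and then prove compactness. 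Compactness is where the condition ``$\beta>\beta_0$ and $\beta\in(0,\min\{\mu_1,\mu_2\})\cup(\max\{\mu_1,\mu_2\},\infty)$'' is consumed: one must rule out the vanishing of the profile into the Aubin-Talanti bubble by showing the strict inequality $m^-(a_1,a_2)<\frac{k_1+k_2}{4}\mathcal{S}^2$ (with $k_i$ the coefficients from \eqref{a1}), or more precisely the system-level threshold $\frac{2\beta-\mu_1-\mu_2}{4(\beta^2-\mu_1\mu_2)}\mathcal{S}^2$ coming from Lemma \ref{lem2.2}; the role of $\beta_0$ is to guarantee that the mass-critical perturbation does not push this level up past the critical threshold, and here one tests with the dilated bubbles $U_\varepsilon$ cut off and $L^2$-normalized, expanding $I$ and $P_{a_1,a_2}$ in $\varepsilon$ and checking the sign of the leading correction. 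Positivity of the solution follows from working with $|u|,|v|$ and the strong maximum principle, as usual.

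For items (2) and (3) I would argue by blow-up/concentration-compactness on the family of ground states. For item (3), the masses go to zero, so after the scaling $w\mapsto\varepsilon_{a_1,a_2}w(\varepsilon_{a_1,a_2}\cdot)$ with $\varepsilon_{a_1,a_2}$ chosen to normalize the gradient, the perturbation terms (being lower order in this regime) disappear in the limit, the constraint terms become negligible relative to the $D^{1,2}$ norm, and the rescaled pair converges to a least-energy solution of the autonomous critical system \eqref{a1}; by the uniqueness statement quoted from Lemma \ref{lem2.2} (which is exactly where the restricted range of $\beta$ reenters), this limit is forced to be the explicit synchronized bubble $\big(\sqrt{\tfrac{\beta-\mu_2}{\beta^2-\mu_1\mu_2}}U_{\varepsilon_0},\sqrt{\tfrac{\beta-\mu_1}{\beta^2-\mu_1\mu_2}}U_{\varepsilon_0}\big)$, and the convergence is strong because the energy levels converge to the least energy of \eqref{a1}. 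For item (2), the masses cluster to the endpoint $(\|w_3\|_2/\alpha_1,\|w_3\|_2/\alpha_2)$: here the natural scaling is instead $(a_i/\|w_3\|_2)r_i^2 u^-_{a_i}((a_i/\|w_3\|_2)r_i\cdot)$ with $r_i=(1-\alpha_i a_i/\|w_3\|_2)^{-1/2}$, designed so that in the limit the $e^{2s}$-coefficient degenerates and the subcritical/critical balance forces the rescaled components onto the manifold of minimizers of the Gagliardo-Nirenberg quotient, i.e. dilates $\nu_i w_3(\sqrt{\nu_i}\cdot)$ of the Aubin-Gagliardo optimizer $w_3$; the parameters $\nu_i$ are determined by how fast the energy escapes to $+\infty$ along the cluster, and the hypothesis $(1-\alpha_1 a_1/\|w_3\|_2)\sim(1-\alpha_2 a_2/\|w_3\|_2)$ is what keeps the two components concentrating at comparable rates so the limit is synchronized. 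The main obstacle throughout is the compactness/threshold estimate in item (1): unlike the $2<p<3$ case, the perturbation is mass-critical, so the test-function computation establishing $m^-(a_1,a_2)$ strictly below the critical threshold is delicate --- one cannot simply discard the $\|u\|_3^3$ term as lower order, and getting the sign of the $\varepsilon$-expansion right (hence pinning down $\beta_0$) requires carefully matching the logarithmic corrections typical of the four-dimensional critical problem against the mass-critical perturbation.
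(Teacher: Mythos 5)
Your fiber-map analysis for $p=3$ and the outline of the asymptotics in items (2)--(3) match the paper's route (Lemmas \ref{lem5.1}, \ref{lem5.5}, \ref{lem5.6}), but two steps of your item (1) have genuine gaps. First, the non-existence half: your proposed mechanism --- that a putative solution ``would violate the Pohozaev identity combined with the sharp Gagliardo--Nirenberg bound'' --- does not work, because for a generic critical point the Gagliardo--Nirenberg inequality is strict, so no contradiction arises, and the existence of \emph{some} test functions with nonpositive $e^{2s}$-coefficient does not empty $\mathcal{P}_{a_1,a_2}$ or kill the level. The paper's argument (Lemma \ref{lem5.4}) is different in kind: one shows $m^-(a_1,a_2)=0$ when $\alpha_ia_i\ge\|w_3\|_2$ by constructing an explicit sequence from rescaled near-optimizers of the Gagliardo--Nirenberg quotient \eqref{e3.2} with $f(u_n)=(\tfrac{2\alpha_1}{3}+A_n)a_1$, $A_n\downarrow 0$, for which the projected energy $\tfrac14\big(\|\nabla u_n\|_2^2+\|\nabla v_n\|_2^2-\tfrac{2\alpha_1}{3}\|u_n\|_3^3-\tfrac{2\alpha_2}{3}\|v_n\|_3^3\big)^2/\big[\mu_1\|u_n\|_4^4+\cdots\big]\to 0$, and then observes that on $\mathcal{P}_{a_1,a_2}$ one has $I(u,v)=\tfrac14[\mu_1\|u\|_4^4+\mu_2\|v\|_4^4+2\beta\|uv\|_2^2]$, which cannot vanish for $(u,v)\in S(a_1)\times S(a_2)$; this quantitative construction is missing from your sketch and cannot be replaced by the argument you give.

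Second, you misattribute the role of $\beta_0$. The strict upper bound $m^-(a_1,a_2)<\frac{k_1+k_2}{4}\mathcal{S}^2$ (Lemma \ref{lem5.2}) holds for every admissible $\beta$: the mass-critical perturbation contributes a term of size $C|\ln\varepsilon|^{-1/2}$ that dominates the $O(\varepsilon^2)$ errors, so no largeness of $\beta$ is needed there. What $\beta>\beta_0$ actually buys is the comparison $m^-(a_1,a_2)<\min\{m^-(a_1,0),m^-(0,a_2)\}$ with the scalar levels of \eqref{c12} (Lemma \ref{lem5.31}, via the lower bound $\beta Ke^{4s_0}$ on the coupling term), and this inequality is indispensable in the compactness step (Lemma \ref{lem5.3}, Cases 2 and 3) to exclude weak limits with one vanishing component. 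Your proposal never addresses the semitrivial dichotomy at all, so as written the compactness argument is incomplete: without the comparison with $m^-(a_1,0)$ and $m^-(0,a_2)$ you cannot rule out that the minimizing (or Palais--Smale) sequence converges to a solution of the scalar equation \eqref{a2} paired with zero.
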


\begin{remark}
For single Sobolev critical nonlinear Schr\"{o}dinger equation \eqref{eq2} with $q=2+\frac{4}{N}$, there exists a critical mass  $a^*>0$ such that if $0<a<a^*$ then \eqref{eq2} has normalized ground state solutions, while if $a^*\le a$, \eqref{eq2} has no normalized ground states (see \cite{LXF,Soave2,WW}). However, for coupled system \eqref{eq1.1}, such a mass threshold is not obtained in Theorem \ref{th1.4}. The main reason   is that there is a competitive relationship between the two components of system \eqref{eq1.1}.
In proving the non-existence result of Theorem \ref{th1.4}, the monotonicity approach developed in \cite{WW} cannot be used directly, because the monotonicity of $m^-(a_1,a_2)$ with $a_1$ and $a_2$ seems difficult to prove. Motivated by \cite{LXF,WW}, we choose appropriate functions to prove that $m^-(a_1,a_2)=0$ when $\alpha_1a_1\ge\|w_3\|_2$ and $\alpha_2a_2\ge \|w_3\|_2$, and then by contradiction \eqref{eq1.1}-\eqref{eq1.11} has no normalized ground state solution. In addition, similar as in the proof of Theorem \ref{th1.2} (2), the precisely asymptotic behaviors of the normalized ground states solutions are also proved.
\end{remark}

\vskip0.12in

If $3<p<4$,  then $\mathcal{P}_{a_1,a_2}=\mathcal{P}^-_{a_1,a_2}$, and we could find at least one normalized solution to \eqref{eq1.1}-\eqref{eq1.11} on $\mathcal{P}^-_{a_1,a_2}$.
Moreover, we give the asymptotic behavior of the normalized ground state  solutions as $(a_1,a_2)\to (+\infty,+\infty)$ and $a_1\sim a_2$.

\begin{Thm}\label{th1.5}
Let $\mu_i,\alpha_i,a_i>0(i=1,2)$, $3<p<4$, and there exists $\beta_1>0$ such that when  $\beta>\beta_1$ and $\beta\in \big(0,\min\{\mu_1,\mu_2\}\big)\cup\big(\max\{\mu_1,\mu_2\},\infty\big)$,  then we have  the following results.
\begin{enumerate}
\item
The  problem \eqref{eq1.1}-\eqref{eq1.11} has a positive ground state solution.
\item
For $(a_1,a_2)\to (+\infty,+\infty)$ and $a_1\sim a_2$, then for any ground state $(u^-_{a_1},v^-_{a_2})\in S(a_1)\times S(a_2)$, we have
\begin{equation*}
\Big(\big(\frac{L_1}{\alpha_1}\big)^{-\frac{1}{p-2}}u^-_{a_1}(L_1^{-\frac{1}{2}}x), \big(\frac{L_2}{\alpha_2}\big)^{-\frac{1}{p-2}}v^-_{a_2}(L_2^{-\frac{1}{2}}x)\Big)\to (w_p,w_p)
 \ \  \text{in}\ H^{1}(\R^4,\R^2),
\end{equation*}
where $L_1=\big(\frac{a^2_1}{\|w_{p}\|^2_2}\alpha_1^{\frac{2}{p-2}} \big)^{-\frac{p-2}{2p-6}}$ and $L_2=\big(\frac{a^2_2}{\|w_{p}\|^2_2}\alpha_2^{\frac{2}{p-2}} \big)^{-\frac{p-2}{2p-6}}$.

\item
In addition, there exists $\varepsilon_{a_1,a_2}>0$ such that
\begin{equation*}
\Big(\varepsilon_{a_1,a_2} u^-_{a_1}(\varepsilon_{a_1,a_2} x),\;\; \varepsilon_{a_1,a_2} v^-_{a_2}(\varepsilon_{a_1,a_2} x)\Big)
\end{equation*}
\begin{equation*}
\to \Big(\sqrt{\frac{\beta-\mu_2}{\beta^2-\mu_1\mu_2}}U_{\varepsilon_0},\sqrt{\frac{\beta-\mu_1}{\beta^2-\mu_1\mu_2}}U_{\varepsilon_0}\Big) ,
\end{equation*}
in $D^{1,2}(\R^4,\R^2)$, for some $\varepsilon_0>0$ as $(a_1,a_2)\to (0,0)$ and $a_1\sim a_2$,  up to a subsequence.
\end{enumerate}
\end{Thm}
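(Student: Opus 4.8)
\emph{Proof strategy.} The plan is to run the fibering/Pohozaev--constraint scheme already set up in the paper. First I would record the geometry of the fiber map $\Psi_{(u,v)}$ from \eqref{c4}: since $3<p<4$ gives $p\gamma_p=2(p-2)\in(2,4)$, the $e^{2s}$ term dominates as $s\to-\infty$ and $-e^{4s}$ as $s\to+\infty$, so $\Psi_{(u,v)}$ has a unique critical point, a strict global maximum, and a short computation using $\Psi'_{(u,v)}(s)=0$ and $2-p\gamma_p=6-2p<0$ shows $\Psi''_{(u,v)}(s)<0$ there. Hence $\mathcal{P}^0_{a_1,a_2}=\emptyset$, $\mathcal{P}_{a_1,a_2}=\mathcal{P}^-_{a_1,a_2}$ (as claimed just before the theorem), each $(u,v)\in S(a_1)\times S(a_2)$ has a unique dilation onto $\mathcal{P}_{a_1,a_2}$ depending continuously on $(u,v)$, and on $\mathcal{P}_{a_1,a_2}$, eliminating $\|\nabla u\|_2^2+\|\nabla v\|_2^2$ via $P_{a_1,a_2}=0$,
\[
I(u,v)=\tfrac14\big(\mu_1\|u\|_4^4+\mu_2\|v\|_4^4+2\beta\|uv\|_2^2\big)+\tfrac{p-3}{p}\big(\alpha_1\|u\|_p^p+\alpha_2\|v\|_p^p\big)>0 .
\]
Thus $m(a_1,a_2):=\inf_{\mathcal{P}^-_{a_1,a_2}}I>0$ with no smallness assumption on the masses (this is why $T(a_1,a_2)\le\gamma_1$ is absent here), and the same identity together with $P_{a_1,a_2}=0$ shows that any sequence on $S(a_1)\times S(a_2)$ with $I\to m$ and $P_{a_1,a_2}\to0$ is bounded in $H^1(\R^4,\R^2)$. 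I would produce such a Palais--Smale sequence $(u_n,v_n)$, with additionally $dI|_{S(a_1)\times S(a_2)}(u_n,v_n)\to0$, $P_{a_1,a_2}(u_n,v_n)\to0$ and $u_n,v_n\ge0$, by the standard minimax over the extended functional $(s,u,v)\mapsto I(s\star(u,v))$ on $\R\times S(a_1)\times S(a_2)$ as in \cite{BS,JTT}, with $m(a_1,a_2)$ the mountain--pass level.

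\emph{Part (1).} Passing to a subsequence, $u_n\rightharpoonup u_{a_1}$, $v_n\rightharpoonup v_{a_2}$ in $H^1$, and $\lambda_{i,n}\to\lambda_i$; combining the Nehari identities with the Pohozaev identity \eqref{c31} and using $\alpha_i>0$ gives $\lambda_1,\lambda_2>0$. The only possible loss of compactness is through bubbles modeled on the limiting critical system \eqref{a1}, and by Lemma \ref{lem2.2} --- valid precisely for $\beta\in(0,\min\{\mu_1,\mu_2\})\cup(\max\{\mu_1,\mu_2\},\infty)$ --- each such bubble costs at least $\tfrac14\mathcal{S}_{\mu_1,\mu_2,\beta}^2=\frac{2\beta-\mu_1-\mu_2}{4(\beta^2-\mu_1\mu_2)}\mathcal{S}^2$ in energy. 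The hard part of the argument is therefore the strict gap
\[
m(a_1,a_2)<\tfrac14\,\mathcal{S}_{\mu_1,\mu_2,\beta}^2 ,
\]
which I would prove by inserting into the Pohozaev--projected functional test pairs built from truncated Aubin--Talenti bubbles $U_\varepsilon$ (see \eqref{L1}) rescaled to lie on $S(a_1)\times S(a_2)$ and corrected by a fixed subcritical profile: the terms $\alpha_i|u|^{p-2}u$ with $\alpha_i>0$ push the fibered maximum strictly below the threshold, and choosing $\beta$ large enough ($\beta>\beta_1$) makes $2\beta\|uv\|_2^2$ dominant enough both to clear the threshold and to beat the semitrivial competitors obtained by sending one component to zero. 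Granting the gap, no bubble detaches, $u_n\to u_{a_1}$, $v_n\to v_{a_2}$ strongly, $(u_{a_1},v_{a_2})\in\mathcal{P}^-_{a_1,a_2}$ attains $m(a_1,a_2)$ and solves \eqref{eq1.1}-\eqref{eq1.11} (a genuine solution since $\mathcal{P}^0_{a_1,a_2}=\emptyset$ makes $\mathcal{P}_{a_1,a_2}$ a natural constraint); since $u_{a_1},v_{a_2}\ge0$ are nontrivial (their $L^2$ norms are $a_1,a_2>0$), the strong maximum principle yields positivity.

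\emph{Parts (2) and (3): asymptotics.} For the large--mass limit, set $\tilde u(x)=(L_1/\alpha_1)^{-1/(p-2)}u^-_{a_1}(L_1^{-1/2}x)$ and $\tilde v$ analogously with $L_2$; by the choice of $L_1,L_2$ one has $\|\tilde u\|_2^2=\|\tilde v\|_2^2=\|w_p\|_2^2$, and $\tilde u$ solves $-\Delta\tilde u+\tilde\lambda_1\tilde u=\kappa_1\tilde u^3+\tilde u^{p-1}+(\text{coupling})$ with $\tilde\lambda_1=\lambda_1 L_1$ and $\kappa_1=\mu_1\alpha_1^{-2/(p-2)}L_1^{(4-p)/(p-2)}$; since $(a_1,a_2)\to(\infty,\infty)$ forces $L_1\sim L_2\to0$ and $4-p>0$, both $\kappa_1$ and the coupling coefficient tend to $0$. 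An upper bound $m(a_1,a_2)\le$ (rescaled energy of a pair of rescaled $w_p$'s) converging to twice the least energy $e_p$ of \eqref{g2}, matched by the lower bound from the $\mathcal{P}_{a_1,a_2}$--identity above, shows the rescaled family is (after recentering) minimizing for the decoupled limit problem; vanishing is excluded by the energy lower bound, the limit equation for each component is $-\Delta w+\Lambda w=w^{p-1}$, and the normalization $\|\cdot\|_2^2=\|w_p\|_2^2$ forces $\Lambda=1$ for $p\ne3$, so by uniqueness of the positive solution $w_p$ one gets $\tilde u\to w_p$ and $\tilde v\to w_p$ in $H^1(\R^4)$. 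For the small--mass limit (Part (3)), which parallels Theorem \ref{th1.2}(2), I would first show $m(a_1,a_2)\to\tfrac14\mathcal{S}_{\mu_1,\mu_2,\beta}^2$ as $(a_1,a_2)\to(0,0)$ (upper bound from bubble test pairs on $S(a_1)\times S(a_2)$; lower bound because on $\mathcal{P}^-_{a_1,a_2}$ the subcritical terms are $o(1)$ while the critical part is controlled by \eqref{f4}), then pick $\varepsilon_{a_1,a_2}>0$ so that the $D^{1,2}$--invariant rescaling $\big(\varepsilon_{a_1,a_2}u^-_{a_1}(\varepsilon_{a_1,a_2}x),\varepsilon_{a_1,a_2}v^-_{a_2}(\varepsilon_{a_1,a_2}x)\big)$ concentrates its gradient at unit scale; this family is minimizing for the energy of \eqref{a1} at level $\tfrac14\mathcal{S}_{\mu_1,\mu_2,\beta}^2$ (the mass and $\alpha_i$ terms being negligible after rescaling), and by the uniqueness result recalled before Theorem \ref{th1.2} its limit, up to translation and dilation, is $\big(\sqrt{(\beta-\mu_2)/(\beta^2-\mu_1\mu_2)}\,U_{\varepsilon_0},\sqrt{(\beta-\mu_1)/(\beta^2-\mu_1\mu_2)}\,U_{\varepsilon_0}\big)$, with strong $D^{1,2}(\R^4,\R^2)$ convergence along a subsequence. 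Throughout Parts (2)--(3), the rigidity of the two limit problems --- uniqueness of $w_p$ for \eqref{g2} and of the ground state of \eqref{a1}, the latter being the only place where the window $\beta\in(0,\min\{\mu_1,\mu_2\})\cup(\max\{\mu_1,\mu_2\},\infty)$ is used --- is what upgrades minimizing properties to the stated convergences. The decisive difficulty of the whole argument remains the strict energy gap $m(a_1,a_2)<\tfrac14\mathcal{S}_{\mu_1,\mu_2,\beta}^2$ together with the exclusion of semitrivial limits, both of which amount to quantifying the gain from the coupling $2\beta\|uv\|_2^2$ and are exactly what forces the hypothesis $\beta>\beta_1$.
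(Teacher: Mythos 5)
Your proposal follows essentially the same route as the paper: the fibering analysis giving $\mathcal{P}_{a_1,a_2}=\mathcal{P}^-_{a_1,a_2}$ and $m^-(a_1,a_2)>0$ for all masses, the strict energy gap $m^-(a_1,a_2)<\frac{k_1+k_2}{4}\mathcal{S}^2=\frac14\mathcal{S}^2_{\mu_1,\mu_2,\beta}$ together with the comparison against the semitrivial levels $m^-(a_1,0),m^-(0,a_2)$, compactness of a (radially symmetrized) minimizing sequence on the Pohozaev manifold, and the two rescaling limits pinned down by uniqueness of $w_p$ and of the positive least energy solution of the limiting critical system. The one point to adjust is your suggestion that $\beta>\beta_1$ also helps clear the Aubin--Talenti threshold: in the paper that gap is obtained from the subcritical perturbation alone (as in Lemma 5.2), and the largeness of $\beta$ is used only to push $m^-(a_1,a_2)$ below the semitrivial levels and thereby exclude semitrivial limits.
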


\begin{remark}
Theorems \ref{th1.4} and \ref{th1.5} indicate that problem \eqref{eq1.1}  possesses at least one normalized ground state solution once the system has a lower order perturbation of
the  mass critical term. Furthermore, a precise asymptotic behavior of the normalized ground state is obtained when the masses of the  two components cluster  either to an upper bound or  to infinity at the same rate. The conditions $\beta>\beta_0>0$ and $\beta>\beta_1>0$ in these two theorems are used to prove that the ground state energy $m^-(a_1,a_2)$ of coupled system \eqref{eq1.1} is less than that of the  related   single equations. This revision of the energy threshold excludes the semitrivial solutions and guarantees the compactness of minimizing sequence at the energy level $m^-(a_1,a_2)$.
\end{remark}

\vskip1mm

\begin{remark}
The arguments in this paper can be applied to
\begin{equation*}
\begin{cases}
-\Delta u_i+\lambda_iu_i=\alpha_i|u_i|^{p_i-2}u_i+\sum^{k}_{j=1}\beta_{i,j} u^2_ju_i\quad&\hbox{in}~\R^4,\\
u_i\in H^1(\R^4),\qquad i\in \{1,\ldots,k\}, \\
\end{cases}
\end{equation*}
under the constraint $\int_{\R^4}u^2_i=a_i^2$ where $a_i(i\in \{1,\ldots,k\})$ is prescribed, $\beta_{i,j}=\beta_{j,i}\in\R$, $\beta_{i,j}>0(\forall i,j)$, $2<p_i<4$, $\alpha_i\in \R$;   where  $\lambda_i\!\in\!\R$ appear as the  Lagrange multipliers.   In addition, $p_i $ is divided into three cases
\begin{equation*}
(H_0)\ 2<p_i<3; \quad\quad (H_1)\ p_i=3;  \quad\quad  (H_2) \ 3<p_i<4,
\end{equation*}
for all $i\in \{1,\ldots,k\}$.
\end{remark}
\vskip0.12in

This paper is organized as follows, in Section 2, we give some preliminary results. The proofs  of Theorems \ref{th1.3} and \ref{th1.1} are given in Section 3. In Sections 4, 5 and 6, we prove Theorems \ref{th1.2}, \ref{th1.4} and \ref{th1.5}, respectively.

\vskip3mm

\noindent\textbf{Notations:}
$L^{p}=L^{p}(\mathbb{R}^{N})(1<p\leq\infty)$ is the Lebesgue space with the standard norm $||u||_{p}=\big(\int_{{\mathbb{R}^N}} {{|u(x)|}^p dx}\big)^{{1}/{p}}$. We use $``\rightarrow"$ and $``\rightharpoonup"$ to denote the strong and weak convergence in the related function spaces respectively. $C$, $C'$ and $C_{i}$ will denote positive constants.

\section{Preliminaries}

In this section, we give some preliminaries. We have the Sobolev inequality
	$$\mathcal{S}\|u\|_{4}^2\leq \|\nabla u\|_2^2,\quad \forall u\in D^{1,2}(\R^4),$$
where $D^{1,2}(\R^4)$ is the completion of $C_c^{\infty}(\R^4)$ with respect to the norm $||u||_{D^{1,2}}:=\|\nabla u\|_2$.
If $p=4$, $\gamma_{4}=1$, then $\mathcal{S}= |\mathcal{C}_4|^{-2}$.
For nonlinear Schr\"odinger equation with critical nonlinearities
\begin{equation}\label{a2}
\begin{cases}
-\Delta u+\lambda u=\alpha |u|^{p-2}u+ \mu u^3, \ \  \ u\in H^{1}(\R^4),\\
\int_{\R^4}|u|^2=a^2,
\end{cases}
\end{equation}
where $\mu,\alpha>0$, $\lambda\in \R$.
The solutions of \eqref{a2} are critical points of the functional
\begin{equation*}
\mathcal{A}_{p,\mu,\alpha}(u):=\frac{1}{2}\int_{\mathbb{R}^{4}}|\nabla u|^{2}-\frac{\alpha}{p}\int_{\mathbb{R}^{4}}|u|^{p}-\frac{\mu}{4}\int_{\mathbb{R}^{4}}|u|^{4}
\end{equation*}
on $S(a)$. Moreover, we introduce the Pohozaev manifold for the scalar  equation:
\begin{equation}\label{a21}
\mathcal{T}_{a,p,\mu,\alpha}:=\Big\{u\in S(a): \int_{\mathbb{R}^{4}}|\nabla u|^{2}=\mu\int_{\mathbb{R}^{4}}|u|^{4}+\alpha \gamma_{p} \int_{\mathbb{R}^{4}}|u|^{p} \Big\}.
\end{equation}
The $L^2$-Pohozaev manifold $\mathcal{T}_{a,p,\mu,\alpha}$ is  closely  related to the fibering maps
\begin{equation*}
\overline{\Psi}_{u}(s)=\mathcal{A}_{p,\mu,\alpha}(s\star u)=\frac{e^{2s}}{2}\int_{\R^4}|\nabla u|^2-\frac{\alpha e^{p\gamma_ps}}{p}\int_{\R^4}|u|^p-\frac{\mu e^{4s}}{4}\int_{\R^4}|u|^4.
\end{equation*}
Similar to the partition of $L^2$-Pohozaev manifold $\mathcal{P}_{a_1,a_2}$, $\mathcal{T}_{a,p,\mu,\alpha}$ can also be divided into the disjoint union $\mathcal{T}_{a,p,\mu,\alpha}=\mathcal{T}^{+}_{a,p,\mu,\alpha}\cup \mathcal{T}^{0}_{a,p,\mu,\alpha}\cup \mathcal{T}^{-}_{a,p,\mu,\alpha}$, where
\begin{equation*}
\begin{aligned}
	\mathcal{T}^{+}_{a,p,\mu,\alpha}&:=\big\{(u,v)\in \mathcal{T}_{a,p,\mu,\alpha} : \overline{\Psi}_{u}''(0)>0\big\},\\
	\mathcal{T}^{0}_{a,p,\mu,\alpha}&:=\big\{(u,v)\in \mathcal{T}_{a,p,\mu,\alpha} : \overline{\Psi}_{u}''(0)=0\big\},\\
	\mathcal{T}^{-}_{a,p,\mu,\alpha}&:=\big\{(u,v)\in \mathcal{T}_{a,p,\mu,\alpha} : \overline{\Psi}_{u}''(0)<0\big\}.
\end{aligned}
\end{equation*}
Define 
\begin{equation}\label{c121}
m^{\pm}_{p,\mu,\alpha}(a):=\inf_{u\in \mathcal{T}^{\pm}_{a,p,\mu,\alpha} }\mathcal{A}_{p,\mu,\alpha}(u).
\end{equation}
Let
\begin{equation}\label{c12}
m^{\pm}(a_1,0)=m^{\pm}_{p,\mu_1,\alpha_1}(a_1)\quad\text{and}\quad m^{\pm}(0,a_2)=m^{\pm}_{p,\mu_2,\alpha_2}(a_2).
\end{equation}

\vskip3mm
We have the following already known results.
\begin{Lem}\label{lem2.1}(\cite[Theorem 1.1]{Soave2};\cite[Theorem 1.1]{WW};\cite[Theorem 1.4]{LXF})
Let $2<p<3$ and $\mu,a,\alpha>0$ in \eqref{a2}, there exists $\tau(\alpha,p)>0$ such that if $\mu a^{p-p\gamma_p}\le\tau(\alpha,p)$ then $ m^{\pm}_{p,\mu,\alpha}(a)=\inf\limits_{u\in \mathcal{T}^{\pm}_{a,p,\mu,\alpha} }\mathcal{A}_{p,\mu,\alpha}(u)<0$ and it can be attained by some $u^{\pm}_{a,\mu,\alpha}$ which is real valued, positive, radially symmetric and radially decreasing. Moreover, $m^{\pm}_{p,\mu,\alpha}(a)$ is strictly decreasing with respect to $a>0$.
\end{Lem}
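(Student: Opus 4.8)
\medskip\noindent\emph{Proof strategy.} The statement is essentially known and the plan is to reconstruct the fibering-map argument of \cite{Soave2,WW,LXF}. The first step is a one-variable analysis of $\overline{\Psi}_u(s)=\frac{e^{2s}}{2}\|\nabla u\|_2^2-\frac{\alpha e^{p\gamma_p s}}{p}\|u\|_p^p-\frac{\mu e^{4s}}{4}\|u\|_4^4$, where for $2<p<3$ the exponents are ordered $p\gamma_p=2(p-2)<2<4$. Using the Gagliardo--Nirenberg inequality \eqref{b2} (so $\|u\|_p^p\le\mathcal{C}_p^p\|\nabla u\|_2^{p\gamma_p}a^{p-p\gamma_p}$ on $S(a)$) and $\mathcal{S}\|u\|_4^2\le\|\nabla u\|_2^2$, I would bound $\overline{\Psi}_u(s)\ge h_a\!\left(e^s\|\nabla u\|_2\right)$ with $h_a(t):=\frac12 t^2-\frac{\alpha\mathcal{C}_p^p}{p}\,a^{p-p\gamma_p}\,t^{p\gamma_p}-\frac{\mu}{4\mathcal{S}^2}t^4$; an elementary study of $h_a$ shows that, when $\mu a^{p-p\gamma_p}\le\tau(\alpha,p)$, it is negative on $(0,\rho_0)$, strictly positive on $(\rho_0,\rho_1)$, and decreasing to $-\infty$ thereafter. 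Since $\overline{\Psi}_u(s)\to 0^-$ as $s\to-\infty$, this forces $\overline{\Psi}_u$ to have exactly two critical points $s_u^+<s_u^-$, a strict local minimum with $\overline{\Psi}_u(s_u^+)<0$ and a strict local maximum, whence $s_u^\pm\star u\in\mathcal{T}^\pm_{a,p,\mu,\alpha}$, $\mathcal{T}^0_{a,p,\mu,\alpha}=\emptyset$, so that $\mathcal{T}^\pm_{a,p,\mu,\alpha}$ are natural constraints and $u\mapsto s_u^\pm$ is a $C^1$ projection onto them. In particular $m^+_{p,\mu,\alpha}(a)<0$ is immediate, while the corresponding strict inequality for $m^-_{p,\mu,\alpha}(a)$ is obtained, as in \cite{WW,LXF}, from the smallness hypothesis by a test-function (or path) argument, together with the identity $\mathcal{A}_{p,\mu,\alpha}(w)=\frac14\|\nabla w\|_2^2-\frac{\alpha(4-p)}{2p}\|w\|_p^p$ valid on $\mathcal{T}_{a,p,\mu,\alpha}$.

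Next, for attainment of $m^+_{p,\mu,\alpha}(a)$ I would minimise $\mathcal{A}_{p,\mu,\alpha}$ over $V(a):=\{u\in S(a):\|\nabla u\|_2<\rho_0\}$: on $\partial V(a)$ one has $\mathcal{A}_{p,\mu,\alpha}\ge h_a(\rho_0)=0>m^+_{p,\mu,\alpha}(a)$, so a minimising sequence stays inside, and by Schwarz symmetrisation (which preserves $\|u\|_2,\|u\|_p,\|u\|_4$ and does not increase $\|\nabla u\|_2$) it may be taken nonnegative, radial and radially decreasing. Its weak $H^1$-limit $\bar u$ is nonzero, because by the compact embedding $H^1_{\mathrm{rad}}(\R^4)\hookrightarrow L^p(\R^4)$ ($2<p<4$) a zero limit would force $\liminf\mathcal{A}_{p,\mu,\alpha}\ge 0$. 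A Brezis--Lieb splitting of the critical term then shows that neither $\|\bar u\|_2<a$ (excluded using the strict monotonicity of $m^+$ in $a$) nor loss of gradient (any escaping Sobolev bubble would add at least $\frac{\mathcal{S}^2}{4\mu}>0$ to the energy, incompatible with $m^+_{p,\mu,\alpha}(a)<0$) can occur, so $u_n\to\bar u$ strongly and $\bar u=u^+_{a,\mu,\alpha}\in\mathcal{T}^+_{a,p,\mu,\alpha}$ is a minimiser. The Lagrange multiplier rule and the Pohozaev identity for \eqref{a2} give $\lambda>0$, and the strong maximum principle gives strict positivity. The existence statement for $m^-_{p,\mu,\alpha}(a)$ is handled in the same circle of ideas but with a constrained mountain-pass scheme on $\mathcal{T}^-_{a,p,\mu,\alpha}$: one produces a Palais--Smale sequence for $\mathcal{A}_{p,\mu,\alpha}|_{S(a)}$ at the level $m^-_{p,\mu,\alpha}(a)$ that asymptotically satisfies the Pohozaev identity, uses the coercivity of $\mathcal{A}_{p,\mu,\alpha}$ on $\mathcal{T}_{a,p,\mu,\alpha}$ (again from the ordering of exponents) for boundedness, symmetrises, and concludes by the same Brezis--Lieb analysis.

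Finally, the strict monotonicity of $a\mapsto m^\pm_{p,\mu,\alpha}(a)$ --- which I would in fact establish first, since it is used in the compactness above --- follows from a scaling argument: given $0<a'<a$ and an (almost-)minimiser $u$ at level $a'$, the two-parameter family $\tau\,u(\theta\,\cdot)$ with $\tau^2\theta^{-4}=(a/a')^2$ lies in $S(a)$; optimising the remaining parameter and projecting onto $\mathcal{T}^\pm_{a,p,\mu,\alpha}$ through $s^\pm$ produces a competitor with energy strictly below $m^\pm_{p,\mu,\alpha}(a')$, the strict gain coming from $p-p\gamma_p=4-p>0$. The main obstacle of the whole argument is the compactness step, and in particular ruling out the escape of a single Sobolev bubble in the minimax sequence for $m^-_{p,\mu,\alpha}(a)$: this rests on the strict inequality $m^-_{p,\mu,\alpha}(a)<m^+_{p,\mu,\alpha}(a)+\frac{\mathcal{S}^2}{4\mu}$, i.e.\ on keeping the mountain-pass level below the first level at which Sobolev compactness is lost, and it is precisely to secure this --- and the analogous fine estimates for $m^+$ --- that the smallness hypothesis $\mu a^{p-p\gamma_p}\le\tau(\alpha,p)$ is imposed.
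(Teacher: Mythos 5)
The paper offers no proof of this lemma: it is imported as a black box from \cite{Soave2}, \cite{WW} and \cite{LXF}. Your reconstruction --- the fibering-map analysis with the exponent ordering $p\gamma_p<2<4$, the lower bound by $h_a$ and emptiness of $\mathcal{T}^{0}_{a,p,\mu,\alpha}$, local minimization on $\{u\in S(a):\|\nabla u\|_2<\rho_0\}$ with Schwarz symmetrization and a Br\'{e}zis--Lieb splitting against the bubble threshold $\mathcal{S}^2/(4\mu)$, and a constrained mountain pass for $m^-$ kept strictly below $m^{+}_{p,\mu,\alpha}(a)+\mathcal{S}^2/(4\mu)$ --- is precisely the strategy of those references, and it is also the template the paper itself replicates for the system in Lemmas \ref{lem2.3}--\ref{lem2.4}, \ref{lem3.1}--\ref{lem3.4} and \ref{lem4.1}--\ref{lem4.2}, so your proposal is correct and essentially the same approach.
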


\begin{Lem}\label{lem2.12}(\cite[Corollary B.1]{LiZou})
Suppose $(u,v)\in H^1(\R^4,\R^2)$ is a nonnegative solution of \eqref{eq1.1}-\eqref{eq1.11} with $2<p<4$, then $(u,v)$ is a smooth solution.
\end{Lem}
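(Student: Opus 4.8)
The statement is a standard elliptic bootstrap, and the plan is to first gain $L^q$-integrability for all finite $q$ by a Brezis--Kato type argument and then gain differentiability by Calder\'on--Zygmund and Schauder estimates; this is the scheme carried out in \cite{LiZou}. Since $u,v\ge 0$, write the system as $-\Delta u=f_1$, $-\Delta v=f_2$ with
$$f_1:=-\lambda_1 u+\mu_1 u^3+\alpha_1 u^{p-1}+\beta v^2u,\qquad f_2:=-\lambda_2 v+\mu_2 v^3+\alpha_2 v^{p-1}+\beta u^2v.$$
Because $N=4$ one only has $H^1(\R^4)\hookrightarrow L^q(\R^4)$ for $q\in[2,4]$, so $f_1,f_2\in L^{4/3}_{\loc}(\R^4)$ to begin with; since the embedding $W^{2,4/3}(\R^4)\hookrightarrow L^4(\R^4)$ is borderline, a naive iteration does not improve the exponent. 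This obstruction comes precisely from the Sobolev-critical cubic $u^3$ and the critical coupling $v^2u$, whereas the perturbation $u^{p-1}$ with $2<p<4$ is genuinely subcritical.

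The core step is the Brezis--Kato / Moser iteration. Rewrite the first equation as $-\Delta u+u=c_1(x)\,u$ with $c_1:=1-\lambda_1+\mu_1 u^2+\alpha_1 u^{p-2}+\beta v^2$, and analogously for $v$. From $u,v\in L^4(\R^4)$ one gets $u^2,v^2\in L^2(\R^4)$, and $u^{p-2}\in L^{4/(p-2)}(\R^4)$ with $4/(p-2)\ge 2$ since $p<4$; hence $c_1\in L^2(\R^4)+L^\infty(\R^4)=L^{N/2}(\R^4)+L^\infty(\R^4)$, and likewise for the coefficient in the $v$-equation. Testing the equations with truncated powers $u_M^{2\eta-1}$, $v_M^{2\eta-1}$ (where $u_M=\min\{u,M\}$ and $\eta\ge 1$), absorbing the $L^{N/2}$-part of the coefficients into the Dirichlet term on small balls, and letting $M\to\infty$, one obtains in the standard way that $u,v\in L^q(\R^4)$ for every $q\in[2,\infty)$. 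I expect this to be the main obstacle: one must exploit that $u^2,v^2$ lie in $L^{N/2}=L^2$ rather than merely $L^1$ in order to close the estimate at the critical growth, and then iterate carefully so as to reach every finite exponent.

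Once $u,v\in L^q(\R^4)$ for all $q<\infty$, the right-hand sides $f_1,f_2$ lie in $L^q_{\loc}(\R^4)$ for all $q<\infty$, so the Calder\'on--Zygmund $L^q$-estimates give $u,v\in W^{2,q}_{\loc}(\R^4)$ for all $q$, hence $u,v\in C^{1,\gamma}_{\loc}(\R^4)$ for all $\gamma\in(0,1)$ by Morrey's embedding. Then $f_1,f_2\in C^{0,\gamma}_{\loc}(\R^4)$, since the cubic and the coupling are smooth compositions and $t\mapsto t^{p-1}$ is locally Lipschitz on $[0,\infty)$ as $p>2$, so Schauder estimates yield $u,v\in C^{2,\gamma}_{\loc}(\R^4)$ and $(u,v)$ is a classical solution. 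Finally, by the strong maximum principle each of $u,v$ is either identically zero or strictly positive; on the positivity set $t\mapsto t^{p-1}$ is $C^\infty$, and if one component vanishes the system reduces to a scalar equation for the positive component. In either case a routine Schauder bootstrap upgrades $(u,v)$ to $C^\infty$, which gives the claimed smoothness.
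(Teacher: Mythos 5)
The paper offers no proof of this lemma; it simply cites \cite[Corollary B.1]{LiZou}, and your Brezis--Kato truncation argument followed by the Calder\'on--Zygmund/Schauder bootstrap (with the strong maximum principle handling the non-integer power $t^{p-1}$ at $t=0$) is precisely the standard proof of that cited result. Your argument is correct, including the key observations that $u^2,v^2\in L^{N/2}(\R^4)=L^2(\R^4)$ closes the critical step and that the subcritical term satisfies $u^{p-2}\le 1+u^2$, so there is nothing to add.
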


\begin{Lem}\label{lem2.11}(\cite[Lemma 2.3]{LiZou})
Suppose $\alpha_1,\alpha_2>0$ and $(u,v)\in H^1(\R^4,\R^2)$ is a nonnegative solution of \eqref{eq1.1}-\eqref{eq1.11} with $2<p<4$, then $u>0$ implies $\lambda_1>0$; $v>0$ implies $\lambda_2>0$.
\end{Lem}

\begin{Lem}\label{lem2.13}(\cite[Lemma 2.4]{GouJean})             
If $(u_n,v_n)\rightharpoonup (u,v)$ in $H^1(\R^4,\R^2)$, then up to a subsequence,
\begin{equation*}
\int_{\R^4}|u_n|^2|v_n|^2-|u_n-u|^2|v_n-v|^2=\int_{\R^4}|u|^2|v|^2+o(1).
\end{equation*}
\end{Lem}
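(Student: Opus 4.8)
This is a two--variable Brezis--Lieb type identity for the coupling density $|u|^2|v|^2$, and the plan is to prove it by the usual splitting argument, adapted to a product nonlinearity. First I would pass to a subsequence along which $u_n\to u$ and $v_n\to v$ almost everywhere in $\R^4$: since $(u_n,v_n)\rightharpoonup(u,v)$ in $H^1(\R^4,\R^2)$, the sequences $\{u_n\}$, $\{v_n\}$ are bounded in $H^1(\R^4)$, and since the embedding $H^1(B_R)\hookrightarrow L^2(B_R)$ is compact on each ball, $u_n\to u$ and $v_n\to v$ in $L^2_{\loc}(\R^4)$; a diagonal extraction over $R=1,2,\dots$ then yields the a.e.\ convergent subsequence. (This localisation step is genuinely needed, because $H^1(\R^4)$ does not embed compactly into $L^4(\R^4)$.) Put $a_n:=u_n-u$ and $b_n:=v_n-v$; then $a_n\rightharpoonup0$ and $b_n\rightharpoonup0$ in $H^1(\R^4)$, hence also in $L^4(\R^4)$ by continuity of the Sobolev embedding, the sequences $\{a_n\}$, $\{b_n\}$ are bounded in $L^4(\R^4)$, and $a_n\to0$, $b_n\to0$ a.e.

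Next I would expand algebraically, using $u_n=u+a_n$, $v_n=v+b_n$ and $|u_n-u|^2|v_n-v|^2=a_n^2b_n^2$:
\begin{align*}
|u_n|^2|v_n|^2-|u_n-u|^2|v_n-v|^2
&= u^2v^2+2u^2v\,b_n+u^2b_n^2+2uv^2a_n\\
&\quad +4uv\,a_nb_n+2u\,a_nb_n^2+v^2a_n^2+2v\,a_n^2b_n.
\end{align*}
Every term after $u^2v^2$ is a product of a \emph{fixed} function with a factor built from $a_n,b_n$, and the point is that in $\R^4$ the H\"older exponents close so that the fixed factor lies in some $L^r$ while the $(a_n,b_n)$--factor tends weakly to $0$ in the conjugate space $L^{r'}$; pairing a weakly null sequence against a fixed element of the dual then sends each such integral to $0$. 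Concretely $u^2\in L^2$, $u^2v\in L^{4/3}$, $uv\in L^2$, $u\in L^4$ (and symmetrically in $v$), while $b_n\rightharpoonup0$ in $L^4$; moreover $b_n^2$, $a_n^2$, $a_nb_n$ are bounded in $L^2$ and tend to $0$ a.e., hence tend weakly to $0$ in $L^2$ --- recall that a sequence bounded in $L^q$ with $1<q<\infty$ that converges a.e.\ converges weakly --- and likewise $a_nb_n^2$, $a_n^2b_n$ are bounded in $L^{4/3}$ and tend weakly to $0$ there. Summing the seven vanishing integrals gives $\int_{\R^4}\big(|u_n|^2|v_n|^2-|u_n-u|^2|v_n-v|^2\big)\to\int_{\R^4}u^2v^2$, which is the assertion.

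There is no deep obstacle here; the whole argument is a routine Brezis--Lieb splitting, and the only points that need care are the two just flagged: first, extracting the a.e.\ convergent subsequence, which must go through the local Rellich embedding and a diagonal argument since $H^1(\R^4)\hookrightarrow L^4(\R^4)$ is not compact; and second, disposing of the terms quadratic in $(a_n,b_n)$, such as $\int_{\R^4}uv\,a_nb_n$ and $\int_{\R^4}u\,a_nb_n^2$, for which dominated convergence is unavailable and one instead uses that $L^q$--boundedness together with a.e.\ convergence forces weak convergence. It is worth noting that dimension four is exactly what makes the exponent bookkeeping close, since $2^{*}=4$ gives $|u|^2\in L^2$ whenever $u\in H^1(\R^4)$.
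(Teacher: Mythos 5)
Your proof is correct. Note that the paper does not actually prove this lemma---it is stated with a citation to \cite[Lemma 2.4]{GouJean}---so there is no internal argument to compare against; your Brezis--Lieb-type splitting (a.e.\ convergent subsequence via local Rellich compactness plus a diagonal extraction, algebraic expansion in $a_n=u_n-u$, $b_n=v_n-v$, and elimination of the seven cross terms by pairing weakly null sequences in $L^4$, $L^2$, $L^{4/3}$ against fixed dual elements, using that $L^q$-boundedness together with a.e.\ convergence forces weak convergence) is the standard route and is essentially the argument of the cited source. The exponent bookkeeping, which closes precisely because $2^*=4$ in dimension four, is handled correctly throughout.
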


\begin{Lem}\label{lem2.14}(\cite[Chapter 3]{LiM})
Let $u^*,v^*$ denote the symmetric decreasing rearrangement of $u,v$ respectively, and $u,v\in H^1(\R^4)$. Then we have
\begin{equation*}
\|\nabla u^*\|_2\le \|\nabla u\|_2, \ \ \|u^*\|_p=\|u\|_p\ \ \text{and}\ \ \int_{\R^4}|u^*|^2|v^*|^2\ge \int_{\R^4}|u|^2|v|^2.
\end{equation*}
\end{Lem}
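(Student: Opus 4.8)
The plan is to verify the three assertions separately, using throughout the layer-cake description of the symmetric decreasing rearrangement. Since all three quantities are unchanged if we replace $u$ by $|u|$ (for real-valued $u\in H^1(\R^4)$ one has $|\nabla|u||=|\nabla u|$ a.e.), I may assume $u,v\ge0$. Recall that for a nonnegative measurable function vanishing at infinity one writes $u(x)=\int_0^\infty \chi_{\{u>t\}}(x)\,dt$ and defines $u^*(x)=\int_0^\infty \chi_{\{u>t\}^*}(x)\,dt$, where $A^*$ is the open ball centered at the origin with $|A^*|=|A|$. The one structural fact I would isolate first is \emph{equimeasurability}: $|\{u^*>t\}|=|\{u>t\}|$ for all $t>0$. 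The $L^p$-identity is then immediate: by the distribution formula $\int_{\R^4}u^p=\int_0^\infty p t^{p-1}|\{u>t\}|\,dt$, the right-hand side is invariant under rearrangement, so $\|u^*\|_p=\|u\|_p$ for every $p\in[1,\infty)$.

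For the coupling inequality $\int_{\R^4}|u^*|^2|v^*|^2\ge \int_{\R^4}|u|^2|v|^2$, I would reduce to the Hardy--Littlewood inequality. Because $t\mapsto t^2$ is increasing on $[0,\infty)$, rearrangement commutes with it, giving $(u^2)^*=(u^*)^2$ and likewise for $v$; hence it suffices to show $\int fg\le \int f^*g^*$ for the nonnegative functions $f=u^2$, $g=v^2$. I would prove this directly from the layer-cake identity $\int fg=\int_0^\infty\!\!\int_0^\infty |\{f>s\}\cap\{g>t\}|\,ds\,dt$: since $\{f^*>s\}$ and $\{g^*>t\}$ are concentric balls, one is contained in the other, so $|\{f^*>s\}\cap\{g^*>t\}|=\min(|\{f>s\}|,|\{g>t\}|)\ge |\{f>s\}\cap\{g>t\}|$, and integrating in $(s,t)$ yields the claim.

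The main obstacle is the P\'olya--Szeg\H{o} inequality $\|\nabla u^*\|_2\le\|\nabla u\|_2$, which I would establish through the heat semigroup. Starting from the identity $\|\nabla u\|_2^2=\lim_{t\to0^+}\tfrac1t\big(\|u\|_2^2-\langle u,e^{t\Delta}u\rangle\big)$ valid for $u\in H^1(\R^4)$, I write $\langle u,e^{t\Delta}u\rangle=\int_{\R^4}\!\!\int_{\R^4} u(x)p_t(x-y)u(y)\,dx\,dy$ with $p_t$ the Gaussian heat kernel. The crucial point is that $p_t$ is itself radially symmetric and decreasing, i.e. $p_t=p_t^*$, so Riesz's rearrangement inequality gives $\langle u,e^{t\Delta}u\rangle\le\langle u^*,e^{t\Delta}u^*\rangle$. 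Combining this with $\|u\|_2=\|u^*\|_2$ from the $L^p$-step yields $\|u\|_2^2-\langle u,e^{t\Delta}u\rangle\ge\|u^*\|_2^2-\langle u^*,e^{t\Delta}u^*\rangle$; dividing by $t$ and letting $t\to0^+$ produces $\|\nabla u\|_2^2\ge\|\nabla u^*\|_2^2$.

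The technical heart is therefore Riesz's rearrangement inequality for the three-function integral, which is precisely where the geometry of symmetrization enters. An alternative route for P\'olya--Szeg\H{o} via the coarea formula and the isoperimetric inequality is available, but it directly controls only $\|\nabla u\|_1$, and upgrading to the $L^2$ gradient requires the full Riesz inequality (or the heat-flow argument above) in any case. Since the statement is quoted from \cite{LiM}, I would present these three steps as a streamlined verification, invoking Riesz's inequality as a known tool rather than reproving it from first principles.
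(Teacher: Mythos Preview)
Your argument is correct and essentially reproduces the approach in the cited reference (Lieb--Loss); the paper itself gives no proof beyond the citation, so there is nothing to compare against. One small correction to your aside: the coarea/isoperimetric route does in fact yield P\'olya--Szeg\H{o} for all $\|\nabla u\|_p$, $1\le p<\infty$, not just $p=1$---the coarea formula writes $\int|\nabla u|^p$ as an integral of $|\nabla u|^{p-1}$ over level sets, and the geometric--arithmetic mean (or Jensen) together with the isoperimetric inequality handles the general exponent---but this does not affect your main line, which is sound.
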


\vskip1mm

From \cite{CZ,PPW,PYF,WWY}, 
we get the least energy solutions to \eqref{a1}.
Note that \eqref{a1} has semi-trivial solutions $(\mu^{-\frac{1}{2}}_1 U_{\varepsilon}, 0)$ and $(0, \mu^{-\frac{1}{2}}_2 U_{\varepsilon})$. Here, we are only interested in  the nontrivial solutions of \eqref{a1}, this is $(\sqrt{k_1}U_{\varepsilon},\sqrt{k_2}U_{\varepsilon})$, where $U_\varepsilon$ is defined in \eqref{L1}, $k_1=\frac{\beta-\mu_2}{\beta^2-\mu_1\mu_2}$ and $k_2=\frac{\beta-\mu_1}{\beta^2-\mu_1\mu_2}$. The main results in this aspect are summarized below. Next, we turn to the related limiting elliptic system \eqref{a1}.

\begin{Lem}\label{lem2.2}(\cite[Theorems 1.5 and 4.1]{CZ}). Let  $\mu_1,\mu_2>0$.
\begin{enumerate}
\item If $\beta<0$, then  $\mathcal{S}_{\mu_1,\mu_2,\beta}$ is not attained.
\item If $0<\beta<\min\{\mu_1,\mu_2\}$ or $\beta>\max\{\mu_1,\mu_2\}$,   
then any positive least energy solution $(u,v)$ of \eqref{a1} must be of the form
$$(u,v)=\big(\sqrt{k_1}U_{\varepsilon}, \sqrt{k_2}U_{\varepsilon}\big).$$

\item If $\beta\in [\min\{\mu_1,\mu_2\},\max\{\mu_1,\mu_2\}]$ and $\mu_1\neq\mu_2$, \eqref{a1} does not have a nontrivial nonnegative solution.
\end{enumerate}
\end{Lem}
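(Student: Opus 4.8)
The three items require different techniques; the plan is to dispatch (1) and (3) by short testing arguments and to concentrate on the classification (2). For (1), first note that $\mathcal{S}_{\mu_1,\mu_2,\beta}=\mathcal{S}/\sqrt{\max\{\mu_1,\mu_2\}}$: discarding the nonpositive term $2\beta\|uv\|_2^2$, the scalar Sobolev inequality in the numerator together with $\mu_1\|u\|_4^4+\mu_2\|v\|_4^4\le\max\{\mu_1,\mu_2\}\big(\|u\|_4^2+\|v\|_4^2\big)^2$ in the denominator gives
\[
\frac{\|\nabla u\|_2^2+\|\nabla v\|_2^2}{\big(\mu_1\|u\|_4^4+\mu_2\|v\|_4^4+2\beta\|uv\|_2^2\big)^{1/2}}\;\ge\;\frac{\mathcal{S}}{\sqrt{\max\{\mu_1,\mu_2\}}},
\]
and the right-hand side is realized by a single Aubin--Talenti bubble in the component carrying the larger $\mu_i$ (and $0$ in the other). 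Now if $(u,v)$ with $u\not\equiv0\not\equiv v$ attained $\mathcal{S}_{\mu_1,\mu_2,\beta}$, then necessarily $uv\equiv0$ (otherwise discarding the strictly negative cross term strictly increases the quotient), and $(u,v)$ would also have to be a Sobolev extremal in each nonzero component; but Sobolev extremals are everywhere positive, so they cannot have disjoint supports — a contradiction. Hence $\mathcal{S}_{\mu_1,\mu_2,\beta}$ is not attained by a fully nontrivial pair.

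For (3), after relabeling assume $\mu_1<\mu_2$ and $\mu_1\le\beta\le\mu_2$, and let $(u,v)$ be a nonnegative solution of \eqref{a1} with $u\not\equiv0\not\equiv v$, so $u,v>0$ by the strong maximum principle. Testing the first equation of \eqref{a1} with $v$ and the second with $u$, both left-hand sides equal $\int_{\R^4}\nabla u\cdot\nabla v$, so subtracting yields $(\mu_1-\beta)\int_{\R^4}u^3v+(\beta-\mu_2)\int_{\R^4}uv^3=0$. Both integrals are strictly positive, $\mu_1-\beta\le0$, $\beta-\mu_2\le0$, and these are not both zero since $\mu_1\neq\mu_2$; hence the left-hand side is strictly negative — a contradiction. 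Thus \eqref{a1} has no solution with both components nontrivial and nonnegative.

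Item (2) is the main point and the hardest step. Let $(u,v)$ be a positive solution of \eqref{a1}. Testing the two equations with $u$ and $v$ and adding gives $\|\nabla u\|_2^2+\|\nabla v\|_2^2=\mu_1\|u\|_4^4+\mu_2\|v\|_4^4+2\beta\|uv\|_2^2$, so the energy associated to \eqref{a1} evaluated on solutions reduces to $E(u,v)=\tfrac14\big(\|\nabla u\|_2^2+\|\nabla v\|_2^2\big)$. Write $a=\|u\|_4^2>0$, $b=\|v\|_4^2>0$. Testing the first equation with $u$ and using $\|\nabla u\|_2^2\ge\mathcal{S}a$ together with $\|uv\|_2^2\le ab$ gives $\mathcal{S}\le\mu_1a+\beta b$; symmetrically $\mathcal{S}\le\beta a+\mu_2 b$. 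The hypothesis $\beta\in(0,\min\{\mu_1,\mu_2\})\cup(\max\{\mu_1,\mu_2\},\infty)$ is exactly what makes $k_1=\tfrac{\beta-\mu_2}{\beta^2-\mu_1\mu_2}>0$ and $k_2=\tfrac{\beta-\mu_1}{\beta^2-\mu_1\mu_2}>0$; since moreover $k_1\mu_1+k_2\beta=k_1\beta+k_2\mu_2=1$, combining the two inequalities with weights $k_1$ and $k_2$ yields $(k_1+k_2)\mathcal{S}\le a+b$, whence
\[
E(u,v)\;\ge\;\tfrac14\mathcal{S}(a+b)\;\ge\;\tfrac14(k_1+k_2)\mathcal{S}^2 .
\]
Since $(\sqrt{k_1}U_\varepsilon,\sqrt{k_2}U_\varepsilon)$ solves \eqref{a1} with energy exactly $\tfrac14(k_1+k_2)\mathcal{S}^2$ (using $\|\nabla U_\varepsilon\|_2^2=\mathcal{S}^2$), this is the least energy among positive solutions.

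For the rigidity I would argue that a positive least energy solution turns both inequalities above into equalities, so in particular $\|\nabla u\|_2^2=\mathcal{S}a$ and $\|\nabla v\|_2^2=\mathcal{S}b$; by the classification of Sobolev extremals (see \cite{GT} and \eqref{L1}) $u$ is a positive multiple of an Aubin--Talenti bubble, $u=cU_\varepsilon(\cdot-x_0)$ with $c>0$, hence $-\Delta u=c^{-2}u^3$. Substituting into the first equation of \eqref{a1} gives $(c^{-2}-\mu_1)u^2=\beta v^2$, so $v=tu$ for a constant $t>0$; substituting $v=tu$ into the second equation and solving the two resulting identities gives $t^2=\tfrac{\mu_1-\beta}{\mu_2-\beta}$ and $c^2=k_1$, so $c=\sqrt{k_1}$ and $tc=\sqrt{k_2}$, i.e.\ $(u,v)=(\sqrt{k_1}U_\varepsilon,\sqrt{k_2}U_\varepsilon)$ up to translation. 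The two delicate points — and where the real work lies — are exactly where the sign condition on $\beta$ enters (positivity of $k_1,k_2$, hence of the weights in the linear combination and of the proportionality constant $t^2$) and the fact that for $0<\beta<\min\{\mu_1,\mu_2\}$ the synchronized pair is \emph{not} the global ground state of \eqref{a1}, since the semi-trivial solutions $(\mu_1^{-1/2}U_\varepsilon,0)$ and $(0,\mu_2^{-1/2}U_\varepsilon)$ have strictly smaller energy; thus (2) must be read as a classification of the least energy solutions among those with \emph{both} components nontrivial, rather than as an existence assertion.
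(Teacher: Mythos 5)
The paper does not actually prove this lemma: it is imported verbatim from Chen--Zou \cite{CZ} (Theorems 1.5 and 4.1), so there is no internal proof to compare against; what you have written is a self-contained reconstruction, and it follows what is essentially the standard route for the $N=4$ critical system. Item (3) by testing each equation against the other component is exactly the classical argument and is correct. In item (2) the mechanism is right: the Nehari identity reduces the energy to $\frac14(\|\nabla u\|_2^2+\|\nabla v\|_2^2)$, the two scalar inequalities $\mathcal{S}\le\mu_1a+\beta b$ and $\mathcal{S}\le\beta a+\mu_2 b$ combined with the positive weights $k_1,k_2$ (and the identities $k_1\mu_1+k_2\beta=k_1\beta+k_2\mu_2=1$) give the sharp lower bound $\frac{k_1+k_2}{4}\mathcal{S}^2$, which the synchronized pair attains, and equality forces $u,v$ to be Sobolev extremals and then, through the pointwise identity $(c^{-2}-\mu_1)u^2=\beta v^2$, proportional to one another with the stated constants; I checked the algebra ($t^2=\frac{\mu_1-\beta}{\mu_2-\beta}$, $c^2=k_1$, $(tc)^2=k_2$) and it is correct. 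Two small points deserve care. First, in item (1) the clause ``discarding the strictly negative cross term strictly increases the quotient'' is stated backwards: discarding it enlarges the denominator and therefore \emph{decreases} the quotient; the fact you actually need, and do use, is that a pair with $uv\not\equiv0$ has a quotient strictly larger than the cross-term-free one, hence strictly above $\mathcal{S}/\sqrt{\max\{\mu_1,\mu_2\}}$. (A shorter route: when both components are nonzero the inequality $\mu_1\|u\|_4^4+\mu_2\|v\|_4^4\le\max\{\mu_1,\mu_2\}\big(\|u\|_4^2+\|v\|_4^2\big)^2$ is already strict because of the missing $2\|u\|_4^2\|v\|_4^2$ term, so no fully nontrivial pair can reach the infimum irrespective of the cross term.) Second, as \eqref{f4} is written the infimum ranges over all $(u,v)\ne(0,0)$, and for $\beta<0$ it \emph{is} attained by the semitrivial pair $\big(\mu^{-1/2}U_{\varepsilon},0\big)$ with $\mu=\max\{\mu_1,\mu_2\}$; your reading of ``not attained'' as ``not attained with both components nontrivial'' is the correct one and matches \cite{CZ}, as is your closing observation that for $0<\beta<\min\{\mu_1,\mu_2\}$ the phrase ``least energy'' in item (2) must mean least among solutions with both components nontrivial, since the semitrivial solutions then lie strictly below the level $\frac{k_1+k_2}{4}\mathcal{S}^2$.
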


\vskip0.3in

Solutions to \eqref{a1} correspond to critical points of the functional
\begin{equation*}
J(u,v)=\int_{\R^4}\frac{1}{2}\big(|\nabla u|^2+|\nabla v^2|\big)-\frac{1}{4}\big(\mu_1|u|^4+\mu_2|v|^4+2\beta|u|^{2}|v|^{2}\big),\ \ u,v\in D^{1,2}(\R^4).
\end{equation*}
From Lemma 2.1 of \cite{PPW} or \cite{CZ}, if $0<\beta<\min\{\mu_1,\mu_2\}$ or $\beta>\max\{\mu_1,\mu_2\}$, $(u_0,v_0)$ is a least energy solution of \eqref{a1}, we have
\begin{equation*}
J(u_0,v_0)=\frac{k_1+k_2}{4}\mathcal{S}^2 \ \  \text{and} \ \ \mathcal{S}_{\mu_1,\mu_2, \beta}=\sqrt{k_1+k_2}\mathcal{S}.
\end{equation*}

We observe that the functional $I(u,v)$ defined in \eqref{eq1.4} is well defined.
Throughout this paper, we denote
\begin{equation}\label{b3}
    D_1=\frac{1}{4\mathcal{S}^2_{\mu_1,\mu_2, \beta}}, \ \quad
	D_2=\frac{\alpha_1}{p} |\mathcal{C}_p|^{p}a_1^{4-p}\ \ \text{and} \ \
	D_3=\frac{\alpha_2}{p} |\mathcal{C}_p|^{p}a_2^{4-p}.
\end{equation}
Then we have
\begin{equation}\label{b4}
\begin{aligned}
\quad\ \frac{1}{4}\int_{\R^4}\mu_1|u|^4+\mu_2|v|^4+2\beta|u|^{2}|v|^{2}\leq D_1\Big(\int_{\R^4}|\nabla u|^2+|\nabla v^2|\Big)^2,
\end{aligned}
\end{equation}


\begin{equation}\label{b5}
	\frac{1}{p}\int_{\R^4}\alpha_1|u|^p\leq D_2\|\nabla u\|_2^{2(p-2)}\quad \text{and}\quad \frac{1}{p}\int_{\R^4}\alpha_2|v|^p\leq D_3\|\nabla v\|_2^{2(p-2)}.
\end{equation}
Substituting \eqref{b4}-\eqref{b5} into \eqref{eq1.4}, we obtain
\begin{equation}\label{b7}
\begin{aligned}
	I(u,v)&\geq\frac{1}{2}\big(\|\nabla u\|^2_2+\|\nabla v\|^2_2\big)-D_1\big(\|\nabla u\|^2_2+\|\nabla v\|^2_2\big)^{2}\\
	&\quad -D_2\|\nabla u\|_2^{2(p-2)}-D_3\|\nabla v\|_2^{2(p-2)}\\
		&\geq h\big([\|\nabla u\|^2_2+\|\nabla v\|^2_2]^{\frac{1}{2}}\big),
\end{aligned}
\end{equation}
where $h(\rho):(0,+\infty)\to\R$ is defined by
\begin{equation}\label{n11}
	h(\rho)=\frac{1}{2}\rho^2-D_1\rho^{4}-(D_2+D_3)\rho^{2(p-2)}, \ \ \forall \rho\ge0.
\end{equation}

Recall the definitions of $T(a_1,a_2)$ and $\gamma_1$ in \eqref{r1}.
\begin{Lem}\label{lem2.3}
Let $\mu_i,\alpha_i,a_i(i=1,2)$, $\beta>\!0$, $2\!<\!p\!<\!3$ and $\rho_0\!=\big[\!\frac{3-p}{2(4-p)D_1}\big]^{\frac{1}{2}}$.

\indent $(i)$ If $T(a_1,a_2)\!<\!\gamma_1$,  then $h(\rho)$ has a local minimum at negative level and a global maximum at positive level.  Moreover, there exist $R_0=R_0(a_1,a_2)$ and $R_1=R_1(a_1,a_2)$ such that
$$0< R_0 <{\rho}_0< R_1, \quad  h(R_0)=0=h(R_1);   \;\; \quad  h(\rho)\!>\!0 \Leftrightarrow \rho\!\in\! (R_0,R_1).$$
\indent $(ii)$ If $T(a_1,a_2)\!=\!\gamma_1$,  then $h(\rho)$ has a local minimum at negative level and a global maximum at level $0$. Moreover, we have
$h(\rho^*)=0$ and $h(\rho)\!<\!0 \Leftrightarrow \rho\!\in\!(0, \rho^*) \cup (\rho^*,+\infty)$.
\end{Lem}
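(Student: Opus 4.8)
The plan is to reduce the whole statement to the elementary analysis of the rescaled function
\[
g(\rho):=\frac{h(\rho)}{\rho^{2(p-2)}}=\tfrac12\rho^{2(3-p)}-D_1\rho^{2(4-p)}-(D_2+D_3),\qquad \rho>0,
\]
which has the same sign as $h$ and the same zeros on $(0,+\infty)$. Since $2<p<3$ we have $0<2(3-p)<2(4-p)$, so $g(0^+)=-(D_2+D_3)<0$ and $g(\rho)\to-\infty$ as $\rho\to+\infty$. A direct differentiation gives
\[
g'(\rho)=\rho^{2(3-p)-1}\big[(3-p)-2(4-p)D_1\rho^2\big],
\]
hence $g'>0$ on $(0,\rho_0)$ and $g'<0$ on $(\rho_0,+\infty)$ with $\rho_0=\big[\tfrac{3-p}{2(4-p)D_1}\big]^{1/2}$: the function $g$ is strictly increasing and then strictly decreasing, with a unique global maximum at $\rho_0$.

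The one computation that matters is the value $g(\rho_0)$. Using $\rho_0^{2}=\tfrac{3-p}{2(4-p)D_1}$ and collecting the first two terms one gets $g(\rho_0)=\tfrac{1}{2(4-p)}\big(\tfrac{3-p}{2(4-p)D_1}\big)^{3-p}-(D_2+D_3)$; substituting $D_1=\tfrac1{4\mathcal{S}^2_{\mu_1,\mu_2,\beta}}$ and $D_2+D_3=\tfrac{|\mathcal{C}_p|^p}{p}\,T(a_1,a_2)$ from \eqref{b3} and \eqref{r1} rewrites this as
\[
g(\rho_0)=\frac{1}{2(4-p)}\Big(\frac{2(3-p)\mathcal{S}^2_{\mu_1,\mu_2,\beta}}{4-p}\Big)^{3-p}-\frac{|\mathcal{C}_p|^p}{p}\,T(a_1,a_2),
\]
which, compared with the definition \eqref{r1-zwm} of $\gamma_1$, says precisely that $g(\rho_0)>0\iff T(a_1,a_2)<\gamma_1$ and $g(\rho_0)=0\iff T(a_1,a_2)=\gamma_1$.

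With this in hand the two cases are immediate. If $T(a_1,a_2)<\gamma_1$, then $g$ increases from a negative value up to $g(\rho_0)>0$ on $(0,\rho_0)$ and decreases back to $-\infty$ on $(\rho_0,+\infty)$, so by the intermediate value theorem and strict monotonicity there are unique zeros $R_0\in(0,\rho_0)$ and $R_1\in(\rho_0,+\infty)$ with $g>0$ precisely on $(R_0,R_1)$; passing back to $h$ gives $0<R_0<\rho_0<R_1$, $h(R_0)=0=h(R_1)$ and $h(\rho)>0\iff\rho\in(R_0,R_1)$. Moreover $h$ is continuous on $[0,+\infty)$ with $h(0)=0$, $h(\rho)\to-\infty$ as $\rho\to+\infty$, and $h(\rho_0)=\rho_0^{2(p-2)}g(\rho_0)>0$, so $h$ attains a global maximum at a positive level; and since $h<0$ on $(0,R_0)$ while $h(0)=h(R_0)=0$, the minimum of $h$ over $[0,R_0]$ is attained at an interior point, giving a local minimum of $h$ at negative level. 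If instead $T(a_1,a_2)=\gamma_1$, then $g\le0$ on $(0,+\infty)$ with equality only at $\rho^{\ast}:=\rho_0$, so $h\le0$ with $h(\rho)=0\iff\rho=\rho^{\ast}$, i.e. $h(\rho)<0\iff\rho\in(0,\rho^{\ast})\cup(\rho^{\ast},+\infty)$ and the global maximum is at level $0$; the local minimum at negative level is obtained exactly as before by minimizing $h$ over $[0,\rho^{\ast}]$.

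The only place that requires care is the algebraic identity for $g(\rho_0)$ and its identification with $\gamma_1$: one must track the exponents $2(3-p)$ and $2(4-p)$ and verify the cancellation $\tfrac12-\tfrac{3-p}{2(4-p)}=\tfrac1{2(4-p)}$, since a slip there would shift the threshold constant $\gamma_1$. Everything else is the standard "convex–concave geometry" bookkeeping and is routine.
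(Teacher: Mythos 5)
Your proof is correct and follows essentially the same route as the paper: your $g(\rho)=h(\rho)/\rho^{2(p-2)}$ is, up to the additive constant $D_2+D_3$, exactly the paper's auxiliary function $\psi_1(\rho)=\tfrac12\rho^{6-2p}-D_1\rho^{8-2p}$, and your identification of the threshold $\gamma_1$ via the peak value $g(\rho_0)$ is the same computation the paper performs. The only difference is that you omit the paper's separate analysis of $\phi_1(\rho)=\rho^{6-2p}-4D_1\rho^{8-2p}$ (and the comparison $\gamma_1<\gamma_0$) showing that $h$ has \emph{exactly two} critical points; that fact is not needed for the statement of Lemma 2.3 as written, since your intermediate-value argument already yields the local minimum at negative level, but it is invoked later (e.g.\ in Lemma 2.4), so note that it does not follow from what you proved.
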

\begin{proof}
$(i)$ We first prove that $h$ has exactly two critical points. In fact,
$$
h'(\rho)=0 \Longleftrightarrow \phi_1(\rho)=2(p-2)(D_2+D_3), ~~~~\mbox{with}~~~~\phi_1(\rho)=\rho^{6-2p}-4D_1\rho^{8-2p}.
$$
We have, $\phi_1(\rho) \nearrow$ on $[0,\bar{\rho})$, $\searrow$ on $(\bar{\rho},+\infty)$, where $\bar{\rho}\!=\big[\!\frac{3-p}{4(4-p)D_1}\big]^{\frac{1}{2}}$.
Since $2\!<\!p\!<\!3$, we get $\max\limits_{\rho\geq0}\phi_1(\rho)\!=\!\phi_1(\bar{\rho})
\!=\!\frac{1}{4-p}\bar{\rho}^{6\!-\!2p}
\!>\!2(p-2)(D_2+D_3)$ if and only if
\begin{equation*} \label{c^*}
\alpha_1a_1^{4-p}+\alpha_2a_2^{4-p}\!<\!\gamma_0\!:=\!\frac{p}{2(p-2)(4-p)|\mathcal{C}_p|^{p}}
\Big[\!\frac{(3\!-\!p)S^{2}_{\mu_1,\mu_2, \beta}}{4\!-\!p}\Big]^{3-p}.
\end{equation*}
By $\phi_1(0^+)\!=\!0^+$ and $\phi_1(+\infty)\!=\!-\infty$, we see  that $h$ has exactly two critical points if $T(a_1,a_2):=\alpha_1a_1^{4-p}+\alpha_2a_2^{4-p}\!<\!\gamma_0$.

\vskip3mm
Notice that
$$
h(\rho)>0 \Longleftrightarrow \psi_1(\rho)>D_2+D_3, ~~~~\mbox{with}~~~~\psi_1(\rho)=\frac{1}{2}\rho^{6-2p}-D_1\rho^{8-2p}.
$$
It is not difficult to check that $\psi_1(\rho) \nearrow$ on $[0,\rho_0)$, $\searrow$ on $(\rho_0,+\infty)$, where $\rho_0\!=\big[\!\frac{3-p}{2(4-p)D_1}\big]^{\frac{1}{2}}$.
We have $\max\limits\limits_{\rho\geq0}\psi_1(\rho)\!=\!\psi_1(\rho_0)
\!=\!\frac{1}{2(4-p)}\rho_0^{6\!-\!2p}
\!>\!D_2+D_3$ provided
\begin{equation*}
\alpha_1a_1^{4-p}+\alpha_2a_2^{4-p}\!<\!\gamma_1\!:=\!\frac{p}{2(4-p)|\mathcal{C}_p|^{p}}
\Big[\!\frac{2(3\!-\!p)S^{2}_{\mu_1,\mu_2, \beta}}{4\!-\!p}\Big]^{3-p}.
\end{equation*}
We have $h(\rho)\!>\!0$ on an open interval $(R_0,R_1)$ if and only if $T(a_1,a_2):=\alpha_1a_1^{4-p}+\alpha_2a_2^{4-p}\!<\!\gamma_1$. It follows from Lemma 5.2 of \cite {Soave1} that
$$(p-2)2^{3-p}<1\Leftrightarrow \gamma_1<\gamma_0.$$
Combined with $h(0^+)=0^{-}$ and $h(+\infty)=-\infty$, we see that $h$ has a local minimum point at negative level in $(0,R_0)$ and a global maximum point at positive level in $(R_0,R_1)$.

\vskip1mm
$(ii)$  Similar to the proof of (i), we deduce that
\begin{equation*}
R_0=\bar{\rho}=\rho_{0}=R_1,\quad \psi_{1}(\rho_{0})=D_2+D_3,\quad \phi_{1}(\bar{\rho})>2(p-2)(D_2+D_3).
\end{equation*}
\end{proof}

\begin{Lem}\label{lem2.4} Let $\mu_i,\alpha_i,a_i(i=1,2)$, $\beta>\!0$ and $2\!<\!p\!<\!3$, $T(a_1,a_2)\le\gamma_{1}$.   Then, for fixed $(u,v)\in S(a_1)\times S(a_2)$,     $\Psi_{(u,v)}(s)$ has exactly two critical points in $(0,\infty)$.
\end{Lem}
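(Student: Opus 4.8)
The plan is to study the function $s\mapsto \Psi_{(u,v)}(s)$ for fixed $(u,v)\in S(a_1)\times S(a_2)$ and show it has exactly two critical points. Write $A=\|\nabla u\|_2^2+\|\nabla v\|_2^2>0$, $B=\mu_1\|u\|_4^4+\mu_2\|v\|_4^4+2\beta\|u\|^2\|v\|^2>0$, and $C=\alpha_1\|u\|_p^p+\alpha_2\|v\|_p^p>0$ (here we use $\alpha_i>0$). From \eqref{c4},
\begin{equation*}
\Psi_{(u,v)}(s)=\tfrac{A}{2}e^{2s}-\tfrac{B}{4}e^{4s}-\tfrac{C}{p}e^{p\gamma_p s},
\end{equation*}
and since $2<p<3$ we have $p\gamma_p=2(p-2)\in(0,2)$. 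Differentiating,
\begin{equation*}
\Psi_{(u,v)}'(s)=A\,e^{2s}-B\,e^{4s}-\gamma_p C\,e^{p\gamma_p s}.
\end{equation*}
So $\Psi_{(u,v)}'(s)=0$ is equivalent, after dividing by $e^{p\gamma_p s}>0$, to
\begin{equation*}
g(s):=A\,e^{(2-p\gamma_p)s}-B\,e^{(4-p\gamma_p)s}=\gamma_p C,
\end{equation*}
where both exponents $2-p\gamma_p=6-2p>0$ and $4-p\gamma_p=8-2p>0$ are positive because $p<3<4$.

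Next I would analyze $g$. As $s\to-\infty$ both terms go to $0$, so $g(s)\to 0^+$ (the $A$-term dominates for $s$ very negative since $6-2p<8-2p$); as $s\to+\infty$, $g(s)\to-\infty$ since the $B$-term dominates. Computing $g'(s)=A(6-2p)e^{(6-2p)s}-B(8-2p)e^{(8-2p)s}$, one sees $g'(s)=0$ has a unique solution $s^\ast$ (solve for $e^{2s}$), with $g'>0$ on $(-\infty,s^\ast)$ and $g'<0$ on $(s^\ast,+\infty)$; hence $g$ is strictly increasing then strictly decreasing, with a unique global maximum at $s^\ast$. Therefore the equation $g(s)=\gamma_p C$ has exactly two solutions provided $g(s^\ast)>\gamma_p C$, exactly one if equality holds, and none if $g(s^\ast)<\gamma_p C$. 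The content of the lemma is that under the hypothesis $T(a_1,a_2)\le\gamma_1$ we are in the first (or borderline) case; I would verify this by translating $g(s^\ast)>\gamma_p C$ via the Gagliardo–Nirenberg inequality \eqref{b2} and the Sobolev-type inequality \eqref{b4} into precisely the inequality $T(a_1,a_2)<\gamma_1$, exactly as in the proof of Lemma \ref{lem2.3}. Indeed, bounding $C\le p(D_2+D_3)\|\nabla u\|_2^{2(p-2)}\|\nabla v\|_2^{\cdots}$ — more cleanly, observing that $\Psi_{(u,v)}(s)\ge h\big([A]^{1/2}e^s\big)$ by the same computation that produced \eqref{b7}, and that critical points of $\Psi_{(u,v)}$ project onto the region where $h>0$ — reduces the count to Lemma \ref{lem2.3}(i): $h>0$ on an interval $(R_0,R_1)$, so the graph of $\Psi_{(u,v)}$ rises above $0$, is negative near $s=\pm\infty$ (as $\Psi_{(u,v)}(s)\to 0^-$ as $s\to-\infty$ and $\to-\infty$ as $s\to+\infty$ from the $e^{4s}$ term), hence has at least one interior maximum; combined with the "at most two" count from the monotonicity of $g$, this gives exactly two.

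The cleanest route, which I would actually write: (1) reduce $\Psi_{(u,v)}'(s)=0$ to $g(s)=\gamma_p C$ as above; (2) show $g$ has the unimodal shape (increasing then decreasing, $\to 0^+$ at $-\infty$, $\to-\infty$ at $+\infty$) so there are at most two critical points; (3) show there are at least two by a sign argument: $\Psi_{(u,v)}(s)\to 0^-$ as $s\to-\infty$ while $\Psi_{(u,v)}$ takes a strictly positive value somewhere — this is where $T(a_1,a_2)\le\gamma_1$ enters, via $\Psi_{(u,v)}(s)\ge h\big(\sqrt{A}\,e^s\big)$ and Lemma \ref{lem2.3} guaranteeing $h$ attains a positive (or, in the borderline case $T=\gamma_1$, one must argue slightly more carefully using that $h(\rho)<0$ only off a single point, so genuine positivity of $\Psi_{(u,v)}$ may fail and instead one appeals directly to the count of solutions of $g(s)=\gamma_p C$ being exactly two because $g(s^\ast)\ge\gamma_p C$ with the degenerate tangency excluded for $(u,v)$ not optimal in \eqref{b2}). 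The main obstacle is precisely this borderline case $T(a_1,a_2)=\gamma_1$: there the lower bound $h$ only touches $0$, so the "strictly positive value" argument in step (3) does not apply verbatim, and I would instead argue at the level of $g$ directly — the inequalities \eqref{b2}, \eqref{b4} are not simultaneously sharp for a fixed pair $(u,v)$ (equality in Gagliardo–Nirenberg forces $u$ proportional to a translate/dilate of $w_p$, equality in the Sobolev system inequality forces an Aubin–Talanti profile, and these are incompatible), so in fact $g(s^\ast)>\gamma_p C$ strictly for every admissible $(u,v)$, yielding two critical points even when $T(a_1,a_2)=\gamma_1$.
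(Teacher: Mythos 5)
Your proof is correct and follows essentially the same route as the paper: the paper also passes to $t=e^s$, bounds the coefficients of $\Psi_{(u,v)}$ by $D_1,D_2,D_3$ via \eqref{b4}--\eqref{b5}, and reduces the count of critical points to the computation already done for $h$ in Lemma \ref{lem2.3}. One clarification is worth making: the borderline case $T(a_1,a_2)=\gamma_1$, which you single out as ``the main obstacle,'' is not actually an obstacle for this lemma. The condition governing the number of solutions of $g(s)=\gamma_p C$ (equivalently, of $\phi_1(\rho)=2(p-2)(D_2+D_3)$ in the notation of Lemma \ref{lem2.3}) is $T(a_1,a_2)<\gamma_0$, not $T(a_1,a_2)<\gamma_1$; since the paper checks $\gamma_1<\gamma_0$ (via $(p-2)2^{3-p}<1$, quoting Lemma 5.2 of Soave), the hypothesis $T(a_1,a_2)\le\gamma_1$ already yields the strict inequality $g(s^{\ast})>\gamma_p C$, and no appeal to non-sharpness of \eqref{b2} or \eqref{b4} is required. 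Your fallback argument --- that $\mathcal{S}_{\mu_1,\mu_2,\beta}$ is not attained in $H^1$, so \eqref{b4} is strict for any fixed $(u,v)\in S(a_1)\times S(a_2)$ --- is also valid, just unnecessary. The threshold $\gamma_1$ is what controls the sign of $h$ (hence the positivity of $\Psi_{(u,v)}$ on an interval, used later in Lemma \ref{lem3.2}), not the critical-point count here.
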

\begin{proof}
By \eqref{c4}, we have
\begin{equation}\label{c11}
\begin{aligned}
\Psi_{(u,v)}(s) &=\frac{e^{2s}}{2}\int_{\R^4}(|\nabla u|^2+|\nabla v|^2)-\frac{e^{4s}}{4}\int_{\R^4}\big(\mu_1|u|^4+\mu_2|v|^4+2\beta|u|^{2}|v|^{2}\big)\\
&\quad-\frac{e^{p\gamma_ps}}{p}\int_{\R^4}\alpha_1|u|^p-\frac{e^{p\gamma_ps}}{p}\int_{\R^4}\alpha_2|v|^p.
\end{aligned}
\end{equation}
Let $t=e^s$, $A_1=\int_{\R^4}(|\nabla u|^2+|\nabla v|^2)$, $A_2=\frac{1}{4}\int_{\R^4}\big(\mu_1|u|^4+\mu_2|v|^4+2\beta|u|^{2}|v|^{2}\big)$, $A_3=\frac{1}{p}\int_{\R^4}\alpha_1|u|^p$ and $A_4=\frac{1}{p}\int_{\R^4}\alpha_2|v|^p$. Moreover, from \eqref{b4}-\eqref{b5}, we have $A_2\le D_1|A_1|^2$, $A_3\le D_2 |A_1|^{p-2}$ and $A_4\le D_3 |A_1|^{p-2}$. Then we can rewrite \eqref{c11} as
\begin{equation*}
\begin{aligned}
l(t)&=\frac{A_1}{2}t^2-A_2t^4-(A_3+A_4)t^{2(p-2)}\\
    &\ge \frac{1}{2}(A^{\frac{1}{2}}_1t)^2-D_1(A^{\frac{1}{2}}_1t)^4-(D_2+D_3)(A^{\frac{1}{2}}_1t)^{2(p-2)}.
\end{aligned}
\end{equation*}
Similar to the calculation in Lemma \ref{lem2.3}, we can prove $\Psi_{(u,v)}(s)$ has exactly two critical points in $(0,\infty)$.
\end{proof}

\vskip0.3in

\section{Proof of Theorems 1.1 and 1.2 }
In this section, we first study the case $\alpha_i<0(i=1,2)$, $2<p<4$, $\mu_i>0(i=1,2)$ and $\beta>0$.
\vskip1mm

\noindent \textbf{Proof of Theorem 1.1.}
If $\alpha_i<0(i=1,2)$, let $(u,v)$ be a constraint critical point of $I$ on $S(a_1)\times S(a_2)$ and $u,v>0$. Therefore, $(u,v)$ solves \eqref{eq1.1}-\eqref{eq1.11} for some $\lambda_1,\lambda_2\in \R$. 
By the Pohozaev identity $P_{a_1,a_2}(u,v)=0$, we have
\begin{equation*}\label{y1}
\lambda_1\|u\|^2_2+\lambda_2\|v\|^2_2=\alpha_1(1-\gamma_p)\|u\|^p_p+\alpha_2(1-\gamma_p)\|v\|^p_p.
\end{equation*}
Then one of the $\lambda_1,\lambda_2$ is negative. Without  loss of generality, we assume $\lambda_1<0$, $\alpha_1<0$. It follows from Lemma \ref{lem2.12} that $(u,v)$ is smooth, and is in $L^{\infty}(\R^4)\times L^{\infty}(\R^4)$; thus, $|\Delta u|, |\Delta v|\in L^{\infty}(\R^4)$ as well, and stand gradient estimates for the Poisson equation (see formula (3,15) in \cite{Tru}) imply that $|\nabla u|, |\nabla v|\in L^{\infty}(\R^4)$. Combining the fact that $u,v\in L^2(\R^4)$, we get $u(x), v(x)\to 0$ as $|x|\to \infty$. Thus, we have
\begin{equation*}
-\Delta u=\big(-\lambda_1+\alpha_1|u|^{p-2}+\mu_1u^2+\beta v^2\big)u\ge \frac{-\lambda_1}{2} u>0 \ \
\end{equation*}
for $|x|>R_0$, with $R_0>0$ large enough, and then $u$ is superharmonic at infinity. From the Hadamard three spheres theorem \cite[Chapter 2]{ProW}, this implies that the function $m(r):= \min\limits_{|x|=r} u(x)$ satisfies
\begin{equation*}
m(r) \ge \frac{m(r_1)\big(r^{-2}-r_2^{-2}\big) + m(r_2)\big( r_1^{-2}- r^{-2}\big)}{r_1^{-2}- r_2^{-2}} \qquad \forall R_0 < r_1 < r < r_2.
\end{equation*}
Since $u$ decays at infinity, we have that $m(r_2) \to 0$ as $r_2 \to +\infty$, it is not difficult to see that $r \mapsto r^{2} m(r)$ is monotone non-decreasing for $r>R_0$. Moreover, $m(r) >0$ for every $r>0$ because $u>0$ in $\R^4$. Thus,
\begin{equation*}
m(r) \ge m(R_0) R_0^{2}\cdot r^{-2} \qquad \forall r>R_0,
\end{equation*}
and we deduce that
\begin{equation*}
\begin{split}
\|u\|_{2}^2
 \ge C \int_{R_0}^{+\infty} |m(r)|^2 r^{3} dr  \ge C \int_{R_0}^{+\infty} \frac{1}{r} dr=+\infty,
\end{split}
\end{equation*}
with $C>0$.  This is a contradiction. Then we know that  \eqref{eq1.1}-\eqref{eq1.11} has no positive solution for $\alpha_i<0(i=1,2)$.
\vskip3mm

If $u\in H^1(\R^4)$ is a radial function, by \cite[Radial Lemma A.II]{BHL}, there exist $C>0$ and $R_1>0$ such that
\begin{equation*}
|u(x)|\le C |x|^{-\frac{3}{2}}\ \ \text{for}\ \ |x|\ge R_1.
\end{equation*}
If $\alpha_i<0(i=1,2)$, let $(u,v)$ be a non-trivial radial solution of \eqref{eq1.1}-\eqref{eq1.11}.
Similarly, we assume $\lambda_1<0$.
Setting $p(x)=-\alpha_1|u|^{p-2}-\mu_1u^2-\beta v^2$, then
\begin{equation}\label{z7}
-\Delta u+p(x)u= -\lambda_1 u.
\end{equation}
Since $p >\frac{8}{3}$,
\begin{equation*}
\lim_{n\to +\infty}|x||p(x)|\le \lim_{n\to +\infty}\big[C|x|^{-\frac{3p}{2}+4}+C|x|^{-2} \big]=0.
\end{equation*}
By Kato's result \cite{KT}, i.e.,  Schr\"{o}dinger operator $H = -\Delta+ p(x)$ has no positive eigenvalue with an $L^2$-eigenfunction if $p(x) = o(|x|^{-1})$, then \eqref{z7} has no solution. We  show that  \eqref{eq1.1}-\eqref{eq1.11} has no non-trivial radial solution for $p >\frac{8}{3}$ and $\alpha_i<0(i=1,2)$.
\qed

\vskip4mm
In the following, we deal with the case $2<p<3$, $\alpha_i,\mu_i,a_i>0(i=1,2)$, $\beta>0$ and prove Theorem \ref{th1.1}.
The first step consists in showing that $\mathcal{P}_{a_1,a_2}$ is a manifold.
\begin{Lem}\label{lem3.1}     If $T(a_1,a_2)\le\gamma_{1}$, then $\mathcal{P}^0_{a_1,a_2}=\emptyset$, and the set $\mathcal{P}_{a_1,a_2}$ is a $C^1$-submanifold of codimension 1 in $S(a_1)\times S(a_2)$, hence it is a $C^1$-submanifold of codimension 3 in $H^1(\R^4,\R^2)$.
\end{Lem}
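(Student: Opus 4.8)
The plan is to show first that $\mathcal{P}^0_{a_1,a_2}=\emptyset$ under the hypothesis $T(a_1,a_2)\le\gamma_1$, and then to deduce the manifold structure by a standard implicit-function/regular-value argument. For the emptiness, suppose $(u,v)\in\mathcal{P}^0_{a_1,a_2}$. By definition this means $P_{a_1,a_2}(u,v)=0$ and $\Psi_{(u,v)}''(0)=0$, i.e. $0$ is a degenerate critical point of the fiber map $\Psi_{(u,v)}(s)$. But Lemma \ref{lem2.4} says that for every fixed $(u,v)\in S(a_1)\times S(a_2)$ the map $\Psi_{(u,v)}$ has exactly two critical points in $(0,\infty)$; tracking the proof of Lemma \ref{lem2.4} (which reduces $\Psi_{(u,v)}$ to a function of the shape $l(t)=\tfrac{A_1}{2}t^2-A_2t^4-(A_3+A_4)t^{2(p-2)}$ with $A_2\le D_1 A_1^2$, $A_3+A_4\le (D_2+D_3)A_1^{p-2}$), these two critical points are transversal — one a strict local minimum, one a strict local maximum — precisely because $T(a_1,a_2)<\gamma_1$ forces the relevant auxiliary function to exceed the threshold strictly, and the borderline case $T(a_1,a_2)=\gamma_1$ is handled exactly as in Lemma \ref{lem2.3}(ii), where the two critical points still separate (one genuine min at negative level, one degenerate point at level $0$). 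I would need to check the equality case $T(a_1,a_2)=\gamma_1$ slightly more carefully: there the global max of the associated $\psi_1$ equals $D_2+D_3$, which could a priori produce a degenerate point; but that degenerate point corresponds to $\rho^*$ with $h(\rho^*)=0$, and one checks directly from $\Psi_{(u,v)}'(s)=0$, $\Psi_{(u,v)}''(s)=0$ that this cannot occur simultaneously with $P_{a_1,a_2}=0$ on $S(a_1)\times S(a_2)$ unless $(u,v)$ saturates both Gagliardo–Nirenberg and Sobolev inequalities at once, which is impossible for $H^1$ functions with fixed positive masses. Hence $\mathcal{P}^0_{a_1,a_2}=\emptyset$.

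Next, to get that $\mathcal{P}_{a_1,a_2}$ is a $C^1$-submanifold of codimension $1$ in $S(a_1)\times S(a_2)$, I would realize $\mathcal{P}_{a_1,a_2}$ as the zero set of $P_{a_1,a_2}:S(a_1)\times S(a_2)\to\R$, which is $C^1$ since $I$ is $C^1$ and $P_{a_1,a_2}(u,v)=\Psi_{(u,v)}'(0)$ is built from the same integrals. It suffices to show $0$ is a regular value, i.e. $dP_{a_1,a_2}|_{S(a_1)\times S(a_2)}(u,v)\ne 0$ for every $(u,v)\in\mathcal{P}_{a_1,a_2}$. The key trick is to test $dP_{a_1,a_2}$ against the tangent vector generated by the dilation flow: if $\varphi(s):=s\star(u,v)$ then $\varphi$ stays on $S(a_1)\times S(a_2)$, $\varphi(0)=(u,v)$, $\varphi'(0)$ is a legitimate tangent vector, and
\[
\langle dP_{a_1,a_2}(u,v),\varphi'(0)\rangle=\frac{d}{ds}\Big|_{s=0}P_{a_1,a_2}(s\star(u,v))=\frac{d}{ds}\Big|_{s=0}\Psi_{(u,v)}'(s)=\Psi_{(u,v)}''(0).
\]
Since $\mathcal{P}^0_{a_1,a_2}=\emptyset$, we have $\Psi_{(u,v)}''(0)\ne 0$, so $dP_{a_1,a_2}|_{S(a_1)\times S(a_2)}(u,v)$ is nonzero. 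By the implicit function theorem on the Hilbert manifold $S(a_1)\times S(a_2)$, $\mathcal{P}_{a_1,a_2}$ is a $C^1$-submanifold of codimension $1$ therein. Since $S(a_1)\times S(a_2)$ is itself a $C^1$-submanifold of codimension $2$ in $H^1(\R^4,\R^2)$ (each $S(a_i)$ is codimension $1$, being the zero set of $u\mapsto \|u\|_2^2-a_i^2$, whose differential $2\langle u,\cdot\rangle$ is nonzero on $S(a_i)$), the set $\mathcal{P}_{a_1,a_2}$ has codimension $1+2=3$ in $H^1(\R^4,\R^2)$.

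The main obstacle, and the only place requiring genuine care rather than bookkeeping, is the verification that $\mathcal{P}^0_{a_1,a_2}=\emptyset$ all the way up to the closed endpoint $T(a_1,a_2)=\gamma_1$; the strict inequality case follows almost verbatim from the strict extremality of the two critical points in Lemma \ref{lem2.4}/Lemma \ref{lem2.3}(i), but the equality case needs the observation above that a degenerate critical point of $\Psi_{(u,v)}$ on the constraint would force simultaneous equality in two distinct sharp inequalities. Everything after that — regularity of $P_{a_1,a_2}$, the dilation test vector, and the codimension count — is routine.
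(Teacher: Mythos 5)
Your overall structure (first $\mathcal{P}^0_{a_1,a_2}=\emptyset$, then a regular-value argument) matches the paper's, but both halves have gaps as written. For the emptiness of $\mathcal{P}^0_{a_1,a_2}$, the correct reason why ``exactly two critical points'' (Lemma \ref{lem2.4}) forces non-degeneracy is that $\Psi'_{(u,v)}(s)=e^{p\gamma_ps}\bigl(g(s)-p\gamma_p(A_3+A_4)\bigr)$ with $g(s)=A_1e^{(2-p\gamma_p)s}-4A_2e^{(4-p\gamma_p)s}$ strictly unimodal, so at any critical point $\Psi''_{(u,v)}(s)=e^{p\gamma_ps}g'(s)$, and a degenerate critical point can only occur at the maximizer of $g$, i.e.\ exactly when the two critical points coalesce into one --- contradicting that there are two. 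You assert ``transversality'' without this observation, and your treatment of the borderline case $T(a_1,a_2)=\gamma_1$ is off target: the threshold governing the number (hence non-degeneracy) of critical points of $\Psi_{(u,v)}$ is $\gamma_0>\gamma_1$, not $\gamma_1$, so nothing degenerates at $T(a_1,a_2)=\gamma_1$; your claimed resolution (``a degenerate point forces simultaneous saturation of Gagliardo--Nirenberg and Sobolev'') is neither proved nor the relevant criterion. The paper instead argues directly: from $\Psi'_{(u,v)}(0)=\Psi''_{(u,v)}(0)=0$ it forms $E(t)=t\Psi'_{(u,v)}(0)-\Psi''_{(u,v)}(0)$, uses $E(p\gamma_p)=0$ to bound $\rho=[\|\nabla u\|_2^2+\|\nabla v\|_2^2]^{1/2}$ from below, then $E(4)=0$ to get $2\le C'(p)\,T(a_1,a_2)$, contradicting $T(a_1,a_2)\le\gamma_1<2/C'(p)$.

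The more serious gap is in the regular-value step. The curve $\varphi(s)=s\star(u,v)$ is continuous but in general \emph{not} differentiable in $H^1(\R^4,\R^2)$: formally $\varphi'(0)$ involves $x\cdot\nabla u$ and $x\cdot\nabla v$, which need not belong to $L^2$ for $(u,v)\in H^1$. So you cannot invoke the chain rule to identify $\frac{d}{ds}P_{a_1,a_2}(s\star(u,v))\big|_{s=0}=\Psi''_{(u,v)}(0)$ with $\langle dP_{a_1,a_2}(u,v),\varphi'(0)\rangle$ and thereby conclude that $dP_{a_1,a_2}$ restricted to the tangent space is nonzero. The paper's route is the rigorous one: if $d(P_{a_1,a_2},G_1,G_2)$ failed to be surjective, then $dP_{a_1,a_2}$ would be a linear combination of $dG_1,dG_2$, so $(u,v)$ would weakly solve an elliptic system with Lagrange multipliers $\bar\nu_1,\bar\nu_2$; elliptic regularity then legitimizes the Pohozaev identity for that system, which combined with $P_{a_1,a_2}(u,v)=0$ places $(u,v)$ in $\mathcal{P}^0_{a_1,a_2}=\emptyset$. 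Your dilation-flow computation is the formal shadow of this argument, but it only becomes valid after the Euler--Lagrange and regularity step you omit. The final codimension count is fine.
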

\begin{proof}
It is sufficient to prove that $\mathcal{P}^0_{a_1,a_2}=\emptyset$ holds. Indeed, if $\mathcal{P}^0_{a_1,a_2}=\emptyset$, we show that $\mathcal{P}_{a_1,a_2}$ is a $C^1$-submanifold of codimension 1 in $S(a_1)\times S(a_2)$. From \eqref{c1}, we know that $\mathcal{P}_{a_1,a_2}$ is defined by $P_{a_1,a_2}(u,v)=0$, $G_1(u)=0$ and $G_2(v)=0$, where
\begin{equation*}
G_1(u)=\|u\|^2_2-a^2_1 \quad\text{and}\quad  G_2(v)=\|v\|^2_2-a^2_2.
\end{equation*}
Since $P_{a_1,a_2}$, $G_1$ and $G_2$ are class of $C^1$, the proof is completed by showing that
$d(P_{a_1,a_2},G_1,G_2): H^1(\R^4,\R^2)\to \R^3 $ is a surjective. If this is not true, then $dP_{a_1,a_2}$ has to be linearly dependent from $dG_1$ and $dG_2$, i.e., there exist $\bar{\nu}_1,\bar{\nu}_2\in\R$ such that $(u, v)$ is a solution to
\begin{equation*}
\begin{cases}
-\Delta u+\bar{\nu}_1 u=2\mu_1u^3+\frac{p\gamma_p}{2}\alpha_1|u|^{p-2}u+2\beta v^2u\ \ &\text{in}\ \R^4,\\
-\Delta v+\bar{\nu}_2 v=2\mu_2v^3+\frac{p\gamma_p}{2}\alpha_2|v|^{p-2}v+2\beta u^2v\ \ &\text{in}\ \R^4,\\
\|u\|^2_2=a^2_1,\ \ \ \|v\|^2_2=a^2_2.
\end{cases}
\end{equation*}
By the Pohozaev identity for above equation, we get
\begin{equation*}
2(||\nabla u||^2_2+\|\nabla v\|^2_2)=4\mu_1\|u\|^{4}_{4}+4\mu_2\|v\|^{4}_{4}+p\gamma^2_p\alpha_1\|u\|^p_p+p\gamma^2_p\alpha_2\|v\|^p_p+
8\beta\|uv\|^2_2,
\end{equation*}
which contradicts with $(u,v)\in \mathcal{P}^0_{a_1,a_2}=\emptyset$.

\vskip1mm
Next, we only need to prove that $\mathcal{P}^0_{a_1,a_2}=\emptyset$ provided that $T(a_1,a_2)\le\gamma_{1}$. We argue by contradiction, suppose that there is a pair $(u,v)\in \mathcal{P}^0_{a_1,a_2}$. Let
\begin{equation*}
\begin{aligned}
E(t):&=t\Psi'_{(u,v)}(0)-\Psi''_{(u,v)}(0)\\
&=(t-2)\int_{\R^4}|\nabla u|^2+|\nabla v|^2-(t-4)\mu_1\int_{\R^4}|u|^{4}-(t-4)\mu_2\int_{\R^4}|v|^{4}\\
&\quad-(t-p\gamma_p)\gamma_p\alpha_1\int_{\R^4}|u|^p-(t-p\gamma_p)\gamma_p\alpha_2\int_{\R^4}|v|^p-2\beta(t-4)\int_{\R^4}|u|^2|v|^2\\
&=0.
\end{aligned}
\end{equation*}
If $2<p<3$, we have $p\gamma_p<2$. It follows from $E(p\gamma_p)=0$ that
\begin{equation}\label{c5}
\begin{aligned}
(2-p\gamma_p)\int_{\R^4}|\nabla u|^2+|\nabla v|^2&=(4-p\gamma_p)\mu_1\int_{\R^4}|u|^{4}+(4-p\gamma_p)\mu_2\int_{\R^4}|v|^{4}\\
&\quad+2\beta(4-p\gamma_p)\int_{\R^4}|u|^2|v|^2.
\end{aligned}
\end{equation}
Let $\rho=\big[\|\nabla u\|^2_2+\|\nabla v\|^2_2\big]^{\frac{1}{2}}$, by \eqref{b4} and \eqref{c5}, we can infer that
$\rho\ge\big(\frac{3-p}{4(4-p)D_1}\big)^{\frac{1}{2}}$.
Since $E(4)=0$, we get
\begin{equation}\label{c6}
\begin{aligned}
2&=(4-p\gamma_p)\gamma_p\alpha_1\rho^{-2}\int_{\R^4}|u|^p+(4-p\gamma_p)\gamma_p\alpha_2\rho^{-2}\int_{\R^4}|v|^p\\
&\le (4-p\gamma_p)p\gamma_pD_2\rho^{2(p-2)-2}+(4-p\gamma_p)p\gamma_pD_3\rho^{2(p-2)-2}\\
&\le C'(p) \big(\alpha_1a^{4-p}_1+ \alpha_2a^{4-p}_2\big),
\end{aligned}
\end{equation}
where $C'(p)=\frac{4(4-p)(p-2)|\mathcal{C}_p|^p}{p}\Big(\frac{4-p}{(3-p)\mathcal{S}^2_{\mu_1,\mu_2,\beta}}\Big)^{3-p}$ and $p\gamma_p=2(p-2)$.
Since $T(a_1,a_2)=\alpha_1a^{4-p}_1+ \alpha_2a^{4-p}_2\le \gamma_{1}<\gamma_0$ with $\gamma_{0}=\frac{2}{C'(p)}$, then this contradicts with \eqref{c6}. Thus, $\mathcal{P}^0_{a_1,a_2}=\emptyset$.
\end{proof}

\begin{Lem}\label{lem3.2} If $T(a_1,a_2)\le\gamma_{1}$, for $(u,v)\in S({a_1})\times S({a_2})$, then the function $\Psi_{(a_1,a_2)}(s)$ has exactly two critical points $s_{(u,v)}<t_{(u,v)}\in\R$ and two zeros $c_{(u,v)}<d_{(u,v)}$ with $s_{(u,v)}<c_{(u,v)}<t_{(u,v)}<d_{(u,v)}$. Moreover,
\vskip1mm

\begin{itemize}
\item  [$(1)$]   $s_{(u,v)}\star(u,v)\in\mathcal{P}^{+}_{a_1,a_2}$ and $t_{(u,v)}\star(u,v)\in\mathcal{P}^{-}_{a_1,a_2}$. Moreover,  if $s\star(u,v)\in \mathcal{P}_{(a_1,a_2)}$, then either $s=s_{(u,v)}$   or $s=t_{(u,v)}$.
\item  [$(2)$]  $s_{(u,v)}<\log\frac{R_0}{[\|\nabla u\|^2_2+\|\nabla v\|^2_2]^{\frac{1}{2}}}$ and
\begin{equation*}
\Psi_{(u,v)}(s_{(u,v)})=\inf\Big\{\Psi_{(u,v)}(s):s\in\big(-\infty,\log\frac{R_0}{[\|\nabla u\|^2_2+\|\nabla v\|^2_2]^{\frac{1}{2}}}\big)\Big\}<0.
\end{equation*}
\item  [$(3)$]  $I\big(t_{(u,v)}\star(u,v)\big)=\max\limits_{s\in\R}I\big(s\star(u,v)\big)>0$.
\item  [$(4)$] The maps $(u,v) \mapsto s_{(u,v)} \in \mathbb{R}$
and $(u,v) \mapsto t_{(u,v)} \in \mathbb{R}$ are of class $C^1$.
\end{itemize}

\end{Lem}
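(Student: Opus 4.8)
\textbf{Proof proposal for Lemma \ref{lem3.2}.}

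The plan is to analyze the scalar function $\Psi_{(u,v)}(s)=I(s\star(u,v))$ via the auxiliary function $h$ from \eqref{n11} and the one-variable function $l(t)$ introduced in the proof of Lemma \ref{lem2.4}. First I would write, with $A_1,A_2,A_3,A_4$ as in Lemma \ref{lem2.4} (all strictly positive since $u,v\not\equiv 0$ and $\alpha_i,\mu_i,\beta>0$), the identity $\Psi_{(u,v)}(s)=l(e^s)$ with $l(t)=\tfrac{A_1}{2}t^2-A_2t^4-(A_3+A_4)t^{2(p-2)}$. Because $2<p<3$ we have $2(p-2)<2<4$, so $l(t)\to 0^-$ as $t\to 0^+$ (the lowest-order term $-(A_3+A_4)t^{2(p-2)}$ dominates and is negative) and $l(t)\to-\infty$ as $t\to+\infty$. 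By Lemma \ref{lem2.4}, $\Psi_{(u,v)}$ has exactly two critical points in $(0,\infty)$; translated to $l$, these are two critical points $0<t_1<t_2<\infty$. From the boundary behavior and the fact that there are exactly two critical points, the shape is forced: $l$ decreases on $(0,t_1)$, increases on $(t_1,t_2)$, decreases on $(t_2,\infty)$, so $t_1$ is a strict local minimum with $l(t_1)<\lim_{t\to0^+}l(t)=0$ hence $l(t_1)<0$, and $t_2$ is a strict local maximum. Setting $s_{(u,v)}=\log t_1$ and $t_{(u,v)}=\log t_2$ gives the two critical points of $\Psi_{(u,v)}$; since $\Psi''_{(u,v)}(s)$ and $l$'s second-derivative information have matching signs, $\Psi''_{(u,v)}(s_{(u,v)})\ge 0$ and, because $\mathcal{P}^0_{a_1,a_2}=\emptyset$ by Lemma \ref{lem3.1}, in fact $\Psi''_{(u,v)}(s_{(u,v)})>0$, i.e. $s_{(u,v)}\star(u,v)\in\mathcal{P}^+_{a_1,a_2}$; similarly $t_{(u,v)}\star(u,v)\in\mathcal{P}^-_{a_1,a_2}$. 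The relation $\Psi'_{(u,v)}(s)=P_{a_1,a_2}(s\star(u,v))$ from \eqref{c2} identifies critical points of $\Psi_{(u,v)}$ with the elements $s$ for which $s\star(u,v)\in\mathcal{P}_{a_1,a_2}$, which gives the last sentence of $(1)$.

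For $(3)$: since $l$ attains its global maximum over $[t_1,\infty)$ at $t_2$ and $l$ is decreasing on $(0,t_1)$ with $l<0$ there, the global maximum of $l$ on $(0,\infty)$ — equivalently of $\Psi_{(u,v)}$ on $\R$ — is attained at $t_2$; thus $I(t_{(u,v)}\star(u,v))=\max_{s\in\R}I(s\star(u,v))$. To see this maximum is positive, I would use \eqref{b7}: $\Psi_{(u,v)}(s)=I(s\star(u,v))\ge h\big(e^s[\|\nabla u\|_2^2+\|\nabla v\|_2^2]^{1/2}\big)$, and since $T(a_1,a_2)\le\gamma_1$ Lemma \ref{lem2.3} guarantees $h$ has positive values, so choosing $s$ with $e^s[\|\nabla u\|_2^2+\|\nabla v\|_2^2]^{1/2}\in(R_0,R_1)$ yields $\Psi_{(u,v)}(s)>0$; hence the maximum value, $\Psi_{(u,v)}(t_{(u,v)})$, is strictly positive. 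Similarly, the two zeros $c_{(u,v)}<d_{(u,v)}$ of $\Psi_{(u,v)}$ exist by the intermediate value theorem: $\Psi_{(u,v)}$ goes $0^-\!\to$ (negative min at $s_{(u,v)}$) $\to$ (positive max at $t_{(u,v)}$) $\to-\infty$, so it has exactly one zero in $(s_{(u,v)},t_{(u,v)})$, call it $c_{(u,v)}$, and exactly one in $(t_{(u,v)},\infty)$, call it $d_{(u,v)}$, giving $s_{(u,v)}<c_{(u,v)}<t_{(u,v)}<d_{(u,v)}$.

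For $(2)$: the bound $s_{(u,v)}<\log\frac{R_0}{[\|\nabla u\|_2^2+\|\nabla v\|_2^2]^{1/2}}$ follows because at $s$ with $e^s[\|\nabla u\|_2^2+\|\nabla v\|_2^2]^{1/2}=R_0$ we have $h(R_0)=0$, whereas $\Psi_{(u,v)}(s_{(u,v)})<0$ and $\Psi_{(u,v)}\ge h(\cdot)$; since $\Psi_{(u,v)}$ is negative only to the left of $c_{(u,v)}$ on the relevant range and $h<0$ precisely on $(0,R_0)$, the local minimizer $s_{(u,v)}$ must lie in the region where the rescaled gradient norm is below $R_0$. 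The displayed infimum identity in $(2)$ then holds because on $\big(-\infty,\log\frac{R_0}{[\|\nabla u\|_2^2+\|\nabla v\|_2^2]^{1/2}}\big)$ the function $\Psi_{(u,v)}$ is first decreasing then increasing with unique critical point $s_{(u,v)}$ (there are no other critical points below $t_{(u,v)}$). Finally $(4)$: apply the implicit function theorem to $(s,(u,v))\mapsto \Psi'_{(u,v)}(s)=P_{a_1,a_2}(s\star(u,v))$, whose $s$-derivative at $s_{(u,v)}$ (resp. $t_{(u,v)}$) is $\Psi''_{(u,v)}(s_{(u,v)})>0$ (resp. $<0$), both nonzero thanks to $\mathcal{P}^0_{a_1,a_2}=\emptyset$; since $P_{a_1,a_2}$ is $C^1$ and the $\star$-action is smooth in $s$, the maps $(u,v)\mapsto s_{(u,v)}$ and $(u,v)\mapsto t_{(u,v)}$ are $C^1$. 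The main obstacle is bookkeeping the sign pattern of $\Psi_{(u,v)}$ carefully enough to pin down the exact ordering $s_{(u,v)}<c_{(u,v)}<t_{(u,v)}<d_{(u,v)}$ and the localization in $(2)$; everything else is a routine transcription of the one-variable analysis in Lemma \ref{lem2.3}.
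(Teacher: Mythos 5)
Your proposal is correct and follows essentially the same route as the paper's proof: both rest on the lower bound $\Psi_{(u,v)}(s)\ge h\bigl(e^s[\|\nabla u\|^2_2+\|\nabla v\|^2_2]^{1/2}\bigr)$ from \eqref{b7}, the sign structure of $h$ from Lemma \ref{lem2.3}, the critical-point count from Lemma \ref{lem2.4}, the emptiness of $\mathcal{P}^0_{a_1,a_2}$ from Lemma \ref{lem3.1}, and the implicit function theorem for item $(4)$. The only blemish is the parenthetical claim that ``$h<0$ precisely on $(0,R_0)$'' (in fact $h<0$ on $(R_1,+\infty)$ as well), but this plays no role since what your argument actually uses is the positivity of $\Psi_{(u,v)}$ on the middle interval together with the forced decrease--increase--decrease shape.
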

\begin{proof}
Let $(u,v)\in S(a_1)\times S(a_2)$, from \eqref{c2}, we have $s\star(u,v)\in \mathcal{P}_{(a_1,a_2)}$ if and only if $\Psi'_{(u,v)}(s)=0$. 
By \eqref{b7}, we get
\begin{equation*}
\Psi_{(u,v)}(s)=I\big(s\star(u,v)\big)\ge h\big(e^s[\|\nabla u\|^2_2+\|\nabla v\|^2_2]^{\frac{1}{2}}\big).
\end{equation*}
Therefore, $\Psi_{(u,v)}(s)$ is positive on $\Big(\log\frac{R_0}{[\|\nabla u\|^2_2+\|\nabla v\|^2_2]^{\frac{1}{2}}},\log\frac{R_1}{[\|\nabla u\|^2_2+\|\nabla v\|^2_2]^{\frac{1}{2}}} \Big)$ and $\Psi_{(u,v)}(-\infty)=0^-$, $\Psi_{(u,v)}(+\infty)=-\infty$, thus we can see that $\Psi_{(u,v)}(s)$ has a local minimum point $s_{(u,v)}$ at negative level in $\Big(-\infty,\log\frac{R_0}{[\|\nabla u\|^2_2+\|\nabla v\|^2_2]^{\frac{1}{2}}} \Big)$ and has a global maximum point $t_{(u,v)}$ at positive level in $\Big(\log\frac{R_0}{[\|\nabla u\|^2_2+\|\nabla v\|^2_2]^{\frac{1}{2}}},\log\frac{R_1}{[\|\nabla u\|^2_2+\|\nabla v\|^2_2]^{\frac{1}{2}}} \Big)$. It follows from Lemma \ref{lem2.4} that $\Psi_{(u,v)}(s)$ has no other critical points.

Since $\Psi''_{(u,v)}(s_{(u,v)})\ge 0$, $\Psi''_{(u,v)}(t_{(u,v)})\le 0$ and $\mathcal{P}^0_{a_1,a_2}=\emptyset$, we know that $s_{(u,v)}\in \mathcal{P}^+_{a_1,a_2}$ and $t_{(u,v)}\in \mathcal{P}^-_{a_1,a_2}$. By the monotonicity and the behavior at infinity of $\Psi_{(u,v)}(s)$, we get that $\Psi_{(u,v)}(s)$ has exactly zeros $c_{(u,v)}<d_{(u,v)}$ with $s_{(u,v)}<c_{(u,v)}<t_{(u,v)}<d_{(u,v)}$. Thus, the conclusions $(1)$-$(3)$ follows from these facts above. Finally, applying the Implicit Function Theorem to the $C^1$ function $g_{(u,v)} : \R \times S(a_1)\times S(a_2) \mapsto \R$ defined by $g_{(u,v)}(s) = \Psi_{(u,v)}'(s)$. Therefore, we have that
$(u,v) \mapsto s_{(u,v)}$ is of class $C^1$ because $g_{(u,v)}(s_{(u,v)}) =0$ and $\partial_s g_{(u,v)}(s_{(u,v)}) = \Psi_{(u,v)}''(s_{(u,v)})>0$. Similarly, we can prove that $(u,v) \mapsto t_{(u,v)} \in \mathbb{R}$ is of class $C^1$.
\end{proof}

\vskip0.3in

Let $T(a_1,a_2)>0$ and $\rho_0$ be given in Lemma \ref{lem2.3}. Note that by Lemma \ref{lem2.3}, we have that $f(a_1,a_2, \rho_0)=h(\rho_0) \ge 0$ for all $T(a_1,a_2)\le\gamma_1$.
Define
\begin{align*}
B_{\rho_0} := \big\{(u,v) \in H^{1}(\R^4,\R^2) : [\|\nabla u\|^2_2+ \|\nabla v\|^2_2 ]^{\frac{1}{2}}< \rho_0\big\},
\end{align*}
and
$$V(a_1,a_2) := \big(S(a_1)\times S(a_2)\big) \cap B_{\rho_0}.$$
Thus, we can define the local minimization problem:  for any $T(a_1,a_2)\le\gamma_1$,
\begin{equation*}
m(a_1,a_2) := \inf_{(u,v) \in V(a_1,a_2)} I(u,v).
\end{equation*}

\begin{Cor}\label{Cor3.1}
If $T(a_1,a_2)\le\gamma_{1}$, the set $\mathcal{P}^+_{a_1,a_2}$ is contained in $V(a_1,a_2)$ and
\begin{equation*}
m(a_1,a_2)=m^+(a_1,a_2)=\inf_{(u,v)\in\mathcal{P}_{a_1,a_2}}I(u,v)=\inf_{(u,v)\in\mathcal{P}^+_{a_1,a_2}}I(u,v).
\end{equation*}
\end{Cor}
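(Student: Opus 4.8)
\textbf{Proof proposal for Corollary \ref{Cor3.1}.}

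The plan is to combine Lemma \ref{lem2.3}, Lemma \ref{lem3.1} and Lemma \ref{lem3.2} to pin down the relative positions of $\mathcal{P}^+_{a_1,a_2}$, $V(a_1,a_2)$ and the various infima. First I would show the inclusion $\mathcal{P}^+_{a_1,a_2}\subset V(a_1,a_2)$. Take $(u,v)\in\mathcal{P}^+_{a_1,a_2}$ and set $\rho:=[\|\nabla u\|^2_2+\|\nabla v\|^2_2]^{1/2}$. Since $\Psi_{(u,v)}'(0)=P_{a_1,a_2}(u,v)=0$ and $\Psi_{(u,v)}''(0)>0$, the point $s=0$ is the local minimum $s_{(u,v)}$ produced by Lemma \ref{lem3.2}; by part $(2)$ of that lemma we have $s_{(u,v)}<\log\big(R_0/\rho\big)$, i.e. $0<\log(R_0/\rho)$, which forces $\rho<R_0<\rho_0$ (using $R_0<\rho_0$ from Lemma \ref{lem2.3}$(i)$, and in the boundary case $T(a_1,a_2)=\gamma_1$ one argues via Lemma \ref{lem2.3}$(ii)$ that $\mathcal P^+$ sits strictly inside $\{\rho<\rho_0\}$ since $h<0$ there). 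Hence $(u,v)\in V(a_1,a_2)$.

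Next I would establish the chain of equalities. The inequality $m^+(a_1,a_2)=\inf_{\mathcal{P}^+_{a_1,a_2}}I\ge\inf_{\mathcal{P}_{a_1,a_2}}I$ is trivial since $\mathcal{P}^+_{a_1,a_2}\subset\mathcal{P}_{a_1,a_2}$. For the reverse, note that by Lemma \ref{lem3.2}$(3)$ any $(u,v)\in\mathcal{P}^-_{a_1,a_2}$ satisfies $I(u,v)=\max_{s\in\R}I(s\star(u,v))>0$, whereas $\inf_{V(a_1,a_2)}I<0$ (this sign is part of \eqref{b11}, or can be re-derived: for fixed $(u,v)$, $\Psi_{(u,v)}(s)\to 0^-$ as $s\to-\infty$, and small $s\star(u,v)$ lies in $V(a_1,a_2)$ with negative energy by $h(\rho)<0$ near $0$). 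Thus $\inf_{\mathcal{P}_{a_1,a_2}}I=\inf_{\mathcal{P}^+_{a_1,a_2}}I$, since $\mathcal{P}_{a_1,a_2}=\mathcal{P}^+_{a_1,a_2}\cup\mathcal{P}^-_{a_1,a_2}$ by Lemma \ref{lem3.1}. It remains to identify this common value with $m(a_1,a_2)=\inf_{V(a_1,a_2)}I$. On one hand, $\mathcal{P}^+_{a_1,a_2}\subset V(a_1,a_2)$ gives $m(a_1,a_2)\le m^+(a_1,a_2)$. On the other hand, given any $(u,v)\in V(a_1,a_2)$, I would use Lemma \ref{lem3.2} to project it onto $\mathcal{P}^+_{a_1,a_2}$ via $s_{(u,v)}\star(u,v)$; since $\rho<\rho_0$ one checks $s_{(u,v)}<0$ (because $\Psi_{(u,v)}$ is decreasing on $(-\infty,s_{(u,v)})$ with $\Psi_{(u,v)}(-\infty)=0^-$, and the behaviour of $h$ forces the minimum to occur at a scale below $\rho_0$; more precisely $\Psi_{(u,v)}$ is increasing on $(s_{(u,v)},t_{(u,v)})$ and $h(\rho)<0=h(\rho_0^-)$ type comparison), and by part $(2)$ of Lemma \ref{lem3.2} $I(s_{(u,v)}\star(u,v))=\Psi_{(u,v)}(s_{(u,v)})\le\Psi_{(u,v)}(0)=I(u,v)$. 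Taking infima yields $m^+(a_1,a_2)\le m(a_1,a_2)$, closing the loop.

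The main obstacle I anticipate is the delicate monotonicity/sign bookkeeping needed to guarantee that for $(u,v)\in V(a_1,a_2)$ the minimizer $s_{(u,v)}$ satisfies $I(s_{(u,v)}\star(u,v))\le I(u,v)$, i.e. that the projection does not increase energy — equivalently, that $0$ lies in the region where $\Psi_{(u,v)}$ is non-increasing toward $s_{(u,v)}$ or is already past it in a way that still lowers the value. This hinges on controlling where $s=0$ falls relative to $s_{(u,v)}$ and $t_{(u,v)}$, which in turn uses $\rho<\rho_0$ together with the precise shape of $h$ from Lemma \ref{lem2.3}; one must also treat the limiting case $T(a_1,a_2)=\gamma_1$ separately, where $h$ touches $0$ and the two critical radii of $h$ coincide, so the argument for the strict inclusion and for $m<0$ needs the version of Lemma \ref{lem2.3}$(ii)$ rather than $(i)$. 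Everything else is a routine assembly of the cited lemmas.
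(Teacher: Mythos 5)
Your proposal is correct and follows essentially the same route as the paper: the inclusion $\mathcal{P}^+_{a_1,a_2}\subset V(a_1,a_2)$ via Lemma \ref{lem3.2}, the observation that $I>0$ on $\mathcal{P}^-_{a_1,a_2}$ while $m(a_1,a_2)<0$, and the projection $ (u,v)\mapsto s_{(u,v)}\star(u,v)$ which does not increase the energy because $s_{(u,v)}\star(u,v)$ minimizes $\Psi_{(u,v)}$ over all dilations remaining in $V(a_1,a_2)$. The "delicate bookkeeping" you worry about is resolved exactly as you suggest: either $0<\log(R_0/\rho)$ and Lemma \ref{lem3.2}(2) applies directly, or $R_0\le\rho<\rho_0$ in which case $I(u,v)\ge h(\rho)\ge 0>\Psi_{(u,v)}(s_{(u,v)})$.
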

\begin{proof}
For $(u,v)\in V(a_1,a_2)$, we have
\begin{equation*}
I(u,v)\ge h\big([\|\nabla u\|^2_2+\|\nabla v\|^2_2]^{\frac{1}{2}}\big)\ge \min_{\rho\in [0,\rho_0]}h(\rho)>-\infty.
\end{equation*}
Let $(u,v)\in S(a_1)\times S(a_2)$, there exists $s_0\ll -1$ such that $s_0\star (u,v)\in B_{\rho_0}$ and $I(s_0\star (u,v))<0$. Hence, we get $m(a_1,a_2)\in (-\infty, 0)$. From Lemma \ref{lem3.2}, we have $\mathcal{P}^+_{a_1,a_2}\subset V(a_1,a_2)$, and then $m(a_1,a_2)\le \inf\limits_{(u,v)\in\mathcal{P}^+_{a_1,a_2}}I(u,v)$.

\vskip0.123in

In addition, if $(u,v)\in V(a_1,a_2)$, $s_{(u,v)}\star (u,v)\in \mathcal{P}^+_{a_1,a_2}\subset V(a_1,a_2)$, we get
\begin{equation*}
I\big(s_{(u,v)}\star(u,v)\big)=\min\big\{I(s\star(u,v)): s\in \R\ \ \text{and}\ \ s\star(u,v)\in V(a_1,a_2)\big\}\le I(u,v),
\end{equation*}
and it follows that $\inf\limits_{\mathcal{P}^+_{a_1,a_2}}I(u,v)\le m(a_1,a_2)$. By Lemma \ref{lem3.2}, $I(u,v)>0$ on $\mathcal{P}^-_{a_1,a_2}$, we conclude that $m(a_1,a_2)=\inf\limits_{(u,v)\in\mathcal{P}_{a_1,a_2}}I(u,v)=\inf\limits_{(u,v)\in\mathcal{P}^+_{a_1,a_2}}I(u,v).$
\end{proof}

\vskip0.23in

\begin{Lem}\label{lem3.3}
If $T(a_1,a_2)\le\gamma_{1}$, then
\begin{equation*}
m(a_1,a_2)<\min\big\{m^+(a_1,0),m^+(0,a_2)\big\}<0,
\end{equation*}
where $m^+(a_1,0)$ and $m^+(0,a_2)$ are defined in \eqref{c12}.
\end{Lem}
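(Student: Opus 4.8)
The plan is to show that the two-component local minimum energy $m(a_1,a_2)$ lies strictly below the one-component levels $m^+(a_1,0)=m^+_{p,\mu_1,\alpha_1}(a_1)$ and $m^+(0,a_2)=m^+_{p,\mu_2,\alpha_2}(a_2)$ by exploiting the strictly negative sign of the coupling term $-\tfrac{\beta}{2}|u|^2|v|^2$ in $I$ (recall $\beta>0$). The strategy is to take a near-minimizer for one of the scalar problems, glue on a small amount of the other component, and check that the resulting pair lands in $V(a_1,a_2)$ while strictly lowering the energy. By symmetry it suffices to prove $m(a_1,a_2)<m^+(a_1,0)$.

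First I would invoke Lemma \ref{lem2.1} (with the hypothesis $T(a_1,a_2)\le\gamma_1$, which controls $\mu_i a_i^{p-p\gamma_p}$ so the lemma applies) to obtain a positive, radially decreasing optimizer $u_0=u^+_{a_1,\mu_1,\alpha_1}\in\mathcal T^+_{a_1,p,\mu_1,\alpha_1}$ with $\mathcal A_{p,\mu_1,\alpha_1}(u_0)=m^+(a_1,0)<0$; moreover $u_0\in\mathcal T^+$ forces $\|\nabla u_0\|_2^2$ to be small (the $+$ branch sits near the origin, below $\rho_0$), so that $(u_0,0)$ already lies in the interior of $V(a_1,a_2)$ with $I(u_0,0)=\mathcal A_{p,\mu_1,\alpha_1}(u_0)=m^+(a_1,0)$. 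Now fix any $w\in S(a_2)$ with $\|\nabla w\|_2$ small, and for $t>0$ small consider the scaled family $w_t(x)=t^{2}w(tx)\in S(a_2)$, or more simply consider $(u_0, s\star w)$ for $s\to-\infty$. Using the $L^2$-preserving dilation $s\star w$, one has $\|\nabla(s\star w)\|_2^2=e^{2s}\|\nabla w\|_2^2\to0$, $\|s\star w\|_4^4=e^{4s}\|w\|_4^4\to0$, $\|s\star w\|_p^p=e^{p\gamma_p s}\|w\|_p^p\to0$, and crucially $\int_{\R^4}|u_0|^2|s\star w|^2=e^{2s}\int_{\R^4}|u_0|^2 |w(e^s\cdot)|^2\to0$ as well, but all of these are $o(1)$ while we want a genuinely negative contribution.

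The cleaner route, which I expect to be the actual argument, is: for each small $s$, project $(u_0,s\star w)$ onto $\mathcal P^+_{a_1,a_2}$ via Lemma \ref{lem3.2}, i.e. consider $\tau_s\star(u_0,s\star w)$ with $\tau_s=s_{(u_0,s\star w)}$, and compare its energy to $m^+(a_1,0)$. Alternatively, and most directly, observe that since $(u_0,0)\in V(a_1,a_2)$ (open set) we can perturb: for $s$ sufficiently negative the pair $(u_0,s\star w)$ still lies in $V(a_1,a_2)$, hence $m(a_1,a_2)\le I(u_0,s\star w)=I(u_0,0)+\tfrac12\|\nabla(s\star w)\|_2^2-\tfrac14\mu_2\|s\star w\|_4^4-\tfrac{\alpha_2}{p}\|s\star w\|_p^p-\tfrac{\beta}{2}\int|u_0|^2|s\star w|^2$. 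Since $2<p<3$ we have $p\gamma_p=2(p-2)<2$, so as $s\to-\infty$ the dominant term among the $s$-dependent ones is $-\tfrac{\alpha_2}{p}e^{p\gamma_p s}\|w\|_p^p$ (it decays slowest, being negative), giving $I(u_0,s\star w)=m^+(a_1,0)-\tfrac{\alpha_2}{p}e^{p\gamma_p s}\|w\|_p^p+o(e^{p\gamma_p s})<m^+(a_1,0)$ for $s$ negative enough; here we used $\alpha_2>0$. Thus $m(a_1,a_2)<m^+(a_1,0)$, and symmetrically $m(a_1,a_2)<m^+(0,a_2)$, while $m^+(a_1,0),m^+(0,a_2)<0$ by Lemma \ref{lem2.1}; combining gives the claim.

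The main obstacle is the bookkeeping needed to guarantee that the competitor pair genuinely sits inside $V(a_1,a_2)$ — one must check $[\|\nabla u_0\|_2^2+\|\nabla(s\star w)\|_2^2]^{1/2}<\rho_0$ — and to identify which of the several $s$-dependent terms dominates as $s\to-\infty$; the inequality $p\gamma_p<2<4$ for $2<p<3$ is exactly what makes the subcritical perturbation term the leading negative correction, and this is where the hypothesis $2<p<3$ (rather than $p\ge3$) is essential. A secondary subtlety is ensuring $m^+(a_1,0)$ and $m^+(0,a_2)$ are themselves attained and negative, which is precisely the content of Lemma \ref{lem2.1} under $T(a_1,a_2)\le\gamma_1$; since $\gamma_1$ was chosen so that $\alpha_i a_i^{4-p}$ is small, the scalar smallness condition $\mu_i a_i^{p-p\gamma_p}\le\tau(\alpha_i,p)$ of that lemma can be arranged, though one should state this compatibility explicitly.
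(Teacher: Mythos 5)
Your proposal is correct and follows essentially the same route as the paper: take the scalar optimizer $u^*$ of $m^+(a_1,0)$, pair it with $s\star v$ for $s\ll -1$, verify the pair lies in $V(a_1,a_2)$ (the paper does this via the comparison $h<h_1$ and the characterization of $m^+(a_1,0)$ as a local infimum on $S(a_1)\cap B_{\hat\rho}$ with $\hat\rho<R_0<\rho_0$), and observe that since $p\gamma_p<2$ the negative term $-\frac{\alpha_2}{p}e^{p\gamma_p s}\|v\|_p^p$ dominates the positive gradient term as $s\to-\infty$, yielding the strict inequality. The two subtleties you flag (membership in $V(a_1,a_2)$ and the applicability of Lemma \ref{lem2.1}) are exactly the points the paper addresses, the first explicitly and the second only implicitly.
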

\begin{proof}
By Lemma \ref{lem2.1}, $m^+(a_1,0)<0$, $m^+(a_1,0)$ can be achieved by $u^*\in S(a_1)$ and $u^*$ is radially symmetric and decreasing. We choose a proper test function $v\in S(a_2)$ such that $(u^*,s\star v)\in S(a_1)\times S(a_2)$ with $s\in\R$. It follows from \eqref{n11} that
\begin{equation*}
h(\rho)\!<\!h_1(\rho)=\frac{1}{2}\rho^{2}-\frac{\mu_1}{4\mathcal{S}^2}\rho^{4}-\frac{\alpha_1}{p}|\mathcal{C}_{p}|^p a^{4-p}_1\rho^{2(p-2)},
\end{equation*}
where we have used the fact that $\mathcal{S}^2_{\mu_1,\mu_2,\beta}< \min\{\frac{1}{\mu_1},\frac{1}{\mu_2}\} \mathcal{S}^2$. 
By direct calculations, there exists $0\!<\!\hat{\rho}<R_0$ such that $h_1(\hat{\rho})=0$.
In \cite{JJL,JTT}, it has been proved that
\begin{equation*}
m^+(a_1,0)=\inf_{u\in \mathcal{T}^+_{a_1,p,\mu_1,\alpha_1}} \mathcal{A}_{p,\mu_1,\alpha_1}(u)=\inf_{u\in S(a_1)\cap B_{\hat{\rho}}}\mathcal{A}_{p,\mu_1,\alpha_1}(u).
\end{equation*}
Then, we have
\begin{equation*}
\|\nabla u^*\|_2\le \hat{\rho}<R_0< \rho_0.
\end{equation*}
Since $h(R_0)=h(R_1)=0$ and the monotonicity of $h(\rho)$, we deduce that $(u^*,s\star v)\in V(a_1,a_2)$ for $s\ll -1$.
It follows that
\begin{equation*}
\begin{aligned}
m(a_1,a_2)&=\inf_{(u,v) \in V(a_1,a_2)} I(u,v)\le I(u^*,s\star v)\\
&\le \mathcal{A}_{p,\mu_1,\alpha_1}(u^*)+\frac{e^{2s}}{2}\|\nabla v\|^2_2-\frac{e^{p\gamma_ps}}{p}\|v\|^p_p-\frac{e^{4s}}{4}\|v\|^4_4-\frac{\beta}{2}\|u^*(s\star v)\|^2_2\\
&<  \mathcal{A}_{p,\mu_1,\alpha_1}(u^*)=m^+(a_1,0),
\end{aligned}
\end{equation*}
because $p\gamma_p<2$. On the other hand, we similarly get that $m(a_1,a_2)<m^+(0,a_2)$. Hence, the proof is completed.
\end{proof}

The main aim of this section is the following results.
\begin{Lem}\label{lem3.4}
If $T(a_1,a_2)\le\gamma_{1}$ and
\begin{equation*}
m^+(a_1,a_2)<\min\big\{m^+(a_1,0),m^+(0,a_2)\big\}<0,
\end{equation*}
then $m^+(a_1,a_2)$ can be achieved by some function $(u_{a_1},v_{a_2})\in V(a_1,a_2)$ which is real valued, positive, radially symmetric and radially decreasing.
\end{Lem}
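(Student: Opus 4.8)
The plan is to run the standard concentration-compactness scheme for a constrained minimizing sequence, using the radial rearrangement to gain compactness and Lemma \ref{lem3.3} to exclude dichotomy into the semitrivial levels. First I would take a minimizing sequence $(u_n,v_n)\in V(a_1,a_2)$ for $m^+(a_1,a_2)$. Since $I$ is invariant under the symmetric decreasing rearrangement in each component — more precisely, by Lemma \ref{lem2.14} rearranging decreases $\|\nabla\cdot\|_2$, preserves the $L^2$- and $L^p$-norms, and increases $\int|u|^2|v|^2$, hence decreases $-\frac{\beta}{2}\int|u|^2|v|^2$, so $I(u_n^*,v_n^*)\le I(u_n,v_n)$ — I may assume $(u_n,v_n)$ is radially symmetric and radially decreasing, and still lies in $V(a_1,a_2)$ (the gradient constraint $[\|\nabla u_n^*\|_2^2+\|\nabla v_n^*\|_2^2]^{1/2}<\rho_0$ is preserved). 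By \eqref{b7} and Lemma \ref{lem2.3}(i), $I(u_n,v_n)\ge h([\|\nabla u_n\|_2^2+\|\nabla v_n\|_2^2]^{1/2})\ge \min_{\rho\in[0,\rho_0]}h(\rho)>-\infty$, so the sequence is bounded in $H^1(\R^4,\R^2)$; passing to a subsequence, $(u_n,v_n)\weakto(u_{a_1},v_{a_2})$ in $H^1$, and by the compact embedding $H^1_{rad}(\R^4)\hookrightarrow L^q(\R^4)$ for $2<q<4$, we have $u_n\to u_{a_1}$, $v_n\to v_{a_2}$ strongly in $L^p(\R^4)$ (and in $L^q$ for every $q\in(2,4)$), while $\nabla u_n\weakto\nabla u_{a_1}$, $\nabla v_n\weakto\nabla v_{a_2}$ weakly in $L^2$.

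The core is to show the weak limit is nontrivial in both components and that the convergence is strong. Set $m_i := \|u_{a_1}\|_2^2/a_1^2$-type masses, i.e. write $a_1'=\|u_{a_1}\|_2$, $a_2'=\|v_{a_2}\|_2$, so $a_i'\le a_i$. Using the Brezis--Lieb lemma for the $L^4$-terms, Lemma \ref{lem2.13} for the cross term $\int|u_n|^2|v_n|^2$, weak lower semicontinuity of the Dirichlet integrals, and the strong $L^p$-convergence, one obtains the splitting
\[
I(u_n,v_n)\ge I(u_{a_1},v_{a_2})+J_\infty(u_n-u_{a_1},v_n-v_{a_2})+o(1),
\]
where $J_\infty$ collects the Dirichlet and quartic (cross) terms of the "critical" part (the $L^p$-terms pass entirely to the limit by strong convergence). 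The first alternative to rule out is $a_1'=0$ or $a_2'=0$: if say $v_{a_2}=0$, then $u_{a_1}\in S(a_1)\cap B_{\rho_0}$ after noting $\|\nabla u_{a_1}\|_2\le\liminf\|\nabla u_n\|_2<\rho_0$, whence $I(u_n,v_n)\ge \mathcal{A}_{p,\mu_1,\alpha_1}(u_{a_1})+(\text{nonnegative remainder})+o(1)\ge m^+(a_1,0)+o(1)$ — here I must check $u_{a_1}$ lands in the correct part $\mathcal T^+_{a_1,p,\mu_1,\alpha_1}$, or more simply invoke the characterization $m^+(a_1,0)=\inf_{S(a_1)\cap B_{\hat\rho}}\mathcal A_{p,\mu_1,\alpha_1}$ from \cite{JJL,JTT} used in Lemma \ref{lem3.3} together with $\|\nabla u_{a_1}\|_2<\rho_0$ and the fact that on $B_{\rho_0}$ one has $\mathcal A_{p,\mu_1,\alpha_1}\ge\mathcal A_{p,\mu_1,\alpha_1}$ restricted appropriately; in any case this forces $m^+(a_1,a_2)\ge m^+(a_1,0)$, contradicting the hypothesis. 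The symmetric argument excludes $u_{a_1}=0$. Hence $a_1',a_2'>0$.

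Next I would upgrade to $a_1'=a_1$, $a_2'=a_2$ and strong convergence. Let $(\bar u,\bar v)=(u_n-u_{a_1},v_n-v_{a_2})\weakto 0$. Projecting $(u_{a_1},v_{a_2})$ onto $S(a_1)\times S(a_2)$ by scaling $t_1 u_{a_1},t_2 v_{a_2}$ with $t_i=a_i/a_i'\ge1$, and using that $(u_{a_1},v_{a_2})$ stays in $B_{\rho_0}$ (since $\rho_0$ only depends on $\mu_i,\beta$ and $\|\nabla u_{a_1}\|_2^2+\|\nabla v_{a_2}\|_2^2\le\rho_0^2$ in the limit, with strict inequality obtainable from the remainder being positive unless it vanishes), one shows $I(t_1 u_{a_1},t_2 v_{a_2})\le I(u_{a_1},v_{a_2})$ is impossible to beat $m^+(a_1,a_2)$ unless $t_i=1$; combined with the energy splitting and $J_\infty\ge 0$ on the relevant range (the Sobolev-type inequality \eqref{b4} shows $J_\infty(\bar u,\bar v)\ge \frac12\|\nabla(\bar u,\bar v)\|_2^2-D_1\|\nabla(\bar u,\bar v)\|_2^4\ge 0$ as long as $\|\nabla(\bar u,\bar v)\|_2$ is small, which holds since $\|\nabla u_n\|_2^2+\|\nabla v_n\|_2^2<\rho_0^2$ and $\rho_0^2<1/(2D_1)$... to be verified against the definition of $\rho_0$), one forces $\nabla\bar u\to0$, $\nabla\bar v\to 0$ in $L^2$ and $a_i'=a_i$. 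Thus $(u_n,v_n)\to(u_{a_1},v_{a_2})$ strongly in $H^1$, so $(u_{a_1},v_{a_2})\in V(a_1,a_2)$ (or on its boundary — but Lemma \ref{lem3.2}(2) together with $m^+(a_1,a_2)<0$ and the fact $h\ge0$ on $[R_0,\rho_0]$ puts the minimizer strictly inside $B_{\rho_0}$, hence in the open set $V(a_1,a_2)$), and $I(u_{a_1},v_{a_2})=m^+(a_1,a_2)$. Positivity of the components follows since $|u_{a_1}|,|v_{a_2}|$ is also a minimizer, then by Lemma \ref{lem2.12}/\ref{lem2.11} and the strong maximum principle $u_{a_1},v_{a_2}>0$; radial symmetry and monotonicity are inherited from the rearranged minimizing sequence, or re-derived by rearranging the minimizer and noting equality must hold in Lemma \ref{lem2.14}.

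The main obstacle is the dichotomy/strong-convergence step: one has to simultaneously (a) keep the weak limit inside $B_{\rho_0}$ after renormalizing the masses — the scaling $t_i\ge1$ pushes gradients up, so one needs the strict-interior information from Lemma \ref{lem3.2}(2) and the sign of $h$ on $[R_0,\rho_0)$ to control this — and (b) verify that the "vanishing-at-infinity" part $J_\infty(\bar u,\bar v)$ is nonnegative, which hinges on the gradient of the tail being small, i.e. on $\rho_0$ being below the threshold where $\rho\mapsto\frac12\rho^2-D_1\rho^4$ turns negative; this is exactly the kind of bookkeeping that the condition $T(a_1,a_2)\le\gamma_1$ and the definition $\rho_0=[\frac{3-p}{2(4-p)D_1}]^{1/2}$ are designed to make work, but it needs to be checked carefully. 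The exclusion of the semitrivial weak limits via the strict inequality $m^+(a_1,a_2)<\min\{m^+(a_1,0),m^+(0,a_2)\}$ is conceptually the heart of the compactness, and it is already supplied as a hypothesis here (and verified in Lemma \ref{lem3.3}).
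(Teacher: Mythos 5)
Your overall skeleton matches the paper's: a Schwarz-rearranged radial minimizing sequence, boundedness, the compact embedding of $H^1_r(\R^4)$ into $L^p(\R^4)$, and exclusion of the semitrivial weak limits via a Br\'ezis--Lieb splitting together with the strict inequality $m^+(a_1,a_2)<\min\{m^+(a_1,0),m^+(0,a_2)\}$. The genuine gap is in the step you yourself flag as the ``main obstacle'': recovering the full masses $\|u_{a_1}\|_2=a_1$, $\|v_{a_2}\|_2=a_2$ and hence strong convergence. Your proposed device --- rescaling the weak limit by $t_i=a_i/\|u_{a_1}\|_2\ge 1$ and arguing that this ``cannot beat $m^+(a_1,a_2)$ unless $t_i=1$'' --- is not carried out, and making it rigorous would essentially require strict monotonicity of the level $m^+(a_1,a_2)$ in the masses, or at least a sign analysis of $t\mapsto I(t_1u,t_2v)$ on all of $[1,t_1]\times[1,t_2]$ together with control of the gradient constraint (note $\|\nabla(t_iu)\|_2^2=t_i^2\|\nabla u\|_2^2$ can exit $B_{\rho_0}$). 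The authors themselves remark that monotonicity of the ground-state level in $(a_1,a_2)$ for the coupled system ``seems difficult to prove'', so this route cannot be waved through.

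The paper closes this step with a different and cleaner device that your proposal is missing. The Lagrange multiplier rule gives approximate equations \eqref{d1}--\eqref{d2} with bounded multipliers $\lambda_{1,n}\to\lambda_1$, $\lambda_{2,n}\to\lambda_2$; testing with $(u_n,v_n)$, combining with $P_{a_1,a_2}(u_n,v_n)=o_n(1)$ and the strong $L^p$-convergence yields
$\lambda_1a_1^2+\lambda_2a_2^2=(1-\gamma_p)\big[\alpha_1\|u\|_p^p+\alpha_2\|v\|_p^p\big]$,
while the Pohozaev identity for the limit equation gives $\lambda_1\|u\|_2^2+\lambda_2\|v\|_2^2$ equal to the same right-hand side. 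Once $u,v>0$ is known (maximum principle), Lemma \ref{lem2.11} gives $\lambda_1,\lambda_2>0$, and then $\|u\|_2^2\le a_1^2$, $\|v\|_2^2\le a_2^2$ force $\|u\|_2^2=a_1^2$ and $\|v\|_2^2=a_2^2$ with no monotonicity input at all. Strong convergence then follows from the Br\'ezis--Lieb splitting essentially as you outline; your bookkeeping point that $\tfrac{1}{2}\rho^2-D_1\rho^4>0$ for $0<\rho\le\rho_0$ (since $\rho_0^2=\tfrac{3-p}{2(4-p)D_1}<\tfrac{1}{2D_1}$ for $2<p<3$) is correct and is exactly what makes the remainder nonnegative. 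Without the two multiplier/Pohozaev identities and the sign of $\lambda_1,\lambda_2$, your argument does not close.
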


\begin{proof}
Let $\{(u_n,v_n)\}\subset \mathcal{P}^+_{a_1,a_2}$ be a minimizing sequence of $m^+(a_1,a_2)$. From Lemma \ref{lem2.14}, then by taking $(|u_n|,|v_n|)$ and adapting the Schwarz symmetrization to $(|u_n|,|v_n|)$ if necessary, we can obtain a new minimizing sequence (up to a subsequence), such that $(u_n,v_n)$ are all real valued, nonnegative, radially symmetric and decreasing in $r=|x|$. We first prove that $\{(u_n,v_n)\}$ is bounded in $H^1(\R^4,\R^2)$. Since $P_{a_1,a_2}(u_n,v_n)\to 0$, we have
$$
||\nabla u_n||^2_2+\|\nabla v_n\|^2_2-\mu_1\|u_n\|^{4}_{4}-\mu_2\|v_n\|^{4}_{4}-\gamma_p\alpha_1\|u_n\|^p_p$$
$$-\gamma_p\alpha_2\|v_n\|^p_p-
2\beta\|u_nv_n\|^2_2
=o_n(1).
$$
By Gagliardo-Nirenberg inequality and \eqref{b5}, we get
\begin{equation}\label{g5}
\begin{aligned}
I(u_n,v_n)&=\frac{1}{4}\int_{\R^4}|\nabla u_n|^2+|\nabla v_n|^2-\frac{\alpha_1}{p}\big(1-\frac{p\gamma_p}{4}\big)\int_{\R^4}|u_n|^p\\
&\quad \quad -\frac{\alpha_2}{p}\big(1-\frac{p\gamma_p}{4}\big)\int_{\R^4}|v_n|^p\\
&\ge \frac{1}{4}\big(\|\nabla u_n\|^2_2+\|\nabla v_n\|^2_2\big)-\big(1-\frac{p\gamma_p}{4}\big)D_2\|\nabla u_n\|^{p\gamma_p}_2\\
&\quad\quad -\big(1-\frac{p\gamma_p}{4}\big)D_3\|\nabla v_n\|^{p\gamma_p}_2.\\
\end{aligned}
\end{equation}
Therefore,
\begin{equation*}
\frac{1}{4}(\|\nabla u_n\|^2_2+\|\nabla v_n\|^2_2)\end{equation*}
\begin{equation*}
\le \big(1-\frac{p\gamma_p}{4}\big)D_2\|\nabla u_n\|^{p\gamma_p}_2+\big(1-\frac{p\gamma_p}{4}\big)D_3\|\nabla v_n\|^{p\gamma_p}_2+m^{+}(a_1,a_2)+o_n(1),
\end{equation*}
which implies that $\{(u_n,v_n)\}$ is bounded. By the compact embedding of $H^1_r(\R^4)$ into $L^{p_1}(\R^4)$ for $2<p_1<4$, there exists $(u,v)$ such that $(u_n,v_n)\rightharpoonup (u,v)$ in $H^1(\R^4,\R^2)$, $u_n\to u$ in $L^p(\R^4)$ and $v_n\to v$ in $L^p(\R^4)$. Therefore, $u,v\ge 0$ are radial functions. By the Lagrange multiplier's rule (see \cite[ Lemma 3]{BL}), we know that there exists a sequence $\{(\lambda_{1,n},\lambda_{2,n})\}\subset \R\times\R$ such that
\begin{equation}\label{d1}
\int_{\R^4}\nabla u_n\nabla\phi+\lambda_{1,n}u_n\phi-\mu_1(u_n)^3\phi-\alpha_1|u_n|^{p-2}u_n\phi
-\beta |v_n|^2 u_n\phi=o_n(1)\|\phi\|,
\end{equation}
\begin{equation}\label{d2}
\int_{\R^4}\nabla v_n\nabla\varphi+\lambda_{2,n}v_n\varphi-\mu_2(v_n)^3\varphi-\alpha_2|v_n|^{p-2}v_n\varphi-\beta |u_n|^2 v_n\varphi=o_n(1)\|\varphi\|,
\end{equation}
as $n\to \infty$, for every $(\phi,\varphi)\in H^1(\R^4,\R^2)$. In particular, if we take $(\phi,\varphi)=(u_n,v_n)$, $(\lambda_{1,n},\lambda_{2,n})$ is bounded. Up to a subsequence, $(\lambda_{1,n},\lambda_{2,n})\to (\lambda_{1},\lambda_{2})\in \R\times\R$. Passing to the limit in \eqref{d1}-\eqref{d2}, then $(u,v)$ satisfies
\begin{equation*}
\begin{cases}
-\Delta u+\lambda_1u=\mu_1u^3+\alpha_1|u|^{p-2}u+\beta v^2u,\\
-\Delta v+\lambda_2v=\mu_2v^3+\alpha_2|v|^{p-2}v+\beta u^{2}v.\\
\end{cases}
\end{equation*}
By $P_{a_1,a_2}(u,v)=0$,
\begin{equation}\label{d3}
\lambda_1\|u\|^2_2+\lambda_2\|v\|^2_2=(1-\gamma_p)\int_{\R^4}\alpha_1|u|^p+(1-\gamma_p)\int_{\R^4}\alpha_2|v|^p.
\end{equation}
From \eqref{d1}-\eqref{d2} and $P_{a_1,a_2}(u_n,v_n)=o_n(1)$, we obtain
\begin{equation}\label{d4}
\begin{aligned}
\lambda_1a^2_1+\lambda_2a^2_2&=\lim_{n\to\infty}\lambda_1\|u_n\|^2_2+\lambda_2\|v_n\|^2_2\\
&=\lim_{n\to\infty}(1-\gamma_p)\int_{\R^4}\alpha_1|u_n|^p+(1-\gamma_p)\int_{\R^4}\alpha_2|v_n|^p\\
&=(1-\gamma_p)\int_{\R^4}\alpha_1|u|^p+(1-\gamma_p)\int_{\R^4}\alpha_2|v|^p.\\
\end{aligned}
\end{equation}
\vskip1mm
We claim that $u\neq 0$ and $v\neq0$.
\vskip1mm
{\bf Case 1.} If $u=0$ and $v=0$, then $\int_{\R^4}\alpha_1|u_n|^p\to 0$, $\int_{\R^4}\alpha_2|v_n|^p\to 0$, we have
\begin{equation*}
P_{a_1,a_2}(u_n,v_n)=||\nabla u_n||^2_2+\|\nabla v_n\|^2_2-\mu_1\|u_n\|^{4}_{4}-\mu_2\|v_n\|^{4}_{4}-
2\beta\|u_nv_n\|^2_2+o_n(1)=o_n(1).
\end{equation*}
Therefore,
\begin{equation*}
m^+(a_1,a_2)+o_n(1)=I(u_n,v_n)=\frac{1}{4}\int_{\R^4}|\nabla u_n|^2+|\nabla v_n|^2+o_n(1)\ge0,
\end{equation*}
this contradicts the fact that $m^+(a_1,a_2)<0$.
\vskip1mm
{\bf Case 2.} If $u\neq0$ and $v=0$, then $v_n\to 0$ in $L^p(\R^4)$. Let $\bar{u}_n=u_n-u$, $\bar{u}_n\to 0$ in $L^p(\R^4)$. By the maximum principle (see \cite[Theorem 2.10]{hanq}), $u$ is a positive solution of \eqref{a2}. It follows from Lemma \ref{lem2.1} that $m^+(a_1,0)\le m^+(\|u\|_2,0)$.
By the Br\'{e}zis-Lieb Lemma \cite{WM}, we deduce that
\begin{equation*}
\begin{aligned}
P_{a_1,a_2}(u_n,v_n)&=||\nabla u_n||^2_2+\|\nabla v_n\|^2_2-\mu_1\|u_n\|^{4}_{4}-\mu_2\|v_n\|^{4}_{4}-\gamma_p\alpha_1\|u_n\|^p_p\\
&\quad -2\beta\|u_nv_n\|^2_2+o_n(1)\\
&=||\nabla \bar{u}_n||^2_2+\|\nabla v_n\|^2_2-\mu_1\|\bar{u}_n\|^{4}_{4}-\mu_2\|v_n\|^{4}_{4}-2\beta\|\bar{u}_nv_n\|^2_2\\
&\quad+||\nabla u||^2_2-\mu_1\|u\|^{4}_{4}-\gamma_p\alpha_1\|u\|^p_p+o_n(1)\\
&=||\nabla \bar{u}_n||^2_2+\|\nabla v_n\|^2_2-\mu_1\|\bar{u}_n\|^{4}_{4}-\mu_2\|v_n\|^{4}_{4}-2\beta\|\bar{u}_nv_n\|^2_2+o_n(1).
\end{aligned}
\end{equation*}
Thus,
\begin{equation*}
\begin{aligned}
m^+(a_1,a_2)+o_n(1)&=I(u_n,v_n)=I(\bar{u}_n,v_n)+I(u,0)+o_n(1)\\
&\ge \frac{1}{4}\int_{\R^4}(|\nabla \bar{u}_n|^2+|\nabla v_n|^2)
+m^+(\|u\|_2,0)+o_n(1)\\
&\ge m^+(a_1,0),
\end{aligned}
\end{equation*}
which contradicts our assumption on $m^+(a_1,a_2)$.
\vskip1mm
{\bf Case 3.} If $u=0$ and $v\neq0$. The analysis similar to that in the proof of Case 2 shows that $m^+(a_1,a_2)\ge m^+(0,a_2)+o_n(1)$, we also have a contradiction.
\vskip1mm

Therefore, $u\neq0$ and $v\neq0$. It remains to show that $m^+(a_1,a_2)$ is achieved.
By the maximum principle (see \cite[Theorem 2.10]{hanq}), $u,v>0$. Then, from Lemma \ref{lem2.11}, we get $\lambda_1,\lambda_2>0$. Moreover, combining \eqref{d3} with \eqref{d4}, we have
\begin{equation}\label{d5}
\lambda_1a^2_1+ \lambda_2a^2_2=\lambda_1\|u\|^2_2+\lambda_2\|v\|^2_2.
\end{equation}
Since $\|u\|^2_2\le a^2_1$ and $\|v\|^2_2\le a^2_2$, it follows from \eqref{d5} that $\|u\|^2_2= a^2_1$ and $\|v\|^2_2= a^2_2$, and hence $(u,v)\in \mathcal{P}_{a_1,a_2}$. Let $(\bar{u}_n,\bar{v}_n)=(u_n-u,v_n-v)$, we obtain
\begin{equation*}
\begin{aligned}
o_n(1)&=P_{a_1,a_2}(u_n,v_n)=P_{a_1,a_2}(\bar{u}_n,\bar{v}_n)+P_{a_1,a_2}(u,v)+o_n(1)\\
&=||\nabla \bar{u}_n||^2_2+\|\nabla \bar{v}_n\|^2_2-\mu_1\|\bar{u}_n\|^{4}_{4}-\mu_2\|\bar{v}_n\|^{4}_{4}-2\beta\|\bar{u}_n\bar{v}_n\|^2_2+o_n(1).\\
\end{aligned}
\end{equation*}
Therefore,
\begin{equation*}
\begin{aligned}
m^+(a_1,a_2)+o_n(1)&=I(u_n,v_n)=I(\bar{u}_n,\bar{v}_n)+I(u,v)+o_n(1)\\
&\ge \frac{1}{4}\int_{\R^4}(|\nabla \bar{u}_n|^2+|\nabla v_n|^2)+m^+(a_1,a_2)+o_n(1)\\
&\ge m^+(a_1,a_2)+o_n(1).
\end{aligned}
\end{equation*}
We then conclude that $I(u,v)=m^+(a_1,a_2)$ and $(u_n,v_n)\to (u,v)$ in $H^1(\R^4,\R^2)$. Then, we have proved that $m^+(a_1,a_2)$ can be attained by some $(u_{a_1},v_{a_2})$ which is real valued, positive, radially symmetric and decreasing in $r=|x|$.
\end{proof}

\vskip0.3in

In the following, we derive a better upper bound of $m^+(a_1,a_2)$. We consider the problem
\begin{equation}\label{g1}
\begin{cases}
-\Delta u +\lambda u =\alpha_1|u|^{p-2}u \ \ \text{in}\ \R^4,\\
\int_{\R^4}|u|^2=a^2_1,
\end{cases}
\end{equation}
where $\alpha_1>0$ and $2<p<3$. Define $I_0(u)=\frac{1}{2}\| \nabla u\|^2_{2}-\frac{\alpha_1}{p}\|u\|^{p}_{p}$, then solutions $u$ of \eqref{g1} can be found as minimizers of
\begin{equation*}
m_0(a_1)=\inf_{u\in S(a_1)} I_0(u)>-\infty,
\end{equation*}
where $\lambda$ is a Lagrange multiplier. From \cite{KM}, we obtain that \eqref{g1} has a unique positive solution $(\lambda,u_{p,\alpha_1})$ given by
\begin{equation}\label{z6}
\lambda=\big(\frac{a^2_1}{\|w_{p}\|^2_2}\alpha_1^{\frac{2}{p-2}} \big)^{\frac{p-2}{6-2p}},\quad \ \  \ \ u_{p,\alpha_1}=\big(\frac{\lambda}{\alpha_1}\big)^{\frac{1}{p-2}}w_p(\lambda^{\frac{1}{2}}x),
\end{equation}
where $w_{p}$ is the unique positive solution of \eqref{g2}.

\vskip1mm
By direct calculations, we have the following corollaries.
\begin{Cor} \label{cor4.1}
Let $a_1>0$, $\alpha_1>0$ and  $p\in(2,3)$. Then, we have
$$m_0(a_1)=I_0(u_{p,\alpha_1})=-K_{p,\alpha_1}\cdot a_1^\frac{4-p}{3-p}<0,$$
where $K_{p,\alpha_1}:=\frac{3-p}{4-p}\|w_{p}\|^{\frac{2-p}{3-p}}_2\alpha_1^{\frac{1}{3-p}}>0$.
\end{Cor}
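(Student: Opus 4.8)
The plan is to reduce the corollary to the single computation of $I_0(u_{p,\alpha_1})$. Since $2<p<3$ we have $p\gamma_p=2(p-2)<2$, so $p$ is $L^2$-subcritical in $\R^4$; by the Gagliardo--Nirenberg inequality $I_0$ is coercive and bounded below on $S(a_1)$, a minimizer exists, and replacing it by its symmetric decreasing rearrangement and applying the maximum principle we may take it positive, solving \eqref{g1} for some Lagrange multiplier $\lambda$. The uniqueness statement of \cite{KM} then forces the minimizer to coincide with $u_{p,\alpha_1}$, so $m_0(a_1)=I_0(u_{p,\alpha_1})$ and it only remains to evaluate the right-hand side.

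Next I would extract two scalar identities from the fact that $u_{p,\alpha_1}$ solves \eqref{g1} with $\|u_{p,\alpha_1}\|_2^2=a_1^2$: testing \eqref{g1} against $u_{p,\alpha_1}$ gives $\|\nabla u_{p,\alpha_1}\|_2^2+\lambda a_1^2=\alpha_1\|u_{p,\alpha_1}\|_p^p$, while the Pohozaev identity associated with \eqref{g1} in dimension four gives $\|\nabla u_{p,\alpha_1}\|_2^2+2\lambda a_1^2=\tfrac{4}{p}\alpha_1\|u_{p,\alpha_1}\|_p^p$. Subtracting yields $\alpha_1\|u_{p,\alpha_1}\|_p^p=\tfrac{p}{4-p}\lambda a_1^2$ (in particular $\lambda>0$ drops out for free), and then the first identity gives $\|\nabla u_{p,\alpha_1}\|_2^2=\tfrac{2(p-2)}{4-p}\lambda a_1^2$. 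Substituting both into $I_0(u_{p,\alpha_1})=\tfrac12\|\nabla u_{p,\alpha_1}\|_2^2-\tfrac{\alpha_1}{p}\|u_{p,\alpha_1}\|_p^p$ collapses the expression to a single term, $I_0(u_{p,\alpha_1})=\big(\tfrac{p-2}{4-p}-\tfrac{1}{4-p}\big)\lambda a_1^2=-\tfrac{3-p}{4-p}\lambda a_1^2$.

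Finally I would substitute the explicit value of $\lambda$ from \eqref{z6}. Writing the exponent $\tfrac{p-2}{6-2p}=\tfrac{p-2}{2(3-p)}$ gives $\lambda=a_1^{\frac{p-2}{3-p}}\|w_p\|_2^{-\frac{p-2}{3-p}}\alpha_1^{\frac{1}{3-p}}$, and using $\tfrac{p-2}{3-p}+2=\tfrac{4-p}{3-p}$ together with $-\tfrac{p-2}{3-p}=\tfrac{2-p}{3-p}$ gives $\lambda a_1^2=\|w_p\|_2^{\frac{2-p}{3-p}}\alpha_1^{\frac{1}{3-p}}a_1^{\frac{4-p}{3-p}}$. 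Hence $m_0(a_1)=I_0(u_{p,\alpha_1})=-\tfrac{3-p}{4-p}\|w_p\|_2^{\frac{2-p}{3-p}}\alpha_1^{\frac{1}{3-p}}a_1^{\frac{4-p}{3-p}}=-K_{p,\alpha_1}a_1^{\frac{4-p}{3-p}}$, and since $2<p<3$ the coefficient $K_{p,\alpha_1}=\tfrac{3-p}{4-p}\|w_p\|_2^{\frac{2-p}{3-p}}\alpha_1^{\frac{1}{3-p}}$ is strictly positive. There is no genuine obstacle in this argument; the only point requiring care is the bookkeeping of the dilation exponents in $\R^4$ (the factor $\lambda^{-N/2}=\lambda^{-2}$ hidden in \eqref{z6}, which can be cross-checked by verifying directly that $\|u_{p,\alpha_1}\|_2^2=(\lambda/\alpha_1)^{2/(p-2)}\lambda^{-2}\|w_p\|_2^2=a_1^2$) and the algebraic simplification of the powers of $a_1$, $\alpha_1$ and $\|w_p\|_2$ in the final line.
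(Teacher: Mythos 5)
Your proof is correct. The paper offers no argument for this corollary beyond ``by direct calculations,'' and your computation --- combining the Nehari identity with the Pohozaev identity for \eqref{g1} in $\R^4$ to get $I_0(u_{p,\alpha_1})=-\tfrac{3-p}{4-p}\lambda a_1^2$ and then substituting the explicit $\lambda$ from \eqref{z6} --- is exactly the calculation intended, while the identification $m_0(a_1)=I_0(u_{p,\alpha_1})$ is already supplied by the paper through the uniqueness statement quoted from \cite{KM}.
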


If $\alpha_1:=\alpha_2$ and $a_1:=a_2$ in \eqref{g1}, the analogues of $u_{p,\alpha_1}$ will be denoted by
$u_{p,\alpha_2}$ respectively.
Combining Lemma \ref{lem3.3} with Corollary \ref{cor4.1}, we have
\begin{Cor} \label{cor4.2}
Let $a_i,\alpha_i>0(i=1,2)$ and  $2<p<3$. If $T(a_1,a_2)\le\gamma_{1}$, then we have
$$m^+(a_1,a_2)<\min\big\{-K_{p,\alpha_1}\cdot a^\frac{4-p}{3-p}_1,-K_{p,\alpha_2} \cdot a^\frac{4-p}{3-p}_2\big\},$$
where $K_{p,\alpha_1}:=\frac{3-p}{4-p}\|w_{p}\|^{\frac{2-p}{3-p}}_2\alpha^{\frac{1}{3-p}}_1>0$ and $K_{p,\alpha_2}:=\frac{3-p}{4-p}\|w_p\|^{\frac{2-p}{3-p}}_2\alpha^{\frac{1}{3-p}}_2>0$.
\end{Cor}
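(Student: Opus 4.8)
The plan is to combine the strict comparison of Lemma~\ref{lem3.3} with the closed form of $m_0$ supplied by Corollary~\ref{cor4.1}, the bridge being the purely scalar estimate $m^+(a_i,0)\le m_0(a_i)$ for $i=1,2$. Indeed, by Corollary~\ref{Cor3.1} we have $m^+(a_1,a_2)=m(a_1,a_2)$, so Lemma~\ref{lem3.3} yields $m^+(a_1,a_2)<\min\{m^+(a_1,0),m^+(0,a_2)\}$; granting the scalar estimate and invoking Corollary~\ref{cor4.1} to rewrite $m_0(a_i)=-K_{p,\alpha_i}a_i^{\frac{4-p}{3-p}}$, the asserted inequality follows at once by taking the minimum over $i\in\{1,2\}$.

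Thus the only thing to prove is $m^+(a_i,0)\le m_0(a_i)$, and I treat $i=1$ (the case $i=2$ being identical). Recall from \eqref{c12} and \eqref{c121} that $m^+(a_1,0)=m^+_{p,\mu_1,\alpha_1}(a_1)=\inf_{u\in\mathcal{T}^+_{a_1,p,\mu_1,\alpha_1}}\mathcal{A}_{p,\mu_1,\alpha_1}(u)$, while $m_0(a_1)=\inf_{u\in S(a_1)}I_0(u)=\inf_{u\in S(a_1)}\min_{s\in\R}I_0(s\star u)$. Fix $u\in S(a_1)$ and compare the fiber maps $\overline{\Psi}_u(s):=\mathcal{A}_{p,\mu_1,\alpha_1}(s\star u)$ and $s\mapsto I_0(s\star u)$. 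Since $\mathcal{A}_{p,\mu_1,\alpha_1}(w)=I_0(w)-\frac{\mu_1}{4}\|w\|_4^4\le I_0(w)$ and $p\gamma_p<2$, the map $s\mapsto I_0(s\star u)$ has a single critical point $\tilde s_u$, a global minimum, whereas under the standing hypothesis $T(a_1,a_2)\le\gamma_1$ the scalar theory (the one-component counterpart of Lemmas~\ref{lem2.3}--\ref{lem2.4}, cf. Lemma~\ref{lem2.1}) guarantees that $\overline{\Psi}_u$ emanates from $0^-$ at $s=-\infty$, is strictly decreasing on some interval $(-\infty,s_u]$, and has there a local minimum at $s_u$ with $\overline{\Psi}_u''(s_u)>0$; in particular $s_u\star u\in\mathcal{T}^+_{a_1,p,\mu_1,\alpha_1}$. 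Writing both critical-point equations in the variable $e^s$ and using $\mu_1\|u\|_4^4>0$ and $2-p\gamma_p>0$ shows $\tilde s_u<s_u$, so monotonicity of $\overline{\Psi}_u$ on $(-\infty,s_u]$ gives
\begin{equation*}
m^+(a_1,0)\le\overline{\Psi}_u(s_u)\le\overline{\Psi}_u(\tilde s_u)=I_0(\tilde s_u\star u)-\tfrac{\mu_1}{4}e^{4\tilde s_u}\|u\|_4^4\le\min_{s\in\R}I_0(s\star u).
\end{equation*}
Taking the infimum over $u\in S(a_1)$ yields $m^+(a_1,0)\le m_0(a_1)$, as required.

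The calculations here are elementary; the one substantive ingredient is the qualitative description of $\overline{\Psi}_u$ for $T(a_1,a_2)\le\gamma_1$, which is just the scalar analogue of the fibering analysis already performed in Section~2, together with the inequality $\tilde s_u<s_u$ obtained above. Conceptually the argument merely records that $m^+_{p,\mu,\alpha_1}(a_1)$ is non-increasing in $\mu\ge0$: adding an attractive Sobolev-critical term to the mass-subcritical scalar problem can only lower the constrained local-minimum level, so dropping it can only raise it to $m_0(a_1)$. Feeding $m^+(a_i,0)\le m_0(a_i)$ back into Lemma~\ref{lem3.3} and using Corollary~\ref{cor4.1} then completes the proof. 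I expect no real obstacle beyond being careful that the standing smallness hypothesis puts the auxiliary scalar problems in the regime where Lemma~\ref{lem2.1} applies.
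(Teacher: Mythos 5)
Your argument is correct and follows the paper's own route: the paper obtains this corollary precisely by combining Lemma \ref{lem3.3} with Corollary \ref{cor4.1}, tacitly relying on the comparison $m^+(a_i,0)\le m_0(a_i)$ that you state and prove explicitly. Your fiber-map verification of that comparison (showing the local minimum $s_u$ of $\overline{\Psi}_u$ lies to the right of the global minimum $\tilde s_u$ of $s\mapsto I_0(s\star u)$, then using monotonicity on $(-\infty,s_u]$ and $\mathcal{A}_{p,\mu_1,\alpha_1}\le I_0$) is sound and fills in a step the paper leaves implicit.
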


Next, we prove the asymptotic behaviour of  the ground states to \eqref{eq1.1}-\eqref{eq1.11}.
\begin{Lem} \label{lem3.8}
For any fixed $\alpha_i,\mu_i,a_i >0(i=1,2)$, $\beta>0$ and $2<p<3$. Letting $(a_1,a_2)\to (0,0)$ and $a_1\sim a_2$, then for any ground state $(u_{a_1},v_{a_2})$ of \eqref{eq1.1}-\eqref{eq1.11}, we have
\begin{equation*}
\Big(\big(\frac{L_1}{\alpha_1}\big)^{-\frac{1}{p-2}}u_{a_1}(L_1^{-\frac{1}{2}}x), \big(\frac{L_2}{\alpha_2}\big)^{-\frac{1}{p-2}}v_{a_2}(L_2^{-\frac{1}{2}}x)\Big)\to (w_p,w_p)
 \ \  \text{in}\ H^{1}(\R^4,\R^2),
\end{equation*}
where $w_p$ is defined as above,
$L_1=\big(\frac{a^2_1}{\|w_{p}\|^2_2}\alpha_1^{\frac{2}{p-2}} \big)^{\frac{p-2}{6-2p}}$ and $L_2=\big(\frac{a^2_2}{\|w_{p}\|^2_2}\alpha_2^{\frac{2}{p-2}} \big)^{\frac{p-2}{6-2p}}$.
\end{Lem}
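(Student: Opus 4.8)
The plan is to compare the ground-state energy $m^+(a_1,a_2)$ with the sum $m_0(a_1)+m_0(a_2)$ of the two decoupled limiting energies and to deduce that, after the rescaling of the statement, each component becomes a minimizing sequence for the mass-subcritical functional $\widetilde I(w):=\tfrac12\|\nabla w\|_2^2-\tfrac1p\|w\|_p^p$ on the sphere $S(\|w_p\|_2)$, whose minimizer is the unique positive solution $w_p$ of \eqref{g2}. Write $\rho_a:=\big[\|\nabla u_{a_1}\|_2^2+\|\nabla v_{a_2}\|_2^2\big]^{1/2}$, and recall that $m^+(a_1,a_2)=I(u_{a_1},v_{a_2})$ with $(u_{a_1},v_{a_2})\in\mathcal P_{a_1,a_2}$ positive, radial and radially decreasing (Lemma \ref{lem3.4}), and that by Corollary \ref{cor4.1} and \eqref{z6} the minimizer $u_{p,\alpha_i}$ realizing $m_0(a_i)$ is carried exactly onto $w_p$ by this rescaling (the $L_i$ of the statement equal the multiplier $\lambda$ in \eqref{z6}). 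As a first step, since $P_{a_1,a_2}(u_{a_1},v_{a_2})=0$ and $m^+(a_1,a_2)<0$, the estimate \eqref{g5} in the proof of Lemma \ref{lem3.4} gives $\tfrac14\rho_a^2\le C(\alpha_1a_1^{4-p}+\alpha_2a_2^{4-p})\rho_a^{2(p-2)}$; as $2(3-p)>0$ this yields $\rho_a^{2(3-p)}\lesssim a_1^{4-p}+a_2^{4-p}$, hence $\rho_a\to0$.

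Next I establish a sharp two-term energy estimate. For the upper bound I test with $(u_{p,\alpha_1},u_{p,\alpha_2})$: since $\rho_0$ is independent of $(a_1,a_2)$ while $\|\nabla u_{p,\alpha_i}\|_2\to0$, this pair lies in $V(a_1,a_2)$ for small masses, and discarding the non-positive quartic and coupling terms gives $m^+(a_1,a_2)\le I_0(u_{p,\alpha_1})+I_0(u_{p,\alpha_2})=m_0(a_1)+m_0(a_2)$. For the lower bound I write $I(u_{a_1},v_{a_2})=I_0(u_{a_1})+I_0(v_{a_2})-\tfrac14(\mu_1\|u_{a_1}\|_4^4+\mu_2\|v_{a_2}\|_4^4)-\tfrac\beta2\|u_{a_1}v_{a_2}\|_2^2$ and bound the last three terms by $C\rho_a^4$ via the Sobolev inequality. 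Combined with $I_0(u_{a_1})\ge m_0(a_1)$ and $I_0(v_{a_2})\ge m_0(a_2)$ this gives
\[
0\;\le\;\big(I_0(u_{a_1})-m_0(a_1)\big)+\big(I_0(v_{a_2})-m_0(a_2)\big)\;\le\;C\rho_a^4,
\]
so each (non-negative) summand is $O(\rho_a^4)$.

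Finally I rescale and invoke uniqueness. With $\hat u_{a_1},\hat v_{a_2}$ the rescaled functions, a change of variables gives $I_0(u_{a_1})=c_1\widetilde I(\hat u_{a_1})$ and $I_0(v_{a_2})=c_2\widetilde I(\hat v_{a_2})$ with $c_i:=(L_i/\alpha_i)^{2/(p-2)}L_i^{-1}\sim a_i^{(4-p)/(3-p)}$, and, the rescaling being an $L^2$-preserving bijection $S(a_i)\to S(\|w_p\|_2)$, also $m_0(a_i)=c_i\widetilde m_0$ with $\widetilde m_0:=\inf_{S(\|w_p\|_2)}\widetilde I=\widetilde I(w_p)<0$ (as follows from Corollary \ref{cor4.1} together with the uniqueness result \cite{KM}). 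Dividing the summands of the previous display by $c_1,c_2$ and using
\[
\frac{\rho_a^4}{c_i}\;\lesssim\;\frac{(a_1^{4-p}+a_2^{4-p})^{2/(3-p)}}{a_i^{(4-p)/(3-p)}}\;\longrightarrow\;0
\]
— which holds for $i=1,2$ simultaneously precisely because $a_1\sim a_2$ — we obtain $\widetilde I(\hat u_{a_1})\to\widetilde m_0$ and $\widetilde I(\hat v_{a_2})\to\widetilde m_0$. Since $\widetilde I$ is coercive on $S(\|w_p\|_2)$ (as $p\gamma_p<2$) and $\hat u_{a_1},\hat v_{a_2}$ are positive, radial and radially decreasing, the standard concentration-compactness analysis of the mass-subcritical minimization problem $\inf_{S(\|w_p\|_2)}\widetilde I$, together with the uniqueness of $w_p$, forces $\hat u_{a_1}\to w_p$ and $\hat v_{a_2}\to w_p$ strongly in $H^1(\R^4)$; as this limit does not depend on the chosen family of ground states, the convergence holds along the whole limit $(a_1,a_2)\to(0,0)$ with $a_1\sim a_2$.

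The principal obstacle is the interplay in the last two steps: the energy expansion has to be made quantitative enough that the error term $O(\rho_a^4)$ stays strictly of lower order than the leading energy $c_i\sim a_i^{(4-p)/(3-p)}$ after rescaling, and this is exactly where the hypothesis $a_1\sim a_2$ is indispensable — without it one component might carry a negligible fraction of the total energy while its rescaling still magnifies the common error, destroying its convergence.
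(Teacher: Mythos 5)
Your argument is correct, but it takes a genuinely different route from the paper. The paper's proof works at the level of the Euler--Lagrange system: it first derives two-sided bounds $\|\nabla u_{a_1}\|_2^2+\|\nabla v_{a_2}\|_2^2\sim a^{\frac{4-p}{3-p}}$ and the multiplier asymptotics $\lambda_{i,k}\sim a_{i,k}^{\frac{p-2}{3-p}}$ from the Pohozaev identity together with the energy bound of Corollary \ref{cor4.2}, then rescales the PDE system (equation \eqref{g7}), checks that the quartic and coupling coefficients vanish in the limit while the $p$-power coefficient tends to $1$ (this is where $a_1\sim a_2$ enters there), passes to the limit in the equations, pins down $\lambda_i^*=1$ via $L^2$-norm convergence, and upgrades to strong convergence by testing with differences. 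You instead avoid the Lagrange multipliers entirely: your two-term energy expansion $m^+(a_1,a_2)=m_0(a_1)+m_0(a_2)+O(\rho_a^4)$, combined with the exact scaling identity $I_0=c_i\widetilde I(\hat\cdot)$ and the observation $\rho_a^4=o(c_i)$ (which is where you need $a_1\sim a_2$), reduces the lemma to the compactness and uniqueness theory of the scalar mass-subcritical problem $\inf_{S(\|w_p\|_2)}\widetilde I$. What your route buys is a cleaner, purely variational argument that makes the role of $a_1\sim a_2$ transparent and needs no analysis of the rescaled system; what it costs is an appeal to the concentration-compactness/strict-subadditivity machinery for the limit problem (legitimate and standard, but stated rather than carried out), whereas the paper's PDE route is self-contained at that point and yields the multiplier asymptotics as a by-product, which are reused elsewhere. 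All the computations you rely on check out: the pair $(u_{p,\alpha_1},u_{p,\alpha_2})$ does lie in $V(a_1,a_2)$ for small masses since $\rho_0$ is independent of $(a_1,a_2)$, the error terms are indeed $O(\rho_a^4)$ by Sobolev, and the rescaling is an $L^2$-preserving bijection with $m_0(a_i)=c_i\widetilde m_0$ attained exactly at $w_p$.
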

\begin{proof}
For $\mu_i,\alpha_i,\beta>0(i=1,2)$ fixed, let $(a_{1,k},a_{2,k})\to (0^+,0^+)$ as $k\to +\infty$ and $(u_{a_{1,k}},v_{a_{2,k}})\in V(a_{1,k},a_{2,k})$ be a minimizer of $m^+(a_{1,k},a_{2,k})$ for each $k\in\mathbb{N}$, where $V(a_{1,k},a_{2,k})=\big\{(u,v)\in S(a_{1,k})\times S(a_{2,k}) : [\|\nabla u_{a_{1,k}}\|^2_2+\|\nabla v_{a_{2,k}}\|^2_2]^{\frac{1}{2}}< \rho_0 \big\}$.
By Lemma $\ref{lem3.4}$, we can suppose that $\big\{(u_{a_{1,k}},v_{a_{2,k}})\big\}$ is positive and radially symmetric, i.e., $0< u_{a_{1,k}},v_{a_{2,k}}\in H^{1}_r(\R^4)$.
From Lemma \ref{lem2.3}, we have $\rho_0\to 0$ as $(a_{1,k},a_{2,k})\to (0,0)$. So $\|\nabla u_{a_{1,k}}\|^2_2+\|\nabla v_{a_{2,k}}\|^2_2\to 0$ as well. In addition,
\begin{equation*}
0>m^+(a_{1,k},a_{2,k})+o_k(1)=I(u_{a_{1,k}},v_{a_{2,k}})\ge h\big([\|\nabla u_{a_{1,k}}\|^2_2+\|\nabla v_{a_{2,k}}\|^2_2]^{\frac{1}{2}}\big)\to 0,
\end{equation*}
it follows that $m^+(a_{1,k},a_{2,k})\to 0$ as $(a_{1,k},a_{2,k})\to (0,0)$.
Since $P_{a_{1,k},a_{2,k}}(u_{a_{1,k}},v_{a_{2,k}})=0$, we get
\begin{equation*}
\begin{aligned}
I(u_{a_{1,k}},v_{a_{2,k}})&=\frac{1}{4}\int_{\R^4}|\nabla u_{a_{1,k}}|^2+|\nabla v_{a_{2,k}}|^2-\big(\frac{1}{p}-\frac{\gamma_p}{4}\big)\int_{\R^4}\alpha_1|u_{a_{1,k}}|^p\\
&\quad -\big(\frac{1}{p}-\frac{\gamma_p}{4}\big)\int_{\R^4}\alpha_2|v_{a_{2,k}}|^p\\
&=\big(\frac{1}{2}-\frac{1}{p\gamma_p}\big)\int_{\R^4}|\nabla u_{a_{1,k}}|^2+|\nabla v_{a_{2,k}}|^2\\
&\quad -\big(\frac{1}{4}-\frac{1}{p\gamma_p}\big)\int_{\R^4}\big(\mu_1|u_{a_{1,k}}|^4+\mu_2|v_{a_{2,k}}|^4\\
&\quad +2\beta|u_{a_{1,k}}|^2|v_{a_{2,k}}|^2\big)\\
&\le \min\big\{-K_{p,\alpha_1}\cdot a^\frac{4-p}{3-p}_{1,k},-K_{p,\alpha_2} \cdot a^\frac{4-p}{3-p}_{2,k}\big\}.\\
\end{aligned}
\end{equation*}
Then, there exist $C_i,C'_i>0(i=1,2)$ such that
\begin{equation}\label{g4}
C_1 a^\frac{4-p}{3-p}_{1,k}+C_2 a^\frac{4-p}{3-p}_{2,k}\le\int_{\R^4}|\nabla u_{a_{1,k}}|^2+|\nabla v_{a_{2,k}}|^2 \le C'_1 a^\frac{4-p}{3-p}_{1,k}+C'_2 a^\frac{4-p}{3-p}_{2,k}.
\end{equation}
Moreover, from \eqref{g4}, we have
\begin{equation*}
\begin{aligned}
&\int_{\R^4}\big(\mu_1|u_{a_{1,k}}|^4+\mu_2|v_{a_{2,k}}|^4+2\beta|u_{a_{1,k}}|^2|v_{a_{2,k}}|^2\big)\\
&\le \mathcal{S}^2_{\mu_1,\mu_2, \beta}\Big(\int_{\R^4}|\nabla u_{a_{1,k}}|^2+|\nabla v_{a_{2,k}}|^2\Big)^2\\
&\le C a_{1,k}^\frac{2(4-p)}{3-p}+C'a_{2,k}^\frac{2(4-p)}{3-p}.
\end{aligned}
\end{equation*}
The Lagrange multipliers rule implies the existence of some $\lambda_{1,k},\lambda_{2,k} \in \R$ such that
\begin{align*}\label{g3}
\int_{\R^4}\nabla u_{a_{1,k}} \nabla\phi+\lambda_{1,k}\int_{\R^4}u_{a_{1,k}}\phi&=
\alpha_1\int_{\R^4}|u_{a_{1,k}}|^{p-2}u_{a_{1,k}}\phi+\mu_1\int_{\R^4}(u_{a_{1,k}})^{3}
\phi\\
&\quad +\beta\int_{\R^4}|v_{a_{2,k}}|^2u_{a_{1,k}}\phi,
\end{align*}
\begin{align*}
\int_{\R^4}\nabla v_{a_{2,k}} \nabla\psi+\lambda_{2,k}\int_{\R^4}v_{a_{2,k}}\psi&=
\alpha_2\int_{\R^4}|v_{a_{2,k}}|^{p-2}v_{a_{2,k}}\psi+\mu_2\int_{\R^4}(v_{a_{2,k}})^{3}
\psi\\
& \quad  +\beta\int_{\R^4}|u_{a_{1,k}}|^2v_{a_{2,k}}\psi,
\end{align*}
for each $\phi,\psi\in H^{1}(\R^4)$. 
Taking $\phi=u_{a_{1,k}}$ and $\psi=v_{a_{2,k}}$, we get
\begin{equation*}
\lambda_{1,k}a^2_{1,k}=-\big[\|\nabla u_{a_{1,k}}\|^2_2-\mu_1\|u_{a_{1,k}}\|^4_4 -\beta\|u_{a_{1,k}}v_{a_{2,k}}\|^2_2-\alpha_1\|u_{a_{1,k}}\|^p_p\big],
\end{equation*}
and
\begin{equation*}
\lambda_{2,k}a^2_{2,k}=-\big[\|\nabla v_{a_{2,k}}\|^2_2-\mu_2\|v_{a_{2,k}}\|^4_4 -\beta\|v_{a_{2,k}}u_{a_{1,k}}\|^2_2-\alpha_2\|v_{a_{2,k}}\|^p_p\big],
\end{equation*}
hence that
\begin{equation}\label{x1}
2K_{p,\alpha_1}\cdot a^\frac{4-p}{3-p}_{1,k}\le \lambda_{1,k}a^2_{1,k}+\lambda_{2,k}a^2_{2,k}\ \ \text{or}\ \ 2K_{p,\alpha_2}\cdot a^\frac{4-p}{3-p}_{2,k}\le \lambda_{1,k}a^2_{1,k}+\lambda_{2,k}a^2_{2,k},
\end{equation}
and finally that
$\lambda_{1,k} \gtrsim  a^\frac{p-2}{3-p}_{1,k}$ and $\lambda_{2,k}\gtrsim a^\frac{p-2}{3-p}_{2,k}$. In addition, by $P_{a_{1,k},a_{2,k}}(u_{a_{1,k}},v_{a_{2,k}})=0$,
\begin{equation}\label{x2}
\begin{aligned}
&C a^\frac{4-p}{3-p}_{1,k}+C' a^\frac{4-p}{3-p}_{2,k}\\
&\le
\lambda_{1,k}a^2_{1,k}+\lambda_{2,k}a^2_{2,k}\\
&=-(1-\frac{1}{\gamma_p})\big[\|\nabla u_{a_{1,k}}\|^2_2+\|\nabla v_{a_{2,k}}\|^2_2]+\big(1-\frac{1}{\gamma_p}\big)\big[\mu_1\|u_{a_{1,k}}\|^4_4\\
&\quad+\mu_2\|v_{a_{2,k}}\|^4_4  +2\beta\|u_{a_{1,k}}v_{a_{2,k}}\|^2_2\big]\\
&\le -(1-\frac{1}{\gamma_p})\big[\|\nabla u_{a_{1,k}}\|^2_2+\|\nabla v_{a_{2,k}}\|^2_2]\le C_1a^\frac{4-p}{3-p}_{1,k}+ C'_1a^\frac{4-p}{3-p}_{2,k}.\\
\end{aligned}
\end{equation}
Therefore, from \eqref{x1}-\eqref{x2}, there  are  $\lambda_{1,k}\sim a^\frac{p-2}{3-p}_{1,k}$ and $\lambda_{2,k}\sim a^\frac{p-2}{3-p}_{2,k}$ as $(a_{1,k},a_{2,k})\to (0,0)$ and $a_{1,k}\sim a_{2,k}$. Moreover, we also have $\|u_{a_{1,k}}\|^p_p\sim a^\frac{4-p}{3-p}_{1,k}$ and $\|v_{a_{2,k}}\|^p_p\sim a^\frac{4-p}{3-p}_{2,k}$.
Define
\begin{equation*}
\tilde{u}_{a_{1,k}}=\frac{1}{\theta_{1,k}}u_{a_{1,k}}\big(\frac{1}{\gamma_{1,k}}x\big)\ \ \text{and}\ \ \tilde{v}_{a_{2,k}}=\frac{1}{\theta_{2,k}}v_{a_{2,k}}\big(\frac{1}{\gamma_{2,k}}x\big),
\end{equation*}
where
\begin{equation*}
\theta_{1,k}=\big(\frac{a^2_{1,k}}{\|w_{p}\|^2_2}\big)^{\frac{1}{6-2p}}\alpha_1^{\frac{1}{3-p}}, \quad \theta_{2,k}=\big(\frac{a^2_{2,k}}{\|w_{p}\|^2_2}\big)^{\frac{1}{6-2p}}\alpha_2^{\frac{1}{3-p}},
\end{equation*}
and
\begin{equation*}
\gamma_{1,k}=\big(\frac{a^2_{1,k}}{\|w_{p}\|^2_2}\big)^{\frac{p-2}{4(3-p)}}\alpha_1^{\frac{1}{6-2p}},\quad \gamma_{2,k}=\big(\frac{a^2_{2,k}}{\|w_{p}\|^2_2}\big)^{\frac{p-2}{4(3-p)}}\alpha_2^{\frac{1}{6-2p}}.
\end{equation*}
Then,
\begin{equation}\label{y2}
\|\nabla \tilde{u}_{a_{1,k}}\|^2_2=\frac{\gamma^2_{1,k}}{\theta^2_{1,k}}\|\nabla u_{a_{1,k}}\|^2_2=\Big[\frac{a_{1,k}}{\|w_{p}\|_2}\Big]^{\frac{p-4}{3-p}}\frac{1}{\alpha^{3-p}_1}\|\nabla u_{a_{1,k}}\|^2_2,
\end{equation}
\vskip1mm
\begin{equation}\label{y3}
\|\nabla \tilde{v}_{a_{2,k}}\|^2_2=\frac{\gamma^2_{2,k}}{\theta^2_{2,k}}\|\nabla v_{a_{2,k}}\|^2_2=\Big[\frac{a_{2,k}}{\|w_{p}\|_2}\Big]^{\frac{p-4}{3-p}}\frac{1}{\alpha^{3-p}_2}\|\nabla v_{a_{2,k}}\|^2_2,
\end{equation}
\begin{equation*}
\| \tilde{u}_{a_{1,k}}\|^2_2=\frac{\gamma^4_{1,k}}{\theta^2_{1,k}}\| u_{a_{1,k}}\|^2_2=\|w_p\|^2_2,\ \quad
\| \tilde{v}_{a_{2,k}}\|^2_2=\frac{\gamma^4_{2,k}}{\theta^2_{2,k}}\| v_{a_{2,k}}\|^2_2=\|w_p\|^2_2,
\end{equation*}
and
\begin{equation*}
\| \tilde{u}_{a_{1,k}}\|^p_p=\frac{\gamma^4_{1,k}}{\theta^p_{1,k}}\| u_{a_{1,k}}\|^p_p=\frac{\|w_p\|^{\frac{4-p}{3-p}}_2}{\alpha^{\frac{p-2}{3-p}}_1}\frac{\| u_{a_{1,k}}\|^p_p}{a^{\frac{4-p}{3-p}}_{1,k}},\ \quad
\end{equation*}

\begin{equation*}
\| \tilde{v}_{a_{2,k}}\|^p_p=\frac{\gamma^4_{2,k}}{\theta^p_{2,k}}\| v_{a_{2,k}}\|^2_2=\frac{\|w_p\|^{\frac{4-p}{3-p}}_2}{\alpha^{\frac{p-2}{3-p}}_2}\frac{\| v_{a_{2,k}}\|^p_p}{a^{\frac{4-p}{3-p}}_{2,k}}.
\end{equation*}

We note that such an additional assumption is appropriate for $a_{1,k}\sim a_{2,k}$ as $(a_{1,k}, a_{2,k}) \to (0,0)$. We see that $(\tilde{u}_{a_{1,k}},\tilde{v}_{a_{2,k}})$ solves
\begin{equation}\label{g7}
\begin{cases}
-\Delta\tilde{u}_{a_{1,k}}+\frac{\lambda_{1,k}}{\gamma^2_{1,k}}\tilde{u}_{a_{1,k}}=\mu_1\frac{\theta^2_{1,k}}{\gamma^2_{1,k}}\tilde{u}^3_{a_{1,k}}+\alpha_1\frac{\theta^{p-2}_{1,k}}{\gamma^2_{1,k}}|\tilde{u}_{a_{1,k}}|^{p-2}\tilde{u}_{a_{1,k}}+\beta\frac{\theta^{2}_{2,k}}{\gamma^2_{1,k}}\tilde{v}^2_{a_{2,k}}\tilde{u}_{a_{1,k}},\\
-\Delta\tilde{v}_{a_{2,k}}+\frac{\lambda_{2,k}}{\gamma^2_{2,k}}\tilde{v}_{a_{2,k}}=\mu_2\frac{\theta^2_{2,k}}{\gamma^2_{2,k}}\tilde{v}^3_{a_{2,k}}+\alpha_2\frac{\theta^{p-2}_{2,k}}{\gamma^2_{2,k}}|\tilde{v}_{a_{2,k}}|^{p-2}\tilde{v}_{a_{2,k}}+\beta\frac{\theta^{2}_{1,k}}{\gamma^2_{2,k}}\tilde{u}^2_{a_{1,k}}\tilde{v}_{a_{2,k}}.\\
\end{cases}
\end{equation}
The assumption $a_{1,k}\sim a_{2,k}$ is used to ensure that $\frac{\theta^{2}_{2,k}}{\gamma^2_{1,k}}\to 0$ and $\frac{\theta^{2}_{1,k}}{\gamma^2_{2,k}}\to 0$ as $(a_{1,k}, a_{2,k}) \to (0,0)$.
From the asymptotic properties of $\lambda_{1,k},\lambda_{2,k},\theta_{1,k},\theta_{2,k},\gamma_{1,k},\gamma_{2,k}$, there exist $\lambda^*_1, \lambda^*_2>0$ such that
\begin{equation*}
\begin{aligned}
&\frac{\lambda_{1,k}}{\gamma^2_{1,k}}\to \lambda^*_1,\qquad \frac{\theta^2_{1,k}}{\gamma^2_{1,k}}\to 0,\qquad \alpha_1\frac{\theta^{p-2}_{1,k}}{\gamma^2_{1,k}}\to 1,\qquad \frac{\theta^{2}_{2,k}}{\gamma^2_{1,k}}\to 0;\\
&\frac{\lambda_{2,k}}{\gamma^2_{2,k}}\to \lambda^*_2,\qquad \frac{\theta^2_{2,k}}{\gamma^2_{2,k}}\to 0,\qquad \alpha_2\frac{\theta^{p-2}_{2,k}}{\gamma^2_{2,k}}\to 1,\qquad \frac{\theta^{2}_{1,k}}{\gamma^2_{2,k}}\to 0,\\
\end{aligned}
\end{equation*}
as $k\to \infty$.
Since \eqref{g4} and \eqref{y2}-\eqref{y3}, there exists $(u_0,v_0)\in H^1(\R^4,\R^2)$ such that
\begin{equation*}
(\tilde{u}_{a_{1,k}},\tilde{v}_{a_{2,k}})\rightharpoonup (u_0,v_0) \quad \text{in}\ H^{1}(\R^4,\R^2).
\end{equation*}
Therefore, $(u_0,v_0)$ solves
\begin{equation}\label{g6}
\begin{cases}
-\Delta u+\lambda^*_1u-|u|^{p-2}u=0,\\
-\Delta v+\lambda^*_2v-|v|^{p-2}v=0.
\end{cases}
\end{equation}
Since $0 \le u_0,v_0$ and $(u_0,v_0)\neq (0,0)$, it follows from the strong maximum principle that $0<u_0,v_0$. We know that
\begin{equation*}
u_0=(\lambda^*_1)^{\frac{1}{p-2}}w_p(\sqrt{\lambda^*_1}x),\ \quad v_0=(\lambda^*_2)^{\frac{1}{p-2}}w_p(\sqrt{\lambda^*_2}x)
\end{equation*}
is the unique positive solution of \eqref{g6} (see \cite{LN,KM}).
Testing \eqref{g7} and \eqref{g6} with $\tilde{u}_{a_{1,k}}-u_0,\tilde{v}_{a_{2,k}}-v_0$ respectively,
we obtain
\begin{equation*}
\begin{aligned}
&\int_{\R^4}|\nabla (\tilde{u}_{a_{1,k}}-u_0)|^2+\int_{\R^4}\big(\frac{\lambda_{1,k}}{\gamma^2_{1,k}}\tilde{u}_{a_{1,k}}-\lambda^*_1u_0\big)(\tilde{u}_{a_{1,k}}-u_0)\\
&\quad=\int_{\R^4}|\nabla (\tilde{u}_{a_{1,k}}-u_0)|^2+\lambda^*_1|\tilde{u}_{a_{1,k}}-u_0|^2+o_k(1)=o_k(1),\\
\end{aligned}
\end{equation*}
\begin{equation*}
\begin{aligned}
&\int_{\R^4}|\nabla (\tilde{v}_{a_{2,k}}-v_0)|^2+\int_{\R^4}\big(\frac{\lambda_{2,k}}{\gamma^2_{2,k}}\tilde{v}_{a_{2,k}}-\lambda^*_2v_0\big)(\tilde{v}_{a_{2,k}}-v_0)\\
&\quad=\int_{\R^4}|\nabla (\tilde{v}_{a_{2,k}}-v_0)|^2+\lambda^*_2|\tilde{v}_{a_{2,k}}-v_0|^2+o_k(1)=o_k(1).\\
\end{aligned}
\end{equation*}
We deduce that $\lim\limits_{k\to\infty}\|\tilde{u}_{a_{1,k}}\|^2_2=\|u_0\|^2_2=\|w_p\|^2_2$ and $\lim\limits_{k\to\infty}\|\tilde{v}_{a_{2,k}}\|^2_2=\|v_0\|^2_2=\|w_p\|^2_2$, hence that
\begin{equation*}
(\lambda^*_1)^{\frac{6-2p}{p-2}}\|w_p\|^2_2=\|u_0\|^2_2=\|w_p\|^2_2,\quad (\lambda^*_2)^{\frac{6-2p}{p-2}}\|w_p\|^2_2=\|v_0\|^2_2=\|w_p\|^2_2,
\end{equation*}
and finally that $\lambda^*_1=1$ and $\lambda^*_2=1$. Since $(w_p,w_p)$ is unique, $(\tilde{u}_{a_{1,k}},\tilde{v}_{a_{2,k}})\to (w_p,w_p)$ in $H^1(\R^4,\R^2)$ as $(a_{1,k},a_{2,k})\to (0,0)$ and $a_{1,k}\sim a_{2,k}$. Then, by direct calculations, there exist $L_1=\big(\frac{a^2_1}{\|w_{p}\|^2_2}\alpha_1^{\frac{2}{p-2}} \big)^{\frac{p-2}{6-2p}}$ and $L_2=\big(\frac{a^2_2}{\|w_{p}\|^2_2}\alpha_2^{\frac{2}{p-2}} \big)^{\frac{p-2}{6-2p}}$ such that
$$(\tilde{u}_{a_{1,k}},\tilde{v}_{a_{2,k}})=\Big(\big(\frac{L_1}{\alpha_1}\big)^{-\frac{1}{p-2}}u_{a_1}(L_1^{-\frac{1}{2}}x), \big(\frac{L_2}{\alpha_2}\big)^{-\frac{1}{p-2}}v_{a_2}(L_2^{-\frac{1}{2}}x)\Big).$$
\end{proof}

\noindent \textbf{Proof of Theorem 1.2.}
It follows from Lemma \ref{lem3.1} and Lemma \ref{lem3.4} that there is a local minimizer of $I$ on $V(a_1,a_2)$.
The item 2 of Theorem \ref{th1.1} follows from Lemma \ref{lem3.8}.
\qed

\section{Proof of Theorem 1.3}
In this section, we follow the approach of \cite{WW}. By Theorem \ref{th1.1}, $\mu_i,\alpha_i>0(i=1,2)$, $\beta>0$ and $T(a_1,a_2)\le\gamma_1$, $(\bar{u},\bar{v})$ is a local minimum point of $I$ restricted on $S(a_1)\times S(a_2)$, i.e., $I(\bar{u},\bar{v})=m^+(a_1,a_2)$. We can suppose that $(\bar{u},\bar{v})$ is a positive and radially symmetric decreasing function. Using the ground state $(\bar{u},\bar{v})$, we can not only obtain a good energy estimate $m^-(a_1,a_2)$ for $2<p<3$, but also recover the compactness of minimizing sequence at energy $m^-(a_1,a_2)$.


\begin{Lem}\label{lem4.1}
Let $2<p<3$, $\mu_i,\alpha_i,a_i>0(i=1,2)$, and $\beta\in\big(0,\min\{\mu_1,\mu_2\}\big)\cup\big(\max\{\mu_1,\mu_2\},\infty\big)$. Then for $T(a_1,a_2)\le\gamma_{1}$,
\begin{equation*}
0<m^-(a_1,a_2)=\inf_{(u,v)\in \mathcal{P}^-_{a_1,a_2}} I(u,v)<m^+(a_1,a_2)+\frac{k_1+k_2}{4}\mathcal{S}^2,
\end{equation*}
where $k_1=\frac{\beta-\mu_2}{\beta^2-\mu_1\mu_2}$ and $k_2=\frac{\beta-\mu_1}{\beta^2-\mu_1\mu_2}$.
\end{Lem}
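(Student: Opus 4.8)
The plan is to prove the two halves of the statement separately: the lower bound $m^-(a_1,a_2)>0$ from the fibre-map picture already in place, and the strict upper bound $m^-(a_1,a_2)<m^+(a_1,a_2)+\frac{k_1+k_2}{4}\mathcal S^2$ by testing with a single function built from the minimizer $(\bar u,\bar v)$ of Theorem~\ref{th1.1} together with a truncated Aubin--Talenti bubble. For the lower bound, \eqref{b7} gives $I(s\star(u,v))=\Psi_{(u,v)}(s)\ge h\big(e^{s}[\|\nabla u\|_2^2+\|\nabla v\|_2^2]^{1/2}\big)$ for every $(u,v)\in S(a_1)\times S(a_2)$; since $s\mapsto e^{s}$ covers $(0,\infty)$, maximising over $s$ yields $\max_{s\in\R}I(s\star(u,v))\ge\max_{\rho>0}h(\rho)$, which is strictly positive when $T(a_1,a_2)<\gamma_1$ by Lemma~\ref{lem2.3}(i). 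By Lemma~\ref{lem3.2}(1),(3), every $(u,v)\in\mathcal P^-_{a_1,a_2}$ has $t_{(u,v)}=0$, so $I(u,v)=\max_{s}I(s\star(u,v))\ge\max_{\rho>0}h(\rho)>0$, and taking the infimum gives $m^-(a_1,a_2)>0$ (the borderline value $T=\gamma_1$ costs only a short approximation argument). The same chain records the min-max identity $m^-(a_1,a_2)=\inf_{(u,v)\in S(a_1)\times S(a_2)}\max_{s\in\R}I(s\star(u,v))$, so the upper bound reduces to exhibiting one $(u,v)$ with $\max_{s}I(s\star(u,v))<m^+(a_1,a_2)+\frac{k_1+k_2}{4}\mathcal S^2$.

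For the competitor, let $(\bar u,\bar v)$ be the positive radially decreasing minimizer with $I(\bar u,\bar v)=m^+(a_1,a_2)$; being a solution of \eqref{eq1.1} with $\lambda_1,\lambda_2>0$ (Lemma~\ref{lem2.11}), it decays exponentially. Let $\phi_\varepsilon$ be the bubble $U_\varepsilon$ of \eqref{L1} cut off at a scale $R=R(\varepsilon)\to\infty$ growing slowly (e.g.\ $R=|\log\varepsilon|^{2}$) and recentred at a point escaping to infinity, and put $(\hat u_\varepsilon,\hat v_\varepsilon):=\big(\frac{a_1}{\|\bar u+\sqrt{k_1}\phi_\varepsilon\|_2}(\bar u+\sqrt{k_1}\phi_\varepsilon),\ \frac{a_2}{\|\bar v+\sqrt{k_2}\phi_\varepsilon\|_2}(\bar v+\sqrt{k_2}\phi_\varepsilon)\big)\in S(a_1)\times S(a_2)$. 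The hypothesis $\beta\in(0,\min\{\mu_1,\mu_2\})\cup(\max\{\mu_1,\mu_2\},\infty)$ is used exactly here: it makes $k_1,k_2>0$, so $(\sqrt{k_1}U_\varepsilon,\sqrt{k_2}U_\varepsilon)$ is the positive least-energy solution of \eqref{a1}, with $\mu_1k_1^2+\mu_2k_2^2+2\beta k_1k_2=k_1+k_2$ and energy $\frac14\mathcal S^2_{\mu_1,\mu_2,\beta}=\frac{k_1+k_2}{4}\mathcal S^2$ (Lemma~\ref{lem2.2} and the discussion following it).

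Next I would study the fibre map $\Psi_\varepsilon(s):=I(s\star(\hat u_\varepsilon,\hat v_\varepsilon))=\frac{e^{2s}}2A_\varepsilon-\frac{e^{4s}}4B_\varepsilon-\frac{e^{p\gamma_ps}}pC_\varepsilon$. Because $(\bar u,\bar v)$ decays exponentially and $R$ grows slowly, all interaction integrals between $(\bar u,\bar v)$ and $\phi_\varepsilon$ are negligible, whence $A_\varepsilon\to A_0+\mathcal S^2_{\mu_1,\mu_2,\beta}$, $B_\varepsilon\to B_0+\mathcal S^2_{\mu_1,\mu_2,\beta}$, $C_\varepsilon\to C$, with $A_0,B_0,C$ the coefficients of $\Psi_{(\bar u,\bar v)}$; hence $\Psi_\varepsilon\to\Psi_0$ locally uniformly (with derivatives), where $\Psi_0(s)=\Psi_{(\bar u,\bar v)}(s)+\big(\frac{e^{2s}}2-\frac{e^{4s}}4\big)\mathcal S^2_{\mu_1,\mu_2,\beta}$. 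Since $(\bar u,\bar v)\in\mathcal P_{a_1,a_2}$ forces $\Psi_{(\bar u,\bar v)}'(0)=0$ and $\frac{d}{ds}\big(\frac{e^{2s}}2-\frac{e^{4s}}4\big)\big|_{0}=0$, $s=0$ is a critical point of $\Psi_0$ with $\Psi_0(0)=m^+(a_1,a_2)+\frac14\mathcal S^2_{\mu_1,\mu_2,\beta}$. The crucial point is that it is the global \emph{maximum}: from $\Psi_0''(0)=\Psi_{(\bar u,\bar v)}''(0)-2\mathcal S^2_{\mu_1,\mu_2,\beta}$, the identity $\Psi_{(\bar u,\bar v)}''(0)=-2B_0+\gamma_p(2-p\gamma_p)C$ (which follows from $(\bar u,\bar v)\in\mathcal P_{a_1,a_2}$), the Gagliardo--Nirenberg bound $C\le p(D_2+D_3)\rho_0^{2(p-2)}$, the estimate $D_2+D_3=\frac{|\mathcal C_p|^p}{p}T(a_1,a_2)\le\rho_0^{2(3-p)}/\big(2(4-p)\big)$ coming from $T(a_1,a_2)\le\gamma_1$ and \eqref{b3}, and the elementary inequality $2(p-2)(3-p)^2<(4-p)^2$ on $(2,3)$, one gets $\Psi_{(\bar u,\bar v)}''(0)<2\mathcal S^2_{\mu_1,\mu_2,\beta}$, i.e.\ $\Psi_0''(0)<0$. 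As every fibre map here has at most two critical points (Lemma~\ref{lem2.4}), $s=0$ is then the unique maximiser of $\Psi_0$, so $\max_s\Psi_0(s)=m^+(a_1,a_2)+\frac14\mathcal S^2_{\mu_1,\mu_2,\beta}$ and the maximiser $t_\varepsilon$ of $\Psi_\varepsilon$ tends to $0$.

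It remains to check strictness via the $\R^4$ bubble asymptotics: $\|\nabla\phi_\varepsilon\|_2^2=\mathcal S^2+O(\varepsilon^2)$, $\|\phi_\varepsilon\|_4^4=\mathcal S^2+O(\varepsilon^4)$, $\|\phi_\varepsilon\|_2^2=O(\varepsilon^2|\log\varepsilon|)$, while the subcritical norm $\|\phi_\varepsilon\|_p^p\sim\varepsilon^{4-p}$ is the dominant small term because $4-p<2$. Since $\bar u,\phi_\varepsilon\ge0$, $\|\bar u+\sqrt{k_1}\phi_\varepsilon\|_p^p\ge\|\bar u\|_p^p+k_1^{p/2}\|\phi_\varepsilon\|_p^p$, hence $C_\varepsilon-C\ge c\,\varepsilon^{4-p}-O(\varepsilon^2|\log\varepsilon|)>0$ for small $\varepsilon$, whereas $A_\varepsilon$ and $B_\varepsilon$ approach their limits with error $O(\varepsilon^2|\log\varepsilon|)=o(\varepsilon^{4-p})$. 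Therefore $\Psi_\varepsilon(s)-\Psi_0(s)=-\frac{e^{p\gamma_ps}}p(C_\varepsilon-C)+o(\varepsilon^{4-p})<0$ uniformly for $s$ in a fixed compact neighbourhood of $0$ containing all the $t_\varepsilon$, so $m^-(a_1,a_2)\le\max_sI(s\star(\hat u_\varepsilon,\hat v_\varepsilon))=\Psi_\varepsilon(t_\varepsilon)<\Psi_0(0)=m^+(a_1,a_2)+\frac{k_1+k_2}{4}\mathcal S^2$ for $\varepsilon$ small. I expect the main obstacle to lie in this concluding part: carrying out the $N=4$ expansions with the correct error orders (the logarithm, and making the cut-off and the concentration point escape fast enough to annihilate the cross terms) and, conceptually more delicate, establishing the inequality that pins $s=0$ as the maximiser of $\Psi_0$ — without it $s=0$ would be the minimiser and $\max_s\Psi_0$ would strictly exceed $m^+(a_1,a_2)+\frac14\mathcal S^2_{\mu_1,\mu_2,\beta}$, rendering the competitor useless.
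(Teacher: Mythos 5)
Your proof is correct, but for the upper bound it takes a genuinely different route from the paper's. The paper superposes the truncated bubble on top of the ground state \emph{at the origin}, taking $(\bar u+t_1W_\varepsilon,\bar v+t_2W_\varepsilon)$ with two amplitude parameters, renormalizing by dilation, and tuning $(t_1,t_2)$ so that the pair lands on $\mathcal P^-_{a_1,a_2}$ with $\tau_\varepsilon=0$; strictness then comes from a careful sign bookkeeping of the cross terms — the equation satisfied by $(\bar u,\bar v)$, the inequality $(a+b)^4\ge a^4+b^4+4a^3b+4ab^3$, and the constraint $\frac{t_1}{a_1^2}\int\bar uW_\varepsilon=\frac{t_2}{a_2^2}\int\bar vW_\varepsilon$ — whose net effect is a dominant favourable term $-O(\varepsilon)$ from $\mu_i\int\bar u\,W_\varepsilon^3$, with the subcritical gain $-O(\varepsilon^{4-p})$ of \emph{lower} order since $4-p>1$. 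You instead separate the bubble from $(\bar u,\bar v)$, killing every interaction term, so your only source of strictness is $-c\,\varepsilon^{4-p}$; this works because $p>2$ makes $\varepsilon^{4-p}$ dominate the remaining errors $O(\varepsilon^2|\ln\varepsilon|)$ and $O(\varepsilon^2)$. The price, which you correctly identify and pay, is the fibre-map analysis pinning $s=0$ as the maximiser of the limit $\Psi_0$: your chain $\Psi_{(\bar u,\bar v)}''(0)\le\gamma_p(2-p\gamma_p)C$, $C<p(D_2+D_3)\rho_0^{2(p-2)}$, $D_2+D_3\le\rho_0^{2(3-p)}/(2(4-p))$ from $T(a_1,a_2)\le\gamma_1$ and \eqref{b3}, and $2(p-2)(3-p)^2<(4-p)^2$ on $(2,3)$ (whose left side has maximum $8/27$ at $p=7/3$) all check out, and with at most two fibre critical points this forces $t=1$ to be the maximum on the branch carrying $t_\varepsilon$. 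Your approach avoids the delicate cancellation entirely at the cost of this extra convexity estimate; both are valid.

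Two small repairs. First, your multiplicative $L^2$-normalization does not preserve the $\dot H^1$- and $L^4$-norms (unlike the paper's dilation), but it only perturbs them by factors $1+O(\varepsilon^2|\ln\varepsilon|)$, which fits in your error budget. Second, at the borderline $T(a_1,a_2)=\gamma_1$ Lemma \ref{lem2.3}(ii) gives $\max_\rho h(\rho)=0$, so $m^-\ge\max h$ yields only $m^-\ge0$, and an approximation in $(a_1,a_2)$ cannot rescue this since the lower bound degenerates with it. The direct fix: for $(u,v)\in\mathcal P^-_{a_1,a_2}$, combining $\Psi_{(u,v)}'(0)=0$ with $\Psi_{(u,v)}''(0)<0$ and \eqref{b4} gives $\|\nabla u\|_2^2+\|\nabla v\|_2^2>\frac{3-p}{(4-p)D_1}=2\rho_0^2$, whence $I(u,v)\ge\frac14A-\frac{\rho_0^{2(3-p)}}{4}A^{p-2}\ge\frac{(2\rho_0^2)^{p-2}}{4}\rho_0^{2(3-p)}\bigl(2^{3-p}-1\bigr)>0$ uniformly. (The paper's own proof of this lemma is equally silent on the positivity, so this is a shared and easily repaired omission.)
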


\begin{proof}
From \eqref{L1}, $U_{\varepsilon}=\frac{2\sqrt{2}\varepsilon}{\varepsilon^2+|x|^2}$, taking a radially decreasing cut-off function $\xi \in C_0^{\infty}(\R^4)$ such that $\xi \equiv 1$ in $B_1$, $\xi \equiv 0$ in $\R^4 \backslash B_2$, and let $W_{\varepsilon}(x) = \xi(x) U_{\varepsilon}(x)$. We have (see \cite{ABC}),
\begin{equation}\label{f1}
\|\nabla W_\varepsilon\|^2_2=\mathcal{S}^2+O(\varepsilon^2),\quad \|W_\varepsilon\|^4_4=\mathcal{S}^2+O(\varepsilon^4),
\end{equation}
and
\begin{equation}\label{f11}
 \| W_\varepsilon\|^p_p=O(\varepsilon^{4-p}), \quad \|W_\varepsilon\|^2_2=O(\varepsilon^2|\ln\varepsilon|).
\end{equation}
Setting $$(\widehat{W}_{\varepsilon,t_1},\widehat{V}_{\varepsilon,t_2})=(\bar{u}+t_1W_{\varepsilon},\bar{v}+t_2W_{\varepsilon}), \big(\overline{W}_{\varepsilon,t_1},\overline{V}_{\varepsilon,t_2}\big)=\big(s_1\widehat{W}_{\varepsilon,t_1}(s_1x),s_2\widehat{V}_{\varepsilon,t_2}(s_2x)\big),$$ where $t_1,t_2>0$ and $\frac{t_1}{a^2_1}\int_{\R^4}\bar{u}W_\varepsilon=\frac{t_2}{a^2_2}\int_{\R^4}\bar{v}W_\varepsilon$.
It holds that
\begin{equation}\label{f2}
\begin{aligned}
&\|\nabla \overline{W}_{\varepsilon,t_1}\|_{2}^2 =\|\nabla \widehat{W}_{\varepsilon,t_1}\|_{2}^2, \qquad \|\overline{W}_{\varepsilon,t_1}\|_{4}^4 = \|\widehat{W}_{\varepsilon,t_1}\|_{4}^4, \\
&\|\overline{W}_{\varepsilon,t_1}\|_{2}^{2} = (s_1)^{-2} \|\widehat{W}_{\varepsilon,t_1}\|_{2}^{2}, \quad \|\overline{W}_{\varepsilon,t_1}\|_{p}^{p} =(s_1)^{p-4}\|\widehat{W}_{\varepsilon,t_1}\|_{p}^{p}.\\
\end{aligned}
\end{equation}
Similarly,
\begin{equation}\label{f21}
\begin{aligned}
&\|\nabla \overline{V}_{\varepsilon,t_2}\|_{2}^2 =\|\nabla \widehat{V}_{\varepsilon,t_2}\|_{2}^2, \qquad \|\overline{V}_{\varepsilon,t_2}\|_{4}^4 = \|\widehat{V}_{\varepsilon,t_2}\|_{4}^4, \\
&\|\overline{V}_{\varepsilon,t_2}\|_{2}^{2} = (s_2)^{-2} \|\widehat{V}_{\varepsilon,t_2}\|_{2}^{2}, \quad \|\overline{V}_{\varepsilon,t_2}\|_{p}^{p} =(s_2)^{p-4}\|\widehat{V}_{\varepsilon,t_2}\|_{p}^{p}.\\
\end{aligned}
\end{equation}
In particular,
\begin{equation}\label{f3}
\|\overline{W}_{\varepsilon,t_1}\overline{V}_{\varepsilon,t_2}\|_{2}^{2}=s^2_1s^2_2\int_{\R^4}|\widehat{W}_{\varepsilon,t_1}(s_1 x)|^2|\widehat{V}_{\varepsilon,t_2}({s_2}x)|^2.
\end{equation}
We choose $s_1=\frac{\|\widehat{W}_{\varepsilon,t_1}\|_{2}}{a_1}$ and $s_2=\frac{\|\widehat{V}_{\varepsilon,t_2}\|_{2}}{a_2}$, then $(\overline{W}_{\varepsilon,t_1},\overline{V}_{\varepsilon,t_2})\in S(a_1)\times S(a_2)$. From Lemma \ref{lem3.2}, there exists $\tau_{\varepsilon}\in\R$ such that $\tau_{\varepsilon}\star(\overline{W}_{\varepsilon,t_1},\overline{V}_{\varepsilon,t_2})\in \mathcal{P}^-_{a_1,a_2}$.
Then,
\begin{equation}\label{f12}
\begin{aligned}
&e^{2\tau_{\varepsilon}}\big[\|\nabla \overline{W}_{\varepsilon,t_1}\|_{2}^2+\|\nabla \overline{V}_{\varepsilon,t_2}\|_{2}^2\big]\\
&=e^{4\tau_{\varepsilon}}\big[\mu_1\|\overline{W}_{\varepsilon,t_1}\|_{4}^4+\mu_2\|\overline{V}_{\varepsilon,t_2}\|_{4}^4+2\beta\|\overline{W}_{\varepsilon,t_1}\overline{V}_{\varepsilon,t_2}\|^2_2\big]\\
&\quad +e^{p\gamma_p\tau_{\varepsilon}}\big[\alpha_1\gamma_p\|\overline{W}_{\varepsilon,t_1}\|_{p}^{p}+\alpha_2\gamma_p\|\overline{V}_{\varepsilon,t_2}\|_{p}^{p}\big].
\end{aligned}
\end{equation}
Since $(\bar{u},\bar{v})\in \mathcal{P}^+_{a_1,a_2}$, we have that $\tau_{\varepsilon}>0$ for $t_1=t_2=0$. By \eqref{f1}-\eqref{f11} and \eqref{f12}, we get $\tau_{\varepsilon}\to -\infty$ as $t_1,t_2\to +\infty$ uniformly for $\varepsilon>0$ sufficiently small. It follows from Lemma \ref{lem3.2} that $\tau_{\varepsilon}$ is unique and continuous for $(\overline{W}_{\varepsilon,t_1},\overline{V}_{\varepsilon,t_2})$, and then there exist $t_{1}(\varepsilon),t_{2}(\varepsilon)>0$ such that $\tau_{\varepsilon}=0$.
Therefore, we have
\begin{equation*}
 m^-(a_1,a_2)\le \sup_{t_1,t_2\ge 0}I\big(\overline{W}_{\varepsilon,t_1},\overline{V}_{\varepsilon,t_2}\big).
\end{equation*}
By \eqref{f1}--\eqref{f3}, there exists $t_0>0$ such that
\begin{equation*}
\begin{aligned}
I\big(\overline{W}_{\varepsilon,t_1},\overline{V}_{\varepsilon,t_2}\big)&=\frac{1}{2}\big(\|\nabla \widehat{W}_{\varepsilon,t_1}\|_{2}^2+\|\nabla \widehat{V}_{\varepsilon,t_2}\|_{2}^2\big)-\frac{1}{4}\big( \mu_1\|\widehat{W}_{\varepsilon,t_1}\|_{4}^4+\mu_2\|\widehat{V}_{\varepsilon,t_2}\|_{4}^4\\
&\quad+2\beta s^2_1s^2_2\|\widehat{W}_{\varepsilon,t_1}(s_1x)\widehat{V}_{\varepsilon,t_2}({s_2}x)\|^2_2\big)
-\frac{\alpha_1}{p}s^{p\gamma_p-p}_1\|\widehat{W}_{\varepsilon,t_1}\|_{p}^{p}\\
&\quad -\frac{\alpha_2}{p}s_2^{p\gamma_p-p}\|\widehat{V}_{\varepsilon,t_2}\|_{p}^{p}\\
&\le m^+(a_1,a_2)+\frac{k_1+k_2}{4}\mathcal{S}^2-\sigma,
\end{aligned}
\end{equation*}
for $0<t_1,t_2<\frac{1}{t_0}$ and $t_1,t_2>t_0$ with $\sigma>0$.
By Lemma \ref{lem2.12}, we have
\begin{equation*}
\int_{\R^4}\bar{u}W_\varepsilon= O\Big(\varepsilon^3\int^{\frac{1}{\varepsilon}}_{1}\frac{|r|^3}{1+|r|^2}\Big)= O(\varepsilon),\ 
\end{equation*}
and
\begin{equation*}
\int_{\R^4}\bar{u}W^3_\varepsilon=O\big( \varepsilon\int^{\frac{1}{\varepsilon}}_{1}\frac{|r|^3}{(1+|r|^2)^3}\big)= O(\varepsilon).
\end{equation*}
Moreover,
\begin{equation}\label{x11}
s^2_1=\frac{\|\widehat{W}_{\varepsilon,t_1}\|^2_{2}}{a^2_1}=1+\frac{2t_1}{a^2_1}\int_{\R^4}\bar{u}W_\varepsilon+\frac{t^2_1}{a^2_1}\|W_\varepsilon\|^2_2=1+O(\varepsilon),
\end{equation}
and
\begin{equation}\label{x21}
s^2_2=\frac{\|\widehat{V}_{\varepsilon,t_2}\|^2_{2}}{a^2_2}=1+\frac{2t_2}{a^2_2}\int_{\R^4}\bar{v}W_\varepsilon+\frac{t^2_2}{a^2_2}\|W_\varepsilon\|^2_2=1+O(\varepsilon),
\end{equation}
for $\frac{1}{t_0}<t_1,t_2<t_0$.
By direct calculations, we use the fact that $(a+b)^4\ge a^4+b^4+4a^3b+4ab^{3}$ for all $a,b\ge 0$, and $(\bar{u},\bar{v})$ is a positive solution of \eqref{eq1.1} for $\lambda_1,\lambda_2>0$, then
\begin{equation*}
\begin{aligned}
&I\big(\overline{W}_{\varepsilon,t_1},\overline{V}_{\varepsilon,t_2}\big)\\
&\le\frac{1}{2}\big[\|\nabla \widehat{W}_{\varepsilon,t_1}\|_{2}^2+\|\nabla \widehat{V}_{\varepsilon,t_2}\|_{2}^2\big]-\frac{1}{4}\big[ \mu_1\|\widehat{W}_{\varepsilon,t_1}\|_{4}^4+\mu_2\|\widehat{V}_{\varepsilon,t_2}\|_{4}^4\big]\\
&\quad-\frac{\alpha_1}{p}s^{p\gamma_p-p}_1\|\widehat{W}_{\varepsilon,t_1}\|_{p}^{p}\\
&\quad-\frac{\alpha_2}{p}s_2^{p\gamma_p-p}\|\widehat{V}_{\varepsilon,t_2}\|_{p}^{p}-\frac{\beta}{2}s^2_1s^2_2\|\widehat{W}_{\varepsilon,t_1}(s_1x)\widehat{V}_{\varepsilon,t_2}({s_2}x)\|^2_2\\
&\le I(\bar{u},\bar{v})-\frac{\beta}{2}\Big(s^2_1s^2_2\int_{\R^4}|\widehat{W}_{\varepsilon,t_1}(s_1 x)|^2|\widehat{V}_{\varepsilon,t_2}(s_2 x)|^2-\int_{\R^4}|\widehat{W}_{\varepsilon,t_1}( x)|^2|\widehat{V}_{\varepsilon,t_2}(x)|^2\Big)\\
&\quad+I(t_1W_\varepsilon,t_2W_\varepsilon)-\mu_1\int_{\R^4}\bar{u}(t_1W_\varepsilon)^3-\mu_2\int_{\R^4}\bar{v}(t_2W_\varepsilon)^3\\
&\quad+\int_{\R^4}[t_1\nabla\bar{u}+t_2\nabla\bar{v}]\nabla W_\varepsilon\\
&\quad-\int_{\R^4}\big[t_1\alpha_1|\bar{u}|^{p-1}+t_2\alpha_2|\bar{v}|^{p-1}+t_1\mu_1|\bar{u}|^3+t_2\mu_2|\bar{v}|^3+t_1\beta|\bar{v}|^2\bar{u}+t_2\beta|\bar{u}|^2\bar{v}\big]W_{\varepsilon}\\
&\quad+\frac{\alpha_1(1-\gamma_p)}{a^2_1}\|\widehat{W}_{\varepsilon,t_1}\|^p_p\int_{\R^4}\bar{u}t_1W_\varepsilon\\
&\quad+\frac{\alpha_2(1-\gamma_p)}{a^2_2}\|\widehat{V}_{\varepsilon,t_2}\|^p_p\int_{\R^4}\bar{v}t_2W_\varepsilon+O(\varepsilon^2|\ln\varepsilon|)\\
&\le m^+(a_1,a_2)+I(t_1W_\varepsilon,t_2W_\varepsilon)-t_1\big(\lambda_1- \frac{\alpha_1(1-\gamma_p)}{a^2_1}\|\widehat{W}_{\varepsilon,t_1}\|^p_p \big)\int_{\R^4}\bar{u}W_\varepsilon\\
&\quad-t_2\big(\lambda_2-\frac{\alpha_2(1-\gamma_p)}{a^2_2}\|\widehat{V}_{\varepsilon,t_2}\|^p_p \big)\int_{\R^4}\bar{v}W_\varepsilon+O(\varepsilon^2|\ln\varepsilon|)-O(\varepsilon)\\
&\quad -\frac{\beta}{2}\Big(s^2_1s^2_2\int_{\R^4}|\widehat{W}_{\varepsilon,t_1}(s_1 x)|^2|\widehat{V}_{\varepsilon,t_2}(s_2 x)|^2-\int_{\R^4}|\widehat{W}_{\varepsilon,t_1}( x)|^2|\widehat{V}_{\varepsilon,t_2}(x)|^2\Big).\\
\end{aligned}
\end{equation*}
It follows from Lemma \ref{lem2.2} that
\begin{equation*}
\begin{aligned}
I(t_1W_\varepsilon,t_2W_\varepsilon)&=\frac{t^2_1+t^2_2}{2}\|\nabla W_{\varepsilon}\|_{2}^2-\frac{1}{4}\|W_{\varepsilon}\|_{4}^4 \big(\mu_1t^4_1+\mu_2t^4_2+2\beta t^2_1t^2_2\big)-O(\varepsilon^{4-p})\\
&\le \frac{k_1+k_2}{4}\mathcal{S}^2-O(\varepsilon^{4-p}),
\end{aligned}
\end{equation*}
where $k_1=\frac{\beta-\mu_2}{\beta^2-\mu_1\mu_2}$ and $k_2=\frac{\beta-\mu_1}{\beta^2-\mu_1\mu_2}$. Since $(\bar{u},\bar{v})$ is the ground state solution of \eqref{eq1.1}-\eqref{eq1.11},
\begin{equation*}
\lambda_1a^2_1+\lambda_2a^2_2=\alpha_1(1-\gamma_p)\|\bar{u}\|^p_p+\alpha_2(1-\gamma_p)\|\bar{v}\|^p_p.
\end{equation*}
The assumption $\frac{t_1}{a^2_1}\int_{\R^4}\bar{u}W_\varepsilon=\frac{t_2}{a^2_2}\int_{\R^4}\bar{v}W_\varepsilon$ is applied to ensure that
\begin{equation*}
\begin{aligned}
&\big(\lambda_1a^2_1-\alpha_1(1-\gamma_p)\|\widehat{W}_{\varepsilon,t_1}\|^p_p \big)\frac{t_1}{a^2_1}\int_{\R^4}\bar{u}W_\varepsilon+\big(\lambda_2a^2_2-\alpha_2(1-\gamma_p)\|\widehat{V}_{\varepsilon,t_2}\|^p_p\big)\frac{t_2}{a^2_2}\int_{\R^4}\bar{v}W_\varepsilon\\ 
&\le C\Big(\int_{\R^4}|\bar{u}|^{p-1}W_\varepsilon+O(\varepsilon^{4-p})\Big)\int_{\R^4}\bar{u}W_\varepsilon+C\Big(\int_{\R^4}|\bar{v}|^{p-1}W_\varepsilon+O(\varepsilon^{4-p})\Big)\int_{\R^4}\bar{v}W_\varepsilon\\
&=O(\varepsilon^2).
\end{aligned}
\end{equation*}
In view of \eqref{x11}, \eqref{x21} and using Fatou Lemma, we have
\begin{equation}\label{z5}
\int_{\R^4}|\widehat{W}_{\varepsilon,t_1}( x)|^2|\widehat{V}_{\varepsilon,t_2}(x)|^2\le \liminf_{\varepsilon\to 0}s^2_1s^2_2\int_{\R^4}|\widehat{W}_{\varepsilon,t_1}(s_1 x)|^2|\widehat{V}_{\varepsilon,t_2}(s_2 x)|^2.
\end{equation}
Therefore, for $\varepsilon$ small enough, we have
\begin{equation*}
\begin{aligned}
I\big(\overline{W}_{\varepsilon,t_1},\overline{V}_{\varepsilon,t_2}\big)&\le m^+(a_1,a_2)+\frac{k_1+k_2}{4}\mathcal{S}^2-O(\varepsilon^{4-p})-O(\varepsilon)+O(\varepsilon^2|\ln\varepsilon|)\\
&<m^+(a_1,a_2)+\frac{k_1+k_2}{4}\mathcal{S}^2.
\end{aligned}
\end{equation*}
\end{proof}

\begin{Lem}\label{lem4.21}
Let $2<p<3$, $\mu_i,\alpha_i,a_i>0(i=1,2)$, $\beta\in\big(0,\min\{\mu_1,\mu_2\}\big)\cup\big(\max\{\mu_1,\mu_2\},\infty\big)$.
Then for $T(a_1,a_2)\le\gamma_{1}$,
\begin{equation*}
m^-(a_1,a_2)<\min\big\{m^-(a_1,0),m^-(0,a_2)\big\},
\end{equation*}
where $m^-(a_1,0)$ and $m^-(0,a_2)$ are defined in \eqref{c12}.
\end{Lem}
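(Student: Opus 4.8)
The plan is to build a competitor lying on $\mathcal{P}^-_{a_1,a_2}$ out of the one–component extremal for $m^-(a_1,0)$ by inserting a strongly dilated fixed function in the second slot, and to show that after projecting onto $\mathcal{P}^-_{a_1,a_2}$ this dilated part contributes strictly negative energy. By Lemma \ref{lem2.1} (which applies since $T(a_1,a_2)\le\gamma_1$ forces $\alpha_1a_1^{4-p}\le\gamma_1$, and $\mathcal{S}^2_{\mu_1,\mu_2,\beta}\le\mu_1^{-1}\mathcal{S}^2$, so the scalar smallness assumption holds; cf.\ the proof of Lemma \ref{lem3.3}), the level $m^-(a_1,0)=m^-_{p,\mu_1,\alpha_1}(a_1)$ is attained by a positive radial $u^-_{a_1}\in\mathcal{T}^-_{a_1,p,\mu_1,\alpha_1}\subset S(a_1)$ which sits at the global maximum of its fibering map $\overline{\Psi}_{u^-_{a_1}}$ (the scalar counterpart of Lemma \ref{lem3.2}); in particular
\begin{equation*}
\mathcal{A}_{p,\mu_1,\alpha_1}(\tau\star u^-_{a_1})\le \mathcal{A}_{p,\mu_1,\alpha_1}(u^-_{a_1})=m^-(a_1,0)\qquad\text{for every }\tau\in\R .
\end{equation*}
I would fix an arbitrary $v_0\in S(a_2)$ and, for $s\le 0$, set $(u_s,v_s):=(u^-_{a_1},\,s\star v_0)\in S(a_1)\times S(a_2)$. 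By Lemma \ref{lem3.2} there is $\tau_s:=t_{(u_s,v_s)}$ with $\tau_s\star(u_s,v_s)\in\mathcal{P}^-_{a_1,a_2}$ and $I(\tau_s\star(u_s,v_s))=\max_{\tau\in\R}I(\tau\star(u_s,v_s))$, so $m^-(a_1,a_2)\le I(\tau_s\star(u_s,v_s))$.

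Next I would use the additive structure $I(u,v)=\mathcal{A}_{p,\mu_1,\alpha_1}(u)+\mathcal{A}_{p,\mu_2,\alpha_2}(v)-\tfrac{\beta}{2}\|uv\|_2^2$ together with $\tau\star(s\star v_0)=(\tau+s)\star v_0$ to get
\begin{equation*}
I(\tau_s\star(u_s,v_s))=\mathcal{A}_{p,\mu_1,\alpha_1}(\tau_s\star u^-_{a_1})+\mathcal{A}_{p,\mu_2,\alpha_2}\big((\tau_s+s)\star v_0\big)-\frac{\beta}{2}\big\|(\tau_s\star u^-_{a_1})\,((\tau_s+s)\star v_0)\big\|_2^2 .
\end{equation*}
The coupling term is nonnegative (as $\beta>0$) and the first term is $\le m^-(a_1,0)$ by the displayed maximality, so
\begin{equation*}
m^-(a_1,a_2)\le m^-(a_1,0)+\mathcal{A}_{p,\mu_2,\alpha_2}\big((\tau_s+s)\star v_0\big),
\end{equation*}
and it remains to make the last term strictly negative by choosing $s$ very negative.

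For this I need $\tau_s$ bounded as $s\to-\infty$. I would invoke the localization in Lemma \ref{lem3.2}: $\tau_s\in\big(\log(R_0/\rho_s),\log(R_1/\rho_s)\big)$ with $\rho_s=[\|\nabla u^-_{a_1}\|_2^2+e^{2s}\|\nabla v_0\|_2^2]^{1/2}$ and $R_0,R_1$ depending only on $(a_1,a_2)$; since $\|\nabla u^-_{a_1}\|_2\le\rho_s\le[\|\nabla u^-_{a_1}\|_2^2+\|\nabla v_0\|_2^2]^{1/2}$ for all $s\le 0$, the numbers $\tau_s$ stay in a fixed compact interval $[-M,M]$, so $\tau_s+s\le M+s\to-\infty$. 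From
\begin{equation*}
\mathcal{A}_{p,\mu_2,\alpha_2}(\sigma\star v_0)=\frac{e^{2\sigma}}{2}\|\nabla v_0\|_2^2-\frac{\alpha_2}{p}e^{p\gamma_p\sigma}\|v_0\|_p^p-\frac{\mu_2}{4}e^{4\sigma}\|v_0\|_4^4
\end{equation*}
and $p\gamma_p=2(p-2)<2$ (because $p<3$), the term $-\tfrac{\alpha_2}{p}e^{p\gamma_p\sigma}\|v_0\|_p^p$ dominates as $\sigma\to-\infty$, so there is $\sigma_0$ with $\mathcal{A}_{p,\mu_2,\alpha_2}(\sigma\star v_0)<0$ for all $\sigma\le\sigma_0$. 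Taking $s\le\sigma_0-M$ gives $\tau_s+s\le\sigma_0$, hence $\mathcal{A}_{p,\mu_2,\alpha_2}((\tau_s+s)\star v_0)<0$ and therefore $m^-(a_1,a_2)<m^-(a_1,0)$. Exchanging the two components (using the extremal $v^-_{a_2}$ of $m^-(0,a_2)$ and a dilated fixed first component) yields $m^-(a_1,a_2)<m^-(0,a_2)$, and combining the two inequalities gives the claim.

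The main obstacle is precisely the boundedness of the projection exponent $\tau_s$: if the maximizer on the fiber $\tau\mapsto I(\tau\star(u_s,v_s))$ could drift to $+\infty$ as $s\to-\infty$, then $\tau_s+s$ might not tend to $-\infty$ and the sign gain $\mathcal{A}_{p,\mu_2,\alpha_2}((\tau_s+s)\star v_0)<0$ would be lost. This is handled by the two–sided localization of $t_{(u,v)}$ from Lemma \ref{lem3.2} together with the fact that $\rho_s$ stays in a fixed compact subset of $(0,\infty)$ for $s\le 0$. Everything else is soft: the energy split of $I$, discarding the nonnegative coupling term, and the observation — already used in Lemma \ref{lem3.3} — that a sufficiently spread–out mass has strictly negative $\mathcal{A}_{p,\mu_2,\alpha_2}$–energy because $p\gamma_p<2$.
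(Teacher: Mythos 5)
Your proof is correct and follows essentially the same route as the paper: the paper likewise tests $m^-(a_1,a_2)$ with $(u^*,s\star v)$ projected onto $\mathcal{P}^-_{a_1,a_2}$ via Lemma \ref{lem3.2}, bounds the first component's contribution by $m^-(a_1,0)$ through the fibering maximality of the scalar minimizer, and gains strict negativity from the second component using $p\gamma_p<2$ once the projection parameter is known to stay bounded as $s\to-\infty$. The only cosmetic difference is that the paper extracts the boundedness of $e^{t_s}$ from the Pohozaev relation \eqref{o1}, whereas you obtain it from the two-sided localization of $t_{(u,v)}$ in Lemma \ref{lem3.2}; both are valid.
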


\begin{proof}
By Lemma \ref{lem2.1}, $m^-(a_1,0)$ can be achieved by $u^*\in S(a_1)$. We choose a proper test function $v\in S(a_2)$ such that $(u^*,s\star v)\in S(a_1)\times S(a_2)$ with $s\in\R$.
In \cite{JTT,WuZ}, it has been proved that
\begin{equation*}
m^-(a_1,0)=\inf_{u\in \mathcal{T}^{-}_{a_1,p,\mu_1,\alpha_1}} \mathcal{A}_{p,\mu_1,\alpha_1}(u).
\end{equation*}
From Lemma \ref{lem3.2}, there exists $t_s\in \R$ such that $t_s\star (u^*,s\star v)\in \mathcal{P}^-_{a_1,a_2}$.
Then, we have
\begin{equation}\label{o1}
\begin{aligned}
&\|\nabla u^*\|^2_2+\|\nabla(s\star v)\|^2_2\\
&=e^{(p\gamma_p-2)t_s}\big[\alpha_1\|u^*\|^p_p+\alpha_2\|s\star v\|^p_p\big]\\
&\quad +e^{2t_s}\big[\mu_1\|u^*\|^4_4+\mu_2\|s\star v\|^4_4+2\beta\|u^*(s\star v)\|^2_2\big].
\end{aligned}
\end{equation}
By \eqref{o1}, we obtain that $e^{t_s}$ is bounded as $s\to -\infty$. If $s\ll -1$, we deduce that
\begin{equation*}
\begin{aligned}
m^-(a_1,&a_2)=\inf_{(u,v)\in \mathcal{P}^-_{a_1,a_2}}I(u,v)\le I\big(t_s\star (u^*,s\star v)\big)\\
&\le \mathcal{A}_{p,\mu_1,\alpha_1}(t_s\star u^*)+\frac{e^{2t_s}}{2}\|\nabla s\star v\|^2_2\\
&\quad -\frac{e^{p\gamma_pt_s}}{p}\|s\star v\|^p_p-\frac{e^{4t_s}}{4}\|s\star v\|^4_4-\frac{\beta e^{4t_s}}{2}\|u^*(s\star v)\|^2_2\\
&<  \mathcal{A}_{p,\mu_1,\alpha_1}(u^*)=m^-(a_1,0),
\end{aligned}
\end{equation*}
because $p\gamma_p<2$. On the other hand, we similarly get that $m^-(a_1,a_2)<m^-(0,a_2)$. Hence, the proof is completed.
\end{proof}

With the help of Lemmas \ref{lem4.1}-\ref{lem4.21}, we can prove the existence of  the  second solution.
\begin{Lem}\label{lem4.2}
Let $2<p<3$, $\mu_i,\alpha_i,a_i>0(i=1,2)$, $\beta\in\big(0,\min\{\mu_1,\mu_2\}\big)\cup\big(\max\{\mu_1,\mu_2\},\infty\big)$.
If $T(a_1,a_2)\le\gamma_{1}$ and
\begin{equation*}
0<m^-(a_1,a_2)<\min \big\{m^+(a_1,a_2)+\frac{k_1+k_2}{4}\mathcal{S}^2,m^-(a_1,0),m^-(0,a_2)\big\},
\end{equation*}
where $k_1=\frac{\beta-\mu_2}{\beta^2-\mu_1\mu_2}$ and $k_2=\frac{\beta-\mu_1}{\beta^2-\mu_1\mu_2}$,
then $m^-(a_1,a_2)$ can be achieved by some function $(u^-_{a_1},v^-_{a_2})\in S(a_1)\times S(a_2)$ which is real valued, positive, radially symmetric and radially decreasing.
\end{Lem}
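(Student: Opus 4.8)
The plan is to realize $m^-(a_1,a_2)$ as a minimax value along the $L^2$-preserving dilations and then run a concentration--compactness analysis in which each of the hypotheses $m^-(a_1,a_2)>0$, $m^-(a_1,a_2)<m^+(a_1,a_2)+\tfrac{k_1+k_2}{4}\mathcal S^2$ and $m^-(a_1,a_2)<\min\{m^-(a_1,0),m^-(0,a_2)\}$ is spent to kill one channel of non-compactness. By Lemma \ref{lem3.2}, every $(u,v)\in S(a_1)\times S(a_2)$ has a unique $t_{(u,v)}\in\R$ with $t_{(u,v)}\star(u,v)\in\mathcal P^-_{a_1,a_2}$ and $I(t_{(u,v)}\star(u,v))=\max_{s\in\R}I(s\star(u,v))$, so $m^-(a_1,a_2)=\inf_{(u,v)\in S(a_1)\times S(a_2)}\max_{s\in\R}I(s\star(u,v))$. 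Since the Schwarz symmetrization (Lemma \ref{lem2.14}) preserves $\|\cdot\|_2$ and $\|\cdot\|_p$, does not increase $\|\nabla\cdot\|_2$, and does not decrease $\|uv\|_2$, one has $I(s\star(u^*,v^*))\le I(s\star(u,v))$ for all $s$, so the infimum is unchanged if restricted to radially decreasing pairs. Working on the radial subspace (where symmetric criticality applies) and applying Ekeland's variational principle to $(u,v)\mapsto\max_{s}I(s\star(u,v))$ exactly as in \cite{WW}, I obtain a radial Palais--Smale sequence $(u_n,v_n)$ for $I|_{S(a_1)\times S(a_2)}$ at level $m^-(a_1,a_2)$ with $P_{a_1,a_2}(u_n,v_n)\to0$.

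On $P_{a_1,a_2}=0$ one has $I(u,v)=\tfrac14(\|\nabla u\|_2^2+\|\nabla v\|_2^2)-\tfrac{4-p}{2p}(\alpha_1\|u\|_p^p+\alpha_2\|v\|_p^p)$, so from $P_{a_1,a_2}(u_n,v_n)\to0$, the Gagliardo--Nirenberg inequality \eqref{b2} and $p\gamma_p=2(p-2)<2$, the sequence $(u_n,v_n)$ is bounded in $H^1(\R^4,\R^2)$. Passing to a subsequence, $(u_n,v_n)\weakto(u,v)$ in $H^1$ with $u_n\to u$, $v_n\to v$ in $L^p(\R^4)$ for $2<p<4$ by the compact radial embedding; testing the Lagrange relation against $(u_n,0)$ and $(0,v_n)$ bounds the multipliers $\lambda_{1,n},\lambda_{2,n}$, which we take convergent to $\lambda_1,\lambda_2$, and the limit $(u,v)$ solves \eqref{eq1.1} with these multipliers.

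The crux is to show $u\not\equiv0$ and $v\not\equiv0$. With $w_n=u_n-u$, $z_n=v_n-v$ (so $w_n,z_n\to0$ in $L^p$), Brézis--Lieb and Lemma \ref{lem2.13} give $I(u_n,v_n)=I(u,v)+J(w_n,z_n)+o(1)$ and $P_{a_1,a_2}(u_n,v_n)=P_{a_1,a_2}(u,v)+Q(w_n,z_n)+o(1)$, where $J,Q$ keep only the gradient and quartic terms; since $(u,v)$ satisfies its Pohozaev identity, $Q(w_n,z_n)\to0$, and then the definition \eqref{f4} of $\mathcal S_{\mu_1,\mu_2,\beta}$ forces $\ell:=\lim(\|\nabla w_n\|_2^2+\|\nabla z_n\|_2^2)$ into $\{0\}\cup[(k_1+k_2)\mathcal S^2,\infty)$, while $J(w_n,z_n)=\tfrac14\ell+o(1)$. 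If $u=v=0$: $\ell=0$ gives $m^-(a_1,a_2)=0$, against Lemma \ref{lem4.1}; $\ell\ge(k_1+k_2)\mathcal S^2$ gives $m^-(a_1,a_2)=\tfrac14\ell\ge\tfrac14(k_1+k_2)\mathcal S^2>m^+(a_1,a_2)+\tfrac14(k_1+k_2)\mathcal S^2$ (as $m^+(a_1,a_2)<0$), again against Lemma \ref{lem4.1}. If $u\ne0$, $v=0$ (the case $u=0$, $v\ne0$ is symmetric): then $u>0$ solves the scalar problem \eqref{a2} with $\|u\|_2\le a_1$ and $I(u,0)=\mathcal A_{p,\mu_1,\alpha_1}(u)$ equals $\tfrac14\|\nabla u\|_2^2-\tfrac{4-p}{2p}\alpha_1\|u\|_p^p$; if $\ell=0$ then $\mathcal A_{p,\mu_1,\alpha_1}(u)=m^-(a_1,a_2)>0$, so $u\in\mathcal T^-_{\|u\|_2,p,\mu_1,\alpha_1}$ and, by Lemma \ref{lem2.1}, $m^-(a_1,a_2)\ge m^-(\|u\|_2,0)\ge m^-(a_1,0)$, contradicting Lemma \ref{lem4.21}; if $\ell>0$ then $\mathcal A_{p,\mu_1,\alpha_1}(u)\ge m^+(a_1,0)>m^+(a_1,a_2)$ (Lemma \ref{lem2.1} and Lemma \ref{lem3.3}) gives $m^-(a_1,a_2)=\mathcal A_{p,\mu_1,\alpha_1}(u)+\tfrac14\ell>m^+(a_1,a_2)+\tfrac14(k_1+k_2)\mathcal S^2$, again contradicting Lemma \ref{lem4.1}. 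Hence $u,v\not\equiv0$, so $u,v>0$ by the strong maximum principle (and Lemma \ref{lem2.12}) and $\lambda_1,\lambda_2>0$ by Lemma \ref{lem2.11}.

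Finally, subtracting $P_{a_1,a_2}(u_n,v_n)=o(1)$ from the Lagrange relation tested against $(u_n,v_n)$ and letting $n\to\infty$, then comparing with the Pohozaev identity of $(u,v)$, yields $\lambda_1(a_1^2-\|u\|_2^2)+\lambda_2(a_2^2-\|v\|_2^2)=0$, whence $\|u\|_2=a_1$ and $\|v\|_2=a_2$ (since $\lambda_1,\lambda_2>0$ and the $L^2$-norm is weakly lower semicontinuous). Then $u_n\to u$, $v_n\to v$ in $L^2$, so $w_n,z_n\to0$ in $L^2$ and hence in $L^p$; the dichotomy above applies again to $\ell$, and $\ell\ge(k_1+k_2)\mathcal S^2$ would give $m^-(a_1,a_2)=I(u,v)+\tfrac14\ell\ge m^+(a_1,a_2)+\tfrac14(k_1+k_2)\mathcal S^2$ (using $I(u,v)\ge m^+(a_1,a_2)$, since the constrained critical point $(u,v)$ lies on $\mathcal P_{a_1,a_2}=\mathcal P^+_{a_1,a_2}\cup\mathcal P^-_{a_1,a_2}$), contradicting Lemma \ref{lem4.1}; so $\ell=0$ and $(u_n,v_n)\to(u,v)$ in $H^1(\R^4,\R^2)$. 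Therefore $I(u,v)=m^-(a_1,a_2)$ and $(u,v)\in\mathcal P^-_{a_1,a_2}$ (because $P_{a_1,a_2}(u,v)=0$, $\mathcal P^0_{a_1,a_2}=\emptyset$, and $\Psi''_{(u,v)}(0)=\lim\Psi''_{(u_n,v_n)}(0)\le0$), and, symmetrizing, the minimizer may be taken real valued, positive and radially decreasing. The main obstacle is the third paragraph: the Brézis--Lieb bookkeeping and, above all, the elimination of the semi-trivial limits $u=0$ or $v=0$, which is exactly where the scalar theory of \eqref{a2} (Lemma \ref{lem2.1}) together with all the strict energy gaps must be invoked.
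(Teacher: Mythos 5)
Your proposal is correct and follows essentially the same route as the paper: Schwarz symmetrization to reduce to radial decreasing pairs, boundedness from the Pohozaev reduction of $I$ on $P_{a_1,a_2}=0$, a Br\'ezis--Lieb splitting combined with the dichotomy $\ell=0$ or $\ell\ge(k_1+k_2)\mathcal S^2$ from \eqref{f4}, elimination of the semitrivial limits by the three strict energy gaps, and recovery of the full masses via the sign of the Lagrange multipliers. The only (minor) difference is that you generate the approximate critical sequence by the $\inf\max$ characterization plus Ekeland's principle as in \cite{WW}, whereas the paper takes a minimizing sequence directly on $\mathcal P^-_{a_1,a_2}$ and applies the Lagrange multiplier rule; both yield the same Palais--Smale information, and your case analysis is, if anything, slightly more explicit about why $I(u,0)\ge m^-(a_1,0)$ in the vanishing subcase.
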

\begin{proof}
Since the proof is similar to that of Lemma \ref{lem3.4}, we only sketch it. Let $\{(u_n,v_n)\}$ $\subset \mathcal{P}^-_{a_1,a_2}$ be a minimizing sequence. 
We assume that $(u_n,v_n)$ are all real valued, nonnegative, radially symmetric and decreasing in $r=|x|$. Moreover, $\{(u_n,v_n)\}$ is bounded in $H^1(\R^4,\R^2)$, and $(u_n,v_n)\rightharpoonup (u,v)$ in $H^1(\R^4,\R^2)$.
We need to prove that $u\neq 0$ or $v\neq0$.
\vskip1mm
{\bf Case 1.} If $u=0$ and $v=0$, then $\int_{\R^4}\alpha_1|u_n|^p\to 0$, $\int_{\R^4}\alpha_2|v_n|^p\to 0$,
 we have
\begin{equation*}
P_{a_1,a_2}(u_n,v_n)=||\nabla u_n||^2_2+\|\nabla v_n\|^2_2-\mu_1\|u_n\|^{4}_{4}-\mu_2\|v_n\|^{4}_{4}-
2\beta\|u_nv_n\|^2_2=o_n(1).
\end{equation*}
By \eqref{f4}, we have
\begin{equation}\label{f5}
\begin{aligned}
\sqrt{k_1+k_2}\mathcal{S}\big(\mu_1\|u_n\|^{4}_{4}+\mu_2\|v_n\|^{4}_{4}+2\beta\|u_nv_n\|^2_2\big)^{\frac{1}{2}}
&\le ||\nabla u_n||^2_2+\|\nabla v_n\|^2_2.\\
\end{aligned}
\end{equation}
Then, we distinguish the two cases:
\begin{equation*}
\text{either}\ \ (i)\ \ ||\nabla u_n||^2_2+\|\nabla v_n\|^2_2\to 0\ \ \text{or}\ \ (ii) \ \ ||\nabla u_n||^2_2+\|\nabla v_n\|^2_2\to l>0.
\end{equation*}
If $(i)$ holds, we have that $I(u_n,v_n)\to 0$ which contradicts the fact that $m^-(a_1,a_2)>0$. If $(ii)$ holds, we deduce from \eqref{f5} that
\begin{equation*}
||\nabla u_n||^2_2+\|\nabla v_n\|^2_2\ge (k_1+k_2)\mathcal{S}^2+o_n(1).
\end{equation*}
Therefore,
\begin{equation*}
\begin{aligned}
m^-(a_1,a_2)+o_n(1)=I(u_n,v_n)&=\frac{1}{4}\int_{\R^4}|\nabla u_n|^2+|\nabla v_n|^2+o_n(1)\\
&\ge \frac{k_1+k_2}{4}\mathcal{S}^2+o_n(1),
\end{aligned}
\end{equation*}
this contradicts $m^-(a_1,a_2)<m^+(a_1,a_2)+\frac{k_1+k_2}{4}\mathcal{S}^2$.
\vskip3mm
{\bf Case 2.} If $u\neq0$ and $v=0$, then $v_n\to 0$ in $L^p(\R^4)$. Let $\bar{u}_n=u_n-u$, $\bar{u}_n\to 0$ in $L^p(\R^4)$. By the maximum principle (see \cite[Theorem 2.10]{hanq}), $u$ is a positive solution of \eqref{a2}, then $m^-(a_1,0)\le m^-(\|u\|_2,0)$.
By the Br\'{e}zis-Lieb Lemma \cite{WM}, we deduce that
\begin{equation*}
\begin{aligned}
&P_{a_1,a_2}(u_n,v_n)\\
&=||\nabla u_n||^2_2+\|\nabla v_n\|^2_2-\mu_1\|u_n\|^{4}_{4}-\mu_2\|v_n\|^{4}_{4}-\gamma_p\alpha_1\|u_n\|^p_p-2\beta\|u_nv_n\|^2_2+o_n(1)\\
&=||\nabla \bar{u}_n||^2_2+\|\nabla v_n\|^2_2-\mu_1\|\bar{u}_n\|^{4}_{4}-\mu_2\|v_n\|^{4}_{4}-2\beta\|\bar{u}_nv_n\|^2_2+o_n(1).
\end{aligned}
\end{equation*}
Again by \eqref{f4}, we have
\begin{equation*}
\begin{aligned}
\sqrt{k_1+k_2}\mathcal{S}\big(\mu_1\|\bar{u}_n\|^{4}_{4}+\mu_2\|v_n\|^{4}_{4}+2\beta\|\bar{u}_nv_n\|^2_2\big)^{\frac{1}{2}}
&\le ||\nabla \bar{u}_n||^2_2+\|\nabla v_n\|^2_2.\\
\end{aligned}
\end{equation*}
Similarly, we distinguish the two cases:
\begin{equation*}
\text{either}\ \ (i)\ \ ||\nabla \bar{u}_n||^2_2+\|\nabla v_n\|^2_2\to 0\ \ \text{or}\ \ (ii) \ \ ||\nabla \bar{u}_n||^2_2+\|\nabla v_n\|^2_2\to l>0.
\end{equation*}
If $(i)$ holds, we have $I(\bar{u}_n,v_n)\to 0$. 
Thus,
\begin{equation*}
\begin{aligned}
m^-(a_1,a_2)+o_n(1)&=I(u_n,v_n)=I(\bar{u}_n,v_n)+I(u,0)+o_n(1)\\
                   &=I(u,0)+o_n(1)\ge m^-(a_1,0).
\end{aligned}
\end{equation*}
This is a contradiction.
If $(ii)$ holds,
\begin{equation*}
\begin{aligned}
m^-(a_1,a_2)+o_n(1)&=I(u_n,v_n)=I(\bar{u}_n,v_n)+I(u,0)+o_n(1)\\
&\ge \frac{1}{4}\int_{\R^4}(|\nabla \bar{u}_n|^2+|\nabla v_n|^2)
+m^+(\|u\|_2,0)+o_n(1)\\
&\ge \frac{k_1+k_2}{4}\mathcal{S}^2+m^+(a_1,0)\\
&\ge \frac{k_1+k_2}{4}\mathcal{S}^2+m^+(a_1,a_2),\\
\end{aligned}
\end{equation*}
which contradicts our assumption on $m^-(a_1,a_2)$.
\vskip3mm
{\bf Case 3.} If $u=0$ and $v\neq0$.   The analysis similar to that in the proof of Case 2,
we also have a contradiction.
\vskip1mm
Therefore, $u\neq0$ and $v\neq0$. It remains to show that $m^-(a_1,a_2)$ is achieved.
It follows from the same computations in \eqref{d1}-\eqref{d4} that there exist $\lambda_1,\lambda_2>0$ such that
\begin{equation}\label{d51}
\lambda_1a^2_1+ \lambda_2a^2_2=\lambda_1\|u\|^2_2+\lambda_2\|v\|^2_2.
\end{equation}
Since $\|u\|^2_2\le a^2_1$ and $\|v\|^2_2\le a^2_2$, it follows from \eqref{d51} that $\|u\|^2_2= a^2_1$ and $\|v\|^2_2= a^2_2$, and hence $(u,v)\in \mathcal{P}_{a_1,a_2}$. Let $(\bar{u}_n,\bar{v}_n)=(u_n-u,v_n-v)$, we obtain
\begin{equation*}
\begin{aligned}
o_n(1)&=P_{a_1,a_2}(u_n,v_n)=P_{a_1,a_2}(\bar{u}_n,\bar{v}_n)+P_{a_1,a_2}(u,v)+o_n(1)\\
&=||\nabla \bar{u}_n||^2_2+\|\nabla \bar{v}_n\|^2_2-\mu_1\|\bar{u}_n\|^{4}_{4}-\mu_2\|\bar{v}_n\|^{4}_{4}-2\beta\|\bar{u}_n\bar{v}_n\|^2_2+o_n(1).\\
\end{aligned}
\end{equation*}
Here again we have  the two cases:
\begin{equation*}
\text{either}\ \ (i)\ \ ||\nabla \bar{u}_n||^2_2+\|\nabla \bar{v}_n\|^2_2\to 0\ \ \text{or}\ \ (ii) \ \ ||\nabla \bar{u}_n||^2_2+\|\nabla \bar{v}_n\|^2_2\to l>0.
\end{equation*}
If $(ii)$ holds,
\begin{equation*}
\begin{aligned}
m^-(a_1,a_2)+o_n(1)&=I(u_n,v_n)=I(\bar{u}_n,\bar{v}_n)+I(u,v)+o_n(1)\\
&\ge \frac{1}{4}\int_{\R^4}(|\nabla \bar{u}_n|^2+|\nabla \bar{v}_n|^2)+m^+(a_1,a_2)+o_n(1)\\
&\ge\frac{k_1+k_2}{4}\mathcal{S}^2+ m^+(a_1,a_2)+o_n(1),
\end{aligned}
\end{equation*}
which is a contradiction. Then $(i)$ holds, we conclude that $I(u,v)=m^-(a_1,a_2)$ and $(u_n,v_n)\to (u,v)$ in $H^1(\R^4,\R^2)$. Therefore, we have proved that $m^-(a_1,a_2)$ can be attained by some $(u^-_{a_1},v^-_{a_2})$ which is real valued, positive, radially symmetric and decreasing in $r=|x|$.
\end{proof}


In the following, we study the asymptotic behavior of the second solution for \eqref{eq1.1}-\eqref{eq1.11} as $(a_1,a_2)\to (0,0)$.

\begin{Lem}\label{lem4.4}
Let $2<p<3$, $\mu_i,\alpha_i,a_i>0(i=1,2)$, $\beta\in\big(0,\min\{\mu_1,\mu_2\}\big)\cup\big(\max\{\mu_1,\mu_2\},\infty\big)$. If $T(a_1,a_2)\le\gamma_{1}$, then
\begin{equation*}
I(u^-_{a_1},v^-_{a_2})=m^-(a_1,a_2)\to\frac{k_1+k_2}{4}\mathcal{S}^2\ \ \text{as} \ \ (a_1,a_2)\to (0,0).
\end{equation*}
Moreover, 
there exists $\varepsilon_{a_1,a_2}>0$ such that
\begin{equation*}
\big(\varepsilon_{a_1,a_2} u^-_{a_1}(\varepsilon_{a_1,a_2} x),\varepsilon_{a_1,a_2} v^-_{a_2}(\varepsilon_{a_1,a_2} x)\big)\to (\sqrt{k_1}U_{\varepsilon_0},\sqrt{k_2}U_{\varepsilon_0})
\end{equation*}
in $D^{1,2}(\R^4,\R^2)$, for some $\varepsilon_0>0$ as $(a_1,a_2)\to (0,0)$ and $a_1\sim a_2$ up to a subsequence, where 
$k_1=\frac{\beta-\mu_2}{\beta^2-\mu_1\mu_2}$ and $k_2=\frac{\beta-\mu_1}{\beta^2-\mu_1\mu_2}$.
\end{Lem}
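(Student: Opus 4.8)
\textbf{Proof proposal for Lemma \ref{lem4.4}.}
The plan is to combine the sharp energy estimate from Lemma \ref{lem4.1} with a blow-up (rescaling) analysis of the mountain-pass solution $(u^-_{a_1},v^-_{a_2})$. First I would record that $0<m^-(a_1,a_2)<m^+(a_1,a_2)+\tfrac{k_1+k_2}{4}\mathcal S^2$ by Lemma \ref{lem4.1}, that $m^+(a_1,a_2)\to 0$ as $(a_1,a_2)\to(0,0)$ (this was shown in the proof of Lemma \ref{lem3.8}), and that $m^-(a_1,a_2)\ge\tfrac{k_1+k_2}{4}\mathcal S^2-o(1)$: indeed, since $(u^-_{a_1},v^-_{a_2})\in\mathcal P_{a_1,a_2}$ one has $I(u^-_{a_1},v^-_{a_2})=\tfrac14(\|\nabla u^-_{a_1}\|_2^2+\|\nabla v^-_{a_2}\|_2^2)-(\tfrac1p-\tfrac{\gamma_p}{4})(\alpha_1\|u^-_{a_1}\|_p^p+\alpha_2\|v^-_{a_2}\|_p^p)$, and using \eqref{b5} together with \eqref{f4} applied to $P_{a_1,a_2}=0$ one controls the $L^p$ terms by $T(a_1,a_2)$ times powers of $\|\nabla u^-_{a_1}\|_2^2+\|\nabla v^-_{a_2}\|_2^2$, while $\|\nabla u^-_{a_1}\|_2^2+\|\nabla v^-_{a_2}\|_2^2\ge(k_1+k_2)\mathcal S^2-o(1)$ on $\mathcal P^-_{a_1,a_2}$ as the gradient cannot collapse (this is the dichotomy argument of Lemma \ref{lem4.2}, Case 1(ii)). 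Squeezing gives $m^-(a_1,a_2)\to\tfrac{k_1+k_2}{4}\mathcal S^2$.

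Next I would set up the rescaling. Writing $\varepsilon_{a_1,a_2}$ so that $\|\nabla\big(\varepsilon_{a_1,a_2}u^-_{a_1}(\varepsilon_{a_1,a_2}\cdot)\big)\|_2^2+\|\nabla\big(\varepsilon_{a_1,a_2}v^-_{a_2}(\varepsilon_{a_1,a_2}\cdot)\big)\|_2^2$ is normalized (equivalently picking $\varepsilon_{a_1,a_2}$ to match the scale of the bubble, e.g. so that the $D^{1,2}$-norm equals that of $(\sqrt{k_1}U_1,\sqrt{k_2}U_1)$), set $(\bar u_{a_1},\bar v_{a_2}):=\big(\varepsilon_{a_1,a_2}u^-_{a_1}(\varepsilon_{a_1,a_2}x),\varepsilon_{a_1,a_2}v^-_{a_2}(\varepsilon_{a_1,a_2}x)\big)$. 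By the scaling invariance of the $D^{1,2}$ and $L^4$ norms and of the coupling term, the Pohozaev identity $P_{a_1,a_2}=0$ and the equation \eqref{eq1.1} transform: the $L^p$ terms pick up a factor $\varepsilon_{a_1,a_2}^{\,p-4}\to$ something small after multiplication by the (vanishing) $L^p$ masses, the $L^2$/Lagrange terms pick up $\varepsilon_{a_1,a_2}^{2}\lambda_i\to 0$ once one checks $\varepsilon_{a_1,a_2}^2\lambda_i\to 0$ (using the asymptotics $\lambda_i\gtrsim$ small power of $a_i$ from the Pohozaev/Nehari relations and $a_1\sim a_2$). Thus $(\bar u_{a_1},\bar v_{a_2})$ is, up to $o(1)$ in $D^{1,2}(\R^4,\R^2)$, a Palais–Smale sequence for the limiting functional $J$ of system \eqref{a1} at the level $\tfrac{k_1+k_2}{4}\mathcal S^2=J(u_0,v_0)$.

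Then I would invoke concentration-compactness / the global compactness decomposition for $J$ on $D^{1,2}(\R^4,\R^2)$: a PS sequence at the least-energy level $\tfrac{k_1+k_2}{4}\mathcal S^2$ which is bounded (its $D^{1,2}$-norm is normalized) either converges strongly, after translations and dilations, to a least-energy solution of \eqref{a1}, or splits into bubbles at strictly higher energy, which is excluded since the level is exactly the ground-state energy of $J$ and, crucially, $\beta\in(0,\min\{\mu_1,\mu_2\})\cup(\max\{\mu_1,\mu_2\},\infty)$ forces the limiting profile to be the \emph{fully nontrivial} ground state (semitrivial bubbles have energy $\tfrac{1}{4\mu_i}\mathcal S^2>\tfrac{k_1+k_2}{4}\mathcal S^2$, and the strict Sobolev-type inequality rules out the degenerate splitting). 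By Lemma \ref{lem2.2}(2) the only such profile, up to translation and dilation, is $\big(\sqrt{k_1}U_{\varepsilon_0},\sqrt{k_2}U_{\varepsilon_0}\big)$ for some $\varepsilon_0>0$; choosing the normalization of $\varepsilon_{a_1,a_2}$ appropriately absorbs the translation (both components are radially decreasing, so the translations are trivial) and fixes $\varepsilon_0$. This yields $(\bar u_{a_1},\bar v_{a_2})\to\big(\sqrt{k_1}U_{\varepsilon_0},\sqrt{k_2}U_{\varepsilon_0}\big)$ in $D^{1,2}(\R^4,\R^2)$ along a subsequence.

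\textbf{Main obstacle.} The delicate point is justifying that the rescaled functions form a genuine PS sequence for $J$ — i.e. that \emph{all} the lower-order terms ($L^p$ perturbations and the mass/Lagrange terms $\lambda_i\varepsilon_{a_1,a_2}^2 \bar u$) really vanish in $D^{-1,2}$ after rescaling. This requires precise two-sided control of $\varepsilon_{a_1,a_2}$, of the gradients $\|\nabla u^-_{a_1}\|_2,\|\nabla v^-_{a_2}\|_2$, and of $\lambda_1,\lambda_2$ in terms of $a_1\sim a_2$, in the spirit of the estimates \eqref{x1}–\eqref{x2} and \eqref{g7} in the proof of Lemma \ref{lem3.8}; the hypothesis $a_1\sim a_2$ is what keeps the two rescaling parameters comparable so that the cross term $\beta\bar v_{a_2}^2\bar u_{a_1}$ survives in the limit while the bad terms do not. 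The second subtlety is excluding energy splitting, for which one uses that the level equals the ground-state energy of $J$ and that, under the stated range of $\beta$, this ground state is strictly below any sum of a nontrivial-plus-bubble or two-bubble configuration — exactly the strict inequality encoded in $\mathcal S_{\mu_1,\mu_2,\beta}=\sqrt{k_1+k_2}\,\mathcal S<\min\{\mu_1^{-1/2},\mu_2^{-1/2}\}\mathcal S$ recorded before \eqref{b3}.
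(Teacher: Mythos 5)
Your first part (the energy asymptotics $m^-(a_1,a_2)\to\frac{k_1+k_2}{4}\mathcal{S}^2$) follows the paper's own argument closely and is sound: boundedness from Lemma \ref{lem4.1}, vanishing of the $L^p$-norms by Gagliardo-Nirenberg, a dichotomy whose collapsing case is excluded (the paper uses the strict inequality defining $\mathcal{P}^-_{a_1,a_2}$ rather than Case 1(ii) of Lemma \ref{lem4.2}, but either works), and squeezing. The genuine gap is in the second part. Your normalization of $\varepsilon_{a_1,a_2}$ is vacuous: in $\R^4$ the scaling $u\mapsto\varepsilon u(\varepsilon\,\cdot)$ leaves $\|\nabla u\|_2$ and $\|u\|_4$ unchanged, so ``choosing $\varepsilon_{a_1,a_2}$ so that the $D^{1,2}$-norm of the rescaled pair is normalized'' determines nothing --- every $\varepsilon$ gives the same $D^{1,2}$-norm. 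You must break scale invariance with a quantity such as $\|u^-_{a_1}\|_2$, $\|u^-_{a_1}\|_p$ or $u^-_{a_1}(0)$ to pin down the dilation. Moreover, the step you yourself flag as the ``main obstacle'' --- that the rescaled pair is a Palais-Smale sequence for $J$ in the dual of $D^{1,2}$ --- does not follow from what you have: writing $u_1=\sigma u^-_{a_1}(\sigma\,\cdot)$, the perturbation becomes $\alpha_1\sigma^{4-p}|u_1|^{p-2}u_1$ with $\|u_1\|_2^2=a_1^2/\sigma^2\to\infty$, and for $2<p<3$ the term $\int_{\R^4}|u_1|^{p-1}\phi$ is not controlled by the $D^{1,2}$-bound on $u_1$ alone (Gagliardo-Nirenberg gives $\|u_1\|_p^p\lesssim\|\nabla u_1\|_2^{p\gamma_p}\|u_1\|_2^{4-p}$, which blows up), so the smallness of the lower-order terms in $(D^{1,2})^{-1}$ is not established.

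The paper avoids both difficulties. Since $P_{a_1,a_2}(u^-_{a_1},v^-_{a_2})=0$ and the $L^p$-terms vanish, $\{(u^-_{a_1},v^-_{a_2})\}$ is a minimizing sequence for the scale-invariant quotient \eqref{f4}; the compactness result it cites for that minimization problem then produces, for free, a dilation $\sigma_1$ with $\big(\sigma_1 u^-_{a_1}(\sigma_1\,\cdot),\sigma_1 v^-_{a_2}(\sigma_1\,\cdot)\big)\to\big(\sqrt{k_1}U_{\varepsilon_0},\sqrt{k_2}U_{\varepsilon_0}\big)$ in $D^{1,2}(\R^4,\R^2)$, with no Palais-Smale verification needed --- only convergence of the scale-invariant norms entering the quotient. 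Taking $\varepsilon_{a_1,a_2}=\sigma_1$ already yields the stated conclusion. The bulk of the paper's remaining work (the uniform pointwise decay $u_1\lesssim(1+r^2)^{-1}$ via an ODE comparison, and the test-function expansions giving $\alpha_1\|u^-_{a_1}\|_p^p+\alpha_2\|v^-_{a_2}\|_p^p\gtrsim\varepsilon_{a_1,a_2}^{4-p}$) serves to identify $\sigma_1$ with an explicit scale determined by $a_1,a_2$; your sketch contains none of that. To repair your route, either adopt the minimizing-sequence argument, or fix the dilation by a genuinely scale-breaking normalization and establish uniform decay of $(u_1,v_1)$ before passing to the limit in the rescaled equation.
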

\begin{proof}
Since $P(u^-_{a_1},v^-_{a_2})=0$, using the Gagliardo-Nirenberg inequality, we get
\begin{equation*}
\begin{aligned}
I(u^-_{a_1},v^-_{a_2})&=\frac{1}{4}\int_{\R^4}|\nabla u^-_{a_1}|^2+|\nabla v^-_{a_2}|^2-\frac{\alpha_1}{p}\big(1-\frac{p\gamma_p}{4}\big)\int_{\R^4}|u^-_{a_1}|^p\\
&\quad -\frac{\alpha_2}{p}\big(1-\frac{p\gamma_p}{4}\big)\int_{\R^4}|v^-_{a_2}|^p,\\
&\ge \frac{1}{4}\big(\|\nabla u^-_{a_1}\|^2_2+\|\nabla v^-_{a_2}\|^2_2\big)-\big(1-\frac{p\gamma_p}{4}\big)D_2\|\nabla u^-_{a_1}\|^{2(p-2)}_2\\
&\quad -\big(1-\frac{p\gamma_p}{4}\big)D_3\|\nabla v^-_{a_2}\|^{2(p-2)}_2.\\
\end{aligned}
\end{equation*}
By Lemma \ref{lem4.1}, $I(u^-_{a_1},v^-_{a_2})<m^+(a_1,a_2)+\frac{k_1+k_2}{4}\mathcal{S}^2$, we have that $\{(u^-_{a_1},v^-_{a_2})\}\subset H^1(\R^4,\R^2)$ is bounded. Using the Gagliardo-Nirenberg inequality again,
\begin{align*}
&\int_{\R^4}|u^-_{a_1}|^p\le |\mathcal{C}_p|^{p}a_1^{4-p}\|\nabla u^-_{a_1}\|^{2(p-2)}_2\to 0,\\
& \int_{\R^4}|v^-_{a_2}|^p\le |\mathcal{C}_p|^{p}a_2^{4-p}\|\nabla v^-_{a_2}\|^{2(p-2)}_2\to 0,
\end{align*}
as $(a_1,a_2)\to (0,0)$. Therefore, it follows from $P(u^-_{a_1},v^-_{a_2})=0$ that
\begin{equation*}
\|\nabla u^-_{a_1}\|^2_2+\|\nabla v^-_{a_2}\|^2_2=\mu_1\|u^-_{a_1}\|^4_4+\mu_2\|v^-_{a_2}\|^4_4+2\beta\|u^-_{a_1}v^-_{a_2}\|^2_2+o(1).
\end{equation*}
From \eqref{f4}, we have
\begin{equation*}
\begin{aligned}
\sqrt{k_1+k_2}\mathcal{S}\big(\mu_1\|u^-_{a_1}\|^{4}_{4}+\mu_2\|v^-_{a_2}\|^{4}_{4}+2\beta\|u^-_{a_1}v^-_{a_2}\|^2_2\big)^{\frac{1}{2}}
&\le ||\nabla u^-_{a_1}||^2_2+\|\nabla v^-_{a_2}\|^2_2.\\
\end{aligned}
\end{equation*}
Thus, we distinguish the two cases:
\begin{align*}
\text{either}\ \ (i)\ \ ||\nabla u^-_{a_1}||^2_2+\|\nabla v^-_{a_2}\|^2_2\to 0\ \ \text{or}\ \ (ii) \ \ ||\nabla u^-_{a_1}||^2_2+\|\nabla v^-_{a_2}\|^2_2\to l>0.
\end{align*}
We claim that $(i)$ is impossible. In fact, since $(u^-_{a_1},v^-_{a_2})\in \mathcal{P}^-_{a_1,a_2}$, we get
\begin{align*}
2\|\nabla u^-_{a_1}\|^2_2+2\|\nabla v^-_{a_2}\|^2_2&<4\big[\mu_1\|u^-_{a_1}\|^4_4+\mu_2\|v^-_{a_2}\|^4_4+2\beta\|u^-_{a_1}v^-_{a_2}\|^2_2\big]\\
&\quad+p\gamma^2_p\alpha_1\|u^-_{a_1}\|^p_p+p\gamma^2_p\alpha_2\|v^-_{a_2}\|^p_p.
\end{align*}
Then
\begin{equation*}
\begin{aligned}
\|\nabla u^-_{a_1}\|^2_2+\|\nabla v^-_{a_2}\|^2_2&<\frac{4-p\gamma_p}{2-p\gamma_p}\big[\mu_1\|u^-_{a_1}\|^4_4+\mu_2\|v^-_{a_2}\|^4_4+2\beta\|u^-_{a_1}v^-_{a_2}\|^2_2\big]\\
&\le \frac{4-p\gamma_p}{2-p\gamma_p}\frac{1}{(k_1+k_2)\mathcal{S}^2}\big[\|\nabla u^-_{a_1}\|^2_2+\|\nabla v^-_{a_2}\|^2_2\big]^2.
\end{aligned}
\end{equation*}
The claim is proved. Therefore, $(ii)$ holds, and $l\ge(k_1+k_2)\mathcal{S}^2$. Moreover, by \cite{JTT}, $m^-(a_1,0)\to \frac{\mu_1}{4}\mathcal{S}^2$ as $a_1\to 0$, and $m^-(0,a_2)\to \frac{\mu_2}{4}\mathcal{S}^2$ as $a_2\to 0$. We obtain that
\begin{equation*}
\begin{aligned}
\frac{k_1+k_2}{4}\mathcal{S}^2+m^+(a_1,a_2)&>m^-(a_1,a_2)=I(u^-_{a_1},v^-_{a_2})\\
&=\frac{1}{4}\int_{\R^4}|\nabla u^-_{a_1}|^2+|\nabla v^-_{a_2}|^2-\frac{\alpha_1}{p}\big(1-\frac{p\gamma_p}{4}\big)\int_{\R^4}|u^-_{a_1}|^p\\
&\quad-\frac{\alpha_2}{p}\big(1-\frac{p\gamma_p}{4}\big)\int_{\R^4}|v^-_{a_2}|^p\\
&\ge \frac{1}{4}\int_{\R^4}|\nabla u^-_{a_1}|^2+|\nabla v^-_{a_2}|^2+o(1)\\
&\ge \frac{k_1+k_2}{4}\mathcal{S}^2+o(1).
\end{aligned}
\end{equation*}
Taking into account that $m^+(a_1,a_2)\to 0$ as $(a_1,a_2)\to (0,0)$.
Thus, we know that
\begin{align*}
&||\nabla u^-_{a_1}||^2_2+\|\nabla v^-_{a_2}\|^2_2\to (k_1+k_2)\mathcal{S}^2,\\
& \mu_1\|u^-_{a_1}\|^{4}_{4}+\mu_2\|v^-_{a_2}\|^{4}_{4}+2\beta\|u^-_{a_1}v^-_{a_2}\|^2_2\to (k_1+k_2)\mathcal{S}^2,
\end{align*}
as $(a_1,a_2)\to (0,0)$. It follows that, up to a subsequence, $\{(u^-_{a_1},v^-_{a_2})\}$ is a minimizing sequence of the minimizing problem \eqref{f4}. From Lemma \ref{lem3.2}, $(u^-_{a_1},v^-_{a_2})$ is radially symmetric. By \cite[Theorem 1.41]{WM} or \cite[Lemma 3.5]{LL}, up to a subsequence, there exists $\sigma_1:=\sigma_1(a_1,a_2)$ such that for some $\varepsilon_0>0$,
\begin{equation}\label{m4}
u_1(x)=\sigma_1 u^-_{a_1}(\sigma_1 x)\to \sqrt{k_1}U_{\varepsilon_0}\ \ \text{and} \ \ v_1(x)=\sigma_1 v^-_{a_2}(\sigma_1 x)\to \sqrt{k_2}U_{\varepsilon_0}
\end{equation}
in $D^{1,2}(\R^4,\R^2)$, as $(a_1,a_2)\to (0,0)$. Since $\|U_{\varepsilon_0}\|^2_2\not\in L^2(\R^4)$, $\|u_1\|^2_2=\frac{a^2_1}{\sigma^2_1}$ and $\|v_1\|^2_2=\frac{a^2_2}{\sigma^2_1}$, by the
Fatou lemma, we have $\sigma_1=o(a_1)$, $\sigma_1=o(a_2)$ and $\|u_1\|^2_2\to +\infty$, $\|v_1\|^2_2\to +\infty$ as $(a_1,a_2)\to (0,0)$.

\vskip1mm
Therefore, we get that $(u_1,v_1)$ satisfies
\begin{equation}\label{z2}
\begin{cases}
-\Delta u_1+\lambda_{1}\sigma^2_1 u_1=\mu_1u^3_1+\alpha_1\sigma^{4-p}_1|u_1|^{p-2}u_1+\beta v^2_1u_1\quad&\hbox{in}~\R^4,\\
-\Delta v_1+\lambda_{2}\sigma^2_1 v_1=\mu_2v^3_1+\alpha_2\sigma^{4-p}_1|v_1|^{p-2}v_1+\beta u^2_1v_1\quad&\hbox{in}~\R^4,\\
\end{cases}
\end{equation}
where $\lambda_{1}:=\lambda_{1,a_1,a_2}$ and $\lambda_{2}:=\lambda_{2,a_1,a_2}$.
We note that such an additional assumption is appropriate for the case $a_{1}\sim a_{2}$ as $(a_{1}, a_{2}) \to (0,0)$. 
By $P_{a_1,a_2}(u^-_{a_1},v^-_{a_2})=0$, we have
\begin{equation*}
\lambda_{1}\sigma^2_1\|u_1\|^2_2+\lambda_{2} \sigma^2_1 \|v_1\|^2_2=(1-\gamma_p)\alpha_1\sigma^{4-p}_1\|u_1\|^p_p+(1-\gamma_p)\alpha_2\sigma^{4-p}_1\|v_1\|^p_p.
\end{equation*}
Since $2<p<3$, it follows that $\lambda_{1}\sigma^2_1=o(\sigma^{4-p}_1)$ and $\lambda_{2}\sigma^{2}_1=o(\sigma^{4-p}_1)$ as $(a_1,a_2)\to (0,0)$ and $a_1\sim a_2$.
This follows by the same methods as in Lemma \ref{lem2.12}, hence
\begin{equation}\label{e11}
(u_1,v_1)\to (\sqrt{k_1}U_{\varepsilon_0},\sqrt{k_2}U_{\varepsilon_0})\ \ \text{in}\ C^2_{loc}(\R^4)\times C^2_{loc}(\R^4).
\end{equation}
In what follows, we follow the ideas in \cite{WuZ} to drive a uniformly upper bound of $(u_1,v_1)$. We define
\begin{equation*}
\bar{u}_1:=\frac{1}{u_1(0)}u_1\big(\sqrt{u_1(0)}x\big),\quad \bar{v}_1:=\frac{1}{v_1(0)}v_1\big(\sqrt{v_1(0)}x\big).
\end{equation*}
Since $(u_1,v_1)$ is radial, a direct computation shows that $(\bar{u}_1,\bar{v}_1)$ solve
\begin{equation}\label{e1}
\begin{cases}
-\bar{u}''_1-\frac{3}{r}\bar{u}'_1=f(\bar{u}_1,\bar{v}_1)\quad&\hbox{in}~\R^4,\\
-\bar{v}''_1-\frac{3}{r}\bar{v}'_1=g(\bar{u}_1,\bar{v}_1)\quad&\hbox{in}~\R^4,\\
\end{cases}
\end{equation}
where
\begin{equation}\label{e2}
\begin{aligned}
f(\bar{u}_1,\bar{v}_1)&=-\lambda_{1}\sigma^2_1u_1(0)\bar{u}_1+\mu_1[u_1(0)]^3\bar{u}^3_1+\alpha_1\sigma^{4-p}_1[u_1(0)]^{p-1}|\bar{u}_1|^{p-2}\bar{u}_1\\
&\quad+\beta[v_1(0)]^{2}u_1(0)\bar{v}^2_1\bar{u}_1,
\end{aligned}
\end{equation}
and
\begin{equation*}
\begin{aligned}
g(\bar{u}_1,\bar{v}_1)&=-\lambda_{2}\sigma^2_1v_1(0)\bar{v}_1+\mu_2[v_1(0)]^3\bar{v}^3_1+\alpha_2\sigma^{4-p}_1[v_1(0)]^{p-1}|\bar{v}_1|^{p-2}\bar{v}_1\\
&\quad+\beta[u_1(0)]^{2}v_1(0)\bar{u}^2_1\bar{v}_1.
\end{aligned}
\end{equation*}
Let
\begin{equation*}
H_1(r)=r^4(\bar{u}'_1)^2+2r^3\bar{u}_1\bar{u}'_1+\frac{1}{2}r^4\bar{u}_1f(\bar{u}_1,\bar{v}_1),
\end{equation*}
and
\begin{equation*}
H_2(r)=r^4(\bar{v}'_1)^2+2r^3\bar{v}_1\bar{v}'_1+\frac{1}{2}r^4\bar{v}_1g(\bar{u}_1,\bar{v}_1).
\end{equation*}
By direct calculations, we have
\begin{equation*}
\begin{aligned}
H'_1(r)&=4r^3(\bar{u}'_1)^2+2r^4\bar{u}'_1\bar{u}''_1+6r^2\bar{u}_1\bar{u}'_1+2r^3[(\bar{u}'_1)^2+\bar{u}_1\bar{u}''_1]+2r^3\bar{u}_1f(\bar{u}_1,\bar{v}_1)\\
&\quad +\frac{1}{2}r^4\big[\bar{u}'_1f(\bar{u}_1,\bar{v}_1)+\bar{u}_1\frac{\partial f(\bar{u}_1,\bar{v}_1)}{\partial r}\big].\\
\end{aligned}
\end{equation*}
From \eqref{e11}-\eqref{e2}, it follows that
\begin{equation*}
\begin{aligned}
H'_1(r)&=r^4u_1(0)\bar{u}_1\bar{u}'_1\big[\lambda_{1}\sigma^2_1-\frac{4-p}{2}\alpha_1\sigma^{4-p}_1[u_1(0)]^{p-2}|\bar{u}_1|^{p-2}\big]\\
&\quad+r^4\beta[v_1(0)]^2u_1(0) \bar{u}_1\bar{v}_1\big[\bar{v}'_1\bar{u}_1-\bar{u}'_1\bar{v}_1\big]\\
&=r^4\sigma^{4-p}_1u_1(0)\bar{u}_1\bar{u}'_1\big[o(1)-\frac{4-p}{2}\alpha_1|\sqrt{k_1}U_{\varepsilon_0}|^{p-2}\big]+o(1),
\end{aligned}
\end{equation*}
for $\varepsilon_0$ small enough.
Moreover, $u_1, v_1$ exponentially decays to zero as $r\to +\infty$ (see \cite{WuZ}), then $\lim\limits_{r\to+\infty}H_1(r)=0$. Since $u_1,v_1>0$, $\bar{u}'_1,\bar{v}'_1<0$, there exists $r_1:=r(a_1,a_2)>0$, $H'_1(r_1)>0$ for $0<r<r_1$ and $H'_1(r)<0$ for $r>r_1$. Therefore, $H_1(r)>H_1(0)=0$ for all $r>0$. Let
\begin{equation*}
\Psi(r)=\frac{-\bar{u}'_1}{r|\bar{u}_1|^{2}}.
\end{equation*}
By \eqref{e1}, we deduce that
\begin{equation*}
\Psi'(r)=2r^{-5}\bar{u}^{-3}_1H_1(r)>0, \ \ \forall r>0.
\end{equation*}
It follows from \eqref{e1} once more that
\begin{equation}\label{h1}
\Psi(r)>\Psi(0)=-\bar{u}''_1(0)=\frac{d}{4},
\end{equation}
where
\begin{equation*}
d=-\lambda_1\sigma^2_1u_1(0)+\mu_1[u_1(0)]^3+\alpha_1\sigma^{4-p}_1[u_1(0)]^{p-1}+\beta[v_1(0)]^{2}u_1(0).
\end{equation*}
Let $Z(r)=\frac{1}{1+\frac{d}{8}r^2}$, then we have
\begin{equation}\label{h2}
\frac{-Z'(r)}{[Z(r)]^2}=\frac{d}{4}r.
\end{equation}
From \eqref{h1}-\eqref{h2}, we get that
\begin{equation*}
\frac{\bar{u}'_1(r)}{[\bar{u}_1(r)]^{2}}\le \frac{Z'(r)}{[Z(r)]^2}\ \ \ \ \forall r>0,
\end{equation*}
which together with \eqref{e11}, implies
\begin{equation}\label{n1}
u_1(r)\lesssim \frac{1}{1+r^2}\ \ \ \ \forall r>0,
\end{equation}
uniformly for $a_1,a_2>0$ sufficiently small and $a_1\sim a_2$.

\vskip1mm
Similarly, we can deduce that
\begin{equation}\label{n2}
v_1(r)\lesssim \frac{1}{1+r^2}\ \ \ \ \forall r>0,
\end{equation}
uniformly for $a_1,a_2>0$ sufficiently small and $a_1\sim a_2$.
Then for $2<p<3$, we get
\begin{equation*}
\|u_1\|^p_p\lesssim \int^{+\infty}_{0}\frac{r^3}{(1+r^2)^p}\lesssim 1\ \ \text{and}\ \ \|v_1\|^p_p\lesssim \int^{+\infty}_{0}\frac{r^3}{(1+r^2)^p}\lesssim 1.
\end{equation*}
By the Fatou lemma, $\|u_1\|^p_p\ge \|\sqrt{k_1}U_{\varepsilon_0}\|^p_p\gtrsim 1+o(1)$ and $\|v_1\|^p_p\ge \|\sqrt{k_2}U_{\varepsilon_0}\|^p_p\gtrsim 1+o(1)$. Thus, $\|u_1\|^p_p\sim 1$, $\|v_1\|^p_p\sim 1$, and
\begin{equation*}
\lambda_1a^2_1\sim \sigma^{4-p}_1\quad \text{and} \quad  \lambda_2a^2_2  \sim  \sigma^{4-p}_1,
\end{equation*}
where $(a_1,a_2)\to (0,0)$ and $a_1\sim a_2$.

\vskip1mm
Let us define
\begin{equation*}
V_\varepsilon(x)=\sqrt{k_1}U_{\varepsilon}\phi(R^{-1}_\varepsilon x)\ \  \text{and}\ \  \overline{V}_\varepsilon(x)=\sqrt{k_2}U_{\varepsilon}\phi(\bar{R}^{-1}_\varepsilon x),
\end{equation*}
where $U_\varepsilon$ is defined in \eqref{L1}, $\phi \in C_0^{\infty}(\R^4)$ is a radially decreasing cut-off function  with $\phi \equiv 1$ in $B_1$, $\phi \equiv 0$ in $\R^4 \backslash B_2$, and $R_\varepsilon, \bar{R}_\varepsilon$ are chosen such that $(V_\varepsilon,\overline{V}_\varepsilon)\in S(a_1)\times S(a_2)$. We choose $\varepsilon>0$ sufficiently small and then
\begin{equation}\label{m3}
a^2_1=\int_{\R^4}\big|\sqrt{k_1}U_{\varepsilon}\phi(R^{-1}_\varepsilon x)\big|^2\sim \varepsilon^2\int^{R_{\varepsilon}\varepsilon^{-1}}_{1}\frac{1}{r}\sim \varepsilon^2\ln(R_{\varepsilon}\varepsilon^{-1}),
\end{equation}
\begin{equation}\label{m31}
a^2_2=\int_{\R^4}\big|\sqrt{k_2}U_{\varepsilon}\phi(\bar{R}^{-1}_\varepsilon x)\big|^2\sim \varepsilon^2\int^{\bar{R}_{\varepsilon}\varepsilon^{-1}}_{1}\frac{1}{r}\sim \varepsilon^2\ln(\bar{R}_{\varepsilon}\varepsilon^{-1}),
\end{equation}
which implies $R_{\varepsilon}\varepsilon^{-1}\to +\infty$ and $\bar{R}_{\varepsilon}\varepsilon^{-1}\to +\infty$ as $\varepsilon\to 0$. From 
we have
\begin{equation*}
\|\nabla V_\varepsilon\|^2_2+\|\nabla \overline{V}_\varepsilon\|^2_2=(k_1+k_2)\mathcal{S}^2+O\big((R_{\varepsilon}\varepsilon^{-1})^{-2}\big)+O\big((\bar{R}_{\varepsilon}\varepsilon^{-1})^{-2}\big),\ \
\end{equation*}
\begin{equation*}
\alpha_1\|V_\varepsilon\|^p_p+\alpha_2\| \overline{V}_\varepsilon\|^p_p+2\beta\|V_\varepsilon \overline{V}_\varepsilon\|^2_2=(k_1+k_2)\mathcal{S}^2+O\big((R_{\varepsilon}\varepsilon^{-1})^{-4}\big)+O\big((\bar{R}_{\varepsilon}\varepsilon^{-1})^{-4}\big),
\end{equation*}
and
\begin{equation*}
\|V_\varepsilon\|^p_p\sim \varepsilon^{4-p}, \ \ \ \ \|\overline{V}_\varepsilon\|^p_p\sim \varepsilon^{4-p},
\end{equation*}
for $\varepsilon$ small enough. By Lemma \ref{lem3.2}, there exists $t_0:=t_0(V_\varepsilon,\overline{V}_\varepsilon)>0$ such that $t_0\star(V_\varepsilon,\overline{V}_\varepsilon)\in \mathcal{P}^-_{a_1,a_2}$. For convenience, we replace $e^{t_{0}}$ by $t_1$, then
\begin{align*}
&||\nabla V_\varepsilon||^2_2+\|\nabla \overline{V}_\varepsilon\|^2_2\\
&=t^2_1\big[\mu_1\|V_\varepsilon\|^{4}_{4}+\mu_2\|\overline{V}_\varepsilon\|^{4}_{4}+2\beta\|V_\varepsilon \overline{V}_\varepsilon\|^2_2\big]+t^{p\gamma_p-2}_1\big[\gamma_p\alpha_1\|V_\varepsilon\|^p_p+\gamma_p\alpha_2\|\overline{V}_\varepsilon\|^p_p\big]
\end{align*}
and
\begin{equation*}
\begin{aligned}
2||\nabla V_\varepsilon||^2_2+2\|\nabla \overline{V}_\varepsilon\|^2_2&<4t^2_1\big[\mu_1\|V_\varepsilon\|^{4}_{4}+\mu_2\|\overline{V}_\varepsilon\|^{4}_{4}+2\beta\|V_\varepsilon \overline{V}_\varepsilon\|^2_2\big]\\
&\quad+t^{p\gamma_p-2}_1\big[p\gamma^2_p\alpha_1\|V_\varepsilon\|^p_p +p\gamma^2_p\alpha_2\|\overline{V}_\varepsilon\|^p_p\big].\\
\end{aligned}
\end{equation*}
Therefore, $\{t_1(V_\varepsilon,\overline{V}_\varepsilon)\}$ is uniformly bounded and bounded from below away from $0$ for all $\varepsilon, a_1, a_2$ sufficiently small. Hence,
\begin{align*}
&(k_1+k_2)\mathcal{S}^2[1-t^2_1]\\
&=\big[\gamma_p\alpha_1\|V_\varepsilon\|^p_p+\gamma_p\alpha_2\|\overline{V}_\varepsilon\|^p_p\big]t^{p\gamma_p-2}_1+O\big((R_{\varepsilon}\varepsilon^{-1})^{-2}\big)+O\big((\bar{R}_{\varepsilon}\varepsilon^{-1})^{-2}\big).
\end{align*}
It follows that
\begin{equation}\label{l3}
t_1=1-(1+o(1))\frac{\gamma_p\alpha_1\|V_\varepsilon\|^p_p+\gamma_p\alpha_2\|\overline{V}_\varepsilon\|^p_p+O\big((R_{\varepsilon}\varepsilon^{-1})^{-2}+O\big((\bar{R}_{\varepsilon}\varepsilon^{-1})^{-2}\big)}{2(k_1+k_2)\mathcal{S}^2}.
\end{equation}
From \eqref{l3}, we can infer that
\begin{equation}\label{l4}
\begin{aligned}
m^-(a_1,a_2)&\le I\big(t_0\star (V_\varepsilon,\overline{V}_\varepsilon)\big)\\
&\le\frac{t^2_1}{4}(k_1+k_2)\mathcal{S}^2-\frac{\alpha_1}{p}(1-\frac{p\gamma_p}{4})(t_1)^{p\gamma_p}\|V_{\varepsilon}\|^p_p\\
&\quad -\frac{\alpha_2}{p}(1-\frac{p\gamma_p}{4})(t_1)^{p\gamma_p}\|\overline{V}_{\varepsilon}\|^p_p+O\big((R_{\varepsilon}\varepsilon^{-1})^{-2}\big)+O\big((\bar{R}_{\varepsilon}\varepsilon^{-1})^{-2}\big)\\
&=\frac{k_1+k_2}{4}\mathcal{S}^2-\frac{\alpha_1\gamma_p\|V_{\varepsilon}\|^p_p+\alpha_2\gamma_p\|\overline{V}_{\varepsilon}\|^p_p}{4}-\frac{\alpha_1}{p}(1-\frac{p\gamma_p}{4})\|V_{\varepsilon}\|^p_p\\
&\quad\ \ -\frac{\alpha_2}{p}(1-\frac{p\gamma_p}{4})\|\overline{V}_{\varepsilon}\|^p_p+O\big((R_{\varepsilon}\varepsilon^{-1})^{-2}\big)+O\big((\bar{R}_{\varepsilon}\varepsilon^{-1})^{-2}\big)\\
&=\frac{k_1+k_2}{4}\mathcal{S}^2-\frac{\alpha_1}{p}\|V_{\varepsilon}\|^p_p-\frac{\alpha_2}{p}\|\overline{V}_{\varepsilon}\|^p_p\\
&\quad +O\big((R_{\varepsilon}\varepsilon^{-1})^{-2}\big)+O\big((\bar{R}_{\varepsilon}\varepsilon^{-1})^{-2}\big).
\end{aligned}
\end{equation}
Since $m^-(a_1,a_2)=I(u^-_{a_1},v^-_{a_2})=\frac{1}{4}[\|\nabla u^-_{a_1}\|^2_2+\|\nabla v^-_{a_2}\|^2_2
]-\frac{\alpha_1}{p}(1-\frac{p\gamma_p}{4})\|u^-_{a_1}\|^p_p-\frac{\alpha_2}{p}(1-\frac{p\gamma_p}{4})\|v^-_{a_2}\|^p_p$, from \eqref{l4}, we get
\begin{equation}\label{m1}
\begin{aligned}
&\frac{1}{4}\big[\|\nabla u^-_{a_1}\|^2_2+\|\nabla v^-_{a_2}\|^2_2\big]
-\frac{\alpha_1}{p}(1-\frac{p\gamma_p}{4})\|u^-_{a_1}\|^p_p-\frac{\alpha_2}{p}(1-\frac{p\gamma_p}{4})\|v^-_{a_2}\|^p_p\\
&\le\frac{k_1+k_2}{4}\mathcal{S}^2-\frac{\alpha_1}{p}\|V_{\varepsilon}\|^p_p-\frac{\alpha_2}{p}\|\overline{V}_{\varepsilon}\|^p_p+O\big((R_{\varepsilon}\varepsilon^{-1})^{-2}\big)+O\big((\bar{R}_{\varepsilon}\varepsilon^{-1})^{-2}\big).
\end{aligned}
\end{equation}
By $P_{a_1,a_2}(u^-_{a_1},v^-_{a_2})=0$, we know that
\begin{equation*}
\begin{aligned}
\sqrt{k_1+k_2}\mathcal{S}&\le \frac{\|\nabla u^-_{a_1}\|^2_2+\|\nabla v^-_{a_2}\|^2_2}{\big[\alpha_1\| u^-_{a_1}\|^p_p+\alpha_2\|v^-_{a_2}\|^p_p+2\beta\| u^-_{a_1}v^-_{a_2}\|^2_2\big]^{1/2}}\\
&=\big(\|\nabla u^-_{a_1}\|^2_2+\|\nabla v^-_{a_2}\|^2_2-\alpha_1\gamma_p\|u^-_{a_1}\|^p_p-\alpha_2\gamma_p\|v^-_{a_2}\|^p_p\big)^{\frac{1}{2}}\\
&\quad+\frac{\alpha_1\gamma_p\|u^-_{a_1}\|^p_p+\alpha_2\gamma_p\|v^-_{a_2}\|^p_p}{\sqrt{k_1+k_2}\mathcal{S}}\\
&\quad+O\big((R_{\varepsilon}\varepsilon^{-1})^{-2}\big)+O\big((\bar{R}_{\varepsilon}\varepsilon^{-1})^{-2}\big),
\end{aligned}
\end{equation*}
and then
\begin{equation}\label{m2}
\begin{aligned}
\|\nabla u^-_{a_1}\|^2_2+\|\nabla v^-_{a_2}\|^2_2&\ge \big[\sqrt{k_1+k_2}\mathcal{S}-\frac{\alpha_1\gamma_p\|u^-_{a_1}\|^p_p+\alpha_2\gamma_p\|v^-_{a_2}\|^p_p}{\sqrt{k_1+k_2}\mathcal{S}}\\
&\quad+O\big((R_{\varepsilon}\varepsilon^{-1})^{-2}\big)+O\big((\bar{R}_{\varepsilon}\varepsilon^{-1})^{-2}\big)\big]^2\\
&\quad+\alpha_1\gamma_p\|u^-_{a_1}\|^p_p+\alpha_2\gamma_p\|v^-_{a_2}\|^p_p\\
&\ge (k_1+k_2)\mathcal{S}^2-\alpha_1\gamma_p\|u^-_{a_1}\|^p_p-\alpha_2\gamma_p\|v^-_{a_2}\|^p_p\\
&\quad+O\big((R_{\varepsilon}\varepsilon^{-1})^{-2}\big)+O\big((\bar{R}_{\varepsilon}\varepsilon^{-1})^{-2}\big).
\end{aligned}
\end{equation}
From \eqref{l4}--\eqref{m2}, we have
\begin{equation}\label{z1}
\begin{aligned}
\alpha_1\|u^-_{a_1}\|^p_p+\alpha_2\|v^-_{a_2}\|^p_p&\gtrsim \alpha_1\|V_\varepsilon\|^p_p+\alpha_2\|\overline{V}_\varepsilon\|^p_p-O\big((R_{\varepsilon}\varepsilon^{-1})^{-2}\big)-O\big((\bar{R}_{\varepsilon}\varepsilon^{-1})^{-2}\big)\\ &\gtrsim\varepsilon^{4-p}+\varepsilon^{4-p}-O\big((R_{\varepsilon}\varepsilon^{-1})^{-2}\big)-O\big((\bar{R}_{\varepsilon}\varepsilon^{-1})^{-2}\big).
\end{aligned}
\end{equation}
By \eqref{m3}-\eqref{m31}, we choose $\varepsilon_{a_1,a_2}$ such that $\|u^-_{a_1}\|^p_p+\|v^-_{a_2}\|^p_p\gtrsim \varepsilon^{4-p}_{a_1,a_2}$. 
Define
\begin{equation}\label{z3}
w_1=\varepsilon_{a_1,a_2} u^-_{a_1}(\varepsilon_{a_1,a_2} x) \quad \text{and} \quad w_2=\varepsilon_{a_1,a_2} v^-_{a_2}(\varepsilon_{a_1,a_2} x),
\end{equation}
then
$\|\nabla w_1\|^2_2\sim 1$, $\|\nabla w_2\|^2_2\sim 1$, $\|w_1\|^4_4\sim 1$, $\|w_2\|^4_4\sim 1$ and $\|w_1\|^p_p+\|w_2\|^p_p\gtrsim 1$. By \eqref{m4}, it is easy to
see that $\sigma^{4-p}_1\|u_1\|^p_p=\|u^-_{a_1}\|^p_p=\varepsilon^{4-p}_{a_1,a_2}\|w_1\|^p_p$ and $\sigma^{4-p}_1\|v_1\|^p_p=\|v^-_{a_2}\|^p_p=\varepsilon^{4-p}_{a_1,a_2}\|w_2\|^p_p$. Then, we have
$\sigma_1\gtrsim\varepsilon_{a_1,a_2}$. 

\vskip1mm
Next, we need to prove that $\sigma_1\lesssim\varepsilon_{a_1,a_2}$, 
then
\begin{equation*}
\varepsilon_{a_1,a_2} u^-_{a_1}(\varepsilon_{a_1,a_2} x)\to \sqrt{k_1}U_{\varepsilon_0}\quad   \text{and}  \quad \varepsilon_{a_1,a_2} v^-_{a_2}(\varepsilon_{a_1,a_2} x)\to \sqrt{k_2}U_{\varepsilon_0} \ \ \text{in}\ D^{1,2}(\R^4),
\end{equation*}
as $(a_1,a_2)\to (0,0)$ and $a_1\sim a_2$.

\vskip1mm
In fact, we know that
\begin{equation*}
w_1(x)=\big( \frac{\varepsilon_{a_1,a_2}}{\sigma_1}\big)u_1\big( \frac{\varepsilon_{a_1,a_2}}{\sigma_1}x\big) \ \ \text{and}\ \ w_2(x)=\big( \frac{\varepsilon_{a_1,a_2}}{\sigma_1}\big)v_1\big( \frac{\varepsilon_{a_1,a_2}}{\sigma_1}x\big)
\end{equation*}
and
$(\bar{w}_1,\bar{w}_2)$ satisfies
\begin{equation*}
\begin{cases}
-\Delta \bar{w}_1=h_1(\bar{w}_1,\bar{w}_2)\ \ \text{in}\ \R^4,\\
-\Delta \bar{w}_2=h_2(\bar{w}_1,\bar{w}_2)\ \ \text{in}\ \R^4,
\end{cases}
\end{equation*}
where
\begin{equation*}
\bar{w}_1=\frac{1}{w_1(0)}w_1\big(\frac{1}{w_1(0)}x \big),\ \ \ \ \bar{w}_2=\frac{1}{w_2(0)}w_2\big(\frac{1}{w_2(0)}x \big),
\end{equation*}
\begin{align*}
h_1(\bar{w}_1,\bar{w}_2)&=-\lambda_1\varepsilon^2_{a_1,a_2}w^{-2}_1(0)\bar{w}_1\\
&\quad+\mu_1\bar{w}^3_1+\alpha_1\varepsilon^{4-p}_{a_1,a_2}[w_1(0)]^{p-4}|\bar{w}_1|^{p-2}\bar{w}_1+\beta[w_2(0)]^{2}w^{-2}_1(0)\bar{w}^2_2\bar{w}_1,
\end{align*}
and
\begin{align*}
h_2(\bar{w}_1,\bar{w}_2)&=-\lambda_2\varepsilon^2_{a_1,a_2}w^{-2}_2(0)\bar{w}_2\\
&\quad+\mu_2\bar{w}^3_2+\alpha_2\varepsilon^{4-p}_{a_1,a_2}[w_2(0)]^{p-4}|\bar{w}_2|^{p-2}\bar{w}_1+\beta[w_1(0)]^{2}w^{-2}_2(0)\bar{w}^2_1\bar{w}_2.
\end{align*}
By similar arguments as used for \eqref{n1}-\eqref{n2}, we have
\begin{equation*}
w_1\lesssim \frac{w_1(0)}{1+b_1r^2}\ \ \text{and}\ \ w_2\lesssim \frac{w_2(0)}{1+b_2r^2},
\end{equation*}
where
\begin{equation*}
b_1=-\lambda_1\varepsilon^2_{a_1,a_2}w^{-2}_1(0)+\mu_1+\alpha_1\varepsilon^{4-p}_{a_1,a_2}[w_1(0)]^{p-4}+\beta[w_2(0)]^{2}w^{-2}_1(0),
\end{equation*}
and
\begin{equation*}
b_2=-\lambda_2\varepsilon^2_{a_1,a_2}w^{-2}_2(0)+\mu_2+\alpha_2\varepsilon^{4-p}_{a_1,a_2}[w_2(0)]^{p-4}+\beta[w_1(0)]^{2}w^{-2}_2(0).
\end{equation*}
It follows from \eqref{z3} that $b_1\sim 1$, $b_2\sim 1$, and then
\begin{equation*}
\|w_1\|^p_p\lesssim \big(\frac{\varepsilon_{a_1,a_2}}{\sigma_1}\big)^{p} \ \ \text{and}\ \ \|w_2\|^p_p\lesssim \big(\frac{\varepsilon_{a_1,a_2}}{\sigma_1}\big)^{p},
\end{equation*}
for $2<p<3$. Therefore, $\sigma_1\lesssim \varepsilon_{a_1,a_2}$. 

\vskip1mm
Finally, there exists $\varepsilon_{a_1,a_2}>0$ such that
\begin{equation*}
\big(\varepsilon_{a_1,a_2} u^-_{a_1}(\varepsilon_{a_1,a_2} x),\varepsilon_{a_1,a_2} v^-_{a_2}(\varepsilon_{a_1,a_2} x)\big)\to (\sqrt{k_1}U_{\varepsilon_0},\sqrt{k_2}U_{\varepsilon_0})\ \ \text{in} \ D^{1,2}(\R^4,\R^2),
\end{equation*}
for some $\varepsilon_0>0$ as $(a_1,a_2)\to (0,0)$ and $a_1\sim a_2$.
We complete the proof.
\end{proof}

\noindent \textbf{Proof of Theorem 1.3.}
From Lemmas \ref{lem4.2} and \ref{lem4.4}, we can obtain Theorem \ref{th1.2}.
\qed

\section{Mass-critical perturbation}
In this section, we deal with the case $p=3$, $\alpha_i,\mu_i>0(i=1,2)$ and $\beta>0$. We recall the decomposition of $\mathcal{ P}_{a_1,a_2}=\mathcal{ P}_{a_1,a_2}^+\cup \mathcal{ P}_{a_1,a_2}^0\cup \mathcal{ P}_{a_1,a_2}^-$ (see \eqref{c41}). From the definition of $\mathcal{ P}_{a_1,a_2}^0$, this is $\Psi'_{(u,v)}(0)=\Psi''_{(u,v)}(0)=0$, then necessarily $\|\nabla u\|^2_2=\|\nabla v\|^2_2=0$, which is not possible because $(u,v)\in S(a_1)\times S(a_2)$. Therefore, $\mathcal{ P}_{a_1,a_2}^0=\emptyset$. By Lemma \ref{lem3.1}, we can also check that $\mathcal{ P}_{a_1,a_2}$ is a smooth manifold of codimension 1 in $S(a_1)\times S(a_2)$.
\vskip1mm

Let $\phi_0$ be a minimizer of Gagliardo-Nirenberg inequality \eqref{b2} with $p=3$. Then, from \cite{WeM}, $\phi_0$ satisfies
\begin{equation*}
\begin{cases}
-\Delta \phi_0+\frac{1}{2}\phi_0=|\phi_0|\phi_0 \quad \text{in} \quad \R^4,\\
\phi_0(x)>0 \quad \text{in} \quad \R^4.\\
\end{cases}
\end{equation*}
By the well-known uniqueness result (cf. \cite{KM}) and the scaling invariance of \eqref{g2}, we have $\phi_0(x)=\frac{1}{2}w_3(\frac{x}{\sqrt{2}})$, and
$$\frac{1}{|\mathcal{C}_3|^{3}}=\frac{\|\nabla \phi_0\|^2_2\|\phi_0\|_2}{\|\phi_0\|^3_3}=\frac{2\|w_3\|_2}{3}.$$


\begin{Lem}\label{lem5.1} 
If $0<\alpha_ia_i<\!\frac{3}{2|\mathcal{C}_3|^{3}}~(i=1,2)$, for all $(u,v)\in S(a_1)\times S(a_2)$, there exists $t_{(u,v)}$ such that $t_{(u,v)}\star(u,v)\in\mathcal{P}_{a_1,a_2}$. Further, $t_{(u,v)}$ is the unique critical point of the function $\Psi_{(u,v)}$ and is a strict maximum point at positive level.
Moreover:
\vskip1mm
\noindent $(1)$ $\mathcal{ P}_{a_1,a_2}=\mathcal{ P}_{a_1,a_2}^-$ and $P_{a_1,a_2}(u,v)<0$ iff $t_{(u,v)}<0$.\\
\noindent $(2)$ $\Psi_{(u,v)}$ is strictly increasing in $(-\infty,t_{(u,v)})$. \\
\noindent $(3)$ The map $(u,v) \mapsto t_{(u,v)} \in \mathbb{R}$ is of class $C^1$.
\end{Lem}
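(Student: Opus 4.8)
The plan is to exploit the special structure of the exponent $p=3$, for which $\gamma_3=\tfrac23$ and hence $p\gamma_p=2$: the fibering map then has only two terms, with the mass-subcritical contribution carrying the same power $e^{2s}$ as the kinetic one. For $(u,v)\in S(a_1)\times S(a_2)$ I set $t=e^s$ and write $\Psi_{(u,v)}(s)=l(t)$ with
\[
l(t)=\Big(\tfrac12A-\tfrac13C\Big)t^2-\tfrac14B\,t^4,\qquad A=\|\nabla u\|_2^2+\|\nabla v\|_2^2,
\]
where $B=\mu_1\|u\|_4^4+\mu_2\|v\|_4^4+2\beta\|uv\|_2^2>0$ and $C=\alpha_1\|u\|_3^3+\alpha_2\|v\|_3^3>0$ (both positive since $a_1,a_2>0$).

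The crucial point is that the hypothesis $\alpha_ia_i<\tfrac{3}{2|\mathcal{C}_3|^{3}}$ forces $\tfrac12A-\tfrac13C>0$. Indeed, the Gagliardo--Nirenberg inequality \eqref{b2} with $p=3$ gives $\|u\|_3^3\le|\mathcal{C}_3|^3\|\nabla u\|_2^2\|u\|_2$, so using $\|u\|_2=a_1$, $\|v\|_2=a_2$ one obtains $C\le|\mathcal{C}_3|^3\big(\alpha_1a_1\|\nabla u\|_2^2+\alpha_2a_2\|\nabla v\|_2^2\big)<\tfrac32A$; recall also $\tfrac{3}{2|\mathcal{C}_3|^3}=\|w_3\|_2$ by the identity $\tfrac{1}{|\mathcal{C}_3|^3}=\tfrac{2\|w_3\|_2}{3}$. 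Thus $l(t)=at^2-bt^4$ with $a:=\tfrac12A-\tfrac13C>0$ and $b:=\tfrac14B>0$; this function vanishes at $0$, is positive on $(0,\sqrt{a/b})$, tends to $-\infty$, and has on $(0,\infty)$ the unique critical point $t^*=\sqrt{a/(2b)}$, which is a strict maximum with $l(t^*)=\tfrac{a^2}{4b}>0$, $l$ being strictly increasing on $(0,t^*)$ and strictly decreasing on $(t^*,\infty)$. Pulling back along the increasing bijection $s\mapsto e^s$, the map $\Psi_{(u,v)}$ has the unique critical point $t_{(u,v)}=\tfrac12\ln\tfrac{a}{2b}$, a strict maximum at positive level, and is strictly increasing on $(-\infty,t_{(u,v)})$ (statement (2)); since by \eqref{c2} $s\star(u,v)\in\mathcal{P}_{a_1,a_2}$ iff $\Psi'_{(u,v)}(s)=0$, this yields existence and uniqueness of the projection onto $\mathcal{P}_{a_1,a_2}$.

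For (1), I would use $\Psi_{t_{(u,v)}\star(u,v)}''(0)=\Psi_{(u,v)}''(t_{(u,v)})=e^{2t_{(u,v)}}l''(t^*)$ together with $l''(t^*)=2a-12b\tfrac{a}{2b}=-4a<0$; hence every point of $\mathcal{P}_{a_1,a_2}$ belongs to $\mathcal{P}_{a_1,a_2}^-$, and since $\mathcal{P}_{a_1,a_2}^0=\emptyset$ was already observed at the beginning of this section, $\mathcal{P}_{a_1,a_2}=\mathcal{P}_{a_1,a_2}^-$. The equivalence $P_{a_1,a_2}(u,v)<0\Leftrightarrow t_{(u,v)}<0$ follows because $P_{a_1,a_2}(u,v)=\Psi'_{(u,v)}(0)$ and $\Psi'_{(u,v)}(0)<0$ precisely when $0$ lies in the decreasing interval $(t_{(u,v)},\infty)$. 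Statement (3) is the Implicit Function Theorem applied to $g(s,(u,v)):=\Psi'_{(u,v)}(s)=P_{a_1,a_2}(s\star(u,v))$, which is $C^1$ jointly and satisfies $\partial_sg(t_{(u,v)},(u,v))=\Psi''_{(u,v)}(t_{(u,v)})<0$.

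I expect the only genuinely delicate point to be the sharp positivity $\tfrac12A-\tfrac13C>0$: this is exactly where the size restriction $\alpha_ia_i<\tfrac{3}{2|\mathcal{C}_3|^3}$ is used and where the explicit value of the optimal Gagliardo--Nirenberg constant in terms of $w_3$ enters. Everything else reduces to one-variable calculus for $l$ and to the change-of-variable/composition identities for $s\star(\cdot)$ already recorded in Section 2.
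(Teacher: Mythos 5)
Your proposal is correct and follows essentially the same route as the paper: both reduce $\Psi_{(u,v)}$ to the form $ae^{2s}-be^{4s}$ and use the Gagliardo--Nirenberg inequality together with the condition $\alpha_ia_i<\tfrac{3}{2|\mathcal{C}_3|^3}$ to force $a=\tfrac12A-\tfrac13C>0$, after which everything is one-variable calculus, the identity $P_{a_1,a_2}(u,v)=\Psi'_{(u,v)}(0)$, and the implicit function theorem. Your substitution $t=e^s$ and the explicit computation $l''(t^*)=-4a<0$ are only cosmetic refinements of the paper's argument.
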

\begin{proof}
Since $p=3$, we have
\begin{equation*}
\begin{aligned}
\Psi_{(u,v)}(s) &=e^{2s}\Big[\big(\frac{\|\nabla u\|^2_2}{2}-\frac{\alpha_1\|u\|^3_3}{3}\big)
+\big(\frac{\|\nabla v\|^2_2}{2}-\frac{\alpha_2\|v\|^3_3}{3}\big)\Big]\\
&\quad-\frac{e^{4s}}{4}\Big(\mu_1\|u\|^4_4+\mu_2\|v\|^4_4+2\beta\|uv\|^{2}_2\Big)\\
&\ge \frac{e^{2s}}{2}\Big[\|\nabla u\|^2_2\big(1-\frac{2|\mathcal{C}_3|^3}{3}\alpha_1a_1\big)
+\|\nabla v\|^2_2\big(1-\frac{2|\mathcal{C}_3|^3}{3}\alpha_2a_2\big)\Big]\\
&\quad-\frac{e^{4s}}{4}\Big(\mu_1\|u\|^4_4+\mu_2\|v\|^4_4+2\beta\|uv\|^{2}_2\Big).\\
\end{aligned}
\end{equation*}
Note that $s\star (u,v)\in \mathcal{P}_{a_1,a_2}$ if and only if $\Psi'_{(u,v)}(s)=0$. It's easy to see that if $\big[\|\nabla u\|^2_2\big(1-\frac{2|\mathcal{C}_3|^3}{3}\alpha_1a_1\big)
+\|\nabla v\|^2_2\big(1-\frac{2|\mathcal{C}_3|^3}{3}\alpha_2a_2\big)\big]$ is positive, then $\Psi_{(u,v)}(s)$ has a unique critical point $t_{(u,v)}$, which is a strict maximum point at positive level. Therefore, under the condition of $0<\alpha_ia_i<\!\frac{3}{2|\mathcal{C}_3|^{3}}~(i=1,2)$, we know that $\big[\frac{\|\nabla u\|^2_2
+\|\nabla v\|^2_2}{2}-\frac{\alpha_1\|u\|^3_3+\alpha_2\|v\|^3_3}{3}\big]>0$. If $(u,v)\in \mathcal{ P}_{a_1,a_2}$, then $t_{(u,v)}$ is a maximum point, we have that $\Psi''_{(u,v)}(t_{(u,v)})\le 0$. Since $\mathcal{P}^0_{a_1,a_2}=\emptyset$, we have $(u,v)\in \mathcal{ P}^-_{a_1,a_2}$. To prove that the map $(u,v)\in S(a_1)\times S(a_2)\mapsto t_{(u,v)}\in \R$ is of class $C^1$, we can apply the implicit function theorem as in Lemma \ref{lem3.2}. Finally, $\Psi'_{(u,v)}(s)<0$ if and only if $s>t_{(u,v)}$, then $P_{a_1,a_2}(u,v)=\Psi'_{(u,v)}(0)<0$ if and only if $t_{(u,v)}<0$.
\end{proof}

\begin{Cor} \label{cor5.1}
Assume $p=3$, $\alpha_i,\mu_i,a_i>0    (i=1,2)$ and $\beta>0$. Let
$0<\alpha_ia_i<\!\frac{3}{2|\mathcal{C}_3|^{3}}~(i=1,2)$, then
\begin{equation*}
m^-(a_1,a_2):=\inf_{(u,v)\in \mathcal{ P}^-_{a_1,a_2}}I(u,v)>0.\\
\end{equation*}
\end{Cor}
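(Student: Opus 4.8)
The plan is to bound $I$ from below directly on the Pohozaev set. By the discussion opening this section, under the hypothesis $0<\alpha_ia_i<\frac{3}{2|\mathcal{C}_3|^3}$ we have $\mathcal{P}^0_{a_1,a_2}=\emptyset$ and $\mathcal{P}_{a_1,a_2}=\mathcal{P}^-_{a_1,a_2}$, so it suffices to prove that $\inf_{(u,v)\in\mathcal{P}_{a_1,a_2}}I(u,v)>0$. First I would use the constraint $P_{a_1,a_2}(u,v)=0$ — with $p=3$, so $\gamma_p=\frac23$ and $p\gamma_p=2$ — to eliminate the quartic and coupling terms from \eqref{eq1.4}: substituting $\mu_1\|u\|_4^4+\mu_2\|v\|_4^4+2\beta\|uv\|_2^2=\|\nabla u\|_2^2+\|\nabla v\|_2^2-\frac23(\alpha_1\|u\|_3^3+\alpha_2\|v\|_3^3)$ gives, for every $(u,v)\in\mathcal{P}_{a_1,a_2}$,
\[
I(u,v)=\tfrac14\big(\|\nabla u\|_2^2+\|\nabla v\|_2^2\big)-\tfrac16\big(\alpha_1\|u\|_3^3+\alpha_2\|v\|_3^3\big).
\]

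Next I would apply the Gagliardo--Nirenberg inequality \eqref{b2} with $p=3$ (so that $3\gamma_3=2$ and $3(1-\gamma_3)=1$), which yields $\|u\|_3^3\le|\mathcal{C}_3|^3\,a_1\,\|\nabla u\|_2^2$ and similarly for $v$, whence
\[
I(u,v)\ge\tfrac14\Big(1-\tfrac23\alpha_1a_1|\mathcal{C}_3|^3\Big)\|\nabla u\|_2^2+\tfrac14\Big(1-\tfrac23\alpha_2a_2|\mathcal{C}_3|^3\Big)\|\nabla v\|_2^2=:c_1\|\nabla u\|_2^2+c_2\|\nabla v\|_2^2.
\]
The hypothesis $\alpha_ia_i<\frac{3}{2|\mathcal{C}_3|^3}$ is exactly what forces $c_1,c_2>0$; at this stage we only obtain $I\ge0$ on $\mathcal{P}_{a_1,a_2}$, not the strict inequality.

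To upgrade this to $m^-(a_1,a_2)>0$, I would establish a uniform lower bound on $\rho^2:=\|\nabla u\|_2^2+\|\nabla v\|_2^2$ over $\mathcal{P}_{a_1,a_2}$. Using $P_{a_1,a_2}(u,v)=0$ once more, together with \eqref{b4} (i.e. $\mu_1\|u\|_4^4+\mu_2\|v\|_4^4+2\beta\|uv\|_2^2\le\mathcal{S}_{\mu_1,\mu_2,\beta}^{-2}\rho^4$) and the same cubic estimate, one arrives at $\rho^2\le\mathcal{S}_{\mu_1,\mu_2,\beta}^{-2}\rho^4+\frac23\max\{\alpha_1a_1,\alpha_2a_2\}|\mathcal{C}_3|^3\rho^2$. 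Since $\rho^2>0$ (both components lie on spheres of positive radius) and $\frac23\max\{\alpha_ia_i\}|\mathcal{C}_3|^3<1$, dividing by $\rho^2$ gives $\rho^2\ge\mathcal{S}_{\mu_1,\mu_2,\beta}^2\big(1-\frac23\max\{\alpha_ia_i\}|\mathcal{C}_3|^3\big)=:\delta>0$. Combined with the previous display, $I(u,v)\ge\min\{c_1,c_2\}\,\delta>0$ for all $(u,v)\in\mathcal{P}_{a_1,a_2}$, hence $m^-(a_1,a_2)>0$. The main obstacle is precisely this coercivity step: the algebraic rewriting of $I$ gives only nonnegativity, and one genuinely needs $\rho$ bounded away from $0$. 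This works because $p=3$ is the $L^2$-critical exponent ($p\gamma_p=2$), so for small $\alpha_ia_i$ the subcritical perturbation cannot cancel the Dirichlet term, while \eqref{b4} controls the quartic terms by $\rho^4$; in the mass-subcritical regime $2<p<3$ this role was played by the function $h$ of \eqref{n11}.
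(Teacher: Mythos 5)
Your proposal is correct and follows essentially the same route as the paper: rewrite $I$ on $\mathcal{P}_{a_1,a_2}$ via the Pohozaev identity as $\frac14(\|\nabla u\|_2^2+\|\nabla v\|_2^2)-\frac16(\alpha_1\|u\|_3^3+\alpha_2\|v\|_3^3)$, apply Gagliardo--Nirenberg to get positive coefficients under the hypothesis $\alpha_ia_i<\frac{3}{2|\mathcal{C}_3|^3}$, and then bound $\|\nabla u\|_2^2+\|\nabla v\|_2^2$ uniformly away from $0$ using the Sobolev-type inequality \eqref{b4}. The only cosmetic difference is that the paper derives the coercivity bound from the strict inequality $\Psi''_{(u,v)}(0)<0$ defining $\mathcal{P}^-_{a_1,a_2}$, whereas you use $P_{a_1,a_2}(u,v)=0$ directly; both yield the same conclusion.
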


\begin{proof}
If $(u,v)\in \mathcal{P}^{-}_{a_1,a_2}$, then by Gagliardo-Nirenberg and the Sobolev inequalities, we have
\begin{equation*}
\begin{aligned}
\|\nabla u\|^2_2+\|\nabla v\|^2_2&<2\big[\mu_1\|u\|^4_4+\mu_2\|v\|^4_4+2\beta\|uv\|^2_2\big]+\frac{2\alpha_1}{3}\|u\|^3_3+\frac{2\alpha_2}{3}\|v\|^3_3\\
&\le 8D_1[\|\nabla u\|^2_2+\|\nabla v\|^2_2]^2+\frac{2|\mathcal{C}_3|^3}{3}\big(\alpha_1a_1\|\nabla u\|^2_2+\alpha_2a_2\|\nabla v\|^2_2\big).\\
\end{aligned}
\end{equation*}
Moreover, $0<\alpha_ia_i<\!\frac{3}{2|\mathcal{C}_3|^{3}}~(i=1,2)$ and $\|\nabla u\|^2_2+\|\nabla v\|^2_2\neq 0$ (since $(u,v)\in S(a_1)\times S(a_2)$), we get
\begin{equation*}
\inf_{(u,v)\in \mathcal{P}^{-}_{a_1,a_2}}\|\nabla u\|^2_2+\|\nabla v\|^2_2\ge C>0.
\end{equation*}
So
\begin{equation*}
\begin{aligned}
m^-(a_1,a_2)&=\inf_{(u,v)\in \mathcal{P}^{-}_{a_1,a_2}}I(u,v)\\
&=\inf_{(u,v)\in \mathcal{P}^{-}_{a_1,a_2}}\frac{1}{4}\big[\|\nabla u\|^2_2+\|\nabla v\|^2_2-\frac{2\alpha_1}{3}\|u\|^3_3-\frac{2\alpha_2}{3}\|v\|^3_3\big]\\
&\ge \inf_{(u,v)\in \mathcal{P}^{-}_{a_1,a_2}}\Big[\frac{1}{4}\big(1-\frac{2|\mathcal{C}_3|^3\alpha_1a_1}{3}\big)\|\nabla u\|^2_2\\
&\qquad \qquad\qquad+\frac{1}{4}\big(1-\frac{2|\mathcal{C}_3|^3\alpha_2a_2}{3}\big)\|\nabla v\|^2_2\Big]\\
&\ge C>0.
\end{aligned}
\end{equation*}
\end{proof}

From the above lemmas, then $I\big|_{S(a_1)\times S(a_2)}$ has a mountain pass geometry. We need an estimate from above on $$m^-(a_1,a_2)=\inf\limits_{(u,v)\in\mathcal{P}_{a_1,a_2}}I(u,v)=\inf\limits_{(u,v)\in\mathcal{P}^{-}_{a_1,a_2}}I(u,v).$$ 

\begin{Lem}\label{lem5.2}
Let $p=3$, $\alpha_i,\mu_i,a_i>0(i=1,2)$, and $\beta\in\big(0,\min\{\mu_1,\mu_2\}\big)\cup\big(\max\{\mu_1,\mu_2\},\infty\big)$. If $0<\alpha_ia_i<\!\frac{3}{2|\mathcal{C}_3|^{3}}~(i=1,2)$, then $$0<m^-(a_1,a_2)<\frac{k_1+k_2}{4}\mathcal{S}^2,$$
where $k_1=\frac{\beta-\mu_2}{\beta^2-\mu_1\mu_2}$ and $k_2=\frac{\beta-\mu_1}{\beta^2-\mu_1\mu_2}$.
\end{Lem}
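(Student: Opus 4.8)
The plan is straightforward. The lower bound $m^-(a_1,a_2)>0$ is exactly Corollary \ref{cor5.1}, whose hypotheses are the present ones, so the only real work is the strict upper bound $m^-(a_1,a_2)<\frac{k_1+k_2}{4}\mathcal S^2$. For this I will test $I$ on a one-parameter family of truncated Aubin--Talenti bubbles sitting on $S(a_1)\times S(a_2)$ and then ride the corresponding fiber map to its maximum.

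Concretely, fix a radial cut-off $\phi\in C_0^\infty(\R^4)$ with $\phi\equiv1$ on $B_1$ and $\phi\equiv0$ off $B_2$, and set $V_\varepsilon(x)=\sqrt{k_1}\,U_\varepsilon(x)\phi(R_\varepsilon^{-1}x)$, $\overline{V}_\varepsilon(x)=\sqrt{k_2}\,U_\varepsilon(x)\phi(\bar R_\varepsilon^{-1}x)$, with $U_\varepsilon$ as in \eqref{L1} and $k_1=\frac{\beta-\mu_2}{\beta^2-\mu_1\mu_2}$, $k_2=\frac{\beta-\mu_1}{\beta^2-\mu_1\mu_2}$, which are positive precisely because $\beta\in\big(0,\min\{\mu_1,\mu_2\}\big)\cup\big(\max\{\mu_1,\mu_2\},\infty\big)$ (Lemma \ref{lem2.2}). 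Choose $R_\varepsilon,\bar R_\varepsilon$ so that $(V_\varepsilon,\overline{V}_\varepsilon)\in S(a_1)\times S(a_2)$, exactly as in the proof of Lemma \ref{lem4.4}; since $a_1,a_2$ are fixed while $\|V_\varepsilon\|_2^2\sim\varepsilon^2\ln(R_\varepsilon\varepsilon^{-1})$ and $\|\overline{V}_\varepsilon\|_2^2\sim\varepsilon^2\ln(\bar R_\varepsilon\varepsilon^{-1})$, one has $R_\varepsilon\varepsilon^{-1},\bar R_\varepsilon\varepsilon^{-1}\to\infty$, in fact like $\exp(\Theta(\varepsilon^{-2}))$, so every cut-off error occurring below is $o(\varepsilon^N)$ for all $N$. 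The classical bubble estimates (already invoked in Lemmas \ref{lem4.1} and \ref{lem4.4}) then give
\begin{equation*}
\|\nabla V_\varepsilon\|_2^2+\|\nabla\overline{V}_\varepsilon\|_2^2=(k_1+k_2)\mathcal S^2+o(\varepsilon),\qquad \mu_1\|V_\varepsilon\|_4^4+\mu_2\|\overline{V}_\varepsilon\|_4^4+2\beta\|V_\varepsilon\overline{V}_\varepsilon\|_2^2=(k_1+k_2)\mathcal S^2+o(\varepsilon),
\end{equation*}
together with $\|V_\varepsilon\|_3^3\sim\varepsilon$ and $\|\overline{V}_\varepsilon\|_3^3\sim\varepsilon$. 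The second identity rests on $\mu_1k_1+\beta k_2=\mu_2k_2+\beta k_1=1$, hence $\mu_1k_1^2+\mu_2k_2^2+2\beta k_1k_2=k_1+k_2$, i.e. the Nehari identity for the least energy solution $(\sqrt{k_1}U_\varepsilon,\sqrt{k_2}U_\varepsilon)$ of \eqref{a1} (equivalently $\mathcal S_{\mu_1,\mu_2,\beta}=\sqrt{k_1+k_2}\,\mathcal S$).

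Now, by Lemma \ref{lem5.1} there is a unique $t_0=t_0(V_\varepsilon,\overline{V}_\varepsilon)$ with $t_0\star(V_\varepsilon,\overline{V}_\varepsilon)\in\mathcal P_{a_1,a_2}=\mathcal P^-_{a_1,a_2}$, and $t_0$ is the strict maximum point of the fiber map, so $m^-(a_1,a_2)\le I\big(t_0\star(V_\varepsilon,\overline{V}_\varepsilon)\big)=\max_{s\in\R}\Psi_{(V_\varepsilon,\overline{V}_\varepsilon)}(s)$. Because $p=3$ forces $p\gamma_p=2$, the fiber map reduces to the two-term shape $\Psi_{(V_\varepsilon,\overline{V}_\varepsilon)}(s)=A_\varepsilon e^{2s}-B_\varepsilon e^{4s}$ with $A_\varepsilon=\tfrac12\big(\|\nabla V_\varepsilon\|_2^2+\|\nabla\overline{V}_\varepsilon\|_2^2\big)-\tfrac13\big(\alpha_1\|V_\varepsilon\|_3^3+\alpha_2\|\overline{V}_\varepsilon\|_3^3\big)$ and $B_\varepsilon=\tfrac14\big(\mu_1\|V_\varepsilon\|_4^4+\mu_2\|\overline{V}_\varepsilon\|_4^4+2\beta\|V_\varepsilon\overline{V}_\varepsilon\|_2^2\big)$, whose maximum over $s$ is $A_\varepsilon^2/(4B_\varepsilon)$ (attained at $e^{2s}=A_\varepsilon/(2B_\varepsilon)$, admissible once $A_\varepsilon>0$, hence for $\varepsilon$ small). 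Feeding in the expansions and putting $M:=\frac{k_1+k_2}{4}\mathcal S^2$, one obtains $B_\varepsilon=M+o(\varepsilon)$ and — and this is where $\alpha_1,\alpha_2>0$ is decisive — $A_\varepsilon=2M-c\varepsilon+o(\varepsilon)$ with $c>0$, so that
\begin{equation*}
m^-(a_1,a_2)\le\frac{A_\varepsilon^2}{4B_\varepsilon}=M-c\varepsilon+o(\varepsilon)<\frac{k_1+k_2}{4}\mathcal S^2
\end{equation*}
for all sufficiently small $\varepsilon>0$, which completes the argument.

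The step I expect to be the most delicate is the final error bookkeeping: since the truncation radii $R_\varepsilon,\bar R_\varepsilon$ vary with $\varepsilon$, I must verify that the cut-off contributions to $\|\nabla V_\varepsilon\|_2^2$, $\|V_\varepsilon\|_4^4$ and $\|V_\varepsilon\overline{V}_\varepsilon\|_2^2$ are of strictly smaller order than the $\Theta(\varepsilon)$ negative term coming from the subcritical $L^3$ norms — which holds because $R_\varepsilon\varepsilon^{-1}$ is exponentially large in $\varepsilon^{-2}$. The other subtle point is that the leading $\mathcal S^2$-coefficients of $A_\varepsilon$ and of $4B_\varepsilon$ must coincide (this is forced by $\mu_1k_1^2+\mu_2k_2^2+2\beta k_1k_2=k_1+k_2$), so that it is exactly the sign of the $O(\varepsilon)$ correction — handed to us by $\alpha_i>0$ — that produces the strict inequality; the hypothesis $\beta\in\big(0,\min\{\mu_1,\mu_2\}\big)\cup\big(\max\{\mu_1,\mu_2\},\infty\big)$ enters precisely here, via Lemma \ref{lem2.2}.
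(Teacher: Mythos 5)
Your proposal is correct, and the overall strategy is the one the paper uses: test $I$ on truncated Aubin--Talenti bubbles, project onto $\mathcal{P}_{a_1,a_2}$ via Lemma \ref{lem5.1}, and let the positive subcritical term $\alpha_1\|\cdot\|_3^3+\alpha_2\|\cdot\|_3^3$ force the strict inequality, with the lower bound coming from Corollary \ref{cor5.1}. The one genuine difference is how you put the bubble on the constraint sphere. The paper keeps a \emph{fixed} cut-off and normalizes by amplitude, taking $(\overline{W}_{\varepsilon},\overline{V}_{\varepsilon})=(a_1 W_{\varepsilon}/\|W_{\varepsilon}\|_2,\,a_2 W_{\varepsilon}/\|W_{\varepsilon}\|_2)$; since $\|W_\varepsilon\|_2^2=O(\varepsilon^2|\ln\varepsilon|)$ this blows up the amplitude, and after projecting, the $L^3$-term contributes a gain of order $|\ln\varepsilon|^{-1/2}$, which beats the $O(\varepsilon^2)$ error in $\max_{s_1,s_2}f(s_1,s_2)\le\frac{k_1+k_2}{4}\mathcal{S}^2+O(\varepsilon^2)$. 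You instead fix the amplitude at the limiting profile $(\sqrt{k_1}U_\varepsilon,\sqrt{k_2}U_\varepsilon)$ and tune the truncation radii $R_\varepsilon,\bar R_\varepsilon$ to meet the mass constraint — the construction the paper itself uses later in Lemma \ref{lem4.4} and Lemma \ref{lem5.6} — so your gain is of order $\varepsilon$ against superexponentially small cut-off errors. Both computations are sound; yours has the advantage that for $p=3$ the fiber map collapses to $A_\varepsilon e^{2s}-B_\varepsilon e^{4s}$ and the maximum $A_\varepsilon^2/(4B_\varepsilon)$ can be evaluated in closed form (the identity $\mu_1k_1^2+\mu_2k_2^2+2\beta k_1k_2=k_1+k_2$ matching the leading coefficients, exactly as you note), while the paper's has the advantage of reusing a single fixed cut-off function and its standard expansions \eqref{af1}--\eqref{af11}. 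Your bookkeeping of the $\varepsilon$-dependent radii ($\ln(R_\varepsilon\varepsilon^{-1})\sim a_1^2\varepsilon^{-2}$, hence all cut-off errors are $o(\varepsilon^N)$) is the point that needed checking, and it holds.
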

\begin{proof}
From \eqref{L1}, $U_{\varepsilon}=\frac{2\sqrt{2}\varepsilon}{\varepsilon^2+|x|^2}$, taking a radially decreasing cut-off function $\xi \in C_0^{\infty}(\R^4)$ such that $\xi \equiv 1$ in $B_1$, $\xi \equiv 0$ in $\R^4 \backslash B_2$, and let $W_{\varepsilon}(x) = \xi(x) U_{\varepsilon}(x)$. We have (see \cite{ABC}),
\begin{equation}\label{af1}
\|\nabla W_\varepsilon\|^2_2=\mathcal{S}^2+O(\varepsilon^2),    \quad \quad \|W_\varepsilon\|^4_4=\mathcal{S}^2+O(\varepsilon^4),
\end{equation}
and
\begin{equation}\label{af11}
 \| W_\varepsilon\|^3_3=O(\varepsilon), \quad \quad \|W_\varepsilon\|^2_2=O(\varepsilon^2|\ln\varepsilon|).
\end{equation}
Setting
\begin{equation*}
\big(\overline{W}_{\varepsilon},\overline{V}_{\varepsilon}\big)=\Big(a_1\frac{W_{\varepsilon}}{\|W_{\varepsilon}\|_2},a_2\frac{W_{\varepsilon}}{\|W_{\varepsilon}\|_2}\Big), \end{equation*}
then $(\overline{W}_{\varepsilon},\overline{V}_{\varepsilon})\in S(a_1)\times S(a_2)$. From Lemma \ref{lem5.1}, there exists $\tau_{\varepsilon}\in\R$ such that $\tau_{\varepsilon}\star(\overline{W}_{\varepsilon},\overline{V}_{\varepsilon})\in \mathcal{P}_{a_1,a_2}$,
this implies that
\begin{equation*}
m^{-}(a_1,a_2)\le I\big(\tau_{\varepsilon}\star(\overline{W}_{\varepsilon},\overline{V}_{\varepsilon})\big)=\max_{t\in \R}I\big(t\star(\overline{W}_{\varepsilon},\overline{V}_{\varepsilon})\big),
\end{equation*}
and
\begin{equation}\label{ab3}
\begin{aligned}
e^{2\tau_{\varepsilon}}\big[\|\nabla \overline{W}_{\varepsilon}\|_{2}^2+\|\nabla \overline{V}_{\varepsilon}\|_{2}^2\big]&=e^{4\tau_{\varepsilon}}\big[\mu_1\|\overline{W}_{\varepsilon}\|_{4}^4+\mu_2\|\overline{V}_{\varepsilon}\|_{4}^4+2\beta\|\overline{W}_{\varepsilon}\overline{V}_{\varepsilon}\|^2_2\big]\\
&\quad +e^{2\tau_{\varepsilon}}\Big(\frac{2\alpha_1}{3}\|\overline{W}_{\varepsilon}\|_{3}^{3}+\frac{2\alpha_2}{3}\|\overline{V}_{\varepsilon}\|_{3}^{3}\Big).
\end{aligned}
\end{equation}
Then,
\begin{equation}\label{y4}
\begin{aligned}
&e^{2\tau_{\varepsilon}}\big[\mu_1\|\overline{W}_{\varepsilon}\|_{4}^4+\mu_2\|\overline{V}_{\varepsilon}\|_{4}^4+2\beta\|\overline{W}_{\varepsilon}\overline{V}_{\varepsilon}\|^2_2\big]\\
&\quad\ge
\big(1-\frac{2\alpha_1|\mathcal{C}_3|^3a_1}{3}\big)\|\nabla \overline{W}_{\varepsilon}\|^2_2+\big(1-\frac{2\alpha_2|\mathcal{C}_3|^3a_2}{3}\big)\|\nabla \overline{V}_{\varepsilon}\|^2_2.\\
\end{aligned}
\end{equation}
Therefore, for $\varepsilon$ small enough, we have
\begin{equation*}
\begin{aligned}
I\big(\tau_{\varepsilon}\star(\overline{W}_{\varepsilon},\overline{V}_{\varepsilon})\big)&=\frac{e^{2\tau_{\varepsilon}}}{2}\big(\|\nabla \overline{W}_{\varepsilon}\|_{2}^2+\|\nabla \overline{V}_{\varepsilon}\|^2_2\big)\\
&\qquad-\frac{e^{4\tau_{\varepsilon}}}{4}\big(\|\overline{W}_{\varepsilon}\|_{4}^4+\|\overline{V}_{\varepsilon}\|_{4}^4+2\beta\|\overline{W}_{\varepsilon}\overline{V}_{\varepsilon}\|^2_2\big)\\
&\qquad-\frac{e^{2\tau_{\varepsilon}}}{2}\Big(\alpha_1\|\overline{W}_{\varepsilon}\|_{3}^{3}+\alpha_2\|\overline{V}_{\varepsilon}\|_{3}^{3}\Big)\\
&\le \max_{s_1,s_2>0}\frac{s^2_1+s^2_2}{2}\|\nabla W_{\varepsilon}\|_{2}^2-\frac{\big(\mu_1s^4_1+\mu_2s^4_2+2\rho s^2_1s^2_2\big)}{4}\|W_{\varepsilon}\|_{4}^4\\
&\qquad-\frac{e^{2\tau_{\varepsilon}}}{2}\big(\alpha_1\|\overline{W}_{\varepsilon}\|_{3}^{3}+\alpha_2\|\overline{V}_{\varepsilon}\|_{3}^{3}\big)\\
\end{aligned}
\end{equation*}
where $s_1=\frac{e^{\tau_{\varepsilon}}a_1}{\|W_{\varepsilon}\|_2}$ and $s_2=\frac{e^{\tau_{\varepsilon}}a_2}{\|W_{\varepsilon}\|_2}$.
Define $$f(s_1,s_2)=\frac{s^2_1+s^2_2}{2}\|\nabla W_{\varepsilon}\|_{2}^2-\frac{(\mu_1s^4_1+\mu_2s^4_2+2\rho s^2_1s^2_2)}{4}\|W_{\varepsilon}\|_{4}^4.$$
Using that, for all $0<s_1,s_2$,
\begin{equation*}
\max_{s_1,s_2>0}f(s_1,s_2)\le \frac{k_1+k_2}{4}\mathcal{S}^2+O(\varepsilon^2).
\end{equation*}
Finally, by \eqref{y4}, we have
\begin{equation*}
\begin{aligned}
\frac{e^{2\tau_{\varepsilon}}}{2}\Big(\alpha_1\|\overline{W}_{\varepsilon}\|_{3}^{3}+\alpha_2\|\overline{V}_{\varepsilon}\|_{3}^{3}\Big)
&=\frac{e^{2\tau_{\varepsilon}}}{2}\frac{\alpha_1a^3_1+\alpha_2a^3_2}{\|W_\varepsilon\|^3_2}\int_{\R^4}|W_{\varepsilon}|^{3}\\
&\ge \frac{C}{\|W_{\varepsilon}\|_2} \int_{\R^4}|W_{\varepsilon}|^{3}\\
&\ge C|\ln\varepsilon|^{-\frac{1}{2}}.
\end{aligned}
\end{equation*}
Hence, we deduce that
\begin{equation*}
m^{-}(a_1,a_2)\le \max_{t\in \R}I\big(t\star(\overline{W}_{\varepsilon},\overline{V}_{\varepsilon})\big)<\frac{k_1+k_2}{4}\mathcal{S}^2.
\end{equation*}
\end{proof}

\begin{Lem}\label{lem5.31}
Let $p=3$, $\mu_i,\alpha_i,a_i>0(i=1,2)$ and $0<\alpha_ia_i<\!\frac{3}{2|\mathcal{C}_3|^{3}}~ (i=1,2)$, there exists $\beta_0>0$ such that
$$m^-(a_1,a_2)<\min\big\{m^-(a_1,0),m^-(0,a_2)\big\},$$
for any $\beta>\beta_0$, where $m^-(a_1,0)$ and $ m^-(0,a_2)$ are defined in \eqref{c12}.
\end{Lem}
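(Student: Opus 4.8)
The plan is to exploit the strongly attractive coupling: once $\beta$ is large, the term $-\frac{\beta}{2}\|uv\|_2^2$ in $I$ forces the mountain‑pass level $m^-(a_1,a_2)$ down towards $0$, whereas the two decoupled levels $m^-(a_1,0)$ and $m^-(0,a_2)$ are fixed strictly positive numbers that do not depend on $\beta$. Concretely, I would fix once and for all a test pair $(\bar u,\bar v)\in S(a_1)\times S(a_2)$ with $\bar u,\bar v$ positive, smooth and compactly supported in, say, the unit ball, so that $\kappa:=\int_{\R^4}\bar u^2\bar v^2>0$ is a constant independent of $\beta$. Since $p=3$ and $0<\alpha_ia_i<\frac{3}{2|\mathcal{C}_3|^{3}}$, Lemma \ref{lem5.1} applies for every $\beta>0$: $\mathcal{P}_{a_1,a_2}=\mathcal{P}^-_{a_1,a_2}$, and there is a unique $t_{(\bar u,\bar v)}\in\R$ with $t_{(\bar u,\bar v)}\star(\bar u,\bar v)\in\mathcal{P}^-_{a_1,a_2}$, which is moreover the global maximum point of $\Psi_{(\bar u,\bar v)}$. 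Hence
$$m^-(a_1,a_2)\le I\big(t_{(\bar u,\bar v)}\star(\bar u,\bar v)\big)=\max_{s\in\R}\Psi_{(\bar u,\bar v)}(s).$$

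Because $p\gamma_p=2$ when $p=3$, the fiber map takes the explicit form $\Psi_{(\bar u,\bar v)}(s)=e^{2s}A-e^{4s}B_\beta$, where $A:=\frac12(\|\nabla\bar u\|_2^2+\|\nabla\bar v\|_2^2)-\frac13(\alpha_1\|\bar u\|_3^3+\alpha_2\|\bar v\|_3^3)$ is independent of $\beta$, and $B_\beta:=\frac14(\mu_1\|\bar u\|_4^4+\mu_2\|\bar v\|_4^4)+\frac{\beta}{2}\kappa$. By the Gagliardo--Nirenberg inequality \eqref{b2} with $p=3$ together with $0<\alpha_ia_i<\frac{3}{2|\mathcal{C}_3|^{3}}$ one has $A>0$ (this is precisely the positivity established inside Lemma \ref{lem5.1}); trivially $B_\beta>0$ and $B_\beta\to+\infty$ as $\beta\to+\infty$. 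Maximizing $t\mapsto tA-t^2B_\beta$ over $t=e^{2s}>0$ gives $\max_{s\in\R}\Psi_{(\bar u,\bar v)}(s)=\frac{A^2}{4B_\beta}$, so that $m^-(a_1,a_2)\le\frac{A^2}{4B_\beta}\to0$ as $\beta\to+\infty$.

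Next I would record that $m^-(a_1,0)$ and $m^-(0,a_2)$ are fixed strictly positive numbers. Indeed, on $\mathcal{T}^-_{a_1,3,\mu_1,\alpha_1}$ one computes $\mathcal{A}_{3,\mu_1,\alpha_1}(u)=\frac14\|\nabla u\|_2^2-\frac{\alpha_1}{6}\|u\|_3^3\ge\frac14\big(1-\frac{2\alpha_1a_1|\mathcal{C}_3|^3}{3}\big)\|\nabla u\|_2^2$, and a bootstrap on the defining identity of $\mathcal{T}^-_{a_1,3,\mu_1,\alpha_1}$ (exactly as in Corollary \ref{cor5.1}) yields $\inf_{\mathcal{T}^-_{a_1,3,\mu_1,\alpha_1}}\|\nabla u\|_2^2>0$; hence $m^-(a_1,0)>0$, and symmetrically $m^-(0,a_2)>0$. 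Since these two quantities are independent of $\beta$, one may choose $\beta_0>0$ so large that $\frac{A^2}{4B_{\beta_0}}<\min\{m^-(a_1,0),m^-(0,a_2)\}$ (possible because $\beta\mapsto B_\beta$ is increasing and unbounded). Then for every $\beta>\beta_0$,
$$m^-(a_1,a_2)\le\frac{A^2}{4B_\beta}\le\frac{A^2}{4B_{\beta_0}}<\min\big\{m^-(a_1,0),m^-(0,a_2)\big\},$$
which is the assertion.

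The argument is essentially soft; the only points needing care are the appeal to Lemma \ref{lem5.1} to legitimise the upper bound via $\max_s\Psi_{(\bar u,\bar v)}(s)$ (uniqueness and maximality of the projection onto $\mathcal{P}^-_{a_1,a_2}$, together with $\mathcal{P}_{a_1,a_2}=\mathcal{P}^-_{a_1,a_2}$), and the observation that the mass threshold $0<\alpha_ia_i<\frac{3}{2|\mathcal{C}_3|^{3}}$ is exactly what simultaneously forces $A>0$ and $m^-(a_i,0)>0$. These are precisely the ingredients that break down in the borderline/large‑mass regime, which is why the statement is confined to this range of masses.
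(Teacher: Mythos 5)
Your proof is correct, and it takes a somewhat different route from the paper's. The paper tests $m^-(a_1,a_2)$ with the pair $(u^*,v^*)$ of minimizers of the two scalar problems $m^-(a_1,0)$, $m^-(0,a_2)$, and splits the fiber parameter into two regimes: for $s<s_0$ the decoupled energies are already below $\min\{m^-(a_1,0),m^-(0,a_2)\}$, while for $s\ge s_0$ the coupling term contributes a loss of at least $Ke^{4s_0}\beta$, which dominates for $\beta$ large. You instead fix an arbitrary compactly supported test pair $(\bar u,\bar v)$ and exploit that for $p=3$ the fiber map degenerates to $e^{2s}A-e^{4s}B_\beta$, whose maximum is exactly $A^2/(4B_\beta)$ with $A$ independent of $\beta$ and $B_\beta\gtrsim\beta$; this yields the quantitative statement $m^-(a_1,a_2)=O(1/\beta)\to 0$, which is stronger than what the paper records, and it only requires the \emph{positivity} of $m^-(a_1,0)$ and $m^-(0,a_2)$ (a soft coercivity estimate as in Corollary \ref{Cor3.1} / Corollary 5.2) rather than the existence of scalar minimizers, which the paper borrows from Lemma \ref{lem2.1} even though that lemma is formally stated for $2<p<3$. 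Both arguments rest on the same mechanism (projection onto $\mathcal{P}^-_{a_1,a_2}$ via Lemma \ref{lem5.1} plus the attractive coupling driving the mountain-pass level down), and all the individual steps you use --- the identity $p\gamma_p=2$, the positivity of $A$ under $\alpha_ia_i<\tfrac{3}{2|\mathcal{C}_3|^3}$, and the bound $m^-(a_1,a_2)\le\max_s\Psi_{(\bar u,\bar v)}(s)$ --- check out.
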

\begin{proof}
By Lemma \ref{lem2.1}, $m^-(a_1,0)$ can be achieved by $u^*\in S(a_1)$, and $m^-(0,a_2)$ can be achieved by $v^*\in S(a_2)$. Moreover, $u^*$ and $v^*$ are positive, radially symmetric and radially decreasing functions.
From Section 2, it is easy to see that
\begin{equation*}
\mathcal{A}_{p,\mu_1,\alpha_1}(s\star u^*)\to 0\quad \text{and}\quad \mathcal{A}_{p,\mu_2,\alpha_2}(s\star v^*)\to 0,\quad \text{as}\ s\to -\infty.
\end{equation*}
Therefore, there exists an $s_0\ll-1$ which is independent of $\beta$ such that
\begin{equation*}
	\max_{s<s_0}I(s\star(u,v))<\max_{s<s_0}\mathcal{A}_{p,\mu_1,\alpha_1}(s\star u^*)+\mathcal{A}_{p,\mu_2,\alpha_2}(s\star v^*)
\end{equation*}	
\begin{equation*}	
<\min\big\{m^-(a_1,0),m^-(0,a_2)\big\}.
\end{equation*}
If $s\ge s_0$, then the intersection term can be bounded from below:
\begin{equation*}
\int_{\R^4}|s\star u^*|^{2}|s\star v^*|^{2}=e^{4s}\int_{\R^4}|u^*|^{2}|v^*|^{2}\ge Ke^{4s_0},
\end{equation*}
where $K>0$. Thus, we have
\begin{equation*}
\begin{aligned}
	\max_{s\ge s_0}I(s\star(u,v))&\le\max_{s\ge s_0}\mathcal{A}_{p,\mu_1,\alpha_1}(s\star u^*)+\mathcal{A}_{p,\mu_2,\alpha_2}(s\star v^*)-Ke^{4s_0}\beta\\
		&\le m^-(a_1,0)+m^-(0,a_2)-Ke^{4s_0}\beta.
\end{aligned}
\end{equation*}
It is clear that there exists $\beta_0>0$ such that $$\max\limits_{s\ge s_0}I(s\star(u,v))<\min\big\{m^-(a_1,0),m^-(0,a_2)\big\}      \;\; \hbox{ for all }  \;\; \beta>\beta_0.$$
\end{proof}

By using the same techniques as that in Lemma \ref{lem3.4}, we can prove the following lemma.
\begin{Lem}\label{lem5.3}
Let $p=3$, $\mu_i,\alpha_i,a_i>0(i=1,2)$, there exists a $\beta_0>0$ such that $\beta>\beta_0$ and $\beta\in \big(0,\min\{\mu_1,\mu_2\}\big)\cup\big(\max\{\mu_1,\mu_2\},\infty\big)$.
If $0<\alpha_ia_i<\!\frac{3}{2|\mathcal{C}_3|^{3}}~ (i=1,2)$ and
\begin{equation*}
0<m^-(a_1,a_2)<\min\big\{\frac{k_1+k_2}{4}\mathcal{S}^2,m^-(a_1,0),m^-(0,a_2)\big\},
\end{equation*}
where $m^-(a_1,0)$ and $m^-(0,a_2)$ are defined in \eqref{c12}, $k_1=\frac{\beta-\mu_2}{\beta^2-\mu_1\mu_2}$ and $k_2=\frac{\beta-\mu_1}{\beta^2-\mu_1\mu_2}$, then $m^-(a_1,a_2)$ can be achieved by some function $(u^-_{a_1},v^-_{a_2})\in S(a_1)\times S(a_2)$ which is real valued, positive, radially symmetric and radially decreasing.
\end{Lem}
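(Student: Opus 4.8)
\textbf{Proof plan for Lemma \ref{lem5.3}.} The scheme is that of Lemmas \ref{lem3.4} and \ref{lem4.2}; the new feature for $p=3$ is that the perturbation $\alpha_i|u|^{p-2}u$ is $L^2$-critical ($p\gamma_p=2$), so coercivity of a minimizing sequence must be drawn from the smallness hypothesis $0<\alpha_ia_i<\frac{3}{2|\mathcal{C}_3|^3}$ (equivalently $\alpha_ia_i<\|w_3\|_2$). First I would pick a minimizing sequence $\{(u_n,v_n)\}\subset\mathcal{P}^-_{a_1,a_2}$ for $m^-(a_1,a_2)$ and, by Lemma \ref{lem2.14}, replace it by its Schwarz symmetrization, so that $(u_n,v_n)$ may be taken nonnegative, radially symmetric and radially decreasing. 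On $\mathcal{P}^-_{a_1,a_2}$ one has $I(u_n,v_n)=\frac14(\|\nabla u_n\|_2^2+\|\nabla v_n\|_2^2)-\frac{\alpha_1}{6}\|u_n\|_3^3-\frac{\alpha_2}{6}\|v_n\|_3^3$; estimating $\|u_n\|_3^3\le|\mathcal{C}_3|^3a_1\|\nabla u_n\|_2^2$ by Gagliardo-Nirenberg and using $\alpha_ia_i<\frac{3}{2|\mathcal{C}_3|^3}$, the resulting quadratic form in $(\|\nabla u_n\|_2,\|\nabla v_n\|_2)$ is positive definite, so $I(u_n,v_n)\to m^-(a_1,a_2)<\infty$ bounds the gradients, and with the mass constraint $\{(u_n,v_n)\}$ is bounded in $H^1(\R^4,\R^2)$. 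Up to a subsequence $(u_n,v_n)\rightharpoonup(u,v)$ in $H^1$ and, by the compact embedding $H^1_r(\R^4)\hookrightarrow L^q(\R^4)$ for $2<q<4$, $u_n\to u$ and $v_n\to v$ strongly in $L^3(\R^4)$, though only weakly in the critical space $L^4$. The Lagrange multiplier rule gives bounded sequences $\lambda_{1,n},\lambda_{2,n}\to\lambda_1,\lambda_2$, and $(u,v)$ solves \eqref{eq1.1}.

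The core of the proof is the claim $u\neq0$ and $v\neq0$, which I would prove by contradiction, distinguishing three cases as in Lemma \ref{lem4.2} but now invoking both the Sobolev threshold and the gap hypotheses. If $u=v=0$, then $\|u_n\|_3,\|v_n\|_3\to0$, so $P_{a_1,a_2}(u_n,v_n)=o_n(1)$ becomes $\|\nabla u_n\|_2^2+\|\nabla v_n\|_2^2=\mu_1\|u_n\|_4^4+\mu_2\|v_n\|_4^4+2\beta\|u_nv_n\|_2^2+o_n(1)$, and inequality \eqref{f4} (with $\mathcal{S}_{\mu_1,\mu_2,\beta}=\sqrt{k_1+k_2}\,\mathcal{S}$) forces either vanishing gradients, whence $I(u_n,v_n)\to0$ contradicting $m^-(a_1,a_2)>0$, or $\|\nabla u_n\|_2^2+\|\nabla v_n\|_2^2\to l\ge(k_1+k_2)\mathcal{S}^2$, whence $m^-(a_1,a_2)=\frac14 l\ge\frac{k_1+k_2}{4}\mathcal{S}^2$, contradicting Lemma \ref{lem5.2}. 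If exactly one component vanishes, say $v=0$, I would use the Br\'{e}zis-Lieb splitting $I(u_n,v_n)=I(u_n-u,v_n)+I(u,0)+o_n(1)$ together with Lemma \ref{lem2.13} for the coupling term; here $u>0$ solves the scalar critical equation \eqref{a2} with $p=3$, hence lies on its scalar Pohozaev set, which by Lemma \ref{lem5.1} (since $\alpha_1\|u\|_2\le\alpha_1a_1<\|w_3\|_2$) is its negative part, so $I(u,0)\ge m^-(a_1,0)$ (the mass comparison being as in \cite{JTT}); the residual $(u_n-u,v_n)$ either carries vanishing gradient (so its energy tends to $0$) or, again by \eqref{f4}, picks up at least $\frac{k_1+k_2}{4}\mathcal{S}^2$, and in either subcase $m^-(a_1,a_2)\ge m^-(a_1,0)$, contradicting $m^-(a_1,a_2)<m^-(a_1,0)$ --- this is exactly where the hypothesis $\beta>\beta_0$ of Lemma \ref{lem5.31} enters. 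The case $u=0$, $v\neq0$ is symmetric.

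With $u\neq0$ and $v\neq0$, the strong maximum principle gives $u,v>0$, and Lemma \ref{lem2.11} yields $\lambda_1,\lambda_2>0$. Combining $P_{a_1,a_2}(u,v)=0$ with the limits of $P_{a_1,a_2}(u_n,v_n)=o_n(1)$ and of the Lagrange identities tested on $(u_n,v_n)$, exactly as in \eqref{d3}--\eqref{d5}, one obtains $\lambda_1a_1^2+\lambda_2a_2^2=\lambda_1\|u\|_2^2+\lambda_2\|v\|_2^2$; since $\|u\|_2\le a_1$, $\|v\|_2\le a_2$ and $\lambda_i>0$, this forces $\|u\|_2=a_1$, $\|v\|_2=a_2$, so $(u,v)\in\mathcal{P}_{a_1,a_2}=\mathcal{P}^-_{a_1,a_2}$ (recall $\mathcal{P}^0_{a_1,a_2}=\emptyset$ in this range). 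A final Br\'{e}zis-Lieb decomposition, $I(u_n,v_n)=I(u_n-u,v_n-v)+I(u,v)+o_n(1)$ and $P_{a_1,a_2}(u_n,v_n)=P_{a_1,a_2}(u_n-u,v_n-v)+o_n(1)$, together with $I(u,v)\ge m^-(a_1,a_2)$ and \eqref{f4} applied to the residual, shows that $\|\nabla(u_n-u)\|_2^2+\|\nabla(v_n-v)\|_2^2\to l>0$ would give $m^-(a_1,a_2)\ge\frac{k_1+k_2}{4}\mathcal{S}^2+m^-(a_1,a_2)$, impossible; hence $l=0$, so $(u_n,v_n)\to(u,v)$ strongly in $H^1(\R^4,\R^2)$, $I(u,v)=m^-(a_1,a_2)$, and positivity and radial monotonicity pass to the limit.

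The main obstacle is the loss of compactness from the doubly critical nonlinearity: the $L^4$ and $|u|^2|v|^2$ terms are not weakly continuous, so excluding semitrivial and bubbling limits and obtaining strong convergence rest on the sharp comparisons of $m^-(a_1,a_2)$ with the Sobolev threshold $\frac{k_1+k_2}{4}\mathcal{S}^2$ (Lemma \ref{lem5.2}) and with the scalar levels $m^-(a_1,0)$, $m^-(0,a_2)$ (Lemma \ref{lem5.31}, whence $\beta>\beta_0$). A secondary difficulty, absent for $2<p<3$, is that the $L^2$-critical perturbation competes with the kinetic energy, so that coercivity of $I$ on the minimizing set --- hence boundedness of the minimizing sequence --- is guaranteed only under $0<\alpha_ia_i<\frac{3}{2|\mathcal{C}_3|^3}$.
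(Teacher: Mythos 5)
Your proposal is correct and follows essentially the same route as the paper's proof: Schwarz-symmetrized minimizing sequence on $\mathcal{P}^-_{a_1,a_2}$, boundedness from the $L^2$-critical smallness condition, the three-case exclusion of semitrivial weak limits via the comparisons with $\tfrac{k_1+k_2}{4}\mathcal{S}^2$ and with $m^-(a_1,0)$, $m^-(0,a_2)$, recovery of the full masses through the Lagrange multiplier identity, and a final Br\'ezis--Lieb splitting to rule out bubbling. The only (immaterial) variations are that you obtain boundedness by coercivity of $I$ on $\mathcal{P}^-_{a_1,a_2}$ via Gagliardo--Nirenberg, whereas the paper first bounds the quartic terms from $I=\tfrac14(\mu_1\|u_n\|_4^4+\cdots)$ and then uses H\"older and the Pohozaev identity, and that in the non-vanishing subcase of Case 2 you contradict $m^-(a_1,a_2)<m^-(a_1,0)$ while the paper contradicts $m^-(a_1,a_2)<\tfrac{k_1+k_2}{4}\mathcal{S}^2$; both are valid under the stated hypotheses.
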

\begin{proof}
Similar to the proof of Lemma \ref{lem3.4}. Let $\{(u_n,v_n)\}\subset \mathcal{P}^-_{a_1,a_2}$ be a minimizing sequence.
We also assume that $(u_n,v_n)$ are all real valued, nonnegative, radially symmetric and decreasing in $r=|x|$.
Since $P_{a_1,a_2}(u_n,v_n)=o_n(1)$, we have
\begin{equation}\label{z4}
\begin{aligned}
&||\nabla u_n||^2_2+\|\nabla v_n\|^2_2-\frac{2\alpha_1}{3}\|u_n\|^3_3\\
&-\frac{2\alpha_2}{3}\|v_n\|^3_3-\mu_1\|u_n\|^{4}_{4}-\mu_2\|v_n\|^{4}_{4}-
2\beta\|u_nv_n\|^2_2=o_n(1).
\end{aligned}
\end{equation}
Thus,
\begin{equation*}
m^-(a_1,a_2)=I(u_n,v_n)=\frac{1}{4}\big[\mu_1\|u_n\|^{4}_{4}+\mu_2\|v_n\|^{4}_{4}+
2\beta\|u_nv_n\|^2_2\big]+o_n(1),
\end{equation*}
and by the H\"{o}lder's inequality, we get $\alpha_1\|u_n\|^3_3+\alpha_2\|v_n\|^3_3$ and $\mu_1\|u_n\|^{4}_{4}+\mu_2\|v_n\|^{4}_{4}+
2\beta\|u_nv_n\|^2_2$ are bounded. Hence, it follows from \eqref{z4} that $\{(u_n,v_n)\}$ is bounded in $H^1(\R^4,\R^2)$. There exists $(u,v)\in H^1(\R^4,\R^2)$ such that $(u_n,v_n)\rightharpoonup (u,v)$ in $H^1(\R^4,\R^2)$.
We need to prove that $u\neq 0$ and $v\neq0$.
\vskip1mm
{\bf Case 1.} If $u=0$ and $v=0$, then $\int_{\R^4}\alpha_1|u_n|^3\to 0$, $\int_{\R^4}\alpha_2|v_n|^3\to 0$,
 we have
\begin{equation*}
P_{a_1,a_2}(u_n,v_n)=||\nabla u_n||^2_2+\|\nabla v_n\|^2_2-\mu_1\|u_n\|^{4}_{4}-\mu_2\|v_n\|^{4}_{4}-
2\beta\|u_nv_n\|^2_2=o_n(1).
\end{equation*}
By \eqref{f4}, we have
\begin{equation}\label{fg5}
\begin{aligned}
\sqrt{k_1+k_2}\mathcal{S}\big(\mu_1\|u_n\|^{4}_{4}+\mu_2\|v_n\|^{4}_{4}+2\beta\|u_nv_n\|^2_2\big)^{\frac{1}{2}}
&\le ||\nabla u_n||^2_2+\|\nabla v_n\|^2_2.\\
\end{aligned}
\end{equation}
Then, we distinguish the two cases:
\begin{equation*}
\text{either}\ \ (i)\ \ ||\nabla u_n||^2_2+\|\nabla v_n\|^2_2\to 0\ \ \text{or}\ \ (ii) \ \ ||\nabla u_n||^2_2+\|\nabla v_n\|^2_2\to l>0.
\end{equation*}
If $(i)$ holds, we have that $I(u_n,v_n)\to 0$ which contradicts the fact that $m^-(a_1,a_2)>0$. If $(ii)$ holds, we deduce \eqref{fg5}
that
\begin{equation*}
||\nabla u_n||^2_2+\|\nabla v_n\|^2_2\ge (k_1+k_2)\mathcal{S}^2+o_n(1).
\end{equation*}
Therefore,
\begin{equation*}
\begin{aligned}
m^-(a_1,a_2)+o_n(1)=I(u_n,v_n)=\frac{1}{4}\int_{\R^4}|\nabla u_n|^2+|\nabla v_n|^2
\ge \frac{k_1+k_2}{4}\mathcal{S}^2,
\end{aligned}
\end{equation*}
this contradicts $m^-(a_1,a_2)<\frac{k_1+k_2}{4}\mathcal{S}^2$.
\vskip3mm
{\bf Case 2.} If $u\neq0$ and $v=0$, then $v_n\to 0$ in $L^3(\R^4)$. Let $\bar{u}_n=u_n-u$, $\bar{u}_n\to 0$ in $L^3(\R^4)$. By the maximum principle (see \cite[Theorem 2.10]{hanq}), $u$ is a positive solution of \eqref{a2}, then $m^-(a_1,0)\le m^-(\|u\|_2,0)$.
By the Br\'{e}zis-Lieb Lemma \cite{WM}, we deduce that
\begin{equation*}
\begin{aligned}
&P_{a_1,a_2}(u_n,v_n)\\
&=||\nabla u_n||^2_2+\|\nabla v_n\|^2_2-\mu_1\|u_n\|^{4}_{4}-\mu_2\|v_n\|^{4}_{4}\\
&\quad-\gamma_p\alpha_1\|u_n\|^p_p-2\beta\|u_nv_n\|^2_2+o_n(1)\\
&=||\nabla \bar{u}_n||^2_2+\|\nabla v_n\|^2_2-\mu_1\|\bar{u}_n\|^{4}_{4}-\mu_2\|v_n\|^{4}_{4}-2\beta\|\bar{u}_nv_n\|^2_2+o_n(1).
\end{aligned}
\end{equation*}
By \eqref{f4}, we have
\begin{equation*}
\begin{aligned}
\sqrt{k_1+k_2}\mathcal{S}\big(\mu_1\|\bar{u}_n\|^{4}_{4}+\mu_2\|v_n\|^{4}_{4}+2\beta\|\bar{u}_nv_n\|^2_2\big)^{\frac{1}{2}}
&\le ||\nabla \bar{u}_n||^2_2+\|\nabla v_n\|^2_2.\\
\end{aligned}
\end{equation*}
Then, we distinguish the two cases:
\begin{equation*}
\text{either}\ \ (i)\ \ ||\nabla \bar{u}_n||^2_2+\|\nabla v_n\|^2_2\to 0\ \ \text{or}\ \ (ii) \ \ ||\nabla \bar{u}_n||^2_2+\|\nabla v_n\|^2_2\to l>0.
\end{equation*}
If $(i)$ holds, we have $I(\bar{u}_n,v_n)\to 0$. 
Thus,
\begin{equation*}
\begin{aligned}
m^-(a_1,a_2)+o_n(1)&=I(u_n,v_n)=I(\bar{u}_n,v_n)+I(u,0)+o_n(1)\\
                   &=I(u,0)+o_n(1)\ge m^-(a_1,0).
\end{aligned}
\end{equation*}
This is a contradiction.
If $(ii)$ holds,
\begin{equation*}
\begin{aligned}
m^-(a_1,a_2)+o_n(1)&=I(u_n,v_n)=I(\bar{u}_n,v_n)+I(u,0)+o_n(1)\\
&\ge \frac{1}{4}\int_{\R^4}(|\nabla \bar{u}_n|^2+|\nabla v_n|^2)+\frac{\mu_1}{4}\int_{\R^4}|u|^4+o_n(1)\\
&\ge \frac{k_1+k_2}{4}\mathcal{S}^2,\\
\end{aligned}
\end{equation*}
which contradicts our assumption on $m^-(a_1,a_2)$.
\vskip1mm
{\bf Case 3.} If $u=0$ and $v\neq0$. Analysis similar to that in the proof of Case 2,
we also have a contradiction.
\vskip1mm
Therefore, $u\neq0$ and $v\neq0$. It remains to show that $m^-(a_1,a_2)$ is achieved.
It follows from the same computations in \eqref{d1}-\eqref{d4} that there exist $\lambda_1,\lambda_2>0$ such that
\begin{equation}\label{d52}
\lambda_1a^2_1+ \lambda_2a^2_2=\lambda_1\|u\|^2_2+\lambda_2\|v\|^2_2.
\end{equation}
Since $\|u\|^2_2\le a^2_1$ and $\|v\|^2_2\le a^2_2$, it follows from \eqref{d52} that $\|u\|^2_2= a^2_1$ and $\|v\|^2_2= a^2_2$, and hence $(u,v)\in \mathcal{P}_{a_1,a_2}$. Let $(\bar{u}_n,\bar{v}_n)=(u_n-u,v_n-v)$, we obtain
\begin{equation*}
\begin{aligned}
o_n(1)&=P_{a_1,a_2}(u_n,v_n)=P_{a_1,a_2}(\bar{u}_n,\bar{v}_n)+P_{a_1,a_2}(u,v)+o_n(1)\\
&=||\nabla \bar{u}_n||^2_2+\|\nabla \bar{v}_n\|^2_2-\mu_1\|\bar{u}_n\|^{4}_{4}-\mu_2\|\bar{v}_n\|^{4}_{4}-2\beta\|\bar{u}_n\bar{v}_n\|^2_2+o_n(1).\\
\end{aligned}
\end{equation*}
Here again we distinguish the two cases:
\begin{equation*}
\text{either}\ \ (i)\ \ ||\nabla \bar{u}_n||^2_2+\|\nabla \bar{v}_n\|^2_2\to 0\ \ \text{or}\ \ (ii) \ \ ||\nabla \bar{u}_n||^2_2+\|\nabla \bar{v}_n\|^2_2\to l>0.
\end{equation*}
If $(ii)$ holds,
\begin{equation*}
\begin{aligned}
m^-(a_1,a_2)+o_n(1)&=I(u_n,v_n)=I(\bar{u}_n,\bar{v}_n)+I(u,v)+o_n(1)\\
&\ge \frac{1}{4}\int_{\R^4}(|\nabla \bar{u}_n|^2+|\nabla \bar{v}_n|^2)\\
&\quad+\frac{1}{4}\int_{\R^4}(\mu_1|u|^4+\mu_2|v|^4)+o_n(1)\\
&\ge \frac{k_1+k_2}{4}\mathcal{S}^2+o_n(1).
\end{aligned}
\end{equation*}
We then conclude that $I(u,v)=m^-(a_1,a_2)$ and $(u_n,v_n)\to (u,v)$ in $H^1(\R^4,\R^2)$. Therefore, we have proved that $m^-(a_1,a_2)$ can be attained by some $(u^-_{a_1},v^-_{a_2})$ which is real valued, positive, radially symmetric and decreasing in $r=|x|$.
\end{proof}

In the following, we study the nonexistence of ground states for the case $\alpha_1a_1\ge\frac{3}{2|\mathcal{C}_3|^3}$ and $\alpha_2a_2\ge\frac{3}{2|\mathcal{C}_3|^3}$. We follow the ideas of \cite{WuZ} and \cite[Lemma 3.2]{LXF}.

\begin{Lem}\label{lem5.4}
Let $p=3$, $\mu_i,\alpha_i,a_i>0(i=1,2)$, there exists a $\beta_0>0$ such that $\beta>\beta_0$ and $\beta\in \big(0,\min\{\mu_1,\mu_2\}\big)\cup\big(\max\{\mu_1,\mu_2\},\infty\big)$.
If $\alpha_1a_1\ge\frac{3}{2|\mathcal{C}_3|^3}$ and $\alpha_2a_2\ge\frac{3}{2|\mathcal{C}_3|^3}$, we have
\begin{equation*}
m^-(a_1,a_2)=0.
\end{equation*}
Moreover, $m^-(a_1,a_2)$ can not be achieved and therefore  \eqref{eq1.1}-\eqref{eq1.11} has no ground states.
\end{Lem}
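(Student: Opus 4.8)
\emph{Overview and Step 1 (lower bound).} The plan is to prove separately that $m^-(a_1,a_2)=0$ and that $m^-(a_1,a_2)$ is not attained; non-existence of ground states then follows. Since $p=3$ gives $\gamma_3=\tfrac23$ and makes the $L^3$-term scale like the kinetic term, the fiber map is $\Psi_{(u,v)}(s)=B(u,v)e^{2s}-C(u,v)e^{4s}$ with $B(u,v)=\tfrac12(\|\nabla u\|_2^2+\|\nabla v\|_2^2)-\tfrac13(\alpha_1\|u\|_3^3+\alpha_2\|v\|_3^3)$ and $C(u,v)=\tfrac14(\mu_1\|u\|_4^4+\mu_2\|v\|_4^4+2\beta\|uv\|_2^2)>0$ on $S(a_1)\times S(a_2)$. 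From this elementary form (as already observed at the beginning of Section 5) one has $\mathcal{P}^0_{a_1,a_2}=\emptyset$ and $\mathcal{P}_{a_1,a_2}=\mathcal{P}^-_{a_1,a_2}=\{(u,v)\in S(a_1)\times S(a_2):\Psi'_{(u,v)}(0)=0\}$, on which necessarily $B(u,v)=2C(u,v)>0$. Using $P_{a_1,a_2}(u,v)=0$ to eliminate the quartic and coupling terms from $I$ yields $I(u,v)=\tfrac14(\|\nabla u\|_2^2+\|\nabla v\|_2^2)-\tfrac16(\alpha_1\|u\|_3^3+\alpha_2\|v\|_3^3)=\tfrac12 B(u,v)$ for every $(u,v)\in\mathcal{P}_{a_1,a_2}$. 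Hence $I>0$ on $\mathcal{P}^-_{a_1,a_2}$, and in particular $m^-(a_1,a_2)\ge 0$.

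\emph{Step 2 (a sequence in $\mathcal{P}^-_{a_1,a_2}$ with energy $\to 0$).} By \eqref{c2} the unique maximum of a fiber through a pair $(\tilde u,\tilde v)$ with $B(\tilde u,\tilde v)>0$, namely the point $s^*\star(\tilde u,\tilde v)$ with $e^{2s^*}=B(\tilde u,\tilde v)/2C(\tilde u,\tilde v)$, lies on $\mathcal{P}^-_{a_1,a_2}$ and has $I$-value $B(\tilde u,\tilde v)^2/4C(\tilde u,\tilde v)$; therefore it suffices to produce $(\tilde u_n,\tilde v_n)\in S(a_1)\times S(a_2)$ with $0<B(\tilde u_n,\tilde v_n)\to 0$ while $C(\tilde u_n,\tilde v_n)$ stays bounded away from $0$ and $\infty$. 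Let $\phi_0$ be the Gagliardo--Nirenberg extremal of \eqref{b2} for $p=3$ (so $\phi_0=\tfrac12 w_3(\cdot/\sqrt2)$) and set $\bar\phi_i=\tfrac{a_i}{\|\phi_0\|_2}\phi_0\in S(a_i)$; since $\bar\phi_i$ saturates \eqref{b2}, the hypothesis $\alpha_i a_i\ge\|w_3\|_2=\tfrac{3}{2|\mathcal{C}_3|^3}$ is exactly the inequality $\tfrac12\|\nabla\bar\phi_i\|_2^2\le\tfrac13\alpha_i\|\bar\phi_i\|_3^3$. I would then choose $\tilde u_n$ to be a perturbation of $\bar\phi_1$ inside $S(a_1)$ whose ratio $\alpha_1\|\tilde u_n\|_3^3/\|\nabla\tilde u_n\|_2^2$ is pushed to a value slightly below $3/2$ (when $\alpha_1a_1=\|w_3\|_2$ any non-extremal sequence converging to $\bar\phi_1$ works, by the strictness of \eqref{b2}; when $\alpha_1a_1>\|w_3\|_2$ one superposes finitely many far-apart translates of $\bar\phi_1$ with tuned amplitudes, or adds a small-amplitude rapid oscillation, to inject the required extra kinetic energy), arranged so that $\tfrac12\|\nabla\tilde u_n\|_2^2-\tfrac13\alpha_1\|\tilde u_n\|_3^3\to 0^+$ with $\|\tilde u_n\|_4^4$ bounded below by a positive constant. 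For the second component take $\tilde v_n=s_n\star v^*$ with $v^*\in S(a_2)$ arbitrary fixed and $s_n\to-\infty$ so fast that the $\tilde v_n$-contributions to $B$ and to $\|\cdot\|_4^4$, and the coupling term $\|\tilde u_n\tilde v_n\|_2^2$, are negligible compared with $\tfrac12\|\nabla\tilde u_n\|_2^2-\tfrac13\alpha_1\|\tilde u_n\|_3^3$ respectively with $\|\tilde u_n\|_4^4$. Then $0<B(\tilde u_n,\tilde v_n)\to 0$ and $C(\tilde u_n,\tilde v_n)=\tfrac14\mu_1\|\tilde u_n\|_4^4+o(1)$ is bounded in $(0,\infty)$, so the projections $s^*_n\star(\tilde u_n,\tilde v_n)\in\mathcal{P}^-_{a_1,a_2}$ satisfy $I(s^*_n\star(\tilde u_n,\tilde v_n))=B(\tilde u_n,\tilde v_n)^2/4C(\tilde u_n,\tilde v_n)\to 0$. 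Hence $m^-(a_1,a_2)\le 0$, and with Step 1, $m^-(a_1,a_2)=0$.

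\emph{Step 3 (non-attainment and no ground state).} If $m^-(a_1,a_2)$ were attained at some $(u^-,v^-)\in\mathcal{P}^-_{a_1,a_2}$, then $0=m^-(a_1,a_2)=I(u^-,v^-)=\tfrac12 B(u^-,v^-)>0$, which is absurd; hence $m^-(a_1,a_2)$ is not attained. Finally, every normalized solution of \eqref{eq1.1}-\eqref{eq1.11} is a constrained critical point of $I$ on $S(a_1)\times S(a_2)$, hence satisfies $P_{a_1,a_2}=0$ and lies on $\mathcal{P}_{a_1,a_2}=\mathcal{P}^-_{a_1,a_2}$, which is a natural constraint because $\mathcal{P}^0_{a_1,a_2}=\emptyset$; consequently $m^-(a_1,a_2)$ is precisely the ground-state energy level, and since it is not attained the problem \eqref{eq1.1}-\eqref{eq1.11} has no normalized ground state.

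\emph{Main obstacle.} The delicate point is Step 2 in the strict regime $\alpha_ia_i>\|w_3\|_2$: one must engineer test functions whose $L^3$-to-kinetic ratio sits just below the threshold $3/2$ — so that $B>0$ and $B\to 0$ — while keeping their $L^4$-norm (hence $C$) uniformly bounded below, and then control all remainder terms (the $L^2$-normalization corrections, the concentration or oscillation cross-terms, the coupling $\|\tilde u_n\tilde v_n\|_2^2$, and the contributions of $\tilde v_n$) and verify they are genuinely of lower order; this is where the constructions of \cite{WuZ} and \cite[Lemma 3.2]{LXF} are adapted. The remaining ingredients — the identity $I=\tfrac12 B$ on $\mathcal{P}_{a_1,a_2}$ and the fact that $\mathcal{P}_{a_1,a_2}=\mathcal{P}^-_{a_1,a_2}$ is a natural constraint in this mass-critical case — are routine.
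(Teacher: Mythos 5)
Your proposal is correct and follows the same overall strategy as the paper: reduce everything to the identity $I(\tau\star(u,v))=B(u,v)^2/4C(u,v)$ at the unique projection onto $\mathcal{P}^-_{a_1,a_2}$, obtain $m^-\ge 0$ from $I=\tfrac12 B=C>0$ on $\mathcal{P}_{a_1,a_2}$, drive $B\to 0^+$ along a test sequence with $C$ bounded away from $0$, and get non-attainment from the strict positivity of $I$ on $\mathcal{P}_{a_1,a_2}$. The only substantive difference is in how the test sequence with $0<B\to 0$ is manufactured. The paper exploits the full scaling invariance of the Gagliardo--Nirenberg quotient $f(u)=\|\nabla u\|_2^{2}\|u\|_2/\|u\|_3^{3}$: since $f$ is continuous on the connected set $H^1\setminus\{0\}$ with infimum $1/|\mathcal{C}_3|^3$ and supremum $+\infty$, it attains every value $M_n=(\tfrac{2\alpha_1}{3}+A_n)a_1$ with $A_n\downarrow 0$, and the two-parameter rescaling $u\mapsto \alpha u(\beta\cdot)$ then normalizes $\|u_n\|_2=a_1$, $\|u_n\|_3=1$ simultaneously, which gives $\tfrac12\|\nabla u_n\|_2^2-\tfrac13\alpha_1\|u_n\|_3^3=\tfrac{A_n}{2}\to 0^+$ and, via $1=\|u_n\|_3^3\le\|u_n\|_2\|u_n\|_4^2$, the lower bound on $C$ for free. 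Your replacement in the strict regime $\alpha_1a_1>\|w_3\|_2$ --- far-apart translates of the extremal with tuned amplitudes, or added oscillation --- does produce quotients $f$ at any prescribed level above the infimum (for $k$ bumps with amplitudes $c_j$ one gets $f\approx f(\phi_0)\,(\sum c_j^2)^{3/2}/\sum c_j^3$, which sweeps $[f(\phi_0),\sqrt{k}\,f(\phi_0)]$), so the argument closes, but it requires verifying the cross-term estimates that the paper's intermediate-value argument avoids entirely; this is exactly the point you flag as the main obstacle, and it is where the written proof is cleaner than your sketch. Your treatment of the second component by sending $s_n\to-\infty$ along the fiber is a legitimate shortcut not used in the paper (which builds both components from the GN quotient), and your non-attainment step is the same as the paper's, phrased through $B$ rather than through the vanishing of $\mu_1\|u\|_4^4+\mu_2\|v\|_4^4+2\beta\|uv\|_2^2$.
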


\begin{proof}
For all $\alpha_1,\alpha_2>0$, we can choose  $(u,v)\in S(a_1)\times S(a_2)$ such that (see \cite[Lemma 3.3]{Soave1})
\begin{equation*}
\|\nabla u\|^2_2>\frac{2\alpha_1}{3}\|u\|^3_3\quad \text{and}\quad \|\nabla v\|^2_2 >\frac{2\alpha_2}{3}\|v\|^3_3.
\end{equation*}

\textbf{Case 1 ($\alpha_1a_1=\frac{3}{2|\mathcal{C}_3|^3}$ and $\alpha_2a_2=\frac{3}{2|\mathcal{C}_3|^3}$)}.

Since the assumption $\frac{2\alpha_1}{3}a_1=\frac{1}{|\mathcal{C}_3|^3}$ and $\frac{2\alpha_2}{3}a_2=\frac{1}{|\mathcal{C}_3|^3}$,
for any $(u,v)\in S(a_1)\times S(a_2)$, we have
\begin{equation*}
\frac{2\alpha_1}{3}\|u\|_3^3+\frac{2\alpha_2}{3}\|v\|_3^3\leq \frac{2\alpha_1}{3} |\mathcal{C}_3|^3
a_1\|\nabla u\|_2^2+\frac{2\alpha_2}{3} |\mathcal{C}_3|^3a_2\|\nabla v\|_2^2=\|\nabla u\|_2^2+\|\nabla v\|_2^2.
\end{equation*}
Let $\{Q_n\}$ is a minimizing sequence of (see \cite{KM,WeM})
\begin{equation}\label{e3.2}
\begin{split}
\frac{1}{|\mathcal{C}_3|^3}:&=\inf_{ u\in
H^{1}(\mathbb{R}^4)\setminus\{0\}}\frac{\|\nabla
u\|_2^{2}\|u\|_2}{\|u\|_3^3}.
\end{split}
\end{equation}
Then by scaling $u_n(x):=a_1t^2_n\|Q_n\|_2^{-1}Q_n(t_nx)$ and $v_n(x):=a_2s^2_n\|Q_n\|_2^{-1}Q_n(s_nx)$ if necessary, we may assume that $\|u_n\|^2_2=a^2_1$, $\|u_n\|^3_3=1$ and $\|\nabla u_n\|^2_2=|\mathcal{C}_3|^{-3}(a_1)^{-1}$    $ +o_n(1)$. Similarly, $\|v_n\|^2_2=a^2_2$, $\|v_n\|^3_3=1$ and $\|\nabla v_n\|^2_2=|\mathcal{C}_3|^{-3}(a_2)^{-1}+o_n(1)$.
Then we have
\begin{equation*}
\begin{aligned}
h(s)&=e^{2s}\Big(\|\nabla u_n\|^2_2+\|\nabla v_n\|^2_2-\frac{2\alpha_1\|u_n\|^3_3+2\alpha_2\|v_n\|^3_3}{3}\Big)\\
&
\quad-e^{4s}\Big(\mu_1\|u_n\|^4_4+\mu_2\|v_n\|^4_4+2\beta\|u_nv_n\|^{2}_2\Big)\\
&=e^{2s}\Big(|\mathcal{C}_3|^{-3}(a_1)^{-1}-\frac{2\alpha_1}{3}+|\mathcal{C}_3|^{-3}(a_2)^{-1}-\frac{2\alpha_2}{3}+o_n(1)\Big)\\
&\quad -e^{4s}\Big(\mu_1\|u_n\|^4_4+\mu_2\|v_n\|^4_4+2\beta\|u_nv_n\|^{2}_2\Big).\\
\end{aligned}
\end{equation*}
By Lemma \ref{lem5.1}, there exists a unique $s_{a_1,a_2}\in \R$ such that $h(s_{a_1,a_2})=0$ for $0<\alpha_ia_i<\!\frac{3}{2|\mathcal{C}_3|^{3}}~ (i=1,2)$. Therefore, $s_{a_1,a_2}\star(u_n,v_n)\in \mathcal{P}_{a_1,a_2}$ for $0<\alpha_ia_i<\!\frac{3}{2|\mathcal{C}_3|^{3}}~ (i=1,2)$. Since $\|u_n\|^3_3=1$ and $\|v_n\|^3_3=1$, by H\"{o}lder inequality, $\|u_n\|^4_4\gtrsim1$ and $\|v_n\|^4_4\gtrsim1$. It follows that $s_{a_1,a_2}\to -\infty$ as $(a_1,a_2)\to \big(\frac{3}{2|\mathcal{C}_3|^{3}\alpha_1},\frac{3}{2|\mathcal{C}_3|^{3}\alpha_2}\big)$, which implies
\begin{equation*}
I\big(s_{a_1,a_2}\star(u_n,v_n) \big)=\frac{1}{4}e^{4s_{a_1,a_2}}\big[\mu_1\|u_n\|_{4}^4+\mu_2\|v_n\|^4_4+2\beta\|u_nv_n\|^2_2\big]=o_n(1)
\end{equation*}
as $(a_1,a_2)\to \big(\frac{3}{2|\mathcal{C}_3|^{3}\alpha_1},\frac{3}{2|\mathcal{C}_3|^{3}\alpha_2}\big)$. Thus, we get $m^-(a_1,a_2)\le 0$ for $(a_1,a_2)
=\big(\frac{3}{2|\mathcal{C}_3|^{3}\alpha_1},\frac{3}{2|\mathcal{C}_3|^{3}\alpha_2}\big)$. Moreover,
\begin{equation}\label{n3}
I(u,v)=\frac{1}{4}\big[\mu_1\|u\|_{4}^4+\mu_2\|v\|^4_4+2\beta\|uv\|^2_2\big]\ge 0, \quad \forall (u,v)\in \mathcal{P}_{a_1,a_2}.
\end{equation}
Thus, we get $m^-(a_1,a_2)= 0$ for case 1.
\vskip1mm
\textbf{Case 2 ($\alpha_1a_1> \frac{3}{2|\mathcal{C}_3|^3}$ and $\alpha_2a_2\ge \frac{3}{2|\mathcal{C}_3|^3}$)
or ($\alpha_1a_1\ge \frac{3}{2|\mathcal{C}_3|^3}$ and $\alpha_2a_2> \frac{3}{2|\mathcal{C}_3|^3}$)}.

We first consider the case $\alpha_1a_1>\frac{3}{2|\mathcal{C}_3|^3}$ and $\alpha_2a_2\ge\frac{3}{2|\mathcal{C}_3|^3}$. For any $(u,v)\in S(a_1)\times S(a_2)$ with $\|\nabla u_n\|_2^2+\|\nabla v_n\|_2^2>\frac{2\alpha_1}{3}\|u_n\|_3^3+\frac{2\alpha_2}{3}\|v_n\|_3^3$, there exists a unique $\tau\in (0,\infty)$ such that $P_{a_1,a_2}(\tau\star(u,v) )=0$. Hence, we have
\begin{equation*}
e^{2\tau}\Big(\|\nabla u\|^2_2+\|\nabla v\|^2_2-\frac{2\alpha_1\|u\|^3_3+2\alpha_2\|v\|^3_3}{3}\Big)
=e^{4\tau}\Big(\mu_1\|u\|^4_4+\mu_2\|v\|^4_4+2\beta\|uv\|^{2}_2\Big),
\end{equation*}
and then
\begin{equation*}
I(\tau\star(u,v))=\frac{1}{4}\Big(\frac{\|\nabla u\|_2^2+\|\nabla v\|_2^2-\frac{2\alpha_1}{3}\|u\|_3^3-\frac{2\alpha_2}{3}\|v\|_3^3}
{[\mu_1\|u\|_{4}^4+\mu_2\|v\|^4_4+2\beta\|uv\|^2_2]^{\frac{1}{2}}}\Big)^{2}.
\end{equation*}
If $(u,v)\in S(a_1)\times S(a_2)$ with $\|\nabla u_n\|_2^2+\|\nabla v_n\|_2^2\le \frac{2\alpha_1}{3}\|u_n\|_3^3+\frac{2\alpha_2}{3}\|v_n\|_3^3$, then there does not exist $\tau\in (0,\infty)$ such that $P_{a_1,a_2}(\tau\star(u,v) )=0$, and $\Psi_{(u,v)}(s)$ is negative and is strictly decreasing on $(0,\infty)$.
To complete the proof it suffices to find
$\{(u_n,v_n)\}\subset S(a_1)\times S(a_2)$ with
\begin{equation}\label{e3.1}
\|\nabla u_n\|_2^2+\|\nabla v_n\|_2^2>\frac{2\alpha_1}{3}\|u_n\|_3^3+\frac{2\alpha_2}{3}\|v_n\|_3^3,
\end{equation}
and
\begin{equation}\label{e3.11}
\frac{\|\nabla u_n\|_2^2+\|\nabla v_n\|_2^2-\frac{2\alpha_1}{3}\|u_n\|_3^3-\frac{2\alpha_2}{3}\|v_n\|_3^3}
{[\mu_1\|u_n\|_{4}^4+\mu_2\|v_n\|^4_4+2\beta\|u_nv_n\|^2_2]^{\frac{1}{2}}}\to 0.
\end{equation}
Define
\begin{equation*}
f(u):=\frac{\|\nabla u\|_2^{2}\|u\|_2}{\|u\|_3^3}.
\end{equation*}
By (\ref{e3.2}), for any $M>\frac{1}{|\mathcal{C}_3|^3}$, there exists $u\in
H^1(\mathbb{R}^4)\setminus \{0\}$ such that $f(u)=M$. For any
$\alpha, \beta>0$, we define $\tilde{u}(x):=\alpha u(\beta x)$. By
direct calculations, we have
\begin{equation*}
\|\tilde{u}\|_2=\alpha \beta^{-2}\|u\|_2,\ \
\|\tilde{u}\|_3=\alpha \beta^{-4/3}\|u\|_3,\ \  \|\nabla
\tilde{u}\|_2=\alpha\beta^{-1}\|\nabla u\|_2
\end{equation*}
and $f(\tilde{u})=M$.
So we can choose
$\alpha=\frac{1}{\|u\|_3}\left(\frac{\|u\|_2}{a_1\|u\|_3}\right)^{2}$
and $\beta=\left(\frac{\|u\|_2}{a_1\|u\|_3}\right)^{3/2}$ such
that $\|\tilde{u}\|_2^2=a^2_1$ and $\|\tilde{u}\|_3=1$.

\vskip1mm
We distinguish the two cases:
\begin{equation*}
\text{either}\ \ (i)\ \  \alpha_2a_2>\frac{3}{2|\mathcal{C}_3|^3} \quad  \text{or}\ \ (ii)\ \ \alpha_2a_2=\frac{3}{2|\mathcal{C}_3|^3}.
\end{equation*}
If $(i)$ holds, for any $N>\frac{1}{|\mathcal{C}_3|^3}$, we choose $\tilde{v}$ such that $f(\tilde{v})=N$, $\|\tilde{v}\|_2^2=a^2_2$ and $\|\tilde{v}\|_3=1$. Under the assumption $\frac{2\alpha_1}{3}a_1>\frac{1}{|\mathcal{C}_3|^3}$ and $\frac{2\alpha_2}{3}a_2>\frac{1}{|\mathcal{C}_3|^3}$.
Thus, there exist $\{A_n\},\{B_n\}\subset \mathbb{R}$ with $A_n, B_n>0$, and
$A_n\to 0$, $B_n\to 0$ as $n\to\infty$ such that
\begin{equation*}
M_n:=(\frac{2\alpha_1}{3}+A_n)
a_1>\frac{1}{|\mathcal{C}_3|^3}\quad \text{and}\quad N_n:=(\frac{2\alpha_2}{3}+B_n)
a_2>\frac{1}{|\mathcal{C}_3|^3}.
\end{equation*}
For such chosen $M_n$, we choose $\{(u_n,v_n)\}\subset H^1(\mathbb{R}^4)\times H^1(\R^4)$
such that $\|u_n\|_2^2=a^2_1$, $\|u_n\|_3=1$ and $f(u_n)=M_n$. Similarly, $\|v_n\|_2^2=a^2_2$, $\|v_n\|_3=1$ and $f(v_n)=N_n$. Then we have
\begin{align*}
&\|\nabla u_n\|_2^2=\big(\frac{2\alpha_1}{3}+A_n\big)\|u_n\|_3^3> \frac{2\alpha_1}{3}\|u_n\|_3^3, \\
&\|\nabla v_n\|_2^2=\big(\frac{2\alpha_2}{3}+B_n\big)\|v_n\|_3^3> \frac{2\alpha_2}{3}\|v_n\|_3^3,
\end{align*}
and
\begin{align*}
\begin{split}
&\frac{\|\nabla u_n\|_2^2+\|\nabla v_n\|_2^2-\frac{2\alpha_1}{3}\|u_n\|_3^3-\frac{2\alpha_2}{3}\|v_n\|_3^3}
{[\mu_1\|u_n\|_{4}^4+\mu_2\|v_n\|^4_4+2\beta\|u_nv_n\|^2_2]^{\frac{1}{2}}}\\
&=\frac{A_n\|u_n\|_3^3+B_n\|v_n\|_3^3}
{\big[\mu_1\|u_n\|_{4}^{4}+\mu_1\|v_n\|_{4}^{4}+2\beta\|u_nv_n\|^2_2\big]^{\frac{1}{2}}}
\to 0
\end{split}
\end{align*}
as $n\to \infty$.

\vskip3mm
If $(ii)$ holds, similar as in case (1), there exists $\bar{v}_n$ such that $\|\bar{v}_n\|^2_2=a^2_2$, $\|\bar{v}_n\|^3_3=1$ and $\|\nabla \bar{v}_n\|^2_2=|\mathcal{C}_3|^{-3}a_2+o_n(1)$. Then we have
\begin{align*}
\begin{split}
&\frac{\|\nabla u_n\|_2^2+\|\nabla \bar{v}_n\|_2^2-\frac{2\alpha_1}{3}\|u_n\|_3^3-\frac{2\alpha_2}{3}\|\bar{v}_n\|_3^3}
{[\mu_1\|u_n\|_{4}^4+\mu_2\|\bar{v}_n\|^4_4+2\beta\|u_n\bar{v}_n\|^2_2]^{\frac{1}{2}}}\\
&=\frac{A_n\|u_n\|_3^3+o_n(1)}
{\big[\mu_1\|u_n\|_{4}^{4}+\mu_1\|v_n\|_{4}^{4}+2\beta\|u_nv_n\|^2_2\big]^{\frac{1}{2}}}
\to 0
\end{split}
\end{align*}
as $n\to \infty$. Thus, we get $m^-(a_1,a_2)= 0$ for case $(2)$.
\vskip1mm

Suppose by contradiction that $m^-(a_1,a_2)$ is attained by some $(u^-_{a_1},v^-_{a_2})$ for case $\alpha_1a_1\ge\frac{3}{2|\mathcal{C}_3|^3}$ and $\alpha_2a_2\ge\frac{3}{2|\mathcal{C}_3|^3}$. By \eqref{n3}, then $\|u^-_{a_1}\|^4_4=\|v^-_{a_2}\|^4_4=0$, which is a contradiction with $(u^-_{a_1},v^-_{a_2})\in S(a_1)\times S(a_2)$. Thus, $m^-(a_1,a_2)$ can not be attained for $\alpha_1a_1\ge\frac{3}{2|\mathcal{C}_3|^3}$ and $\alpha_2a_2\ge\frac{3}{2|\mathcal{C}_3|^3}$. The proof is complete.
\end{proof}


\begin{Lem}\label{lem5.5}
Let $p=3$, $\mu_i,\alpha_i,a_i>0(i=1,2)$, there exists a $\beta_0>0$ such that $\beta>\beta_0$ and $\beta\in \big(0,\min\{\mu_1,\mu_2\}\big)\cup\big(\max\{\mu_1,\mu_2\},\infty\big)$.
If $\alpha_ia_i<\frac{3}{2|\mathcal{C}_3|^3}(i=1,2)$, for any ground state $(u^-_{a_1},v^-_{a_2})$ of \eqref{eq1.1}-\eqref{eq1.11}, there exist $\nu_1,\nu_2>0$ such that
\begin{align*}
&\Big(\big(\frac{a_1}{\|w_3\|_2} \big)r^{2}_1u^-_{a_1}(\frac{a_1}{\|w_3\|_2}r_1x), \big(\frac{a_2}{\|w_3\|_2} \big)r^{2}_2v^-_{a_2}(\frac{a_2}{\|w_3\|_2}r_2x)\Big)\\
&\to \big(\nu_1w_3(\sqrt{\nu_1}x), \nu_2w_3(\sqrt{\nu_2}x)\big),
\end{align*}
in $H^1(\R^4,\R^2)$ as $(a_1,a_2)\to (\frac{3}{2\alpha_1|\mathcal{C}_3|^3},\frac{3}{2\alpha_2|\mathcal{C}_3|^3})$ and $1-\frac{2\alpha_1a_1|\mathcal{C}_3|^3}{3}\sim 1-\frac{2\alpha_2a_2|\mathcal{C}_3|^3}{3}$ up to a subsequence, where
$r_1=\big(1-\frac{2\alpha_1a_1|\mathcal{C}_3|^3}{3}\big)^{-\frac{1}{2}}$ and $r_2=\big(1-\frac{2\alpha_2a_2|\mathcal{C}_3|^3}{3}\big)^{-\frac{1}{2}}$.
\end{Lem}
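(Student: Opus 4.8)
**Proof proposal for Lemma 5.14 (asymptotic behavior of the ground state as $\alpha_i a_i \uparrow \frac{3}{2|\mathcal C_3|^3}$).**

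The plan is to mimic the scaling analysis already carried out in Lemma \ref{lem3.8}, but with the relevant parameter being the defect $1-\frac{2\alpha_i a_i|\mathcal C_3|^3}{3}$ rather than the masses themselves. First I would let $(a_{1,k},a_{2,k})\to\big(\frac{3}{2\alpha_1|\mathcal C_3|^3},\frac{3}{2\alpha_2|\mathcal C_3|^3}\big)$ with $\delta_{1,k}:=1-\frac{2\alpha_1a_{1,k}|\mathcal C_3|^3}{3}\sim\delta_{2,k}:=1-\frac{2\alpha_2a_{2,k}|\mathcal C_3|^3}{3}$, and take $(u^-_{a_{1,k}},v^-_{a_{2,k}})\in\mathcal P^-_{a_{1,k},a_{2,k}}$ a positive radially decreasing ground state, which exists by Lemma \ref{lem5.3}. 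From the identity in Corollary \ref{cor5.1}, $m^-(a_{1,k},a_{2,k})=\frac14\big[(1-\frac{2\alpha_1a_{1,k}|\mathcal C_3|^3}{3})\|\nabla u^-_{a_{1,k}}\|_2^2+(1-\frac{2\alpha_2a_{2,k}|\mathcal C_3|^3}{3})\|\nabla v^-_{a_{2,k}}\|_2^2\big]$ plus lower-order Gagliardo--Nirenberg error; combining this with the upper bound $m^-(a_{1,k},a_{2,k})\le\max_s I(s\star(\overline W_\varepsilon,\overline V_\varepsilon))$ of Lemma \ref{lem5.2}, suitably optimized in $\varepsilon$, and the Pohozaev relation $P_{a_{1,k},a_{2,k}}=0$, I expect to extract the two-sided bound $\delta_{1,k}\|\nabla u^-_{a_{1,k}}\|_2^2+\delta_{2,k}\|\nabla v^-_{a_{2,k}}\|_2^2\sim m^-(a_{1,k},a_{2,k})\to 0$, and hence $\|\nabla u^-_{a_{1,k}}\|_2^2\|\to\infty$ like $\delta_{1,k}^{-1}$ (and similarly for $v$). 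This is the step that dictates the correct scaling rate $r_i=\delta_{i,k}^{-1/2}$.

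Next I would introduce the rescaled functions
$$
\tilde u_k(x)=\theta_{1,k}^{-1}u^-_{a_{1,k}}\big(\gamma_{1,k}^{-1}x\big),\qquad \tilde v_k(x)=\theta_{2,k}^{-1}v^-_{a_{2,k}}\big(\gamma_{2,k}^{-1}x\big),
$$
with $\theta_{i,k},\gamma_{i,k}$ chosen (via the same algebra as in Lemma \ref{lem3.8}, with $p=3$ and with $L_i=\frac{a_i}{\|w_3\|_2}r_i$ built from $\delta_{i,k}$) so that $\|\tilde u_k\|_2^2=\|w_3\|_2^2$, $\|\tilde u_k\|_3^3$ stays bounded and bounded away from $0$, and $\|\nabla\tilde u_k\|_2^2\to$ a finite positive limit. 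Writing out the equation solved by $(\tilde u_k,\tilde v_k)$, the quartic self-coupling coefficients $\mu_i\theta_{i,k}^2/\gamma_{i,k}^2$, the cross-coupling coefficients $\beta\theta_{j,k}^2/\gamma_{i,k}^2$, and the Lagrange-multiplier-type terms $\lambda_{i,k}/\gamma_{i,k}^2$ all tend to $0$ (the cross terms using $a_{1,k}\sim a_{2,k}$, exactly as in Lemma \ref{lem3.8}), while the cubic-perturbation coefficient $\alpha_i\theta_{i,k}/\gamma_{i,k}^2\to 1$. Thus, passing to a weak limit $(\tilde u_k,\tilde v_k)\rightharpoonup(u_0,v_0)$ along a subsequence, $(u_0,v_0)$ solves the decoupled system $-\Delta u_0+\mu_0 u_0=|u_0|u_0$ (and the analog for $v_0$) after rescaling, so by the uniqueness result for \eqref{g2} (cf. \cite{KM}) and $\|u_0\|_2=\|w_3\|_2$ one identifies $u_0=\nu_1 w_3(\sqrt{\nu_1}\,x)$, $v_0=\nu_2 w_3(\sqrt{\nu_2}\,x)$ for suitable $\nu_1,\nu_2>0$ determined by the limiting gradient norms. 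Strong $H^1$-convergence then follows by testing the rescaled equations against $\tilde u_k-u_0$ and $\tilde v_k-v_0$ and using the vanishing of the quartic/cross/linear coefficients, exactly as in the closing paragraph of Lemma \ref{lem3.8}.

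The main obstacle, as in the $2<p<3$ case, is controlling the cross-coupling term $\beta|\tilde v_k|^2\tilde u_k$ in the limit equation: one must verify that $\theta_{2,k}^2/\gamma_{1,k}^2\to 0$, and this is precisely where the synchronization hypothesis $1-\frac{2\alpha_1a_1|\mathcal C_3|^3}{3}\sim 1-\frac{2\alpha_2a_2|\mathcal C_3|^3}{3}$ enters; I would check it by writing both $\theta_{2,k}$ and $\gamma_{1,k}$ explicitly in terms of $a_{i,k}$ and $\delta_{i,k}$ and using $\delta_{1,k}\sim\delta_{2,k}$ together with $a_{1,k},a_{2,k}$ bounded. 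A secondary technical point is the asymptotics of the Lagrange multipliers: from the equations tested with $u^-_{a_{1,k}}$, $v^-_{a_{2,k}}$ and the Pohozaev identity one should obtain $\lambda_{i,k}a_{i,k}^2\sim\delta_{i,k}$ (up to the synchronization), which is what makes $\lambda_{i,k}/\gamma_{i,k}^2$ converge to a positive constant; this is slightly more delicate than in Lemma \ref{lem3.8} because the perturbation term is now mass-critical, so the subcritical gap $p\gamma_p<2$ is replaced by an equality and one must keep track of the $\delta_{i,k}$-factors carefully. Once these rate computations are in place, the rest is a routine adaptation of the already-established arguments.
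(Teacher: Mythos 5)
Your overall template (rescale by the defect $\delta_{i}:=1-\tfrac{2\alpha_i a_i|\mathcal C_3|^3}{3}$, identify a decoupled limit equation whose cubic coefficient tends to $1$ while the quartic, cross-coupling and multiplier terms degenerate, then invoke uniqueness of $w_3$) is the same as the paper's, but the quantitative core of your argument is wrong in a way that would derail the proof. First, the upper bound you propose for $m^-(a_1,a_2)$ is the wrong one: the bubble-based estimate of Lemma \ref{lem5.2} only gives $m^-<\tfrac{k_1+k_2}{4}\mathcal S^2$, a quantity bounded away from $0$, and no optimization in $\varepsilon$ can make it vanish as $\delta_i\to0$. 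The paper instead tests with the \emph{soliton} pair $(\varphi_1,\varphi_2)=\big(\tfrac{a_1}{\|w_3\|_2}w_3,\tfrac{a_2}{\|w_3\|_2}w_3\big)$, for which the Pohozaev projection yields the explicit bound $m^-(a_1,a_2)\lesssim(\delta_1+\delta_2)^2$; this quadratic smallness is the engine of the whole lemma and is absent from your proposal.

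Second, your conclusion that $\|\nabla u^-_{a_{1,k}}\|_2^2\to\infty$ like $\delta_{1,k}^{-1}$ has the direction of the asymptotics backwards, and is even inconsistent with your own displayed relation $\delta_{1,k}\|\nabla u^-_{a_{1,k}}\|_2^2+\delta_{2,k}\|\nabla v^-_{a_{2,k}}\|_2^2\sim m^-\to0$ (which would force $\delta\cdot\delta^{-1}\sim1\not\to0$) and with the scaling $r_i=\delta_i^{-1/2}$ in the statement (your rate would make $\|\nabla u_{r_1}\|_2^2\sim r_1^2\|\nabla u^-_{a_1}\|_2^2\sim\delta_1^{-2}$ blow up). The correct two-sided bound, obtained in the paper by combining $m^-\ge\tfrac14\big(\delta_1\|\nabla u^-_{a_1}\|_2^2+\delta_2\|\nabla v^-_{a_2}\|_2^2\big)$ with $m^-\lesssim(\delta_1+\delta_2)^2$ on one side and the Sobolev-type inequality $\delta_1+\delta_2\lesssim\mathcal S_{\mu_1,\mu_2,\beta}^{-2}\big(\|\nabla u^-_{a_1}\|_2^2+\|\nabla v^-_{a_2}\|_2^2\big)$ on the other, is $\|\nabla u^-_{a_1}\|_2^2+\|\nabla v^-_{a_2}\|_2^2\sim\delta_1+\delta_2\to0$: the ground states flatten out rather than concentrate, and the rescaling $r_i^2 u(r_i\cdot)$ renormalizes this vanishing profile to the fixed soliton. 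Until these two rate computations are corrected, the identification of the limit equation and the claim that the rescaled gradients have a finite positive limit cannot be justified. Your remark about where the synchronization $\delta_1\sim\delta_2$ enters and the multiplier asymptotics $\lambda_i a_i^2\sim\delta_i$ are, by contrast, consistent with the paper.
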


\begin{proof}
We choose $(\varphi_1,\varphi_2)=\big(\frac{a_1}{\|w_3\|_2}w_3,\frac{a_2}{\|w_3\|_2}w_3 \big)\in S(a_1)\times S(a_2)$ as a text function of $m^-(a_1,a_2)$. There exists a unique $\tau\in (0,\infty)$ such that $P_{a_1,a_2}(\tau\star(\varphi_1,\varphi_2) )=0$. Hence, we have
\begin{equation*}
I(\tau\star(\varphi_1,\varphi_2))=\frac{1}{4}\Big(\frac{\|\nabla \varphi_1\|_2^2+\|\nabla \varphi_2\|_2^2-\frac{2\alpha_1}{3}\|\varphi_1\|_3^3-\frac{2\alpha_2}{3}\|\varphi_2\|_3^3}
{[\mu_1\|\varphi_1\|_{4}^4+\mu_2\|\varphi_2\|^4_4+2\beta\|\varphi_1\varphi_2\|^2_2]^{\frac{1}{2}}}\Big)^{2},
\end{equation*}
and then
\begin{equation*}
\begin{aligned}
&m^-(a_1,a_2)\\&\le I(\tau\star(\varphi_1,\varphi_2)) \\
&\le\frac{\max\{a^4_1,a^4_2\}\|\nabla w_3\|^4_2}{4[\mu_1a^4_1+\mu_2a^4_2+2\beta a^2_1a^2_2]\|w_3\|^4_4}
\Big[ \big(1-\frac{2\alpha_1a_1|\mathcal{C}_3|^3}{3}\big)+\big(1-\frac{2\alpha_2a_2|\mathcal{C}_3|^3}{3}\big)\Big]^2.\\
\end{aligned}
\end{equation*}
It follows from $(u^-_{a_1},v^-_{a_2})\in \mathcal{P}_{a_1,a_2}$ and  the Gagliardo-Nirenberg and the Sobolev inequalities that
\begin{equation*}
\big(1-\frac{2\alpha_1a_1|\mathcal{C}_3|^3}{3}\big)+\big(1-\frac{2\alpha_2a_2|\mathcal{C}_3|^3}{3}\big)\lesssim \frac{1}{\mathcal{S}^2_{\mu_1,\mu_2,\beta}}\big(\|\nabla u^-_{a_1}\|^2_2+\|v^-_{a_2}\|^2_2 \big)
\end{equation*}
and
\begin{align*}
&\big(\|\nabla u^-_{a_1}\|^2_2+\|v^-_{a_2}\|^2_2 \big)\\
&\lesssim \frac{\max\{a^4_1,a^4_2\}\|\nabla w_3\|^4_2}{[\mu_1a^4_1+\mu_2a^4_2+2\beta a^2_1a^2_2]\|w_3\|^4_4}
\Big[ \big(1-\frac{2\alpha_1a_1|\mathcal{C}_3|^3}{3}\big)+\big(1-\frac{2\alpha_2a_2|\mathcal{C}_3|^3}{3}\big)\Big].
\end{align*}
By the  Pohozaev identity, we have
\begin{equation*}
\lambda_1a^2_1+\lambda_2a^2_2=\frac{2\alpha_1}{3}\| u^-_{a_1}\|^3_3+\frac{2\alpha_2}{3}\| v^-_{a_2}\|^3_3\sim \Big[\big(1-\frac{2\alpha_1a_1|\mathcal{C}_3|^3}{3}\big)+\big(1-\frac{2\alpha_2a_2|\mathcal{C}_3|^3}{3}\big)\Big].
\end{equation*}
Let $r_1=\big(1-\frac{2\alpha_1a_1|\mathcal{C}_3|^3}{3}\big)^{-\frac{1}{2}}$, $r_2=\big(1-\frac{2\alpha_2a_2|\mathcal{C}_3|^3}{3}\big)^{-\frac{1}{2}}$ and
\begin{equation*}
(u_{r_1},v_{r_2})=\big(\frac{a_1}{\|w_3\|_2}r^2_1u^-_{a_1}(\frac{a_1}{\|w_3\|_2}r_1x),\frac{a_2}{\|w_3\|_2}r^2_2v^-_{a_2}(\frac{a_2}{\|w_3\|_2}r_2x)
\big).
\end{equation*}
By direct calculation, $\{(u_{r_1},v_{r_2})\}$ is bounded and $(u_{r_1},v_{r_2})$ satisfies
\begin{equation*}
\begin{cases}
-\Delta u_{r_1} +\lambda_1r^2_1\frac{a^2_1}{\|w_3\|^2_2}u_{r_1}=\mu_1r^{-2}_1 u^3_{r_1}+
\alpha_1\frac{a_1}{\|w_3\|_2}u^2_{r_1}+\beta\frac{a^2_1}{a^2_2}r^{-4}_2r^{2}_1v^2_{r_2}u_{r_1},\\
-\Delta v_{r_2} +\lambda_2r^2_2\frac{a^2_2}{\|w_3\|^2_2}v_{r_2}=\mu_1r^{-2}_2 v^3_{r_2}+
\alpha_1\frac{a_2}{\|w_3\|_2}v^2_{r_2}+\beta\frac{a^2_2}{a^2_1}r^{-4}_1r^{2}_2u^2_{r_1}v_{r_2}.\\
\end{cases}
\end{equation*}
Similar to the proof in Lemma \ref{lem3.8}, we deduce that $$(u_{r_1},v_{r_2})\to (\nu_1w_3(\sqrt{\nu_1}x), \nu_2w_3(\sqrt{\nu_2}x))$$ strongly in $H^1(\R^4,\R^2)$ as $(a_1,a_2)\to (\frac{3}{2\alpha_1|\mathcal{C}_3|^3},\frac{3}{2\alpha_2|\mathcal{C}_3|^3})$ and $1-\frac{2\alpha_1a_1|\mathcal{C}_3|^3}{3}\sim 1-\frac{2\alpha_2a_2|\mathcal{C}_3|^3}{3}$ up to a subsequence for some $\nu_1,\nu_2>0$.
\end{proof}

\begin{Lem}\label{lem5.6}
Let $p=3$, $\mu_i,\alpha_i>0(i=1,2)$, there exists a $\beta_0>0$ such that $\beta>\beta_0$ and $\beta\in \big(0,\min\{\mu_1,\mu_2\}\big)\cup\big(\max\{\mu_1,\mu_2\},\infty\big)$.
If $\alpha_ia_i<\frac{3}{2|\mathcal{C}_3|^3}(i=1,2)$, for any ground state $(u^-_{a_1},v^-_{a_2})$ of \eqref{eq1.1}-\eqref{eq1.11}, then
\begin{equation*}
I(u^-_{a_1},v^-_{a_2})=m^-(a_1,a_2)\to\frac{k_1+k_2}{4}\mathcal{S}^2\ \ \text{as} \ \ (a_1,a_2)\to (0,0).
\end{equation*}
Moreover, there exists $\varepsilon_{a_1,a_2}>0$ such that
\begin{equation*}
\big(\varepsilon_{a_1,a_2} u^-_{a_1}(\varepsilon_{a_1,a_2} x),\varepsilon_{a_1,a_2} v^-_{a_2}(\varepsilon_{a_1,a_2} x)\big)\to (\sqrt{k_1}U_{\varepsilon_0},\sqrt{k_2}U_{\varepsilon_0})
\end{equation*}
in $D^{1,2}(\R^4,\R^2)$, for some $\varepsilon_0>0$ as $(a_1,a_2)\to (0,0)$ and $a_1\sim a_2$ up to a subsequence, where
$k_1=\frac{\beta-\mu_2}{\beta^2-\mu_1\mu_2}$ and $k_2=\frac{\beta-\mu_1}{\beta^2-\mu_1\mu_2}$.
\end{Lem}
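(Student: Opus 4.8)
\textbf{Proof proposal for Lemma \ref{lem5.6}.}

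The plan is to adapt, essentially verbatim, the argument used in Lemma \ref{lem4.4}, replacing the subcritical exponent $p\in(2,3)$ there by $p=3$ and using the fact (proved in Lemma \ref{lem5.3}, Lemma \ref{lem5.2} and Corollary \ref{cor5.1}) that for $\alpha_ia_i<\frac{3}{2|\mathcal{C}_3|^3}$ and $\beta>\beta_0$ a ground state $(u^-_{a_1},v^-_{a_2})\in\mathcal{P}^-_{a_1,a_2}$ exists with $0<m^-(a_1,a_2)<\frac{k_1+k_2}{4}\mathcal{S}^2$. First I would establish the energy limit: using $P_{a_1,a_2}(u^-_{a_1},v^-_{a_2})=0$ together with $I(u^-_{a_1},v^-_{a_2})=\frac14(\|\nabla u^-_{a_1}\|_2^2+\|\nabla v^-_{a_2}\|_2^2)-\frac{\alpha_1}{3}(1-\frac{3\gamma_3}{4})\|u^-_{a_1}\|_3^3-\frac{\alpha_2}{3}(1-\frac{3\gamma_3}{4})\|v^-_{a_2}\|_3^3$ (here $\gamma_3=\frac23$, so $3\gamma_3=2$ and the cubic terms actually drop out, which simplifies matters: $I=\frac14(\|\nabla u^-_{a_1}\|_2^2+\|\nabla v^-_{a_2}\|_2^2)$ on $\mathcal{P}_{a_1,a_2}$), I get that $\{(u^-_{a_1},v^-_{a_2})\}$ is bounded in $H^1$ from the upper bound in Lemma \ref{lem5.2}. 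By the Gagliardo--Nirenberg inequality \eqref{b2} with $p=3$ and $\|u^-_{a_1}\|_2=a_1$, $\|u^-_{a_1}\|_3^3\le|\mathcal{C}_3|^3 a_1\|\nabla u^-_{a_1}\|_2^2\to0$ as $a_1\to0$, hence the cubic terms are negligible and $P_{a_1,a_2}=0$ forces $\|\nabla u^-_{a_1}\|_2^2+\|\nabla v^-_{a_2}\|_2^2=\mu_1\|u^-_{a_1}\|_4^4+\mu_2\|v^-_{a_2}\|_4^4+2\beta\|u^-_{a_1}v^-_{a_2}\|_2^2+o(1)$. Combining this with $\mathcal{S}_{\mu_1,\mu_2,\beta}=\sqrt{k_1+k_2}\,\mathcal{S}$ and \eqref{f4}, a dichotomy argument (exactly as in Lemma \ref{lem4.4}, using that $(u^-_{a_1},v^-_{a_2})\in\mathcal{P}^-_{a_1,a_2}$ rules out the vanishing alternative) gives $\|\nabla u^-_{a_1}\|_2^2+\|\nabla v^-_{a_2}\|_2^2\to(k_1+k_2)\mathcal{S}^2$ and $m^-(a_1,a_2)\to\frac{k_1+k_2}{4}\mathcal{S}^2$; one also needs the lower bound which follows since $m^-(a_1,0)\to\frac{\mu_1}{4}\mathcal{S}^2$, $m^-(0,a_2)\to\frac{\mu_2}{4}\mathcal{S}^2$ by \cite{JTT}, and the sandwich $\frac{k_1+k_2}{4}\mathcal{S}^2+m^+(a_1,a_2)$ is replaced here by directly invoking Lemma \ref{lem5.2}.

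Next, for the concentration statement, I would repeat the scaling analysis of Lemma \ref{lem4.4}: since $(u^-_{a_1},v^-_{a_2})$ is (up to subsequence) a minimizing sequence for the critical quotient \eqref{f4} and is radial, by \cite[Theorem 1.41]{WM} or \cite[Lemma 3.5]{LL} there is $\sigma_1=\sigma_1(a_1,a_2)$ with $\sigma_1 u^-_{a_1}(\sigma_1\cdot)\to\sqrt{k_1}U_{\varepsilon_0}$, $\sigma_1 v^-_{a_2}(\sigma_1\cdot)\to\sqrt{k_2}U_{\varepsilon_0}$ in $D^{1,2}$. The rescaled pair $(u_1,v_1)$ solves a system of the form \eqref{z2} with $\lambda_{1}\sigma_1^2,\lambda_{2}\sigma_1^2$ and $\alpha_i\sigma_1^{4-p}=\alpha_i\sigma_1$ coefficients; the Pohozaev identity gives $\lambda_1\sigma_1^2\|u_1\|_2^2+\lambda_2\sigma_1^2\|v_1\|_2^2=(1-\gamma_3)(\alpha_1\sigma_1\|u_1\|_3^3+\alpha_2\sigma_1\|v_1\|_3^3)$, and since $1-\gamma_3=\frac13>0$ while the left side carries an extra factor $\sigma_1$ relative to... actually here one must be slightly careful: with $p=3$, $4-p=1$, so $\lambda_i\sigma_1^2=o(\sigma_1^{4-p})=o(\sigma_1)$, i.e. $\lambda_i\sigma_1\to0$; this is the analogue of the step ``$\lambda_1\sigma_1^2=o(\sigma_1^{4-p})$'' and it still works for $p=3$. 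Then the Pohozaev/decay machinery of Lemma \ref{lem4.4} (the functions $H_1,H_2$, $\Psi$, the comparison with $Z(r)=1/(1+\frac d8 r^2)$, yielding $u_1(r),v_1(r)\lesssim(1+r^2)^{-1}$ uniformly) applies word for word, giving $\|u_1\|_3^3\sim1$, $\|v_1\|_3^3\sim1$ and $\lambda_i a_i^2\sim\sigma_1^{4-p}=\sigma_1$. Finally, choosing $\varepsilon_{a_1,a_2}$ via test functions $V_\varepsilon,\overline V_\varepsilon$ of the type in Lemma \ref{lem4.4} so that $\|u^-_{a_1}\|_3^3+\|v^-_{a_2}\|_3^3\gtrsim\varepsilon_{a_1,a_2}^{4-p}=\varepsilon_{a_1,a_2}$, and running the two-sided estimate $\sigma_1\gtrsim\varepsilon_{a_1,a_2}$ and $\sigma_1\lesssim\varepsilon_{a_1,a_2}$ exactly as there, one concludes $(\varepsilon_{a_1,a_2}u^-_{a_1}(\varepsilon_{a_1,a_2}\cdot),\varepsilon_{a_1,a_2}v^-_{a_2}(\varepsilon_{a_1,a_2}\cdot))\to(\sqrt{k_1}U_{\varepsilon_0},\sqrt{k_2}U_{\varepsilon_0})$ in $D^{1,2}(\R^4,\R^2)$.

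I expect the main obstacle to be verifying that none of the estimates in the long chain of Lemma \ref{lem4.4} secretly used $p<3$ in a way that fails at $p=3$ — in particular the step where $p\gamma_p<2$ was invoked (with $p=3$ one has $p\gamma_p=2$, so strict inequality degenerates to equality, and the cubic term contributes to $\Psi_{(u,v)}$ at the same homogeneity $e^{2s}$ as the gradient term rather than at a lower one). This is exactly the phenomenon that makes $p=3$ the mass-critical case and is why the bound $\alpha_ia_i<\frac{3}{2|\mathcal{C}_3|^3}$ (rather than $T(a_1,a_2)\le\gamma_1$) is the natural hypothesis; I would lean on Lemma \ref{lem5.1} and Corollary \ref{cor5.1} to supply the structural facts about $\mathcal{P}_{a_1,a_2}=\mathcal{P}^-_{a_1,a_2}$ that replace Lemma \ref{lem3.2}, and otherwise the Pohozaev-identity bookkeeping goes through because $4-p=1>0$ still makes $\lambda_i\sigma_1^2$ lower order. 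A secondary point to check is that the uniform pointwise bound $u_1,v_1\lesssim(1+r^2)^{-1}$ still yields $\|u_1\|_3^3\lesssim\int_0^\infty r^3(1+r^2)^{-3}\,dr<\infty$, which it does since $3-6+3=0$ gives a logarithmically... no, $\int_0^\infty r^3(1+r^2)^{-3}dr$ converges (integrand $\sim r^{-3}$ at infinity), so this is fine.
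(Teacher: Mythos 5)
Your overall strategy coincides with the paper's: the published proof of this lemma literally says ``similar to Lemma \ref{lem4.4}, focus on the differences,'' and the two differences it isolates are exactly the ones you identify (boundedness of the ground states, and the non-vanishing of $l=\lim(\|\nabla u^-_{a_1}\|_2^2+\|\nabla v^-_{a_2}\|_2^2)$), both handled via the Gagliardo--Nirenberg inequality with the coefficient $1-\tfrac{2\alpha_ia_i|\mathcal{C}_3|^3}{3}>0$. However, two of your specific claims are wrong as stated. First, the cubic terms do \emph{not} drop out of the gradient representation of $I$ on $\mathcal{P}_{a_1,a_2}$: for $p=3$ one has $1-\tfrac{p\gamma_p}{4}=\tfrac12$, so $I=\tfrac14(\|\nabla u\|_2^2+\|\nabla v\|_2^2)-\tfrac{\alpha_1}{6}\|u\|_3^3-\tfrac{\alpha_2}{6}\|v\|_3^3$ on $\mathcal{P}_{a_1,a_2}$, not $\tfrac14(\|\nabla u\|_2^2+\|\nabla v\|_2^2)$. (The representation in which the cubic terms genuinely cancel is $I=\tfrac14(\mu_1\|u\|_4^4+\mu_2\|v\|_4^4+2\beta\|uv\|_2^2)$, cf.\ \eqref{n3}.) This slip is harmless only because you also invoke $\|u\|_3^3\le|\mathcal{C}_3|^3a_1\|\nabla u\|_2^2$, which makes the cubic terms $o(\|\nabla u\|_2^2)$ as $a_1\to0$; that is the estimate actually needed for boundedness, and it is what the paper uses.

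Second, and more substantively, the non-vanishing step does \emph{not} go through ``exactly as in Lemma \ref{lem4.4}.'' There, the $\mathcal{P}^-$ inequality is combined with $P_{a_1,a_2}=0$ to eliminate the $\|u\|_p^p$ terms, yielding $(2-p\gamma_p)(\|\nabla u\|_2^2+\|\nabla v\|_2^2)<(4-p\gamma_p)\big[\mu_1\|u\|_4^4+\mu_2\|v\|_4^4+2\beta\|uv\|_2^2\big]$; at $p=3$ the factor $2-p\gamma_p$ vanishes and this reduces to the vacuous statement $0<2[\cdots]$. You correctly flag this degeneracy in your last paragraph as ``the main obstacle,'' but you leave it unresolved. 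The replacement argument is to keep the cubic terms and bound them by Gagliardo--Nirenberg: the $\mathcal{P}^-$ condition gives $\|\nabla u\|_2^2+\|\nabla v\|_2^2<2[\text{quartic}]+\tfrac{2\alpha_1}{3}\|u\|_3^3+\tfrac{2\alpha_2}{3}\|v\|_3^3$, whence $\big(1-\tfrac{2\alpha_1a_1|\mathcal{C}_3|^3}{3}\big)\|\nabla u\|_2^2+\big(1-\tfrac{2\alpha_2a_2|\mathcal{C}_3|^3}{3}\big)\|\nabla v\|_2^2<\tfrac{2}{\mathcal{S}^2_{\mu_1,\mu_2,\beta}}\big(\|\nabla u\|_2^2+\|\nabla v\|_2^2\big)^2$, which is incompatible with $l=0$ under the hypothesis $\alpha_ia_i<\tfrac{3}{2|\mathcal{C}_3|^3}$. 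Once this is supplied, the remainder of your proposal (the scaling bookkeeping with $4-p=1>0$, the pointwise decay machinery, and the two-sided comparison $\sigma_1\sim\varepsilon_{a_1,a_2}$) matches the paper's argument and is sound.
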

\begin{proof}
The proof is similar to that of Lemma \ref{lem4.4}, thus we focus only on the differences. By $P_{a_1,a_2}(u^-_{a_1},v^-_{a_2})=0$, we have
\begin{equation*}
\begin{aligned}
&\frac{k_1+k_2}{4}\mathcal{S}^2\\
&>I(u^-_{a_1},v^-_{a_2})\\
&=\frac{1}{4}\big(\|\nabla u^-_{a_1}\|^2_2+\|\nabla v^-_{a_2}\|^2_2-\frac{2}{3}\alpha_1\|u^-_{a_1}\|^3_3-\frac{2}{3}\alpha_2\|v^-_{a_2}\|^3_3  \big)\\
&\ge \frac{1}{4}\big(1-\frac{2\alpha_1a_1}{3}|\mathcal{C}_3|^3\big)\|\nabla u^-_{a_1}\|^2_2+\frac{1}{4}\big(1-\frac{2\alpha_2a_2}{3}|\mathcal{C}_3|^3\big)\|\nabla v^-_{a_2}\|^2_2,
\end{aligned}
\end{equation*}
then $\{(u^-_{a_1},v^-_{a_2})\}$ is bounded in $H^1(\R^4,\R^2)$. We claim that $l\neq 0$. Indeed, if $l=0$, by $(u^-_{a_1},v^-_{a_2})\in \mathcal{P}^-_{a_1,a_2}$ and
\begin{align*}
&\|\nabla u^-_{a_1}\|^2_2+\|\nabla v^-_{a_2}\|^2_2\\
&<2\big[
\mu_1\|u^-_{a_1}\|^4_4+\mu_2\|v^-_{a_2}\|^4_4+2\beta\|u^-_{a_1}v^-_{a_2}\|^2_2 \big]+\frac{2\alpha_1}{3}\|u^-_{a_1}\|^3_3+\frac{2\alpha_2}{3}\|v^-_{a_2}\|^3_3,
\end{align*}
we obtain a contradiction:
\begin{align*}
&\big(1-\frac{2\alpha_1a_1}{3}|\mathcal{C}_3|^3\big)\|\nabla u^-_{a_1}\|^2_2+\big(1-\frac{2\alpha_2a_2}{3}|\mathcal{C}_3|^3\big)\|\nabla v^-_{a_2}\|^2_2\\
&< \frac{2}{\mathcal{S}^2_{\mu_1,\mu_2,\beta}}\big[\|\nabla u^-_{a_1}\|^2_2+\|\nabla v^-_{a_2}\|^2_2\big]^2.
\end{align*}
Therefore, $l>0$ and then $I(u^-_{a_1},v^-_{a_2})\to \frac{k_1+k_2}{4}\mathcal{S}^2$ as $(a_1,a_2)\to (0,0)$. The rest of the proof runs as before.
\end{proof}

\noindent \textbf{Proof of Theorem 1.4.}
The proof is finished when we combine Lemmas \ref{lem5.3} and   \ref{lem5.4}.
\qed



\section{Mass-supercritical perturbation}

In this section, we fix $3<p<4$, $\alpha_i,\mu_i>0(i=1,2)$ and $\beta>0$. We recall the decomposition of $\mathcal{ P}_{a_1,a_2}=\mathcal{ P}_{a_1,a_2}^+\cup \mathcal{ P}_{a_1,a_2}^0\cup \mathcal{ P}_{a_1,a_2}^-$ (see \eqref{c41}). From the definition of $\mathcal{ P}_{a_1,a_2}^0$, this is $\Psi'_{(u,v)}(0)=\Psi''_{(u,v)}(0)=0$, we have
\begin{equation*}
-(p\gamma_p-2)\gamma_p\big[\alpha_1\|u\|^p_p+\alpha_2\|v\|^p_p\big]=2\big[\mu_1\|u\|^4_4+\mu_2\|v\|^4_4+2\beta\|uv\|^2_2\big],
\end{equation*}
which is not possible. Then $\mathcal{ P}_{a_1,a_2}^0=\emptyset$. By Lemma \ref{lem3.1}, we can also check that $\mathcal{ P}_{a_1,a_2}$ is a smooth manifold of codimension 1 in $S(a_1)\times S(a_2)$.

\begin{Lem}\label{lem6.1}
For all $(u,v)\in S(a_1)\times S(a_2)$, there exists $t_{(u,v)}$ such that $t_{(u,v)}\star(u,v)\in\mathcal{P}_{a_1,a_2}$. $t_{(u,v)}$ is the unique critical point of the function $\Psi_{(u,v)}$ and is a strict maximum point at positive level.
Moreover:
\vskip1mm
\noindent $(1)$ $\mathcal{ P}_{a_1,a_2}=\mathcal{ P}_{a_1,a_2}^-$ and $P_{a_1,a_2}(u,v)<0$ iff $t_{(u,v)}<0$.\\
\noindent $(2)$ $\Psi_{(u,v)}$ is strictly increasing in $(-\infty,t_{(u,v)})$. \\
\noindent $(3)$ The map $(u,v) \mapsto t_{(u,v)} \in \mathbb{R}$ is of class $C^1$.
\end{Lem}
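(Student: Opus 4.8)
The plan is to adapt the fiber-map analysis already used for the scalar equation in Section~2 and for $p=3$ in Lemma~\ref{lem5.1}, now exploiting the sign condition $\alpha_i,\mu_i,\beta>0$ together with $3<p<4$. Fix $(u,v)\in S(a_1)\times S(a_2)$ and abbreviate $A:=\|\nabla u\|_2^2+\|\nabla v\|_2^2$, $B:=\mu_1\|u\|_4^4+\mu_2\|v\|_4^4+2\beta\|uv\|_2^2$ and $C:=\alpha_1\|u\|_p^p+\alpha_2\|v\|_p^p$, all strictly positive. By \eqref{c4} and \eqref{c2}, $\Psi_{(u,v)}(s)=\tfrac12 e^{2s}A-\tfrac14 e^{4s}B-\tfrac1p e^{p\gamma_p s}C$ and $\Psi'_{(u,v)}(s)=e^{2s}\big(A-e^{2s}B-\gamma_p e^{(p\gamma_p-2)s}C\big)=:e^{2s}g(s)$. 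The decisive point is that for $3<p<4$ one has $p\gamma_p-2=2(p-3)>0$, so both $e^{2s}B$ and $e^{(p\gamma_p-2)s}C$ are strictly increasing; hence $g$ is strictly decreasing on $\R$ with $g(-\infty)=A>0$ and $g(+\infty)=-\infty$. Thus $g$ has a unique zero $t_{(u,v)}$, with $g>0$ on $(-\infty,t_{(u,v)})$ and $g<0$ on $(t_{(u,v)},+\infty)$. Consequently $t_{(u,v)}$ is the unique critical point of $\Psi_{(u,v)}$, the map $\Psi_{(u,v)}$ is strictly increasing on $(-\infty,t_{(u,v)})$ and strictly decreasing on $(t_{(u,v)},+\infty)$, and since $\Psi_{(u,v)}(s)=\tfrac12 e^{2s}A\big(1+o(1)\big)>0$ as $s\to-\infty$ while $\Psi_{(u,v)}(s)\to-\infty$ as $s\to+\infty$ (because $2<p\gamma_p<4$), the point $t_{(u,v)}$ is a strict global maximum at positive level. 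Finally $s\star(u,v)\in\mathcal P_{a_1,a_2}$ iff $\Psi'_{(u,v)}(s)=0$ by \eqref{c2}, so $t_{(u,v)}$ is the unique such $s$.

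For the remaining items I would argue as follows. If $(u,v)\in\mathcal P_{a_1,a_2}$ then $0$ is the critical point of $\Psi_{(u,v)}$, i.e.\ $t_{(u,v)}=0$, which is a strict maximum, hence $\Psi''_{(u,v)}(0)\le 0$; combined with $\mathcal P^0_{a_1,a_2}=\emptyset$ (established at the start of this section) this gives $\Psi''_{(u,v)}(0)<0$, so $(u,v)\in\mathcal P^-_{a_1,a_2}$, whence $\mathcal P_{a_1,a_2}=\mathcal P^-_{a_1,a_2}$. Since $P_{a_1,a_2}(u,v)=\Psi'_{(u,v)}(0)=g(0)$ and $g$ is strictly decreasing with unique zero $t_{(u,v)}$, we obtain $P_{a_1,a_2}(u,v)<0\Leftrightarrow g(0)<0\Leftrightarrow t_{(u,v)}<0$, which is (1); statement (2) is exactly the monotonicity of $\Psi_{(u,v)}$ recorded above. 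For (3) I would apply the implicit function theorem to the $C^1$ map $(s,(u,v))\mapsto\Psi'_{(u,v)}(s)$ at the point $(t_{(u,v)},(u,v))$, exactly as in Lemma~\ref{lem3.2}, using that $\partial_s\Psi'_{(u,v)}(t_{(u,v)})=\Psi''_{(u,v)}(t_{(u,v)})<0\ne 0$ because $t_{(u,v)}$ is a nondegenerate maximum lying in $\mathcal P^-_{a_1,a_2}$; this yields that $(u,v)\mapsto t_{(u,v)}$ is of class $C^1$.

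I do not expect a genuine obstacle here: the argument reduces to a one-variable calculus study of $g$, and the only place the hypothesis $3<p<4$ is used is the inequality $p\gamma_p-2>0$, which is precisely what turns the ``$\alpha$-term'' into a strictly increasing contribution and makes $\Psi_{(u,v)}$ of pure mountain-pass type, in contrast with the convex--concave geometry of Sections~3--5. The one step demanding a little care is verifying that the unique critical value is positive, which follows from the asymptotics $\Psi_{(u,v)}(-\infty)=0^+$; this in turn guarantees that $\mathcal P_{a_1,a_2}=\mathcal P^-_{a_1,a_2}\neq\emptyset$ and underlies the subsequent minimax characterisation of the ground state.
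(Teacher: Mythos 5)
Your proposal is correct and follows essentially the same route as the paper, which for this lemma simply cites Lemma 6.1 of Soave (\cite{Soave2}); that cited proof is exactly the one-variable analysis of the fiber map you carry out, using $p\gamma_p-2=2(p-3)>0$ to make $g(s)=A-e^{2s}B-\gamma_p e^{(p\gamma_p-2)s}C$ strictly decreasing, together with $\mathcal P^0_{a_1,a_2}=\emptyset$ and the implicit function theorem as in Lemma \ref{lem3.2}. Your write-up supplies the details the paper omits by reference, and all steps check out.
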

\begin{proof}
The proof is the  same as that of  Lemma 6.1 \cite{Soave2}.
\end{proof}

\begin{Lem} \label{cor6.1}
Assume $3<p<4$, $\alpha_i,\mu_i,a_i>0(i=1,2)$ and $\beta>0$. For any $a_1,a_2>0$, then
\begin{equation*}
m^-(a_1,a_2):=\inf_{(u,v)\in \mathcal{ P}_{a_1,a_2}}I(u,v)>0.\\
\end{equation*}
\end{Lem}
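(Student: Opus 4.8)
The plan is to bound $I$ below by a positive constant on the entire Pohozaev set $\mathcal{P}_{a_1,a_2}$. This splits into two steps: a uniform lower bound on $\rho:=[\|\nabla u\|_2^2+\|\nabla v\|_2^2]^{1/2}$ over $\mathcal{P}_{a_1,a_2}$, followed by a sign-favourable rewriting of $I$ using the constraint, which is available precisely because $3<p<4$.

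First I would fix $(u,v)\in\mathcal{P}_{a_1,a_2}$, so that $P_{a_1,a_2}(u,v)=0$ by \eqref{c31}; since $(u,v)\in S(a_1)\times S(a_2)$ we have $\rho>0$. The constraint reads
$$\rho^2=\mu_1\|u\|_4^4+\mu_2\|v\|_4^4+2\beta\|uv\|_2^2+\gamma_p\alpha_1\|u\|_p^p+\gamma_p\alpha_2\|v\|_p^p.$$
I would estimate the first three terms by $\mathcal{S}_{\mu_1,\mu_2,\beta}^{-2}\rho^4$ via \eqref{b4}, and the last two, using the Gagliardo--Nirenberg inequality \eqref{b2} with $\|u\|_2=a_1$ and $\|v\|_2=a_2$, by $C\rho^{p\gamma_p}$ for a constant $C=C(a_1,a_2,\alpha_1,\alpha_2,p)>0$. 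Dividing by $\rho^2$ gives $1\le\mathcal{S}_{\mu_1,\mu_2,\beta}^{-2}\rho^2+C\rho^{p\gamma_p-2}$. The key point is that $p\gamma_p-2=2(p-3)>0$ for $3<p<4$, so both exponents on the right are strictly positive; letting $\rho\to0^+$ would force the right side to $0$, a contradiction. Hence there is $c_0=c_0(a_1,a_2,\mu_i,\alpha_i,\beta,p)>0$ with $\rho\ge c_0$ on $\mathcal{P}_{a_1,a_2}$.

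Next, using $P_{a_1,a_2}(u,v)=0$, I would compute $I(u,v)=I(u,v)-\tfrac{1}{p\gamma_p}P_{a_1,a_2}(u,v)$; the $\|u\|_p^p$ and $\|v\|_p^p$ contributions cancel exactly, leaving
$$I(u,v)=\Big(\tfrac12-\tfrac{1}{p\gamma_p}\Big)\big(\|\nabla u\|_2^2+\|\nabla v\|_2^2\big)+\Big(\tfrac{1}{p\gamma_p}-\tfrac14\Big)\big(\mu_1\|u\|_4^4+\mu_2\|v\|_4^4+2\beta\|uv\|_2^2\big).$$
Since $p\gamma_p=2(p-2)\in(2,4)$ for $3<p<4$, both coefficients $\tfrac12-\tfrac{1}{p\gamma_p}$ and $\tfrac{1}{p\gamma_p}-\tfrac14$ are strictly positive, so $I(u,v)\ge(\tfrac12-\tfrac{1}{p\gamma_p})\rho^2\ge(\tfrac12-\tfrac{1}{p\gamma_p})c_0^2>0$. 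Taking the infimum over $\mathcal{P}_{a_1,a_2}$ yields $m^-(a_1,a_2)>0$.

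The argument is essentially computational, so I do not expect a genuine obstacle; the only delicate point is that the hypothesis $3<p<4$ is used twice, and in both places it is exactly what supplies the needed sign — for $p\le3$ the exponent $p\gamma_p-2$ is no longer positive (so the gradient term cannot be absorbed in the lower bound for $\rho$ without an extra smallness condition, as in the $p=3$ analysis) and the coefficient $\tfrac{1}{p\gamma_p}-\tfrac14$ may vanish or change sign. I would finally remark that the same identity shows $I>0$ on all of $\mathcal{P}_{a_1,a_2}=\mathcal{P}^-_{a_1,a_2}$, consistent with the mountain-pass characterisation of the ground state used later in the section.
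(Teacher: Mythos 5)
Your proposal is correct and follows essentially the same route as the paper: both first derive the uniform lower bound $\|\nabla u\|_2^2+\|\nabla v\|_2^2\ge c_0>0$ on $\mathcal{P}_{a_1,a_2}$ from the Pohozaev constraint together with \eqref{b4}--\eqref{b5} (using $p\gamma_p>2$), and then rewrite $I$ on $\mathcal{P}_{a_1,a_2}$ as a positive combination via the constraint. The only cosmetic difference is the multiplier: you subtract $\tfrac{1}{p\gamma_p}P_{a_1,a_2}$, cancelling the $L^p$ terms and keeping the gradient term (so the lower bound on $\rho$ applies directly), whereas the paper subtracts $\tfrac12 P_{a_1,a_2}$, keeping the $L^4$ and $L^p$ terms with the positive coefficients $\tfrac14$ and $\tfrac{p\gamma_p-2}{2p}$; both computations are valid and use $p>3$ in the same essential way.
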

\begin{proof}
If $(u,v)\in \mathcal{P}_{a_1,a_2}$, then by the Gagliardo-Nirenberg and the Sobolev inequalities, we have
\begin{equation*}
\begin{aligned}
\|\nabla u\|^2_2+\|\nabla v\|^2_2&=\mu_1\|u\|^4_4+\mu_2\|v\|^4_4+2\beta\|uv\|^2_2+\alpha_1\gamma_p\|u\|^p_p+\alpha_2\gamma_p\|v\|^p_p\\
&\le D_1[\|\nabla u\|^2_2+\|\nabla v\|^2_2]^2+(D_2+D_3)[\|\nabla u\|^2_2+\|\nabla v\|^2_2]^{\frac{p\gamma_p}{2}}.\\
\end{aligned}
\end{equation*}
Thus, from the above inequality and $\|\nabla u\|^2_2+\|\nabla v\|^2_2\neq 0$ (since $(u,v)\in S(a_1)\times S(a_2)$), we get that
\begin{equation*}
\inf_{(u,v)\in \mathcal{P}_{a_1,a_2}}\|\nabla u\|^2_2+\|\nabla v\|^2_2\ge C>0.
\end{equation*}
Therefore, from $P_{a_1,a_2}(u,v)=0$, we have
\begin{equation*}
\begin{aligned}
m^-(a_1,a_2)&=\inf_{(u,v)\in \mathcal{P}_{a_1,a_2}}I(u,v)\\
&=\inf_{(u,v)\in \mathcal{P}_{a_1,a_2}}\frac{1}{4}\big[\mu_1\|u\|^4_4+\mu_2\|v\|^4_4+2\beta\|uv\|^2_2\big]\\
&\qquad \qquad\qquad+\frac{p\gamma_p-2}{2p}\big[\alpha_1\|u\|^p_p+\alpha_2\|v\|^p_p\big]\\
&\ge C\inf_{(u,v)\in \mathcal{P}_{a_1,a_2}}\|\nabla u\|^2_2+\|\nabla v\|^2_2>0.
\end{aligned}
\end{equation*}
\end{proof}

\begin{Lem}\label{lem6.2}
Let $3<p<4$, $\mu_i,\alpha_i,a_i>0(i=1,2)$ and $\beta\in \big(0,\min\{\mu_1,\mu_2\}\big)\cup\big(\max\{\mu_1,\mu_2\},\infty\big)$. For any $a_1,a_2>0$, then
$$0<m^-(a_1,a_2)<\frac{k_1+k_2}{4}\mathcal{S}^2,$$
where $k_1=\frac{\beta-\mu_2}{\beta^2-\mu_1\mu_2}$ and $k_2=\frac{\beta-\mu_1}{\beta^2-\mu_1\mu_2}$.
\end{Lem}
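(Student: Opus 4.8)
The plan is to produce a test pair whose associated mountain‑pass energy on $\mathcal P^-_{a_1,a_2}$ lies strictly below the critical threshold $\frac{k_1+k_2}{4}\mathcal S^2$, mimicking the construction in Lemma \ref{lem5.2} but now exploiting that $3<p<4$ makes the perturbation $L^2$‑supercritical. Start from the Aubin–Talanti bubble truncated by a cut‑off, $W_\varepsilon=\xi U_\varepsilon$, with the standard expansions $\|\nabla W_\varepsilon\|_2^2=\mathcal S^2+O(\varepsilon^2)$, $\|W_\varepsilon\|_4^4=\mathcal S^2+O(\varepsilon^4)$, $\|W_\varepsilon\|_p^p=O(\varepsilon^{4-p})$ and $\|W_\varepsilon\|_2^2=O(\varepsilon^2)$ (note for $3<p<4$ the $L^2$‑norm is genuinely $O(\varepsilon^2)$, no log). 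Set $\big(\overline W_\varepsilon,\overline V_\varepsilon\big)=\big(a_1 W_\varepsilon/\|W_\varepsilon\|_2,\ a_2 W_\varepsilon/\|W_\varepsilon\|_2\big)\in S(a_1)\times S(a_2)$, and use Lemma \ref{lem6.1} to pick the unique $\tau_\varepsilon\in\R$ with $\tau_\varepsilon\star(\overline W_\varepsilon,\overline V_\varepsilon)\in\mathcal P^-_{a_1,a_2}$, so that $m^-(a_1,a_2)\le I\big(\tau_\varepsilon\star(\overline W_\varepsilon,\overline V_\varepsilon)\big)=\max_{t\in\R}I\big(t\star(\overline W_\varepsilon,\overline V_\varepsilon)\big)$.

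Next I would estimate this max. Writing $I(t\star(\overline W_\varepsilon,\overline V_\varepsilon))$ explicitly, drop the (negative) perturbation terms $-\frac{\alpha_i}{p}e^{p\gamma_p t}\|\cdot\|_p^p$ to get an upper bound by the pure Sobolev‑critical functional $t\mapsto \frac{e^{2t}}{2}\big(\|\nabla\overline W_\varepsilon\|_2^2+\|\nabla\overline V_\varepsilon\|_2^2\big)-\frac{e^{4t}}{4}\big(\mu_1\|\overline W_\varepsilon\|_4^4+\mu_2\|\overline V_\varepsilon\|_4^4+2\beta\|\overline W_\varepsilon\overline V_\varepsilon\|_2^2\big)$, whose maximum over $t$ is $\frac14\cdot\frac{(\|\nabla\overline W_\varepsilon\|_2^2+\|\nabla\overline V_\varepsilon\|_2^2)^2}{\mu_1\|\overline W_\varepsilon\|_4^4+\mu_2\|\overline V_\varepsilon\|_4^4+2\beta\|\overline W_\varepsilon\overline V_\varepsilon\|_2^2}$. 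Using $\overline W_\varepsilon=\lambda_1 W_\varepsilon$, $\overline V_\varepsilon=\lambda_2 W_\varepsilon$ with $\lambda_i=a_i/\|W_\varepsilon\|_2$, this quotient equals $\frac14\cdot\frac{(\lambda_1^2+\lambda_2^2)^2\|\nabla W_\varepsilon\|_2^4}{(\mu_1\lambda_1^4+\mu_2\lambda_2^4+2\beta\lambda_1^2\lambda_2^2)\|W_\varepsilon\|_4^4}$, and after maximising the $\lambda_i$‑ratio one recovers $\frac14\cdot\frac{\|\nabla W_\varepsilon\|_2^4}{(\mathcal S_{\mu_1,\mu_2,\beta})^{2}\|W_\varepsilon\|_4^2}$-type expression; by the definition of $\mathcal S_{\mu_1,\mu_2,\beta}$ and the identity $\mathcal S_{\mu_1,\mu_2,\beta}=\sqrt{k_1+k_2}\,\mathcal S$ recorded after Lemma \ref{lem2.2}, this is $\le \frac{k_1+k_2}{4}\mathcal S^2+O(\varepsilon^2)$. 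So far the bound is only $\le \frac{k_1+k_2}{4}\mathcal S^2+O(\varepsilon^2)$, not strict.

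The gain of a strictly negative correction must come from the perturbation terms I discarded, and here the sign of $3<p<4$ is essential. Since $\tau_\varepsilon$ stays bounded (one checks $e^{\tau_\varepsilon}\sim1$ from the Pohozaev relation for $\tau_\varepsilon\star(\overline W_\varepsilon,\overline V_\varepsilon)$, as $\|\overline W_\varepsilon\|_4^4\sim\mathcal S^2$ and the perturbation is lower order), the term $-\frac{\alpha_1}{p}e^{p\gamma_p\tau_\varepsilon}\|\overline W_\varepsilon\|_p^p-\frac{\alpha_2}{p}e^{p\gamma_p\tau_\varepsilon}\|\overline V_\varepsilon\|_p^p$ contributes $\sim -C\|W_\varepsilon\|_p^p/\|W_\varepsilon\|_2^p\sim -C\varepsilon^{4-p}/\varepsilon^{p}\cdot(\text{const})$. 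I need to be careful: $\|\overline W_\varepsilon\|_p^p=(a_1/\|W_\varepsilon\|_2)^p\|W_\varepsilon\|_p^p\sim \varepsilon^{-p}\cdot\varepsilon^{4-p}=\varepsilon^{4-2p}$, which for $p>3$ blows up faster than the $O(\varepsilon^2)$ error — this is exactly the mechanism that forces a strict inequality. So for $\varepsilon$ small, $-\frac{\alpha_i}{p}\|\overline W_\varepsilon\|_p^p$ dominates the $+O(\varepsilon^2)$ remainder and one concludes $\max_{t}I(t\star(\overline W_\varepsilon,\overline V_\varepsilon))<\frac{k_1+k_2}{4}\mathcal S^2$; combined with Lemma \ref{cor6.1} this yields $0<m^-(a_1,a_2)<\frac{k_1+k_2}{4}\mathcal S^2$.

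The delicate point — and the step I expect to be the main obstacle — is controlling the coupling term $2\beta\|\overline W_\varepsilon\overline V_\varepsilon\|_2^2$ simultaneously with the maximisation over $t$ and over the split $\lambda_1,\lambda_2$, so that the reduction to $\mathcal S_{\mu_1,\mu_2,\beta}$ is genuinely the optimal bound rather than a lossy one; equivalently, one must verify that choosing $\lambda_1:\lambda_2=\sqrt{k_1}:\sqrt{k_2}$ (consistent with $a_1\sim a_2$ in the regime of interest, or by a direct optimisation for general $a_1,a_2$) is what realises $\mathcal S_{\mu_1,\mu_2,\beta}$, and that the error terms $O(\varepsilon^2)$ in \eqref{af1}–type expansions survive this optimisation uniformly. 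Once that algebra is pinned down, the strict inequality is immediate from the $\varepsilon^{4-2p}\to+\infty$ blow‑up of the perturbation contribution. I would also double‑check the boundedness of $\tau_\varepsilon$ away from $\pm\infty$ via Lemma \ref{lem6.1}, since the whole estimate hinges on $e^{p\gamma_p\tau_\varepsilon}$ being bounded below.
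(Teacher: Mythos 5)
Your skeleton is exactly the paper's: the paper proves this lemma by saying it is "similar to Lemma \ref{lem5.2}", i.e. normalize the truncated bubble to $S(a_1)\times S(a_2)$, project onto $\mathcal P^-_{a_1,a_2}$, bound the critical part of $\max_t I$ by $\frac{k_1+k_2}{4}\mathcal S^2+O(\varepsilon^2)$ via the free maximization over $(s_1,s_2)$ (which, incidentally, disposes of the "delicate point" you flag at the end: relaxing the constrained ratio $s_1/s_2=a_1/a_2$ to an unconstrained max can only increase the upper bound, so no optimal splitting of $a_1:a_2$ is needed), and then show the discarded subcritical term beats the $O(\varepsilon^2)$ error. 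So the strategy and the mechanism are right.

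However, your quantitative asymptotics are wrong in a way that makes the written argument internally inconsistent. First, in dimension $4$ the truncated bubble satisfies $\|W_\varepsilon\|_2^2=O(\varepsilon^2|\ln\varepsilon|)$ for every $p$ (see \eqref{af11}); the log does not disappear for $3<p<4$. Second, $e^{\tau_\varepsilon}\sim 1$ is false: since $\overline W_\varepsilon=(a_1/\|W_\varepsilon\|_2)W_\varepsilon$, one has $\|\nabla\overline W_\varepsilon\|_2^2\sim\varepsilon^{-2}|\ln\varepsilon|^{-1}$ and $\|\overline W_\varepsilon\|_4^4\sim\varepsilon^{-4}|\ln\varepsilon|^{-2}$ (you wrote $\|\overline W_\varepsilon\|_4^4\sim\mathcal S^2$, confusing $\overline W_\varepsilon$ with $W_\varepsilon$), so the Pohozaev balance forces $e^{\tau_\varepsilon}\sim\|W_\varepsilon\|_2\sim\varepsilon|\ln\varepsilon|^{1/2}\to 0$. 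Third, and consequently, the perturbation contribution is not $\sim-C\varepsilon^{4-2p}\to-\infty$; if it were, you would get $\max_tI\to-\infty$, contradicting the lower bound $m^-(a_1,a_2)>0$ from Lemma \ref{cor6.1} that you yourself invoke. The correct computation is
\begin{equation*}
e^{p\gamma_p\tau_\varepsilon}\|\overline W_\varepsilon\|_p^p
=\Big(\tfrac{s_1\|W_\varepsilon\|_2}{a_1}\Big)^{2(p-2)}\Big(\tfrac{a_1}{\|W_\varepsilon\|_2}\Big)^{p}\|W_\varepsilon\|_p^p
= s_1^{2(p-2)}a_1^{4-p}\,\|W_\varepsilon\|_2^{p-4}\,\|W_\varepsilon\|_p^p\sim|\ln\varepsilon|^{\frac{p-4}{2}},
\end{equation*}
with $s_1=e^{\tau_\varepsilon}a_1/\|W_\varepsilon\|_2$ bounded above and below. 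Since $\frac{p-4}{2}\in(-\frac12,0)$ for $3<p<4$, this term tends to $0$ but only logarithmically, so it still dominates the $O(\varepsilon^2)$ remainder and the strict inequality survives — exactly as in Lemma \ref{lem5.2}, where the analogous contribution is $C|\ln\varepsilon|^{-1/2}$. With these corrections your proof closes; as written, the step "$e^{\tau_\varepsilon}\sim 1$, hence the perturbation blows up to $-\infty$" is a genuine error that must be replaced by the estimate above.
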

\begin{proof}
The proof is similar to that of Lemma \ref{lem5.2}.
\end{proof}

\begin{Lem}\label{lem6.31}
Let $3<p<4$, $\mu_i,\alpha_i,a_i>0(i=1,2)$, there exists $\beta_1>0$ such that
$$m^-(a_1,a_2)<\min\big\{m^-(a_1,0),m^-(0,a_2)\big\},$$
for any $\beta>\beta_1$, where $m^-(a_1,0)$ and $m^-(0,a_2)$ are defined in \eqref{c12}.
\end{Lem}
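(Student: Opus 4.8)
Here the plan is to adapt, almost verbatim, the argument used for Lemma \ref{lem5.31} in the mass-critical case $p=3$; the only structural change is the behaviour of the scalar fiber maps as $s\to-\infty$, which remains favourable because $p\gamma_p=2(p-2)>2$ for $3<p<4$. First I would recall the known scalar facts (cf. \cite{JTT,WW,WuZ}): for $3<p<4$ and $\mu_i,\alpha_i,a_i>0$ the level $m^-(a_i,0)=\inf_{u\in\mathcal{T}^-_{a_i,p,\mu_i,\alpha_i}}\mathcal{A}_{p,\mu_i,\alpha_i}(u)$ is attained by a positive, radially symmetric, radially decreasing minimizer, say $u^*\in S(a_1)$ and $v^*\in S(a_2)$, and since $u^*\in\mathcal{T}^-_{a_1,p,\mu_1,\alpha_1}$ the scalar fiber map $\overline{\Psi}_{u^*}(s)=\mathcal{A}_{p,\mu_1,\alpha_1}(s\star u^*)$ attains its unique global maximum at $s=0$ and is strictly increasing on $(-\infty,0)$ (similarly for $v^*$). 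Using the splitting $I(s\star(u^*,v^*))=\mathcal{A}_{p,\mu_1,\alpha_1}(s\star u^*)+\mathcal{A}_{p,\mu_2,\alpha_2}(s\star v^*)-\frac{\beta}{2}e^{4s}\int_{\R^4}|u^*|^2|v^*|^2$ and Lemma \ref{lem6.1}, which gives a unique $t_{(u^*,v^*)}$ with $t_{(u^*,v^*)}\star(u^*,v^*)\in\mathcal{P}_{a_1,a_2}$ that is the global maximum point of $\Psi_{(u^*,v^*)}$, one obtains the starting estimate
\[
m^-(a_1,a_2)\le I\big(t_{(u^*,v^*)}\star(u^*,v^*)\big)=\max_{s\in\R}I\big(s\star(u^*,v^*)\big).
\]

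Next I would bound this maximum by splitting $\R=(-\infty,s_0)\cup[s_0,\infty)$ with $s_0\ll-1$ to be chosen. On $(-\infty,s_0)$, dropping the negative coupling term gives $I(s\star(u^*,v^*))\le\overline{\Psi}_{u^*}(s)+\overline{\Psi}_{v^*}(s)$; since $p\gamma_p>2$, each summand $\overline{\Psi}_{u^*}(s)=\frac{e^{2s}}{2}\|\nabla u^*\|_2^2-\frac{\alpha_1e^{p\gamma_ps}}{p}\|u^*\|_p^p-\frac{\mu_1e^{4s}}{4}\|u^*\|_4^4$ tends to $0$ as $s\to-\infty$ and is increasing on $(-\infty,0)$, so $\sup_{s<s_0}I(s\star(u^*,v^*))\le\overline{\Psi}_{u^*}(s_0)+\overline{\Psi}_{v^*}(s_0)\to0$ as $s_0\to-\infty$. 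Hence I can fix $s_0\ll-1$, depending only on $u^*,v^*$ and not on $\beta$, so that $\sup_{s<s_0}I(s\star(u^*,v^*))<\min\{m^-(a_1,0),m^-(0,a_2)\}$.

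On $[s_0,\infty)$, using $\overline{\Psi}_{u^*}(s)\le m^-(a_1,0)$, $\overline{\Psi}_{v^*}(s)\le m^-(0,a_2)$ and $e^{4s}\ge e^{4s_0}$ yields
\[
\sup_{s\ge s_0}I\big(s\star(u^*,v^*)\big)\le m^-(a_1,0)+m^-(0,a_2)-\tfrac{\beta}{2}Ke^{4s_0},\qquad K:=\int_{\R^4}|u^*|^2|v^*|^2>0 .
\]
Since $s_0$ and $K$ do not depend on $\beta$, setting $\beta_1:=\dfrac{2\max\{m^-(a_1,0),m^-(0,a_2)\}}{Ke^{4s_0}}$ guarantees, for every $\beta>\beta_1$, that the right-hand side is strictly below $\min\{m^-(a_1,0),m^-(0,a_2)\}$; combined with the bound on $(-\infty,s_0)$ this forces $\max_{s\in\R}I(s\star(u^*,v^*))<\min\{m^-(a_1,0),m^-(0,a_2)\}$, whence the lemma.

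The main (and really the only) delicate point is the scalar input invoked at the start: that for $3<p<4$ the level $m^-(a_i,0)$ is attained, and that the minimizer lies on $\mathcal{T}^-$, so that its fiber map is strictly increasing on $(-\infty,0)$ and the $s_0$ above can be chosen independently of $\beta$. I would extract this from \cite{JTT,WW,WuZ} (and, if needed, supply a short proof along the lines of the scalar counterpart of Lemma \ref{lem6.1}, noting that $\overline{\Psi}_u'(s)e^{-2s}$ is strictly decreasing in $s$ when $p\gamma_p>2$, which forces a unique critical point of maximum type); the remaining estimates are elementary.
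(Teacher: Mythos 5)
Your proposal is correct and follows essentially the same route as the paper, which simply transplants the argument of Lemma \ref{lem5.31}: take the scalar minimizers $u^*, v^*$, split the fiber parameter at an $s_0\ll -1$ chosen independently of $\beta$ (possible because $\overline{\Psi}_{u^*}(s),\overline{\Psi}_{v^*}(s)\to 0$ as $s\to-\infty$, which still holds for $3<p<4$ since $p\gamma_p>2$), and on $[s_0,\infty)$ use the lower bound $e^{4s}\int_{\R^4}|u^*|^2|v^*|^2\ge Ke^{4s_0}$ on the coupling term to push the level below $\min\{m^-(a_1,0),m^-(0,a_2)\}$ for $\beta$ large. Your explicit choice of $\beta_1$ and the monotonicity remark about the scalar fiber maps are consistent with the paper's (terser) treatment.
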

\begin{proof}
The proof is similar to that of Lemma \ref{lem5.31}.
\end{proof}

By using the same techniques as that in Lemma \ref{lem5.3}, we can prove the following lemma.
\begin{Lem}\label{lem6.3}
Let $3<p<4$, $\mu_i,\alpha_i>0(i=1,2)$, there exists $\beta_1>0$ such that $\beta>\beta_1$ and $\beta\in \big(0,\min\{\mu_1,\mu_2\}\big)\cup\big(\max\{\mu_1,\mu_2\},\infty\big)$.
For any $a_1,a_2>0$, if
\begin{equation*}
0<m^-(a_1,a_2)<\min\big\{\frac{k_1+k_2}{4}\mathcal{S}^2, m^-(a_1,0),m^-(0,a_2)\big\},
\end{equation*}
where $m^-(a_1,0)$ and $m^-(0,a_2)$ are defined in \eqref{c12}, $k_1=\frac{\beta-\mu_2}{\beta^2-\mu_1\mu_2}$ and $k_2=\frac{\beta-\mu_1}{\beta^2-\mu_1\mu_2}$, then $m^-(a_1,a_2)$ can be achieved by some function $(u^-_{a_1},v^-_{a_2})\in S(a_1)\times S(a_2)$ which is real valued, positive, radially symmetric and radially decreasing.
\end{Lem}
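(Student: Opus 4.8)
The plan is to adapt the compactness argument from Lemma \ref{lem3.4} (and its $p=3$ analogue Lemma \ref{lem5.3}) to the mass-supercritical regime. Let $\{(u_n,v_n)\}\subset\mathcal{P}^-_{a_1,a_2}=\mathcal{P}_{a_1,a_2}$ be a minimizing sequence for $m^-(a_1,a_2)$; by Lemma \ref{lem2.14} I may assume each $(u_n,v_n)$ is nonnegative, radially symmetric and radially decreasing. First I would establish boundedness in $H^1(\R^4,\R^2)$: from $P_{a_1,a_2}(u_n,v_n)=0$ one rewrites
\begin{equation*}
I(u_n,v_n)=\frac14\big(\|\nabla u_n\|^2_2+\|\nabla v_n\|^2_2\big)+\frac{p\gamma_p-2}{2p}\big(\alpha_1\|u_n\|^p_p+\alpha_2\|v_n\|^p_p\big),
\end{equation*}
and since $3<p<4$ gives $p\gamma_p-2>0$, both terms are nonnegative, so $m^-(a_1,a_2)+o_n(1)$ bounds the gradient norms from above. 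By the compact embedding $H^1_r(\R^4)\hookrightarrow L^{p_1}(\R^4)$ for $2<p_1<4$, passing to a subsequence $(u_n,v_n)\rightharpoonup(u,v)$ in $H^1$, with strong convergence in $L^p$ and $L^4$ (the latter via the radial compactness too, or via Brézis--Lieb bookkeeping where needed). The Lagrange multiplier rule (as in \eqref{d1}--\eqref{d4}) produces bounded sequences $\lambda_{1,n}\to\lambda_1$, $\lambda_{2,n}\to\lambda_2$, and the weak limit $(u,v)$ solves \eqref{eq1.1} with these multipliers, satisfying the Pohozaev relation \eqref{d3}.

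\textbf{Ruling out vanishing and semitrivial limits.} The heart of the argument is showing $u\neq0$ and $v\neq0$. I would argue by contradiction in the three cases exactly as in Lemma \ref{lem5.3}. If $u=v=0$, then $\|u_n\|_p,\|v_n\|_p\to0$, so $P_{a_1,a_2}(u_n,v_n)=o_n(1)$ forces $\|\nabla u_n\|^2_2+\|\nabla v_n\|^2_2=\mu_1\|u_n\|^4_4+\mu_2\|v_n\|^4_4+2\beta\|u_nv_n\|^2_2+o_n(1)$; combining with \eqref{f4}, either the gradients tend to $0$ (whence $I(u_n,v_n)\to0$, contradicting $m^->0$ from Lemma \ref{cor6.1}) or they tend to $l\ge(k_1+k_2)\mathcal{S}^2$, giving $m^-(a_1,a_2)\ge\frac{k_1+k_2}{4}\mathcal{S}^2$, contradicting Lemma \ref{lem6.2}. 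If $u\neq0$, $v=0$: writing $\bar u_n=u_n-u$, Brézis--Lieb splits the Pohozaev functional, $u$ solves the scalar equation \eqref{a2}, and $m^-(a_1,0)\le m^-(\|u\|_2,0)$ (Lemma \ref{lem2.1}); the same dichotomy on $\|\nabla\bar u_n\|^2_2+\|\nabla v_n\|^2_2$ yields in case $(i)$ the bound $m^-(a_1,a_2)\ge m^-(a_1,0)$, contradicting Lemma \ref{lem6.31}, and in case $(ii)$ the bound $m^-(a_1,a_2)\ge\frac{k_1+k_2}{4}\mathcal{S}^2$ (using $m^-(a_1,0)\ge0$), contradicting Lemma \ref{lem6.2}. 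The case $u=0,v\neq0$ is symmetric.

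\textbf{Strong convergence and conclusion.} Once $u,v>0$ (maximum principle, \cite[Theorem 2.10]{hanq}), Lemma \ref{lem2.11} gives $\lambda_1,\lambda_2>0$. Combining \eqref{d3} with the limit of the multiplier identity \eqref{d4} yields $\lambda_1\|u\|^2_2+\lambda_2\|v\|^2_2=\lambda_1a^2_1+\lambda_2a^2_2$; since $\|u\|^2_2\le a^2_1$, $\|v\|^2_2\le a^2_2$ and all multipliers are positive, equality forces $\|u\|^2_2=a^2_1$, $\|v\|^2_2=a^2_2$, so $(u,v)\in\mathcal{P}_{a_1,a_2}$. Setting $(\bar u_n,\bar v_n)=(u_n-u,v_n-v)$, a final Brézis--Lieb decomposition of $P_{a_1,a_2}$ and $I$ leaves the dichotomy $\|\nabla\bar u_n\|^2_2+\|\nabla\bar v_n\|^2_2\to0$ or $\to l>0$; case $(ii)$ would give $m^-(a_1,a_2)\ge\frac{k_1+k_2}{4}\mathcal{S}^2+m^+(a_1,a_2)+o_n(1)$ — but here one must be careful since $m^+$ is not defined for $3<p<4$; instead the correct bound is $m^-(a_1,a_2)\ge\frac{k_1+k_2}{4}\mathcal{S}^2+I(u,v)+o_n(1)>m^-(a_1,a_2)$, using $I(u,v)\ge m^-(a_1,a_2)$, a contradiction. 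Hence $(i)$ holds: $(u_n,v_n)\to(u,v)$ strongly in $H^1$, so $I(u,v)=m^-(a_1,a_2)$ is attained. The minimizer is radial by construction, and positivity and radial monotonicity follow from the maximum principle and rearrangement.

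\textbf{Main obstacle.} The delicate point is the energy-splitting dichotomy in the semitrivial cases and the final step: one needs the strict inequality $m^-(a_1,a_2)<\frac{k_1+k_2}{4}\mathcal{S}^2$ (Lemma \ref{lem6.2}, requiring the construction with Aubin--Talenti bubbles and the condition $\beta\in(0,\min\{\mu_i\})\cup(\max\{\mu_i\},\infty)$ so that $J(u_0,v_0)=\frac{k_1+k_2}{4}\mathcal{S}^2$) together with the strict inequality against the scalar levels $m^-(a_i,0)$ (Lemma \ref{lem6.31}, requiring $\beta>\beta_1$). Verifying that these two threshold conditions simultaneously exclude the "escaping bubble" and "semitrivial" losses of compactness — and that no energy is lost to the nontrivial weak limit in case $(ii)$ of the last step — is where the argument genuinely uses the hypotheses of the theorem.
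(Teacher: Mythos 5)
Your overall strategy coincides with the paper's (the paper itself only says ``use the same techniques as Lemma \ref{lem5.3}''), and the case analysis ruling out vanishing and semitrivial limits, the Lagrange-multiplier/Pohozaev argument forcing $\|u\|_2^2=a_1^2$ and $\|v\|_2^2=a_2^2$, and the final Br\'ezis--Lieb dichotomy are all sound. However, your boundedness step contains a genuine error. The identity you write,
\begin{equation*}
I(u_n,v_n)=\tfrac14\big(\|\nabla u_n\|^2_2+\|\nabla v_n\|^2_2\big)+\tfrac{p\gamma_p-2}{2p}\big(\alpha_1\|u_n\|^p_p+\alpha_2\|v_n\|^p_p\big),
\end{equation*}
is false: eliminating the quartic terms via $P_{a_1,a_2}(u_n,v_n)=0$ gives
\begin{equation*}
I(u_n,v_n)=\tfrac14\big(\|\nabla u_n\|^2_2+\|\nabla v_n\|^2_2\big)+\tfrac{p-4}{2p}\big(\alpha_1\|u_n\|^p_p+\alpha_2\|v_n\|^p_p\big),
\end{equation*}
and the coefficient $\tfrac{p-4}{2p}$ is \emph{negative} for $3<p<4$. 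The coefficient $\tfrac{p\gamma_p-2}{2p}$ you quote belongs to the other elimination, $I-\tfrac12P_{a_1,a_2}$, whose leading term is $\tfrac14\big(\mu_1\|u_n\|^4_4+\mu_2\|v_n\|^4_4+2\beta\|u_nv_n\|^2_2\big)$ rather than the gradient (this is the formula appearing in Lemma \ref{cor6.1}). With the correct sign, your conclusion ``both terms are nonnegative, hence the energy bounds the gradients'' does not follow; nor can it be repaired by Gagliardo--Nirenberg, since $\|u\|_p^p\lesssim \|\nabla u\|_2^{p\gamma_p}$ with $p\gamma_p>2$, so in the mass-supercritical regime the energy does not control the kinetic term from above along $\mathcal{P}_{a_1,a_2}$ in this direct way. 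The correct route, used in the paper's Lemma \ref{lem6.5}, is to first deduce from $I-\tfrac12P_{a_1,a_2}$ (where both coefficients are positive) that $\{(u_n,v_n)\}$ is bounded in $L^4\cap L^p$, and only then recover the $H^1$ bound from the constraint $P_{a_1,a_2}(u_n,v_n)=0$ itself.

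A second, more minor point: your parenthetical claim that radial compactness yields strong convergence in $L^4(\R^4)$ is wrong, since $4=2^*$ in dimension four and $H^1_r(\R^4)\hookrightarrow L^4(\R^4)$ is not compact; this failure is exactly why the threshold $\frac{k_1+k_2}{4}\mathcal{S}^2$ of Lemma \ref{lem6.2} is needed. You do not actually rely on this claim (your case analysis handles the critical term through Br\'ezis--Lieb and \eqref{f4}), but it should be removed. With these two corrections your argument matches the paper's intended proof.
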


In the following, we study the asymptotic behavior of the minimizers of $$I(u,v)\big|_{S(a_1)\times S(a_2)} \quad\hbox{in} \quad \mathcal{P}^{-}_{a_1,a_2}$$as $(a_1,a_2)\to (+\infty,+\infty)$ and $a_{1}\sim a_{2}$. By direct calculations, we also have the following corollaries. If $3<p<4$, \eqref{g1} also has a unique positive solution $(\lambda,u_{p,\alpha_1})$ (see \eqref{z6}).

\vskip0.2in

\begin{Cor} \label{cor6.2}
Let $a_1>0$, $\alpha_1>0$ and  $p\in(3,4)$. Then, we have
$$0<I_0(u_{p,\alpha_1})=K_{p,\alpha_1} \cdot a_1^{-\frac{4-p}{p-3}},$$
where $K_{p,\alpha_1}:=\frac{p-3}{4-p}\|w_{p}\|^{\frac{p-2}{p-3}}_2\alpha_1^{\frac{1}{3-p}}>0$.
\end{Cor}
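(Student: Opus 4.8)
The plan is to compute $I_0(u_{p,\alpha_1})$ explicitly using the known form of the minimizer from \eqref{z6} and the Pohozaev/Nehari relations attached to the scalar equation \eqref{g1}, exactly paralleling Corollary \ref{cor4.1} but now in the mass-supercritical regime $p\in(3,4)$, where $\gamma_p=\frac{2(p-2)}{p}>1$ so that $p\gamma_p-2=2(p-3)>0$. First I would recall that from \eqref{z6} the unique positive solution of \eqref{g1} is $u_{p,\alpha_1}=\big(\frac{\lambda}{\alpha_1}\big)^{\frac{1}{p-2}}w_p(\lambda^{1/2}x)$ with $\lambda=\big(\frac{a_1^2}{\|w_p\|_2^2}\alpha_1^{\frac{2}{p-2}}\big)^{\frac{p-2}{6-2p}}$; note that for $p\in(3,4)$ the exponent $\frac{p-2}{6-2p}$ is negative, so $\lambda\to 0$ as $a_1\to\infty$, which is the behavior consistent with the claimed negative power $a_1^{-\frac{4-p}{p-3}}$.

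Next I would express $I_0(u_{p,\alpha_1})=\frac12\|\nabla u_{p,\alpha_1}\|_2^2-\frac{\alpha_1}{p}\|u_{p,\alpha_1}\|_p^p$ in terms of $\lambda$ and the fixed quantities $\|\nabla w_p\|_2^2$, $\|w_p\|_p^p$, $\|w_p\|_2^2$ attached to the normalized equation \eqref{g2}. A scaling computation gives $\|\nabla u_{p,\alpha_1}\|_2^2=\big(\frac{\lambda}{\alpha_1}\big)^{\frac{2}{p-2}}\lambda^{-1}\|\nabla w_p\|_2^2$ (in $\R^4$ the $\nabla$-norm under $u\mapsto u(\lambda^{1/2}x)$ picks up $\lambda^{-1}$) and $\|u_{p,\alpha_1}\|_p^p=\big(\frac{\lambda}{\alpha_1}\big)^{\frac{p}{p-2}}\lambda^{-2}\|w_p\|_p^p$. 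Using the Pohozaev identity for \eqref{g2}, namely $\|\nabla w_p\|_2^2=\gamma_p\|w_p\|_p^p$ together with the Nehari identity $\|\nabla w_p\|_2^2+\|w_p\|_2^2=\|w_p\|_p^p$ (so all three quantities are comparable constants), one can collapse the two terms of $I_0$ into a single monomial in $\lambda$ times a positive constant depending only on $p$ and $\|w_p\|$'s. Since $p\gamma_p>2$ here, the combination $\tfrac12-\tfrac{\gamma_p}{p}\cdot(\text{something})$ comes out with the correct sign to make $I_0(u_{p,\alpha_1})>0$, and substituting the explicit $\lambda$ yields $K_{p,\alpha_1}\cdot a_1^{-\frac{4-p}{p-3}}$ with $K_{p,\alpha_1}=\frac{p-3}{4-p}\|w_p\|_2^{\frac{p-2}{p-3}}\alpha_1^{\frac{1}{3-p}}$. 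I would double-check the exponent bookkeeping: from $\lambda\sim a_1^{\frac{2(p-2)}{6-2p}}=a_1^{-\frac{p-2}{p-3}}$ and the power of $\lambda$ multiplying the constant being $\frac{2}{p-2}\cdot\frac{p-2}{\ \cdot\ }-1=\ldots$ one should land on the net power $-\frac{4-p}{p-3}$ of $a_1$, and the fact that $K_{p,\alpha_1}$ is manifestly positive for $p\in(3,4)$ confirms $I_0>0$.

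The main obstacle, such as it is, is purely the careful exponent arithmetic: tracking the powers of $\lambda$ and $\alpha_1$ through the two scalings and verifying that the coefficient assembling from $\tfrac12\gamma_p-\tfrac{1}{p}\cdot(\text{Nehari factor})$ equals $\frac{p-3}{4-p}$ after using the identities $\|\nabla w_p\|_2^2=\gamma_p\|w_p\|_p^p$ and the $L^2$-relation. There is no analytic difficulty here since the minimizer is known in closed form and uniqueness of $w_p$ is already invoked in the paper; the statement is a direct corollary of \eqref{z6} and the scaling identities, exactly as Corollary \ref{cor4.1} was in the range $p\in(2,3)$, with the sole difference being the sign flip $\gamma_p\gtrless 1$ that moves the energy from negative to positive and flips the sign of the exponent of $a_1$.
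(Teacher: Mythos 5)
Your proposal is correct and coincides with the paper's (unwritten) argument: the paper simply asserts Corollary \ref{cor6.2} ``by direct calculations,'' and the intended calculation is precisely the one you outline — insert the closed-form solution \eqref{z6} into $I_0$, use the scaling identities in $\R^4$ together with the Pohozaev and Nehari relations $\|\nabla w_p\|_2^2=\gamma_p\|w_p\|_p^p$ and $\|w_p\|_p^p=\frac{p}{4-p}\|w_p\|_2^2$ to reduce $I_0(u_{p,\alpha_1})$ to $\frac{p-3}{4-p}\|w_p\|_2^2\,\lambda^{\frac{4-p}{p-2}}\alpha_1^{-\frac{2}{p-2}}$, and substitute $\lambda$. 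The exponent bookkeeping you flag indeed closes up to give $K_{p,\alpha_1}a_1^{-\frac{4-p}{p-3}}$ with the stated positive constant.
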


\vskip0.2in
\begin{Cor} \label{cor6.3}
Let $a_1,a_2>0$, $\alpha_1,\alpha_2>0$ and  $3<p<4$. Then, we have
$$0<m^-(a_1,a_2)<K_{p,\alpha_1}\cdot a^{-\frac{4-p}{p-3}}_1+K_{p,\alpha_2} \cdot a^{-\frac{4-p}{p-3}}_2$$
where $K_{p,\alpha_1}:=\frac{p-3}{4-p}\|w_{p}\|^{\frac{p-2}{p-3}}_2\alpha^{\frac{1}{3-p}}_1>0$ and $K_{p,\alpha_2}:=\frac{p-3}{4-p}\|w_p\|^{\frac{p-2}{p-3}}_2\alpha^{\frac{1}{3-p}}_2>0$.
\end{Cor}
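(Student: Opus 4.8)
\textbf{Proof proposal for Corollary \ref{cor6.3}.}

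The plan is to produce a suitable test pair on $\mathcal{P}^-_{a_1,a_2}$ whose energy is bounded above by the two single-equation levels $I_0(u_{p,\alpha_1})$ and $I_0(u_{p,\alpha_2})$ from Corollary \ref{cor6.2}, dropping the extra (negative-contributing) quartic and coupling terms. First I would take the decoupled minimizers $u_{p,\alpha_1}\in S(a_1)$ and $u_{p,\alpha_2}\in S(a_2)$ of the mass-supercritical functional $I_0$ given explicitly in \eqref{z6}, so that $I_0(u_{p,\alpha_1})=\mathcal{A}_{p,\mu_1,\alpha_1}$-type values computed in Corollary \ref{cor6.2}, namely $I_0(u_{p,\alpha_i})=K_{p,\alpha_i}a_i^{-\frac{4-p}{p-3}}$. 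Then I form the pair $(u_{p,\alpha_1},u_{p,\alpha_2})\in S(a_1)\times S(a_2)$ and use Lemma \ref{lem6.1}: there is a unique $t:=t_{(u_{p,\alpha_1},u_{p,\alpha_2})}$ with $t\star(u_{p,\alpha_1},u_{p,\alpha_2})\in\mathcal{P}_{a_1,a_2}=\mathcal{P}^-_{a_1,a_2}$, and $I$ along the fiber attains its maximum there, so
$$
m^-(a_1,a_2)\le I\big(t\star(u_{p,\alpha_1},u_{p,\alpha_2})\big)=\max_{s\in\R} I\big(s\star(u_{p,\alpha_1},u_{p,\alpha_2})\big).
$$

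Next I would estimate this maximum from above. Writing out $I(s\star(u_{p,\alpha_1},u_{p,\alpha_2}))$ via \eqref{c4}, since $\mu_1,\mu_2,\beta>0$ the quartic and coupling terms $-\tfrac{e^{4s}}{4}(\mu_1\|u_{p,\alpha_1}\|_4^4+\mu_2\|u_{p,\alpha_2}\|_4^4+2\beta\|u_{p,\alpha_1}u_{p,\alpha_2}\|_2^2)$ are nonpositive, so
$$
I\big(s\star(u_{p,\alpha_1},u_{p,\alpha_2})\big)\le \frac{e^{2s}}{2}\big(\|\nabla u_{p,\alpha_1}\|_2^2+\|\nabla u_{p,\alpha_2}\|_2^2\big)-\frac{e^{p\gamma_p s}}{p}\big(\alpha_1\|u_{p,\alpha_1}\|_p^p+\alpha_2\|u_{p,\alpha_2}\|_p^p\big).
$$
The right-hand side is exactly $\overline\Psi_{u_{p,\alpha_1}}(s)+\overline\Psi_{u_{p,\alpha_2}}(s)$ with the $\mu$-terms removed, i.e.\ it equals $I_0(s\star u_{p,\alpha_1})+I_0(s\star u_{p,\alpha_2})$ after identifying $I_0$'s fiber map; since $u_{p,\alpha_i}$ is the unique critical point of $I_0(s\star u_{p,\alpha_i})$ and a strict maximum (this is the $3<p<4$, $\beta=0$ analogue of Lemma \ref{lem6.1} — alternatively one checks directly that each summand is maximized at its own $s$), I get $\max_{s}\le I_0(u_{p,\alpha_1})+I_0(u_{p,\alpha_2})$, and strict inequality holds because the two summands are not simultaneously maximized unless $a_1=a_2$ and $\alpha_1=\alpha_2$, and in any case the dropped quartic/coupling terms are strictly negative at the maximizing $s$. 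Combining with Corollary \ref{cor6.2} gives $m^-(a_1,a_2)<K_{p,\alpha_1}a_1^{-\frac{4-p}{p-3}}+K_{p,\alpha_2}a_2^{-\frac{4-p}{p-3}}$, and the lower bound $m^-(a_1,a_2)>0$ is Lemma \ref{cor6.1}.

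The main obstacle — really the only subtlety — is making the "maximum over the common fiber is at most the sum of the individual maxima" step rigorous: the two one-dimensional functions $s\mapsto I_0(s\star u_{p,\alpha_i})$ have possibly different maximizers $s_i$, so I must either argue $\max_s[f_1(s)+f_2(s)]\le \max_s f_1+\max_s f_2$ (trivially true) and then quantify that the coupling/quartic terms I dropped are bounded away from $0$ at the relevant $s$, or, more cleanly, just use $\max_s[f_1(s)+f_2(s)]\le \max f_1 + \max f_2$ together with $I\le f_1+f_2$ pointwise to get the non-strict bound, and upgrade to strict inequality by noting $\beta\|u_{p,\alpha_1}u_{p,\alpha_2}\|_2^2>0$ forces $I(t\star\cdot)<f_1(t)+f_2(t)\le \max f_1+\max f_2$. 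That last observation is what turns $\le$ into $<$ and completes the proof.
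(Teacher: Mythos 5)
Your argument is correct and supplies precisely the ``direct calculations'' that the paper omits for this corollary: project the pair of single-equation mountain-pass solutions $(u_{p,\alpha_1},u_{p,\alpha_2})$ onto $\mathcal{P}^-_{a_1,a_2}$ via Lemma \ref{lem6.1}, drop the nonpositive quartic and coupling terms, note that each fiber map $s\mapsto I_0(s\star u_{p,\alpha_i})$ is maximized at $s=0$ by the Pohozaev identity for \eqref{g1} so its maximum equals $K_{p,\alpha_i}a_i^{-\frac{4-p}{p-3}}$ from Corollary \ref{cor6.2}, and obtain strictness from $\beta\|u_{p,\alpha_1}u_{p,\alpha_2}\|_2^2>0$ at the projecting parameter, with positivity coming from Lemma \ref{cor6.1}. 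This matches the comparison scheme the authors use for the analogous bounds elsewhere (e.g.\ Lemmas \ref{lem3.3} and \ref{lem5.31}), so no further commentary is needed.
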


\vskip0.2in

\begin{Lem}\label{lem6.4}
Assume $3<p<4$, $\mu_i,\alpha_i>0(i=1,2)$, there exists $\beta_1>0$ such that $\beta>\beta_1$ and $\beta\in \big(0,\min\{\mu_1,\mu_2\}\big)\cup\big(\max\{\mu_1,\mu_2\},\infty\big)$. Let $(u^-_{a_1},v^-_{a_2})$ be the minimizer of $I(u,v)\big|_{S(a_1)\times S(a_2)}$ in $\mathcal{P}^-_{a_1,a_2}$ for any $a_1,a_2>0$. Then
\begin{equation*}
\Big(\big(\frac{L_1}{\alpha_1}\big)^{-\frac{1}{p-2}}u^-_{a_1}(L_1^{-\frac{1}{2}}x), \big(\frac{L_2}{\alpha_2}\big)^{-\frac{1}{p-2}}v^-_{a_2}(L_2^{-\frac{1}{2}}x)\Big)\to (w_p,w_p)
 \ \  \text{in}\ H^{1}(\R^4,\R^2),
\end{equation*}
as $(a_1,a_2)\to (+\infty,+\infty)$ and $a_1\sim a_2$, where $w_p$ is defined in \eqref{g2} and
$$L_1=\big(\frac{a^2_1}{\|w_{p}\|^2_2}\alpha_1^{\frac{2}{p-2}} \big)^{-\frac{p-2}{2p-6}};\quad L_2=\big(\frac{a^2_2}{\|w_{p}\|^2_2}\alpha_2^{\frac{2}{p-2}} \big)^{-\frac{p-2}{2p-6}}.$$
\end{Lem}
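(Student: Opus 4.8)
The plan is to follow the scheme of the proof of Lemma~\ref{lem3.8}, interchanging the role played there by the vanishing masses with the diverging masses here: since $3<p<4$, the lower-order term $\alpha_i|u|^{p-2}u$ is $L^2$-supercritical relative to $\bar p=3$, so the $L^2$-preserving dilation that singles it out is the one that becomes dominant as $(a_1,a_2)\to(+\infty,+\infty)$. Abbreviate $(u_{a_1},v_{a_2}):=(u^-_{a_1},v^-_{a_2})$. First I would record the identity on $\mathcal{P}_{a_1,a_2}$, obtained by subtracting $\frac{1}{p\gamma_p}P_{a_1,a_2}$ from $I$,
\begin{equation*}
I(u,v)=\frac{p-3}{2(p-2)}\big(\|\nabla u\|_2^2+\|\nabla v\|_2^2\big)+\frac{4-p}{4(p-2)}\big(\mu_1\|u\|_4^4+\mu_2\|v\|_4^4+2\beta\|uv\|_2^2\big),
\end{equation*}
whose right-hand side is a sum of nonnegative terms. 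By Corollary~\ref{cor6.3}, $0<m^-(a_1,a_2)\lesssim a_1^{-(4-p)/(p-3)}+a_2^{-(4-p)/(p-3)}\to0$, so each term above is $o(1)$; in particular $\|\nabla u_{a_1}\|_2^2+\|\nabla v_{a_2}\|_2^2\to0$ and $\mu_1\|u_{a_1}\|_4^4+\mu_2\|v_{a_2}\|_4^4+2\beta\|u_{a_1}v_{a_2}\|_2^2\to0$.

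Next I would extract sharp rates. Plugging the Gagliardo--Nirenberg inequality~\eqref{b2} and the Sobolev inequality into $P_{a_1,a_2}(u_{a_1},v_{a_2})=0$, the quartic terms are of higher order in $\|\nabla u_{a_1}\|_2,\|\nabla v_{a_2}\|_2$ (because $p\gamma_p>2$ and the gradients vanish), so $\|\nabla u_{a_1}\|_2^2+\|\nabla v_{a_2}\|_2^2=(1+o(1))\gamma_p\big(\alpha_1\|u_{a_1}\|_p^p+\alpha_2\|v_{a_2}\|_p^p\big)\lesssim(a_1^{4-p}+a_2^{4-p})\big(\|\nabla u_{a_1}\|_2^2+\|\nabla v_{a_2}\|_2^2\big)^{p-2}$; since $p-2>1$ this forces $\|\nabla u_{a_1}\|_2^2+\|\nabla v_{a_2}\|_2^2\gtrsim(a_1^{4-p}+a_2^{4-p})^{-1/(p-3)}\sim a_1^{-(4-p)/(p-3)}$ using $a_1\sim a_2$, and combined with the upper bound from Corollary~\ref{cor6.3} yields $\|\nabla u_{a_1}\|_2^2+\|\nabla v_{a_2}\|_2^2\sim a_1^{-(4-p)/(p-3)}$ and $\alpha_1\|u_{a_1}\|_p^p+\alpha_2\|v_{a_2}\|_p^p\sim a_1^{-(4-p)/(p-3)}$. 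Then, exactly as in Lemma~\ref{lem3.8} --- testing the Euler--Lagrange equations with $(u_{a_1},v_{a_2})$, invoking $P_{a_1,a_2}=0$ and the Pohozaev identity $\lambda_{1,a_1}a_1^2+\lambda_{2,a_2}a_2^2=(1-\gamma_p)(\alpha_1\|u_{a_1}\|_p^p+\alpha_2\|v_{a_2}\|_p^p)$, and using $a_1\sim a_2$ --- I would upgrade these to the componentwise statements $\|\nabla u_{a_1}\|_2^2\sim\|u_{a_1}\|_p^p\sim a_1^{-(4-p)/(p-3)}$, $\lambda_{1,a_1}\sim a_1^{-(p-2)/(p-3)}$, and likewise for the second component, with $\lambda_{1,a_1},\lambda_{2,a_2}>0$ by Lemma~\ref{lem2.11}.

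Third, I would rescale by $\tilde u_{a_1}(x):=(L_1/\alpha_1)^{-1/(p-2)}u_{a_1}(L_1^{-1/2}x)$ and $\tilde v_{a_2}(x):=(L_2/\alpha_2)^{-1/(p-2)}v_{a_2}(L_2^{-1/2}x)$. A direct computation gives $\|\tilde u_{a_1}\|_2=\|\tilde v_{a_2}\|_2=\|w_p\|_2$ and shows that $(\tilde u_{a_1},\tilde v_{a_2})$ solves a system in which the coefficient of $|\tilde u|^{p-2}\tilde u$ equals $1$ identically, the coefficients of $\tilde u^3$ and $\tilde v^3$ are $\mu_i\alpha_i^{-2/(p-2)}L_i^{(4-p)/(p-2)}\to0$, the coupling coefficients are constants times $(L_2/L_1)^{\pm1}L_j^{(4-p)/(p-2)}$, which tend to $0$ precisely because $a_1\sim a_2$ (the analogue of $\theta_2^2/\gamma_1^2\to0$ in Lemma~\ref{lem3.8}), and $\lambda_{i,a_i}/L_i$ is bounded above and away from $0$. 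The rates from the previous step give $\|\nabla\tilde u_{a_1}\|_2^2\sim a_1^{(4-p)/(p-3)}\|\nabla u_{a_1}\|_2^2\sim1$ and $\|\tilde u_{a_1}\|_p^p\sim1$ (and similarly for $\tilde v_{a_2}$), so $(\tilde u_{a_1},\tilde v_{a_2})$ is bounded in $H^1_r(\R^4,\R^2)$; along a subsequence $(\tilde u_{a_1},\tilde v_{a_2})\weakto(u_0,v_0)$ in $H^1$ and strongly in $L^q$ for $2<q<4$, whence $\|u_0\|_p^p,\|v_0\|_p^p>0$. Passing to the limit, $(u_0,v_0)$ solves the decoupled system $-\Delta u_0+\lambda_1^*u_0=|u_0|^{p-2}u_0$, $-\Delta v_0+\lambda_2^*v_0=|v_0|^{p-2}v_0$ with $\lambda_i^*=\lim\lambda_{i,a_i}/L_i\ge0$; $\lambda_1^*>0$ because otherwise $u_0\not\equiv0$ would solve $-\Delta u_0=|u_0|^{p-2}u_0$ on $\R^4$, impossible in $H^1$ by the Pohozaev identity for that equation (here $p<4$ enters). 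By the strong maximum principle $u_0,v_0>0$, and by uniqueness (cf.~\cite{LN,KM}) $u_0=(\lambda_1^*)^{1/(p-2)}w_p(\sqrt{\lambda_1^*}\,\cdot)$, $v_0=(\lambda_2^*)^{1/(p-2)}w_p(\sqrt{\lambda_2^*}\,\cdot)$. Testing the rescaled and the limit equations against $\tilde u_{a_1}-u_0$ and $\tilde v_{a_2}-v_0$ and using the vanishing of the auxiliary coefficients gives strong $H^1$ convergence; then $\|u_0\|_2^2=\lim\|\tilde u_{a_1}\|_2^2=\|w_p\|_2^2$, while $\|u_0\|_2^2=(\lambda_1^*)^{(6-2p)/(p-2)}\|w_p\|_2^2$, so $\lambda_1^*=1$ since $p\ne3$; hence $u_0=w_p$, and likewise $v_0=w_p$. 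As the limit is the unique pair $(w_p,w_p)$, the convergence holds for the whole family.

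The main obstacle I anticipate is the componentwise rate estimates of the second step: extracting the correct powers for $\|\nabla u_{a_1}\|_2^2$, $\|u_{a_1}\|_p^p$ and $\lambda_{1,a_1}$ separately --- not merely for the sums $\|\nabla u_{a_1}\|_2^2+\|\nabla v_{a_2}\|_2^2$ and $\alpha_1\|u_{a_1}\|_p^p+\alpha_2\|v_{a_2}\|_p^p$ --- because these componentwise bounds are exactly what forces $u_0\ne0$ and $v_0\ne0$ after rescaling, and this (together with killing the coupling coefficients) is the only place where $a_1\sim a_2$ is genuinely used. Granting them, the remainder is a routine adaptation of the proofs of Lemmas~\ref{lem3.8} and~\ref{lem4.4}.
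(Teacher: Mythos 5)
Your proposal follows essentially the same route as the paper's own (sketched) proof: the energy identity $I-\tfrac{1}{p\gamma_p}P_{a_1,a_2}$, the upper bound from Corollary \ref{cor6.3} combined with Gagliardo--Nirenberg to pin down $\|\nabla u^-_{a_1}\|_2^2+\|\nabla v^-_{a_2}\|_2^2\sim a_1^{-(4-p)/(p-3)}$ and $\lambda_{i}\sim a_i^{-(p-2)/(p-3)}$, and then the rescaling (your $(L_i/\alpha_i)^{-1/(p-2)}$, $L_i^{-1/2}$ coincide exactly with the paper's $\theta_{i,n}^{-1}$, $\gamma_{i,n}^{-1}$) followed by the limit-equation and uniqueness argument of Lemma \ref{lem3.8}. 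The componentwise-rate issue you flag is indeed the delicate point, and the paper glosses over it in the same way; your added justification that $\lambda_i^*>0$ via the Pohozaev identity is a harmless refinement.
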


\begin{proof}
Since the proof is similar to that of Lemma \ref{lem3.8}, we only sketch it.
By Lemma $\ref{lem6.3}$, we can suppose that $\big\{(u^-_{a_{1,n}},v^-_{a_{2,n}})\big\}$ is positive and radially symmetric, i.e.,  $0< u^-_{a_{1,n}},v^-_{a_{2,n}}\in H^{1}_r(\R^4)$. Moreover,
\begin{equation*}
0<m^-(a_{1,n},a_{2,n})+o_n(1)=I(u^-_{a_{1,n}},v^-_{a_{2,n}})\le K_{p,\alpha_1} a_{1,n}^{-\frac{4-p}{p-3}} + K_{p,\alpha_2} a_{2,n}^{-\frac{4-p}{p-3}},
\end{equation*}
it follows that $m^-(a_{1,n},a_{2,n})\to 0$ as $(a_{1,n},a_{2,n})\to (+\infty,+\infty)$, and $\|\nabla u^-_{a_{1,n}}\|^2_2+\|\nabla v^-_{a_{2,n}}\|^2_2\to 0$ as $(a_{1,n},a_{2,n})\to (+\infty,+\infty)$.
Since $P_{a_{1,n},a_{2,n}}(u_{a_{1,n}},v_{a_{2,n}})=0$, we get
\begin{equation*}
\begin{aligned}
I(u^-_{a_{1,n}},v^-_{a_{2,n}})&=\big(\frac{1}{2}-\frac{1}{p\gamma_p}\big)\int_{\R^4}|\nabla u^-_{a_{1,n}}|^2+|\nabla v^-_{a_{2,n}}|^2\\
&\quad -\big(\frac{1}{4}-\frac{1}{p\gamma_p}\big)\int_{\R^4}\big(\mu_1|u^-_{a_{1,n}}|^4+\mu_2|v^-_{a_{2,n}}|^4\\
&\quad +2\beta|u^-_{a_{1,n}}|^2|v^-_{a_{2,n}}|^2\big)\\
&\le K_{p,\alpha_1} a_{1,n}^{-\frac{4-p}{p-3}} + K_{p,\alpha_2} a_{2,n}^{-\frac{4-p}{p-3}}.\\
\end{aligned}
\end{equation*}
Moreover, we use the fact that
\begin{equation*}
\begin{aligned}
&\int_{\R^4}\big(\mu_1|u^-_{a_{1,n}}|^4+\mu_2|v^-_{a_{2,n}}|^4+2\beta|u^-_{a_{1,n}}|^2|v^-_{a_{2,n}}|^2\big)\\
&\le \mathcal{S}^2_{\mu_1,\mu_2, \beta}\Big(\int_{\R^4}|\nabla u^-_{a_{1,n}}|^2+|\nabla v^-_{a_{2,n}}|^2\Big)^2\\
&\le C a_{1,n}^{-\frac{2(4-p)}{p-3}}+C'a_{2,n}^{-\frac{2(4-p)}{p-3}}.
\end{aligned}
\end{equation*}
Then
\begin{equation*}
(1+o_n(1))\big(\alpha_1\gamma_pa_{1,n}^{p-p\gamma_p}|\mathcal{C}_p|^p+\alpha_2\gamma_pa_{2,n}^{p-p\gamma_p}|\mathcal{C}_p|^p\big)^{-\frac{2}{p\gamma_p-2}}\le \|\nabla u^-_{a_{1,n}}\|^2_2+\|\nabla v^-_{a_{2,n}}\|^2_2.
\end{equation*}
There exist $C_i,C'_i>0(i=1,2)$ such that
\begin{equation*}
C_1 a^{-\frac{4-p}{p-3}}_{1,n}+C_2 a^{-\frac{4-p}{p-3}}_{2,n}\le\int_{\R^4}|\nabla u^-_{a_{1,n}}|^2+|\nabla v^-_{a_{2,n}}|^2 \le C'_1 a_{1,n}^{-\frac{4-p}{p-3}}+C'_2 a_{2,n}^{-\frac{4-p}{p-3}}.
\end{equation*}
In addition, by $P_{a_{1,n},a_{2,n}}(u^-_{a_{1,n}},v^-_{a_{2,n}})=0$,
\begin{equation*}
\begin{aligned}
\lambda_{1,n}a^2_{1,n}+\lambda_{2,n}a^2_{2,n}&=(1-\gamma_p)\alpha_1\|u^-_{a_{1,n}}\|^p_p+(1-\gamma_p)\alpha_2\|v^-_{a_{2,n}}\|^p_p\\
&=(1+o_n(1))\big( C_1a^{-\frac{4-p}{p-3}}_{1,n}+ C'_1a^{-\frac{4-p}{p-3}}_{2,n}\big).\\
\end{aligned}
\end{equation*}
Therefore, there exist $\lambda_{1,n}\sim a^{-\frac{p-2}{p-3}}_{1,n}$ and $\lambda_{2,n}\sim a^{-\frac{p-2}{p-3}}_{2,n}$ as $(a_{1,n},a_{2,n})\to (+\infty,+\infty)$ and $a_{1,n}\sim a_{2,n}$.
Define
\begin{equation*}
\tilde{u}^-_{a_{1,n}}=\frac{1}{\theta_{1,n}}u^-_{a_{1,n}}\big(\frac{1}{\gamma_{1,n}}x\big)\ \ \text{and}\ \ \tilde{v}^-_{a_{2,n}}=\frac{1}{\theta_{2,n}}v^-_{a_{2,n}}\big(\frac{1}{\gamma_{2,n}}x\big),
\end{equation*}
where
\begin{equation*}
\theta_{1,n}=\big(\frac{a^2_{1,n}}{\|w_{p}\|^2_2}\big)^{-\frac{1}{2p-6}}\alpha_1^{-\frac{1}{p-3}}, \qquad \theta_{2,n}=\big(\frac{a^2_{2,n}}{\|w_{p}\|^2_2}\big)^{-\frac{1}{2p-6}}\alpha_2^{-\frac{1}{p-3}},
\end{equation*}
and
\begin{equation*}
\gamma_{1,n}=\big(\frac{a^2_{1,n}}{\|w_{p}\|^2_2}\big)^{-\frac{p-2}{4(p-3)}}\alpha_1^{-\frac{1}{2p-6}},\qquad \gamma_{2,n}=\big(\frac{a^2_{2,n}}{\|w_{p}\|^2_2}\big)^{-\frac{p-2}{4(p-3)}}\alpha_2^{-\frac{1}{2p-6}}.
\end{equation*}
By similar arguments as used in Lemma \ref{lem3.8},
we have $(\tilde{u}^-_{a_{1,n}},\tilde{v}^-_{a_{2,n}})\to (w_p,w_p)$ in $H^1(\R^4,\R^2)$ as $(a_1,a_2)\to (+\infty,+\infty)$ and $a_1\sim a_2$.
\end{proof}

\begin{Lem}\label{lem6.5}
Let $3<p<4$, $\mu_i,\alpha_i,a_i>0(i=1,2)$, there exists a $\beta_1>0$ such that $\beta>\beta_1$ and $\beta\in \big(0,\min\{\mu_1,\mu_2\}\big)\cup\big(\max\{\mu_1,\mu_2\},\infty\big)$.
If $a_1,a_2>0$, for any ground state $(u^-_{a_1},v^-_{a_2})$ of \eqref{eq1.1}-\eqref{eq1.11}, then
\begin{equation*}
I(u^-_{a_1},v^-_{a_2})=m^-(a_1,a_2)\to\frac{k_1+k_2}{4}\mathcal{S}^2\ \ \text{as} \ \ (a_1,a_2)\to (0,0).
\end{equation*}
Moreover, there exists $\varepsilon_{a_1,a_2}>0$ such that
\begin{equation*}
\big(\varepsilon_{a_1,a_2} u^-_{a_1}(\varepsilon_{a_1,a_2} x),\varepsilon_{a_1,a_2} v^-_{a_2}(\varepsilon_{a_1,a_2} x)\big)\to (\sqrt{k_1}U_{\varepsilon_0},\sqrt{k_2}U_{\varepsilon_0})
\end{equation*}
in  $D^{1,2}(\R^4,\R^2)$, for some $\varepsilon_0>0$ as $(a_1,a_2)\to (0,0)$ and $a_1\sim a_2$ up to a subsequence, where $k_1=\frac{\beta-\mu_2}{\beta^2-\mu_1\mu_2}$ and $k_2=\frac{\beta-\mu_1}{\beta^2-\mu_1\mu_2}$.
\end{Lem}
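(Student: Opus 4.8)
The plan is to follow the proof of Lemma~\ref{lem4.4} (and Lemma~\ref{lem5.6}) almost verbatim, flagging the one place where the mass-supercritical range $3<p<4$ forces a different argument. Fix sequences $a_{1,n},a_{2,n}\to0^{+}$ with $a_{1,n}\sim a_{2,n}$ and write $(u_n,v_n):=(u^-_{a_{1,n}},v^-_{a_{2,n}})$ for the positive radial minimizers furnished by Lemma~\ref{lem6.3}. Using $P_{a_{1,n},a_{2,n}}(u_n,v_n)=0$ to rewrite $I(u_n,v_n)$ as in \eqref{c31}, together with the uniform bound $m^-(a_{1,n},a_{2,n})<\frac{k_1+k_2}{4}\mathcal{S}^2$ from Lemma~\ref{lem6.2}, one sees that $\{(u_n,v_n)\}$ is bounded in $H^1(\R^4,\R^2)$; then the Gagliardo--Nirenberg inequality \eqref{b2} gives $\|u_n\|_p^p,\|v_n\|_p^p\to0$, so that $P_{a_{1,n},a_{2,n}}(u_n,v_n)=0$ yields $\|\nabla u_n\|_2^2+\|\nabla v_n\|_2^2=\mu_1\|u_n\|_4^4+\mu_2\|v_n\|_4^4+2\beta\|u_nv_n\|_2^2+o(1)$.

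The one non-routine point is to rule out $\rho_n^2:=\|\nabla u_n\|_2^2+\|\nabla v_n\|_2^2\to0$. For $2<p<3$ this was obtained from the inequality $\Psi_{(u_n,v_n)}''(0)<0$ defining $\mathcal{P}^-$, but that inequality carries no information once $p\gamma_p=2(p-2)>2$. Instead, I plan to feed the previous display into the definition \eqref{f4} of $\mathcal{S}_{\mu_1,\mu_2,\beta}$ (recall $\mathcal{S}_{\mu_1,\mu_2,\beta}^2=(k_1+k_2)\mathcal{S}^2$) and use \eqref{b5}, obtaining $\rho_n^2\le(k_1+k_2)^{-1}\mathcal{S}^{-2}\rho_n^4+C(D_2+D_3)\rho_n^{p\gamma_p}$ with $D_2+D_3\to0$; since both exponents $4$ and $p\gamma_p$ exceed $2$ and $\rho_n$ is bounded, dividing by $\rho_n^2$ forces $\rho_n^2\ge(k_1+k_2)\mathcal{S}^2-o(1)>0$. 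Combining this with $m^-(a_{1,n},a_{2,n})=\tfrac14\rho_n^2-o(1)$ and $m^-(a_{1,n},a_{2,n})<\frac{k_1+k_2}{4}\mathcal{S}^2$ pins down $m^-(a_{1,n},a_{2,n})\to\frac{k_1+k_2}{4}\mathcal{S}^2$ and $\rho_n^2\to(k_1+k_2)\mathcal{S}^2$, hence also $\mu_1\|u_n\|_4^4+\mu_2\|v_n\|_4^4+2\beta\|u_nv_n\|_2^2\to(k_1+k_2)\mathcal{S}^2$; so $\{(u_n,v_n)\}$ is, up to a subsequence, a radial minimizing sequence for \eqref{f4}.

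From here the argument coincides with the corresponding steps of Lemma~\ref{lem4.4}. By the classification of extremals of $\mathcal{S}_{\mu_1,\mu_2,\beta}$ in Lemma~\ref{lem2.2} together with \cite[Theorem 1.41]{WM} (or \cite[Lemma 3.5]{LL}) there is $\sigma_n>0$ with $\sigma_n u_n(\sigma_n\cdot)\to\sqrt{k_1}U_{\varepsilon_0}$ and $\sigma_n v_n(\sigma_n\cdot)\to\sqrt{k_2}U_{\varepsilon_0}$ in $D^{1,2}(\R^4)$ for some $\varepsilon_0>0$; the Pohozaev identity, now using $1-\gamma_p=\frac{4-p}{p}>0$ and the fact that the rescaled $L^2$-masses diverge while the $L^p$-masses stay $\sim1$, gives $\lambda_{i,n}\sigma_n^2=o(\sigma_n^{4-p})$; the ODE comparison of \cite{WuZ} then furnishes the uniform pointwise bound $\sigma_n u_n(\sigma_n r),\sigma_n v_n(\sigma_n r)\lesssim(1+r^2)^{-1}$ (the tail integrals $\int_0^{\infty}r^3(1+r^2)^{-p}\,dr$ converge because $p>2$); and choosing $\varepsilon_{a_1,a_2}$ so that $\|u^-_{a_1}\|_p^p+\|v^-_{a_2}\|_p^p\sim\varepsilon_{a_1,a_2}^{4-p}$ and repeating the decay bound on the $\varepsilon$-rescaled functions yields $\sigma_n\sim\varepsilon_{a_{1,n},a_{2,n}}$, which upgrades the $D^{1,2}$-convergence to the stated limit $\big(\varepsilon_{a_1,a_2}u^-_{a_1}(\varepsilon_{a_1,a_2}\cdot),\varepsilon_{a_1,a_2}v^-_{a_2}(\varepsilon_{a_1,a_2}\cdot)\big)\to(\sqrt{k_1}U_{\varepsilon_0},\sqrt{k_2}U_{\varepsilon_0})$ in $D^{1,2}(\R^4,\R^2)$. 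The main obstacle, exactly as in Lemma~\ref{lem4.4}, is establishing this last uniform decay estimate for the rescaled components; the rest is a transcription of arguments already carried out in Sections~3 and~4.
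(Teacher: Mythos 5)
Your proposal is correct and follows essentially the same route as the paper: reduce everything to the argument of Lemma \ref{lem4.4}, with the only genuinely new point being the exclusion of $\|\nabla u^-_{a_1}\|_2^2+\|\nabla v^-_{a_2}\|_2^2\to 0$ in the range $3<p<4$. The paper excludes it via the strict inequality defining $\mathcal{P}^-_{a_1,a_2}$ combined with Sobolev and Gagliardo--Nirenberg, while you use the Pohozaev identity itself plus the same two inequalities; both are one-line estimates exploiting that the exponents $4$ and $p\gamma_p$ exceed $2$, and yours even delivers the lower bound $\rho_n^2\ge (k_1+k_2)\mathcal{S}^2-o(1)$ directly.
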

\begin{proof}
The proof is similar to that of Lemma \ref{lem4.4}, thus we focus only on the differences. By $P_{a_1,a_2}(u^-_{a_1},v^-_{a_2})=0$, we have
\begin{equation*}
\begin{aligned}
\frac{k_1+k_2}{4}\mathcal{S}^2>I(u^-_{a_1},v^-_{a_2})&=\frac{1}{4}\big(\mu_1\|u^-_{a_1}\|^4_4+\mu_2\|v^-_{a_2}\|^4_4+2\beta\|u^-_{a_1}v^-_{a_2}\|^2_2\big)
\\
&\quad+\frac{1}{p}(\frac{p\gamma_p}{2}-1)\big(\alpha_1\|u^-_{a_1}\|^p_p+\alpha_2\|v^-_{a_2}\|^p_p\big),\\
\end{aligned}
\end{equation*}
since $p\gamma_p>2$, we deduce that $\{(u^-_{a_1},v^-_{a_2})\}$ is bounded in $L^p(\R^4,\R^2)\cap L^{4}(\R^4,\R^2)$, and hence, since $P_{a_1,a_2}(u^-_{a_1},v^-_{a_2})=0$,
it is also bounded in $H^1(\R^4,\R^2)$. We claim that $l\neq 0$. Indeed, if $l=0$, by $(u^-_{a_1},v^-_{a_2})\in \mathcal{P}^-_{a_1,a_2}$, we see that
\begin{align*}
&2\|\nabla u^-_{a_1}\|^2_2+2\|\nabla v^-_{a_2}\|^2_2\\
&<4\big[
\mu_1\|u^-_{a_1}\|^4_4+\mu_2\|v^-_{a_2}\|^4_4+2\beta\|u^-_{a_1}v^-_{a_2}\|^2_2 \big]+\alpha_1p\gamma^2_p\|u^-_{a_1}\|^p_p+\alpha_2p\gamma^2_p\|v^-_{a_2}\|^p_p,
\end{align*}
we obtain a contradiction:
\begin{equation*}
\begin{aligned}
\|\nabla u^-_{a_1}\|^2_2+\|\nabla v^-_{a_2}\|^2_2&< \frac{2}{\mathcal{S}^2_{\mu_1,\mu_2,\beta}}\big[\|\nabla u^-_{a_1}\|^2_2+\|\nabla v^-_{a_2}\|^2_2\big]^2\\
&\quad+p\gamma^2_p|\mathcal{C}_p|^{p}\big[a^{p-p\gamma_p}_1\|\nabla u^-_{a_1}\|^{p\gamma_p}_2+a^{p-p\gamma_p}_2\|\nabla v^-_{a_2}\|^{p\gamma_p}_2\big].
\end{aligned}
\end{equation*}
Therefore, $l>0$, we obtain that $I(u^-_{a_1},v^-_{a_2})\to \frac{k_1+k_2}{4}\mathcal{S}^2$ as $(a_1,a_2)\to (0,0)$. The rest of the proof runs as before.
\end{proof}

\noindent \textbf{Proof of Theorem 1.5.}
The proof is finished when we combine Lemma \ref{lem6.3} and Lemmas \ref{lem6.4}-\ref{lem6.5}.
\qed

\end{CJK*}
\end{document}